\newcommand{\CommentOut}[1]{\ifthenelse{\boolean{commentout}}{}{#1}}
\title{\titlename}
\author{Kenji Nakahira}
\begin{document}

\maketitle

\begin{abstract}
 In category theory, the use of string diagrams is well known to aid in
 the intuitive understanding of certain concepts, particularly when
 dealing with adjunctions and monoidal categories.
 We show that string diagrams are also useful in exploring fundamental properties
 of basic concepts in category theory, such as universal properties,
 (co)limits, Kan extensions, and (co)ends.
 For instance, string diagrams are utilized to represent
 visually intuitive proofs of the Yoneda lemma,
 necessary and sufficient conditions for being adjunctions,
 the fact that right adjoints preserve limits (RAPL),
 and necessary and sufficient conditions for having pointwise Kan extensions.
 We also introduce a method for intuitively calculating (co)ends using diagrammatic representations
 and employ it to prove several properties of (co)ends and weighted (co)limits.
 This paper proposes that using string diagrams is an effective approach
 for beginners in category theory to learn the fundamentals of the subject
 in an intuitive and understandable way.
\end{abstract}

\tableofcontents


\section{Introduction} \label{sec:intro}

String diagrams are commonly used as graphical representations in category theory
(e.g., \cite{Str-1995,Abr-Coe-2009,Sel-2011,Mar-2014,Hin-Mar-2023}).
They serve as a tool for visually depicting concepts such as adjunctions and monoidal categories
in an easily understandable way.
In recent years, string diagrams have found widespread use in various fields beyond mathematics
that apply category theory, including computer science, physics, control theory, and linguistics.

The aim of this paper is to demonstrate the usefulness of string diagrams
in learning basic concepts of category theory.
Currently, commutative diagrams are predominantly used in standard category theory textbooks,
while string diagrams are seldom employed.
This paper proposes that using string diagrams is an effective approach
for beginners to intuitively grasp the fundamentals of category theory.
String diagrams can not only represent concepts intuitively
but also aid in proving various statements.
For instance, we use string diagrams to prove the Yoneda lemma (see Theorem~\ref{thm:Yoneda}),
necessary and sufficient conditions for being adjunctions
(see Theorems~\ref{thm:AdjNasSnake} and \ref{thm:AdjNasUniv}),
the fact that right adjoints preserve limits (RAPL) (see Theorem~\ref{thm:LimitAdjPreserve}),
and necessary and sufficient conditions for having pointwise Kan extensions
(see Theorem~\ref{thm:KanPointwise}).
We also introduce diagrams for intuitively calculating (co)ends and employ them
to demonstrate some relationships between (co)limits, (co)ends, weighted (co)limits,
and pointwise Kan extensions.
It is well known that, with string diagrams, calculations involving functors and natural transformations
can be performed without explicitly considering functoriality or naturality.
Additionally, we show that string diagrams are useful for verifying whether given data
satisfies functoriality, naturality, universal property, etc.

We explain basic concepts in category theory (specifically, universal property, adjunctions,
(co)limits, Kan extensions, (co)ends, etc.) using string diagrams.
For more information on these concepts, please refer to sources such as Refs.~\cite{Mac-2013,Rie-2017}.
Reference~\cite{Lor-2015} is also helpful for (co)end calculations.
This paper is largely self-contained; however, we assume that the reader has a basic understanding
of the fundamental properties of category theory.
The primary objective of this paper is to demonstrate the usefulness of string diagrams;
we do not discuss various interesting examples or some basic concepts
(e.g., epi- or monomorphisms and examples of (co)limits such as equalizers)
where string diagrams may not be particularly helpful.
We do not address monoidal categories because the method of representing them
with string diagrams has already been extensively studied (e.g., \cite{Sel-2011} and its references).
Our focus is on categories or 2-categories consisting of categories,
excluding enriched categories or general bicategories.

\subsection{Features of string diagrams}

Diagrams commonly employed in standard textbooks can be classified into two types from a certain perspective:
(a) those that represent morphisms with arrows and
(b) those that consider 2-categories consisting of categories and represent functors with arrows.
In what follows, we refer to them as (a) arrow diagrams and (b) pasting diagrams \cite{Ben-1967,Pow-1990}.
We here provide several examples of them and briefly describe their relationship with
the corresponding string diagrams used in this paper.

\myparagraph{Relationship with arrow diagrams}

\noindent
As the first example comparing arrow diagrams with string diagrams,
we give the following expression representing naturality of a natural transformation
$\alpha \colon F \nto G$ with $F,G \colon \cC \to \cD$:
\begin{alignat}{2}
 &\text{(A) arrow diagram:} &\qquad\qquad& \text{(B) string diagram:} \nonumber \\ &\quad
 \begin{tikzpicture}[auto,->,baseline=0.6cm]
  \node (Fa) at (0, 1.2) {$Fa$};
  \node (Fb) at (0, 0) {$Fb$};
  \node (Ga) at (1.2, 1.2) {$Ga$};
  \node (Gb) at (1.2, 0) {$Gb$};
  \draw (Fa) to node {$\scriptstyle \alpha_a$} (Ga);
  \draw (Fb) to node[swap] {$\scriptstyle \alpha_b$} (Gb);
  \draw (Fa) to node[swap] {$\scriptstyle Ff$} (Fb);
  \draw (Ga) to node {$\scriptstyle Gf$} (Gb);
 \end{tikzpicture} \raisebox{-2em}{,}
 &&\quad \InsertMidPDF{intro_nat.pdf} \raisebox{-2em}{.}
 \label{eq:intro_nat}
\end{alignat}
This arrow diagram is commutative, which means that the two paths
from $Fa$ to $Gb$, i.e., $Gf \c \alpha_a$ and $\alpha_b \c Ff$, are equal
(where $\c$ is the composition of morphisms).
Information such as that $F$ and $G$ are functors from $\cC$ to $\cD$ is usually not included
in arrow diagrams, but it is present in string diagrams.
Note that it can also be represented by a string diagram as follows
(see in detail in Eq.~\eqref{eq:basic_natural_trans_string2}):
\begin{alignat}{1}
 \InsertPDF{report_intro_nat.pdf} \raisebox{1em}{.}
 \label{eq:basic_intro_nat}
\end{alignat}

As the second example, a universal morphism $\braket{u,\eta}$ from $c \in \cC$
to $G \colon \cD \to \cC$ is represented by
\begin{alignat}{2}
 &\text{(A) arrow diagram:} &\qquad\qquad& \text{(B) string diagram:} \nonumber \\ &\quad
 \begin{tikzpicture}[auto,->,baseline=0.6cm]
  \node (c) at (0, 1.2) {$c$};
  \node (Gu) at (1.8, 1.2) {$Gu$};
  \node (Gx) at (1.8, 0) {$Gx$};
  \node (u) at (3.0, 1.2) {$u$};
  \node (x) at (3.0, 0) {$x$};
  \draw (c) to node {$\scriptstyle \eta$} (Gu);
  \draw (c) to node[swap] {$\scriptstyle a$} (Gx);
  \draw (Gu) to node {$\scriptstyle G\ol{a}$} (Gx);
  \draw (u) to node {$\scriptstyle \exists!\,\ol{a}$} (x);
 \end{tikzpicture} \raisebox{-2em}{,}
 &&\quad \InsertMidPDF{intro_univ.pdf} \raisebox{-2em}{.}
 \label{eq:intro_univ2}
\end{alignat}
In the string diagram, the black circle (dot) represents $\eta$.
In the arrow diagrams, two diagrams are drawn: one on the left representing morphisms
in the category $\cC$ and one on the right representing morphisms in the category $\cD$.
The diagram on the left is commutative (i.e., $a = G\ol{a} \c \eta$).
Thus, when representing morphisms from multiple categories in arrow diagrams,
multiple diagrams are generally required.
As can be seen from this example, the correspondence between arrow diagrams
and string diagrams is not very direct.

The main advantages of string diagrams over arrow diagrams are as follows%
\footnote{There are also situations where arrow diagrams are preferable to string diagrams.
The main disadvantages of string diagrams are as follows:
(a) It is difficult to effectively represent commutative expressions when they appear frequently.
(b) The style of drawing string diagrams can differ significantly among individuals.}:
\begin{itemize}
 \item A diagram can convey a large amount of information in an easy-to-understand form.
       While, in arrow diagrams, typically, only objects and morphisms within a single category can be
       represented in a single diagram, it is often necessary to represent
       objects and morphisms in multiple categories,
       as well as functors and natural transformations.
       String diagrams can represent all of them in a single diagram.
 \item Expressions involving both vertical and horizontal composition
       can be straightforwardly represented.
       Related to Advantage~1, horizontal composition is generally represented using
       equational forms in arrow diagrams.
       In contrast, both vertical and horizontal composition can be clearly represented
       in a string diagram.
       String diagrams are well-suited for representing the composition of morphisms,
       categories, functors, and natural transformations,
       revealing their structural relationships.
\end{itemize}
For a comparison of arrow diagrams and string diagrams from the perspective of a calculational
approach, see the Introduction of Ref.~\cite{Mar-2014}.

\myparagraph{Relationship with pasting diagrams}

\noindent
The following is an example comparing pasting diagrams with the string diagrams:
\begin{alignat}{2}
 &\text{(A) pasting diagram:} &\qquad\qquad& \text{(B) string diagram:} \nonumber \\ &\quad
 \begin{tikzpicture}[auto,->,baseline=0.6cm]
  \node (C) at (2.4, 0) {$\cC$};
  \node (D) at (1.2, 1.2) {$\cD$};
  \node (E) at (0, 0) {$\cE$};
  \node (a1) at (1.2, 0) {};
  \node (a2) at (1.2, 0.8) {};
  \draw (C) to node[swap] {$\scriptstyle K$} (D);
  \draw (C) to node {$\scriptstyle F$} (E);
  \draw (D) to node[swap] {$\scriptstyle L$} (E);
  \draw[double equal sign distance,-implies] (a1) to node[swap] {$\eta$} (a2);
 \end{tikzpicture} \raisebox{-1em}{,}
 &&\quad \InsertMidPDF{intro_Kan_left.pdf} \raisebox{-1em}{.}
 \label{eq:intro_Kan_left}
\end{alignat}
Note that this diagram is drawn with Eq.~\eqref{eq:Kan_left_universal} in mind,
which represents a left Kan extension of $F \colon \cC \to \cE$ along $K \colon \cC \to \cD$.
There is a close relationship between these diagrams.
We can overlay the two diagrams as follows, omitting the overlapping information:
\begin{alignat}{1}
 \InsertPDF{report_intro_Kan_left2.pdf} \raisebox{1em}{.}
 \label{eq:intro_Kan_left_commute2}
\end{alignat}
In the pasting diagram, two types of arrows appear: arrows ``$\rightarrow$'' representing
functors and arrows ``$\Rightarrow$'' representing natural transformations.
Also, categories $\cC$, $\cD$, and $\cE$ are represented by points;
functors $K \colon \cC \to \cD$, $F \colon \cC \to \cE$, and $L \colon \cD \to \cE$ by arrows;
and a natural transformation $\eta \colon F \nto L \b K$ (where $\b$ denotes horizontal composition)
by a triangle with vertices $\cC$, $\cD$, and $\cE$ having an arrow ``$\Rightarrow$''.
When overlaid as in Eq.~\eqref{eq:intro_Kan_left_commute2}, it can be seen that
each arrow (in the pasting diagram) and the corresponding wire (in the string diagram) can be drawn
in such a way that they are orthogonal to each other.
The differences between pasting and string diagrams can be summarized as shown
in Table~\ref{tab:intro_diagram}.
\begin{table}[hbt]
 \centering
 \caption{Differences between pasting and string diagrams.}
 \label{tab:intro_diagram}
 \begin{tabular}{c|c|c}
  \bhline{1pt}
   & \multicolumn{1}{c|}{pasting diagrams} &
          \multicolumn{1}{c}{string diagrams} \\ \hline
  categories & points (0-dimensional) & surfaces (2-dimensional) \\
  functors & arrows ``$\to$'' (1-dimensional) & wires (1-dimensional) \\
  natural transformations & surfaces with arrows ``$\nto$'' (2-dimensional) & blocks (0-dimensional) \\
  \bhline{1pt}
 \end{tabular}
\end{table}

Another example is
\begin{alignat}{2}
 &\text{(A) pasting diagram:} &\qquad\qquad& \text{(B) string diagram:} \nonumber \\ &\quad
 \begin{tikzpicture}[auto,->,baseline=0cm]
  \node (C) at (3.2, 0) {$\cC$};
  \node (D) at (1.6, 0) {$\cD$};
  \node (E) at (0, 0) {$\cE$};
  \draw (C) to[out=100, in=80, looseness=2] node[swap] {$\scriptstyle H$} (D);
  \draw (C) to node[swap,pos=0.7] {$\scriptstyle G$} (D);
  \draw (C) to[out=-100, in=-80, looseness=2] node {$\scriptstyle F$} (D);
  \draw (D) to[out=120, in=60] node[swap] {$\scriptstyle L$} (E);
  \draw (D) to[out=-120, in=-60] node {$\scriptstyle K$} (E);
  \node (a1) at (2.4, -1.0) {};  \node (a2) at (2.4, -0.2) {};
  \node (b1) at (2.4, 0.2) {};  \node (b2) at (2.4, 1.0) {};
  \node (c1) at (0.8, -0.4) {};  \node (c2) at (0.8, 0.4) {};
  \draw[double equal sign distance,-implies] (a1) to node[swap] {$\alpha$} (a2);
  \draw[double equal sign distance,-implies] (b1) to node[swap] {$\beta$} (b2);
  \draw[double equal sign distance,-implies] (c1) to node[swap] {$\gamma$} (c2);
 \end{tikzpicture} \raisebox{-2em}{,}
 &&\quad \InsertMidPDF{intro_circ_bullet.pdf} \raisebox{-2em}{,}
 \label{eq:intro_circ_bullet}
\end{alignat}
where we represent the horizontal composition of a natural transformation $\gamma$
with the vertical composition of two natural transformations $\alpha$ and $\beta$,
i.e., $\gamma \b \beta\alpha$.
Both diagrams represent vertical composition by stacking elements vertically and
horizontal composition by placing them side by side.
As with the previous example, it is immediately clear that the two diagrams can be overlaid.

Pasting and string diagrams are dual to each other and can be converted interchangeably.
However, when it comes to discussions regarding the foundations of category theory,
there are often instances where calculations involving natural transformations need to be carried out.
Therefore, it is crucial to visually represent natural transformations in a clear form
when performing calculations.
Keeping this in mind, the main advantage of string diagrams over pasting diagrams is as follows:
\begin{itemize}
 \item The same type of natural transformation can be represented in the same way.
       In pasting diagrams, it can often be difficult to represent identical or the same type
       of natural transformations in the same shape.
       For example, consider the scenario where $\alpha = \beta = \gamma$ holds
       in Eq.~\eqref{eq:intro_circ_bullet}.
       In contrast, in string diagrams, these natural transformations can always (and easily)
       be represented in the same shape.
       Therefore, when calculating natural transformations, string diagrams should be
       visually easier to understand.
       Also, thanks to this advantage, it is easy to assign a special shape to
       a special type of natural transformation (such as the black circle
       in Eq.~\eqref{eq:intro_univ2}) and distinguish it from others.
\end{itemize}

\subsection{Key aspects of this paper}

The article most closely related to this paper is Refs.~\cite{Mar-2014,Hin-Mar-2023},
in which it has been argued that string diagrams have utility
in calculational approaches to category theory.
This paper aims to demonstrate the usefulness of string diagrams
for various concepts mentioned in standard textbooks on category theory,
which were not extensively explored in Refs.~\cite{Mar-2014,Hin-Mar-2023}.
Several key aspects of this paper include:
\begin{itemize}
 \item Set-valued functors: We introduce several diagrams for set-valued functors.
       Using these diagrams, we prove the Yoneda lemma in an intuitively understandable form
       (see Theorem~\ref{thm:Yoneda}).
 \item Universal properties: We represent universal properties with simple diagrams
       (see Subsubsection~\ref{subsubsec:repr_repr_diagram}).
       Note that this representation is similar to that of Ref.~\cite{Hin-2012}.
       Diagrams for adjoints, (co)limits, Kan extensions, etc. are depicted
       as specific instances of this representation.
 \item (Co)limits: We show some basic properties
       (e.g., any right adjoint and functor of the form $\cC(c,\Endash)$ preserve limits).
 \item Kan extensions: We mainly show some basic properties of pointwise Kan extensions.
 \item (Co)ends: We introduce a new diagram that visually represents (co)end-related
       expressions such as $\Func{\cC}{\cD}(F,G) \cong \int_{c \in \cC} \cD(Fc,Gc)$
       in an easy-to-understand form and perform several (co)end calculations graphically.
\end{itemize}


\section{Basics of string diagrams} \label{sec:preliminary}

In this section, we provide an overview of how to represent very basic concepts in category theory,
such as categories, functors, and natural transformations, using string diagrams.

\subsection{Categories, functors, and natural transformations} \label{subsec:category_category}

\subsubsection{Categories} \label{subsubsec:category_category_category}

For morphisms $f \colon a \to b$ and $g \colon b \to c$ in a category $\cC$,
the composite of $f$ and $g$, denoted by $g \c f \colon a \to c$
(or simply written as $gf$), is represented by the following diagram:
\begin{alignat}{1}
 \InsertPDF{basic_cat_circ.pdf} \raisebox{1em}{.}
 \label{eq:basic_cat_circ}
\end{alignat}
In our string diagrams, morphisms are represented by blocks
(which are often depicted with rectangular or trapezoidal blocks but can also take on other shapes),
while objects are represented by wires.
The wires extending from the bottom and top of a morphism represent its domain and codomain,
respectively.
The yellow region labeled with ``$\cC$'' indicates that the morphism $gf$ belongs to
the category $\cC$.
Regions of the same color within each diagram represent the same category unless otherwise stated,
and their duplicated labels are often omitted.
The composition operation is associative, i.e.,
\begin{alignat}{1}
 h(gf) = (hg)f &\qquad\diagram\qquad
 \InsertMidPDF{basic_cat_associative.pdf},
 \label{eq:basic_cat_associative}
\end{alignat}
and unital, i.e.,
\begin{alignat}{1}
 f \c \id_a = f = \id_b \c f &\qquad\diagram\qquad
 \InsertMidPDF{basic_cat_id.pdf},
 \label{eq:basic_cat_id}
\end{alignat}
where dashed lines (used as auxiliary lines) are only intended to guide the eye.
The symbol ``$\diagram$'' is used to show a corresponding diagram.
As shown in Eq.~\eqref{eq:basic_cat_id}, identity morphisms are simply represented by wires.
Note that it is not possible to distinguish between the identity morphism $\id_a$ and the object $a$
in diagrams, but this is not a substantial problem.
In this paper, we assume that all categories are locally small unless otherwise stated.
Let $\cC(a,b)$ be the hom-set from $a \in \cC$ to $b \in \cC$, i.e., the collection of all morphisms
with domain $a$ and codomain $b$.

The composite of two morphisms $f \in \cC^\op(b,a)$ and $g \in \cC^\op(c,b)$, $fg$, in
the opposite category $\cC^\op$ of $\cC$, is represented by
\begin{alignat}{1}
 \InsertMidPDF{basic_contra_circ.pdf}
 \qquad\myLeftrightarrow{equivalent}\qquad
 \InsertMidPDF{basic_contra_circ2.pdf},
 \label{eq:basic_contra_circ}
\end{alignat}
where the right-hand side of the symbol ``$\myLeftrightarrow{equivalent}$''
represents $f$ and $g$ as morphisms in $\cC$.
As shown in this diagram, it is common for the background colors of $\cC$ and $\cC^\op$ to be the same.
Note that $fg \in \cC^\op(c,a)$ on the left-hand side of Eq.~\eqref{eq:basic_contra_circ}
is the composite of $g \in \cC^\op(c,b)$ and $f \in \cC^\op(b,a)$, while
$gf \in \cC(a,c)$ on the right-hand side is the composite of $f \in \cC(a,b)$ and $g \in \cC(b,c)$.
For each category $\cC$, a morphism in its opposite category $\cC^\op$ is often represented
as the corresponding morphism in $\cC$, in which case the diagram is flipped vertically.
For each morphism $f \in \cC(a,b)$, the corresponding $f \in \cC^\op(b,a)$
may be written as $f^\op$ if there is a risk of confusion.

\subsubsection{Functors} \label{subsubsec:category_category_functor}

\myparagraph{Diagrammatic notation}

\noindent
Let us represent a functor $F \colon \cC \to \cD$ as a wire with the label ``$F$'' like this:
\begin{alignat}{1}
 \InsertPDF{basic_functor_easy.pdf} \raisebox{1em}{.}
 \label{eq:basic_functor_easy}
\end{alignat}
The regions on the left and right sides represent the categories $\cD$ and $\cC$, respectively.
As shown in this example, regions with different colors generally indicate different categories.
In a diagram, placing the wire representing the functor $F$ to the left of an object or morphism
indicates that $F$ is applied to it.
For example, for a morphism $f \in \cC(a,b)$, $Ff \in \cD(Fa,Fb)$ is diagrammatically represented by
\begin{alignat}{1}
 \InsertPDF{basic_functor_Ff.pdf} \raisebox{1em}{.}
 \label{eq:basic_functor_Ff}
\end{alignat}

Each functor preserves the structure of associativity of composition.
That is, any functor $F \colon \cC \to \cD$ and
any two composable morphisms $f \in \cC(a,b)$ and $g \in \cC(b,c)$ satisfy $F(gf) = (Fg)(Ff)$%
\footnote{When we say ``for any $f \in \cC(a,b)$'', unless otherwise stated,
we assume that both $a$ and $b$ are also arbitrary.}.
This can be depicted by
\begin{alignat}{1}
 \InsertMidPDF{basic_functor_string.pdf}
 &\qquad\myLeftrightarrow{equivalent}\qquad
 \InsertMidPDF{basic_functor.pdf},
 \label{eq:basic_functor}
\end{alignat}
where we show two equivalent representations.
Note that if we remove the auxiliary lines from the left-hand side
of the symbol ``$\myLeftrightarrow{equivalent}$'',
then it becomes impossible to distinguish between its left- and right-hand sides,
which poses no issue at all since both sides are equal.
Also, $F$ maps identity morphisms to identity morphisms.

\myparagraph{Several fundamental functors}

\noindent
The identity functor on $\cC$, denoted by $\id_\cC$, is represented as
\begin{alignat}{1}
 \InsertPDF{basic_functor_id.pdf} \raisebox{1em}{.}
 \label{eq:basic_functor_id}
\end{alignat}
As shown on the right-hand side, the wire representing the identity functor is often omitted.

A functor from a category $\cC$ to a discrete category, denoted as $\cOne$, with only one object,
is uniquely determined by mapping all morphisms in $\cC$ to the identity morphism $\id_*$
of the unique object $*$ in $\cOne$.
This functor is often denoted by ${!}$.
We represent ${!}$ with a gray dotted wire;
for a morphism $f$ in $\cC$, ${!}f = \id_*$ is represented by
\begin{alignat}{1}
 \InsertPDF{basic_functor_terminal.pdf} \raisebox{1em}{.}
 \label{eq:basic_functor_terminal}
\end{alignat}
Intuitively, the functor ${!}$ acts to erase all information about the input.
Note that the background of the region representing category $\cOne$ is white,
and the label ``$\cOne$'' is often omitted.

The dual of a functor $F \colon \cC \to \cD$ is denoted by the same symbol as
$F \colon \cC^\op \to \cD^\op$;
however, it may be written as $F^\op$ when confusion is likely to arise.

\myparagraph{Fully faithful functors}

\noindent
Let us consider a functor $F \colon \cC \to \cD$.
If the map $\cC(a,b) \ni f \mapsto Ff \in \cD(Fa,Fb)$ is bijective for each $a,b \in \cC$,
then $F$ is called \termdef{fully faithful}.
$F$ is fully faithful if and only if when we fix arbitrary $a,b \in \cC$,
for any $g \in \cD(Fa,Fb)$,
there exists a unique $\ol{g} \in \cC(a,b)$ such that
\begin{alignat}{1}
 \InsertPDF{basic_full_faithful.pdf} \raisebox{1em}{.}
 \label{eq:basic_full_faithful}
\end{alignat}
In diagrams, the symbol ``$\exists !$'' means that the block immediately to its right
is uniquely determined to satisfy the equation.

\subsubsection{Natural transformations}

For two functors $F,G \colon \cC \to \cD$, a natural transformation $\alpha \colon F \nto G$
is defined as a collection of indexed morphisms
$\alpha \coloneqq \{ \alpha_a \in \cD(Fa,Ga) \}_{a \in \cC}$ satisfying
\begin{alignat}{1}
 Gf \c \alpha_a = \alpha_b \c Ff
 &\qquad\diagram\qquad 
 \InsertMidPDF{basic_natural_trans_string.pdf}
 \tag{nat}
 \label{eq:nat}
\end{alignat}
for any morphism $f \colon a \to b$ in $\cC$.
In diagrams, we often represent natural transformations with circular blocks
(but they can also take on other shapes) like this one:
\begin{alignat}{1}
 \InsertPDF{basic_natural_trans_def.pdf} \raisebox{1em}{.}
 \label{eq:basic_natural_trans_def}
\end{alignat}
We often represent the component $\alpha_a$ of the natural transformation $\alpha$
by placing $\alpha$ immediately to the left of the object $a$.
When the components of a natural transformation are represented as in Eq.~\eqref{eq:basic_natural_trans_def},
Eq.~\eqref{eq:nat} can be rewritten as follows:
\begin{alignat}{1}
 \InsertPDF{report_basic_natural_trans_string2.pdf} \raisebox{1em}{.}
 \label{eq:basic_natural_trans_string2}
\end{alignat}
Intuitively, this equation indicates that the morphism $f$ and the natural transformation $\alpha$
can be moved freely along the wires.
This equation is often called the \termdef{sliding rule}.
The right-hand side of Eq.~\eqref{eq:basic_natural_trans_string2}
is sometimes denoted by $\alpha \b f$,
where the operator $\b$ represents the horizontal composition, which will be mentioned later.

Objects and morphisms in a category $\cC$ can be regarded as functors and natural transformations
from $\cOne$ to $\cC$, respectively.
For example, a functor $F \colon \cOne \to \cC$ can be identified with an object $F(*)$ in $\cC$,
and a natural transformation $\alpha \colon F \nto G$ with $F,G \colon \cOne \to \cC$ can be
identified with its unique component $\alpha_* \in \cC(F(*),G(*))$.
In this paper, we identify them.

\subsection{Vertical and horizontal composition}

\subsubsection{Vertical composition}

For two natural transformations $\alpha \colon F \nto G$ and $\beta \colon G \nto H$
with $F,G,H \colon \cC \to \cD$,
we write the natural transformation defined by
\begin{alignat}{1}
 \InsertPDF{basic_natural_trans2_def.pdf}
 \label{eq:basic_natural_trans2_def}
\end{alignat}
as $\beta \c \alpha$ or simply as $\beta\alpha$, and call it the \termdef{vertical composite}
of $\alpha$ and $\beta$.
It is clear that $\beta\alpha$ is a natural transformation because the sliding rule of
Eq.~\eqref{eq:basic_natural_trans_string2} gives
\begin{alignat}{1}
 \InsertPDF{basic_natural_trans_circ.pdf} \raisebox{1em}{.}
 \label{eq:basic_natural_trans_circ}
\end{alignat}

\subsubsection{Horizontal composition}

Since functors can be regarded as maps (for objects and morphisms), they can be composed
using ordinary map composition.
Specifically, for two functors $F \colon \cC \to \cD$ and $G \colon \cD \to \cE$,
we can consider a map that maps each object $a$ in $\cC$ to the object $G(Fa)$ and
each morphism $f$ in $\cC$ to the morphism $G(Ff)$.
This map, denoted by $G \b F$, is a functor known as the \termdef{horizontal composite} of $F$ and $G$.
We write $G \b F$ as
\begin{alignat}{1}
 \InsertPDF{basic_functor_bullet.pdf} \raisebox{1em}{.}
 \label{eq:basic_functor_bullet}
\end{alignat}
For a functor $F \colon \cC \to \cD$,
if there exists $H \colon \cD \to \cC$ such that $H \b F = \id_\cC$ and $F \b H = \id_\cD$,
then $H$ is called the \termdef{inverse} of $F$ and is denoted as $F^{-1}$.
In this case, $\cC$ and $\cD$ is called \termdef{isomorphic}.

The horizontal composite of two natural transformations can also be defined.
We write the horizontal composite of $\alpha \colon F \nto F'$ and $\beta \colon G \nto G'$
(where $F,F' \colon \cC \to \cD$ and $G,G' \colon \cD \to \cE$)
as $\beta \b \alpha$, which is depicted by
\begin{alignat}{1}
 \InsertPDF{basic_natural_bullet.pdf} \raisebox{1em}{.}
 \label{eq:basic_natural_bullet}
\end{alignat}
Specifically, it is defined as $\beta \b \alpha \coloneqq \{ G' \alpha_a \c \beta_{Fa} \}_{a \in \cC}
= \{ \beta_{F'a} \c G \alpha_a \}_{a \in \cC}$.
Its each component is written as
\begin{alignat}{1}
 \lefteqn{ (\beta \b \alpha)_a \coloneqq G' \alpha_a \c \beta_{Fa}
 = \beta_{F'a} \c G \alpha_a } \nonumber \\
 &\diagram\qquad 
 \InsertMidPDF{basic_natural_trans_alpha_beta_a.pdf},
 \label{eq:basic_natural_trans_alpha_beta_a}
\end{alignat}
where the equality follows from naturality of $\beta$.
Thus, the following equation holds:
\begin{alignat}{1}
 \lefteqn{\beta \b \alpha = (G' \b \alpha) \c (\beta \b F) = (\beta \b F') \c (G \b \alpha)}
 \nonumber \\
 &\diagram\qquad 
 \InsertMidPDF{basic_bullet_sliding.pdf},
 \tag{sld}
 \label{eq:sliding}
\end{alignat}%
where $G' \b \alpha$ means $\id_{G'} \b \alpha$ and $\beta \b F$ means $\beta \b \id_F$.
Note that the horizontal composite with the identity natural transformation is often denoted like this.
This equation is also called the \termdef{sliding rule}.
Equation~\eqref{eq:sliding} intuitively means that $\alpha$ and $\beta$ can be moved freely
along the wires.
Since any morphism can be regarded as a natural transformation,
Eq.~\eqref{eq:basic_natural_trans_string2} can be regarded as a special case of
Eq.~\eqref{eq:sliding}.

In what follows, when we write something like $\beta \b \alpha$ or $\beta \c \alpha$
for natural transformations $\alpha$ and $\beta$, we implicitly assume that
they are horizontally or vertically composable.
The same applies to functors and morphisms.

\begin{proposition}{}{}
 For any categories $\cC$, $\cD$, and $\cE$, functors $F,F',F'' \colon \cC \to \cD$ and
 $G,G',G'' \colon \cD \to \cE$, and natural transformations $\alpha \colon F \nto F'$,
 $\alpha' \colon F' \nto F''$, $\beta \colon G \nto G'$, and $\beta' \colon G' \nto G''$,
 we have
 \begin{alignat}{1}
  (\beta' \c \beta) \b (\alpha' \c \alpha) = (\beta' \b \alpha') \c (\beta \b \alpha)
  &\qquad\diagram\qquad 
  \InsertMidPDF{basic_bullet_circ.pdf}.
  \label{eq:basic_bullet_circ}
 \end{alignat}
\end{proposition}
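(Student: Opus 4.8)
The plan is to exhibit both sides of Eq.~\eqref{eq:basic_bullet_circ} as one and the same string diagram. That diagram is obtained by drawing the three parallel wires $F,F',F''$ carrying a block for $\alpha$ between the first two and a block for $\alpha'$ between the last two, placed next to the three parallel wires $G,G',G''$ carrying blocks for $\beta$ and $\beta'$, with $\alpha,\beta$ drawn at one height and $\alpha',\beta'$ at another. Slicing this picture by first forming the vertical composites $\alpha'\c\alpha$ and $\beta'\c\beta$ inside each column and then juxtaposing the columns produces the left-hand side; slicing it by first juxtaposing $\alpha$ with $\beta$ and $\alpha'$ with $\beta'$ and then stacking the two rows produces the right-hand side. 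Since a string diagram denotes a single well-defined natural transformation independent of how it is cut into vertical and horizontal composites, the two sides coincide.

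To make the argument rest only on the rules established so far, I would back it up by a componentwise check. Fix $a\in\cC$. Using functoriality of $G$ together with the definitions of vertical composite~\eqref{eq:basic_natural_trans2_def} and horizontal composite~\eqref{eq:basic_natural_trans_alpha_beta_a}, expand the left-hand side at $a$ as
\begin{alignat}{1}
 \bigl((\beta'\c\beta)\b(\alpha'\c\alpha)\bigr)_a
 = \beta'_{F''a}\c\beta_{F''a}\c G\alpha'_a\c G\alpha_a,
\end{alignat}
and the right-hand side at $a$ as
\begin{alignat}{1}
 \bigl((\beta'\b\alpha')\c(\beta\b\alpha)\bigr)_a
 = \beta'_{F''a}\c G'\alpha'_a\c\beta_{F'a}\c G\alpha_a.
\end{alignat}
The outer factors $\beta'_{F''a}$ and $G\alpha_a$ match, so the identity reduces to $\beta_{F''a}\c G\alpha'_a = G'\alpha'_a\c\beta_{F'a}$, which is precisely naturality~\eqref{eq:nat} of $\beta\colon G\nto G'$ applied to the morphism $\alpha'_a\colon F'a\to F''a$. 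Diagrammatically this is the single move of sliding the block $\alpha'$ past the block $\beta$ along the wire, i.e.\ one instance of the sliding rule~\eqref{eq:sliding} (equivalently~\eqref{eq:basic_natural_trans_string2}).

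The step I would flag as the real crux is the first, purely pictorial claim: that the displayed string diagram has an unambiguous value, so that the two bracketings in Eq.~\eqref{eq:basic_bullet_circ} are literally the same natural transformation. This is the soundness of the string-diagram calculus — invariance of the denotation under planar isotopy of the blocks — which the sliding rules encode at the level of elementary moves; the componentwise computation above is exactly the bookkeeping that discharges this particular instance without invoking the general coherence statement.
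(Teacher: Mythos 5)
Your proof is correct and takes essentially the same route as the paper: the paper's entire proof is a single application of the sliding rule \eqref{eq:sliding}, and your componentwise reduction of the identity to $\beta_{F''a}\c G\alpha'_a = G'\alpha'_a\c\beta_{F'a}$ is exactly that one sliding move written out in components.
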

\begin{proof}
 From the sliding rule of Eq.~\eqref{eq:sliding}, we have
 \begin{alignat}{1}
  \footnoteinset{-3.19}{1.7}{\eqref{eq:basic_natural_trans_alpha_beta_a}}{%
  \footnoteinset{0.12}{1.7}{\eqref{eq:basic_natural_trans2_def}}{%
  \footnoteinset{3.43}{1.7}{\eqref{eq:basic_natural_trans_alpha_beta_a}}{%
  \footnoteinset{0.12}{-1.95}{\eqref{eq:basic_natural_trans_alpha_beta_a}}{%
  \footnoteinset{3.43}{-1.95}{\eqref{eq:basic_natural_trans2_def}}{%
  \InsertPDF{basic_bullet_circ_proof.pdf}}}}}} \raisebox{1em}{.}
  \label{eq:basic_bullet_circ_proof}
 \end{alignat}
\end{proof}

\begin{lemma}{}{BasicNatNowired}
 For two functors $F,G \colon \cC \to \cC$ and two natural transformations
 $\alpha \colon F \nto \id_\cC$, $~\beta \colon \id_\cC \nto G$,
 we have $\beta \alpha = \beta \b \alpha = \alpha \b \beta$.
\end{lemma}
\begin{proof}
 $\beta \alpha = \beta \b \alpha$ can be shown by
 \begin{alignat}{1}
  \InsertPDF{basic_natural_nowired.pdf} \raisebox{1em}{,}
  \label{eq:basic_natural_nowired}
 \end{alignat}
 whose corresponding mathematical formula is
 \begin{alignat}{1}
  \beta \alpha &= (\beta \b \id_\cC) \c (\id_\cC \b \alpha)
  = (\beta \c \id_\cC) \b (\id_\cC \c \alpha) = \beta \b \alpha.
 \end{alignat}
 Similarly, $\beta \alpha = \alpha \b \beta$ can be shown by
 \begin{alignat}{1}
  \InsertPDF{basic_natural_nowired2.pdf} \raisebox{1em}{,}
  \label{eq:basic_natural_nowired2}
 \end{alignat}
 whose corresponding mathematical formula is
 \begin{alignat}{1}
  \beta \alpha &= (\id_\cC \b \beta) \c (\alpha \b \id_\cC)
  = (\id_\cC \c \alpha) \b (\beta \c \id_\cC) = \alpha \b \beta.
 \end{alignat}
\end{proof}

\begin{proposition}{}{BasicFuncSimeqFF}
 If a functor $F \colon \cC \to \cD$ induces a category equivalence $\cC \simeq \cD$,
 then $F$ is fully faithful.
\end{proposition}
\begin{proof}
 By the definition of category equivalence, there exist a functor $G \colon \cD \to \cC$
 and natural isomorphisms $\phi \colon G \b F \cong \id_\cC$ and
 $\psi \colon F \b G \cong \id_\cD$.
 $\phi$ satisfies $\phi \phi^{-1} = \id_{\id_\cC}$ and $\phi^{-1} \phi = \id_{G \b F}$, i.e.,
 \begin{alignat}{1}
  \InsertPDF{basic_simeq_FF_full_id.pdf} \raisebox{1em}{,}
  \label{eq:basic_simeq_FF_full_id}
 \end{alignat}
 where the white and blue diamond blocks represent $\phi$ and $\phi^{-1}$, respectively.
 Similarly, $\psi$ satisfies $\psi \psi^{-1} = \id_{\id_\cD}$, i.e.,
 \begin{alignat}{1}
  \InsertPDF{basic_simeq_FF_full_id2.pdf} \raisebox{1em}{,}
  \label{eq:basic_simeq_FF_full_id2}
 \end{alignat}
 where the white and blue rectangular blocks represent $\psi$ and $\psi^{-1}$, respectively.

 Arbitrarily choose $g \in \cD(Fa,Fb)$ (where $a,b \in \cC$ are also arbitrary);
 it suffices to show that there exists a unique $\ol{g} \in \cC(a,b)$ that satisfies
 Eq.~\eqref{eq:basic_full_faithful}.
 If such $\ol{g}$ exists, then
 \begin{alignat}{1}
  \footnoteinset{-2.19}{0.3}{\eqref{eq:basic_simeq_FF_full_id}}{%
  \footnoteinset{1.13}{0.3}{\eqref{eq:basic_full_faithful}}{%
  \InsertPDF{basic_simeq_FF_g_uniq.pdf}}}
  \label{eq:basic_simeq_FF_g_uniq}
 \end{alignat}
 uniquely determines $\ol{g}$.
 Such $\ol{g}$ satisfies $F \ol{g} = g$ since
 \begin{alignat}{1}
  \footnoteinset{-3.38}{2.60}{\eqref{eq:basic_simeq_FF_full_id2}}{%
  \footnoteinset{1.78}{2.60}{\eqref{eq:basic_simeq_FF_full_id}}{%
  \footnoteinsets{-3.38}{-1.10}{\eqref{eq:basic_natural_nowired}}{\eqref{eq:basic_natural_nowired2}}{%
  \footnoteinset{0.59}{-1.10}{\eqref{eq:basic_simeq_FF_full_id}}{%
  \footnoteinset{4.56}{-1.10}{\eqref{eq:basic_simeq_FF_full_id2}}{%
  \InsertPDF{basic_simeq_FF_full.pdf}}}}}} \raisebox{1em}{,}
  \label{eq:basic_simeq_FF_full}
 \end{alignat}
 where we use the fact that the areas enclosed by auxiliary lines represent
 $\phi^{-1} \phi = \id_{G \b F}$.
\end{proof}

\subsubsection{Functor categories}

For two categories $\cC$ and $\cD$, there is a functor category, denoted by $\Func{\cC}{\cD}$,
whose objects are functors from $\cC$ to $\cD$ and whose morphisms are natural transformations.
A natural transformation $\alpha \colon F \nto G$ with $F,G \colon \cC \to \cD$
can be represented as a morphism in $\Func{\cC}{\cD}$ as follows:
\begin{alignat}{1}
 \InsertPDF{basic_functor_category.pdf} \raisebox{1em}{.}
 \label{eq:basic_functor_category}
\end{alignat}
In this paper, when considering the functor category $\Func{\cC}{\cD}$, it is often assumed
without notice that $\cC$ is small (to ensure that $\Func{\cC}{\cD}$ is locally small).

\begin{ex}{pre-composition with a functor}{FuncPreComp}
 Pre-composition with a functor $F \colon \cC \to \cD$ is denoted by $\Endash \b F$.
 For any category $\cE$, this becomes a functor from $\Func{\cD}{\cE}$ to $\Func{\cC}{\cE}$.
 In diagrams, $\Endash \b F$ corresponds to the action of placing $F$ on the right side.
 The action on morphisms of $F$ can be expressed as follows
 (intuitively, it works like ``drawing the wire $F$ on the right side'').
 \begin{alignat}{2}
  \InsertMidPDF{basic_functor_precomposite.pdf}
  &\qquad\xmapsto{\Endash \b F}\qquad
  \InsertMidPDF{basic_functor_precomposite2.pdf}.
  \label{eq:basic_natural_trans_functorprecomposite}
 \end{alignat}
 Also, for a natural transformation $\gamma \colon F \nto G$ with $F,G \colon \cC \to \cD$,
 pre-composition with $\gamma$ can be denoted as $\Endash \b \gamma$,
 which becomes a natural transformation from $\Endash \b F$ to $\Endash \b G$.
\end{ex}

\begin{ex}{post-composition with a functor}{FuncPostComp}
 Similarly to Example~\ref{ex:FuncPreComp},
 post-composition with a functor $F \colon \cD \to \cE$ is denoted by $F \b \Endash$.
 For any category $\cC$, $F \b \Endash$ becomes a functor from
 $\Func{\cC}{\cD}$ to $\Func{\cC}{\cE}$.
 In diagrams, $F \b \Endash$ corresponds to the action of placing $F$ on the left side.
 Similarly, for a natural transformation $\gamma \colon F \nto G$ with $F,G \colon \cD \to \cE$,
 post-composition with $\gamma$ is denoted by $\gamma \b \Endash$,
 which becomes a natural transformation from $F \b \Endash$ to $G \b \Endash$.
 In this paper, when representing $F \b \Endash$ and/or $\gamma \b \Endash$ in diagrams,
 we often simply use labels such as ``$F$'' and/or ``$\gamma$'' to represent them like this:
 \begin{alignat}{1}
  \InsertPDF{basic_functor_postcomposite.pdf} \raisebox{1em}{.}
  \label{eq:basic_functor_postcomposite}
 \end{alignat}
 When confusion is likely to occur,
 we use labels such as ``$F \b \Endash$'' and/or ``$\gamma \b \Endash$''.
\end{ex}

For two functors $F \colon \cC \to \cD$ and $G \colon \cD \to \cE$,
we can introduce a diagram that represents the identity natural transformation $\id_{G \b F}$
as shown in the following left- and right-hand sides.
\begin{alignat}{1}
 \InsertPDF{basic_functor_precomp_cross.pdf} \raisebox{1em}{,}
 \label{eq:basic_functor_precomp_cross}
\end{alignat}
where on each side, $\id_{G \b F}$ is represented as a diagram where two wires cross.
While these expressions may feel like they just complicate the central expression,
this kind of notation can be convenient when you want to replace the wire $\Endash \b F$
with the wire $F$ in the diagram.

\begin{ex}{evaluation functors}{FuncEval}
 For any object $d$ in a category $\cD$, we can define a functor $\Endash \b d$
 by considering the case of $\cC = \cOne$ in Example~\ref{ex:FuncPreComp}.
 $\Endash \b d$ is called an \termdef{evaluation functor} and is written as $\ev_d$.
 The functor $\ev_d \colon \Func{\cD}{\cE} \to \cE$ maps
 each functor $F \colon \cD \to \cE$ to the object $Fd \in \cE$
 and each natural transformation $\alpha \colon F \nto G$ with $F,G \colon \cD \to \cE$
 to the morphism $\alpha_d \in \mor \cE$.
 $\ev_d \alpha = \alpha \b d$ can be represented by
 \begin{alignat}{1}
  \footnoteinset{1.11}{0.3}{\eqref{eq:basic_natural_trans_def}}{%
  \InsertPDF{basic_functor_eval.pdf}} \raisebox{1em}{.}
  \label{eq:basic_functor_eval}
 \end{alignat}
\end{ex}

\begin{ex}{diagonal functors}{FuncDelta}
 When considering the case of $\cD = \cOne$ in Example~\ref{ex:FuncPreComp},
 $F$ is a functor from $\cC$ to $\cOne$, and thus is the unique functor ${!}$,
 which maps all morphisms in $\cC$ to $\id_*$
 (see Subsubsection~\ref{subsubsec:category_category_functor}).
 The functor $\Endash \b {!} \colon \cE \to \Func{\cC}{\cE}$ is called a \termdef{diagonal functor}
 and is denoted by $\Delta_\cC$.
 $\Delta_\cC f = f \b {!}$ is represented by
 \begin{alignat}{1}
  \InsertPDF{basic_functor_delta.pdf} \raisebox{1em}{,}
  \label{eq:basic_functor_delta}
 \end{alignat}
 where the gray dotted wire is ${!}$ (recall Eq.~\eqref{eq:basic_functor_terminal}).
\end{ex}

\begin{lemma}{}{FullFaithfulPostcomp}
 For categories $\cC$, $\cD$, and $\cE$, if a functor $H \colon \cD \to \cE$ is fully faithful,
 then $H \b \Endash \colon \Func{\cC}{\cD} \to \Func{\cC}{\cE}$ is also fully faithful.
 That is, for any natural transformation $\tau \colon H \b F \nto H \b G$ with
 any $F,G \colon \cC \to \cD$,
 there exists a natural transformation $\ol{\tau} \colon F \nto G$ such that
 \begin{alignat}{1}
  \InsertPDF{basic_func_fullfaithful_tau_univ.pdf} \raisebox{1em}{.}
  \label{eq:basic_func_fullfaithful_tau_univ}
 \end{alignat}
\end{lemma}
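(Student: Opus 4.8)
The plan is to build $\ol\tau$ one component at a time using the defining property of fully faithful functors and then to check that the resulting family is natural by applying $H$ and invoking faithfulness. First I would fix $a \in \cC$. The component $\tau_a$ lies in $\cE(HFa, HGa) = \cE(H(Fa), H(Ga))$, so by the universal property of fully faithful functors in Eq.~\eqref{eq:basic_full_faithful} there is a unique $\ol\tau_a \in \cD(Fa, Ga)$ with $H\ol\tau_a = \tau_a$. Diagrammatically this is exactly the move of detaching the $H$-wire from the right edge of the block $\tau$ to leave a block $\ol\tau$ between the $F$- and $G$-wires; the ``$\exists!$'' annotation in Eq.~\eqref{eq:basic_func_fullfaithful_tau_univ} records that once such a detachment exists it is unique.

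Next I would verify that $\ol\tau \coloneqq \{\ol\tau_a\}_{a \in \cC}$ satisfies the naturality condition~\eqref{eq:nat}, i.e.\ the sliding rule of Eq.~\eqref{eq:basic_natural_trans_string2}. For a morphism $f \colon a \to b$ in $\cC$, apply $H$ to both sides of the target identity $Gf \c \ol\tau_a = \ol\tau_b \c Ff$: functoriality of $H$ rewrites the left-hand side as $HGf \c \tau_a$ and the right-hand side as $\tau_b \c HFf$, and these agree because $\tau \colon H \b F \nto H \b G$ is natural. Since $H$ is faithful, equality after applying $H$ forces equality in $\cD$, so $\ol\tau$ is natural. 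In string-diagram form: attach the $H$-wire on the right of the candidate diagram, use $H \b \ol\tau = \tau$ together with the fact that $\tau$ slides past $f$ via Eq.~\eqref{eq:sliding}, then re-detach $H$ by faithfulness to conclude that $\ol\tau$ itself slides past $f$.

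Finally, $H \b \ol\tau = \tau$ holds by construction, since its $a$-component is $H\ol\tau_a = \tau_a$; and any natural $\ol\tau'$ with $H \b \ol\tau' = \tau$ satisfies $H\ol\tau'_a = \tau_a = H\ol\tau_a$ for all $a$, hence $\ol\tau'_a = \ol\tau_a$ by faithfulness, giving uniqueness. Existence of $\ol\tau$ is fullness of $H \b \Endash$ and uniqueness is its faithfulness, so the lemma follows. The only step with genuine content is the naturality check: the componentwise construction is immediate, and the clean way to see that the detached blocks $\ol\tau_a$ cohere into a natural transformation is precisely the ``apply $H$, slide, cancel $H$ by faithfulness'' argument; everything else is bookkeeping.
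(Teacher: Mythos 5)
Your proposal is correct and follows essentially the same route as the paper: define $\ol{\tau}_a$ componentwise as the unique preimage of $\tau_a$ under the bijection $\cD(Fa,Ga) \cong \cE(HFa,HGa)$, then establish naturality by applying $H$ to the candidate square, invoking naturality of $\tau$ (the sliding rule), and cancelling $H$ by faithfulness. The uniqueness remark at the end is a harmless addition beyond what the paper's proof spells out explicitly.
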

\begin{proof}
 We show Eq.~\eqref{eq:basic_func_fullfaithful_tau_univ}.
 Since $H$ is fully faithful, for each $c \in \cC$ there exists a unique
 $\ol{\tau_c} \in \cD(Fc,Gc)$ satisfying
 \begin{alignat}{1}
  \footnoteinset{-0.07}{0.3}{\eqref{eq:basic_full_faithful}}{%
  \InsertPDF{basic_func_fullfaithful_tau.pdf}}.
  \label{eq:basic_func_fullfaithful_tau}
 \end{alignat}
 Therefore, it suffices to show that $\ol{\tau} \coloneqq \{ \ol{\tau_c} \}_{c \in \cC}$
 is a natural transformation.
 We have for any $f \in \cC(c,c')$,
 \begin{alignat}{1}
  \footnoteinset{-3.57}{0.3}{\eqref{eq:basic_func_fullfaithful_tau}}{%
  \footnoteinset{0.00}{0.3}{\eqref{eq:sliding}}{%
  \footnoteinset{3.57}{0.3}{\eqref{eq:basic_func_fullfaithful_tau}}{%
  \InsertPDF{basic_func_fullfaithful_tau_nat.pdf}}}}.
  \label{eq:basic_func_fullfaithful_tau_nat}
 \end{alignat}
 Thus, since $H$ is fully faithful, we have
 \begin{alignat}{1}
  \InsertPDF{basic_func_fullfaithful_tau_nat2.pdf} \raisebox{1em}{.}
  \label{eq:basic_func_fullfaithful_tau_nat2}
 \end{alignat}
 Therefore, $\ol{\tau}$ is a natural transformation.
\end{proof}

Let us apply the diagram introduced in Eq.~\eqref{eq:basic_functor_precomp_cross}
to the evaluation functor $\ev_d$ from Example~\ref{ex:FuncEval}.
The identity natural transformation $\id_{\ev_d}$ can be depicted as
\begin{alignat}{1}
 \id_{\ev_d} &\qquad\diagram\qquad
 \InsertMidPDF{basic_functor_eval_cross.pdf},
 \label{eq:basic_functor_eval_cross}
\end{alignat}
where the left-hand side represents
\begin{alignat}{1}
 \id_{\ev_d} = \{ \id_{\ev_d G} \}_{G \in \Func{\cD}{\cE}} &\qquad\diagram\qquad
 \InsertMidPDF{basic_functor_eval_cross_def.pdf}
 \label{eq:basic_functor_eval_cross_def}
\end{alignat}
and likewise for the right-hand side.
Equation~\eqref{eq:basic_functor_eval_cross} may be useful when you want to swap the wire $\ev_d$
and the wire $d$.
\begin{alignat}{1}
 \ev_d \alpha = \alpha_d &\qquad\diagram\qquad
 \InsertMidPDF{basic_functor_eval_cross_nat.pdf}
 \label{eq:basic_functor_eval_cross_nat}
\end{alignat}
obviously holds.
Also, the first equality of Eq.~\eqref{eq:basic_functor_eval} can be shown by
\begin{alignat}{1}
 \footnoteinset{-2.92}{0.3}{\eqref{eq:basic_functor_eval_cross}}{%
 \footnoteinset{0.12}{0.3}{\eqref{eq:basic_functor_eval_cross_nat}}{%
 \footnoteinset{3.03}{0.3}{\eqref{eq:basic_functor_eval_cross}}{%
 \InsertPDF{basic_functor_eval_proof.pdf}}}} \raisebox{1em}{.}
 \label{eq:basic_functor_eval_proof}
\end{alignat}
These diagrams for the evaluation functors $\ev_d = \Endash \b d$ can be generalized
to the functor $\Endash \b F$ with any $F \colon \cC \to \cD$.
For example, as a generalization of Eq.~\eqref{eq:basic_functor_eval_cross_nat},
for any $F,F' \colon \cC \to \cD$, $~G,G' \colon \cD \to \cE$,
$~\alpha \colon F \nto F'$, and $\beta \colon G \nto G'$,
$\beta \b \alpha \colon G \b F \nto G' \b F'$ can be depicted by
\begin{alignat}{1}
 \InsertPDF{basic_functor_cross_nat.pdf} \raisebox{1em}{.}
 \label{eq:basic_functor_cross_nat}
\end{alignat}

\subsubsection{Objects, morphisms, categories, and functors are all instances of natural transformations}

We already mentioned that morphisms can be regarded as special cases of natural transformations.
Here, we mention that functors, categories, and objects can also be regarded as special cases
of natural transformations.
This fact indicates that they can be treated uniformly as natural transformations.
Specifically, we can make the following identifications:
\begin{itemize}
 \item Functors: Any functor $F$ can be identified with its identity natural transformation $\id_F$.
 \item Categories: Any category $\cC$ can be identified with its identity functor $\id_\cC$,
       and thus can be identified with the identity natural transformation $\id_{\id_\cC}$.
 \item Objects: Any object $a$ in a category $\cC$ can be regarded as a functor from $\cOne$ to $\cC$,
       and thus can be identified with the identity natural transformation $\id_a$.
\end{itemize}
As a result, the diagrams presented in this paper can be seen as representing
natural transformations that are formed through vertical and/or horizontal composition
of natural transformations.

\subsection{Product of categories and bifunctors} \label{subsec:category_prod}

\subsubsection{Product of categories} \label{subsubsec:category_prod_prod}

In this subsection and Section~\ref{sec:end}, we employ certain diagrams
that exhibit slight variations from those previously introduced.

An object $\braket{c,d}$ (where $c \in \cC$ and $d \in \cD$)
in a product category $\cC \times \cD$ is represented by the following two expressions:
\begin{alignat}{1}
 \InsertPDF{basic_prod_ob.pdf} \raisebox{1em}{.}
 \label{eq:basic_prod_ob}
\end{alignat}
The left-hand side employs the same expression as before, while the right-hand side
introduces a new expression.
For convenience, we will call the latter expression ``the expression for direct products''.
As shown on the right-hand side, we represent $\braket{c,d}$
by arranging the two arrows representing $c \in \cC$ and $d \in \cD$ side by side.
Also, to indicate that it is a direct product, a gray dashed line is drawn between the two arrows.
Note that in this expression, we represent objects with arrows instead of wires;
such arrows will become useful in Section~\ref{sec:end}.

A morphism with domain $\braket{c,d}$ and codomain $\braket{c',d'}$
in $\cC \times \cD$ is represented as a pair $\braket{f,g}$,
where $f \in \cC(c,c')$ and $g \in \cD(d,d')$.
This morphism is represented by
\begin{alignat}{1}
 \InsertPDF{basic_prod_mor.pdf} \raisebox{1em}{,}
 \label{eq:basic_prod_mor}
\end{alignat}
where the right-hand side is the expression for direct products.
In this way, we represent $\braket{f,g}$
by arranging the two morphisms $f$ and $g$ side by side.
The composite of two morphisms $\braket{f,g} \colon \braket{c,d} \to \braket{c',d'}$ and
$\braket{f',g'} \colon \braket{c',d'} \to \braket{c'',d''}$ is
\begin{alignat}{1}
 \braket{f',g'} \c \braket{f,g} \coloneqq \braket{f'f,g'g}
 &\qquad\diagram\qquad
 \InsertMidPDF{basic_prod_mor_circ.pdf},
 \label{eq:basic_prod_mor_circ}
\end{alignat}
where a composite of morphisms is represented by connecting them.
This diagram can be interpreted as arranging the two morphisms $f'f$ and $g'g$ side by side.
Using Eq.~\eqref{eq:basic_prod_mor_circ} and the properties of identity morphisms,
we obtain
\begin{alignat}{1}
 \lefteqn{\braket{\id_{c'},g} \c \braket{f,\id_d} = \braket{f,g} = \braket{f,\id_{d'}} \c \braket{\id_c,g}}
 \nonumber \\
 &\diagram\qquad
 \InsertMidPDF{basic_bifunc_sliding.pdf}.
 \label{eq:basic_bifunc_sliding}
\end{alignat}

\subsubsection{Bifunctors}

The mapping of a morphism $\braket{f,g} \colon \braket{c,d} \to \braket{c',d'}$ in $\cC \times \cD$
with a bifunctor $F \colon \cC \times \cD \to \cE$ is often written by $F(f,g)$.
$F(f,g)$ is represented by the following two expressions:
\begin{alignat}{1}
 \InsertPDF{basic_bifunc.pdf} \raisebox{1em}{,}
 \label{eq:basic_func}
\end{alignat}
where the right-hand side is the expression for direct products.
In this expression, we represent the functor $F$ by two blue arrows
(with the arrowhead omitted on the right one).
Note that such an expression can be found in, for example, Refs.~\cite{Mel-2006, McCur-2009}.
The area enclosed by these two arrows represents the input to $F$;
in the case of Eq.~\eqref{eq:basic_func}, $\braket{f,g}$ is the input to $F$.

For two bifunctors $F,G \colon \cC \times \cD \to \cE$,
naturality of a natural transformation $\alpha \colon F \nto G$ can be expressed by
\begin{alignat}{1}
 \InsertPDF{basic_bifunc_nat_cd.pdf}
 \label{eq:basic_bifunc_nat_cd}
\end{alignat}
for any $f \in \cC(c,c')$ and $g \in \cD(d,d')$.
The label on the left or right arrow of each functor is often omitted.

\begin{ex}{}{FunctorBifunc}
 For any bifunctor $F \colon \cC \times \cD \to \cE$ and object $c \in \cC$,
 there exists a functor, denoted by $F(c,\Endash) \colon \cD \to \cE$, that
 \begin{itemize}
  \item maps each object $d \in \cD$ to $F(c,d)$.
  \item maps each morphism $g \in \mor \cD$ to $F(\id_c,g)$,
        which we often write as $F(c,g)$.
 \end{itemize}
 Such a functor can be intuitively understood as fixing one variable of a two-variable map.
 $F(c,\Endash)$ can be represented as
 \begin{alignat}{1}
  F(c,\Endash) &\qquad\diagram\qquad
  \InsertMidPDF{basic_prod_Fc.pdf}.
  \label{eq:basic_prod_Fc}
 \end{alignat}
 In this diagram, objects and morphisms in $\cD$ are placed in regions representing $\cD$.
 For example, the morphism, $F(c,g)$, obtained by mapping a morphism $g \in \cD(d,d')$
 with this functor is represented by
 \begin{alignat}{1}
  \InsertPDF{basic_prod_Fc_mor.pdf} \raisebox{1em}{.}
  \label{eq:basic_prod_Fc_mor}
 \end{alignat}
 It is easy to see that $F(c,\Endash)$ is a functor.
 Indeed,
 \begin{alignat}{1}
  F(c,g'g) = F(c,g') \c F(c,g) &\qquad\diagram\qquad
  \InsertMidPDF{basic_prod_Fc_func.pdf}
  \label{eq:basic_prod_Fc_func}
 \end{alignat}
 holds.
 It is also clear that an identity morphism is mapped to an identity morphism.
 Similarly, for each $d \in \cD$, the functor $F(\Endash,d) \colon \cC \to \cE$ is obtained.
\end{ex}

\begin{ex}{}{BiFuncFf}
 For any bifunctor $F \colon \cC \times \cD \to \cE$ and
 morphism $f \in \cC(c,c')$,
 let us define $F(f,\Endash) \coloneqq \{ F(f,d) \}_{d \in \cD}$.
 Then, $F(f,\Endash)$ is a natural transformation from $F(c,\Endash)$ to $F(c',\Endash)$;
 indeed, we have for each $g \in \cD(d,d')$,
 \begin{alignat}{1}
  \footnoteinset{-1.99}{-1.0}{\eqref{eq:basic_bifunc_sliding}}{%
  \InsertPDF{basic_bifunc_nat1.pdf}} \raisebox{1em}{,}
  \label{eq:basic_bifunc_nat1}
 \end{alignat}
 where the second and third expressions are for direct products.
\end{ex}

\begin{ex}{}{FuncEval2}
 The evaluation functor discussed in Example~\ref{ex:FuncEval} can be extended to
 the bifunctor, denoted by $\ev \colon \Func{\cD}{\cE} \times \cD \to \cE$, that
 \begin{itemize}
  \item maps each object $\braket{K,d}$ to the object $K \b d = Kd$.
  \item maps each morphism $\braket{\alpha,f}$ to the morphism $\alpha \b f$.
 \end{itemize}
 This bifunctor is often referred to as the \termdef{evaluation functor}.
\end{ex}

\begin{lemma}{}{BifuncNat}
 Let us consider two bifunctors $F,G \colon \cC \times \cD \to \cE$.
 For a collection of morphisms
 $\alpha \coloneqq \{ \alpha_{c,d} \colon F(c,d) \to G(c,d) \}_{c \in \cC, d \in \cD}$
 in $\cE$, the following are equivalent:
 \begin{enumerate}
  \item $\alpha$ is a natural transformation from $F$ to $G$.
  \item The following two conditions hold:
        \begin{enumerate}[label=(2.\alph*),leftmargin=2em]
         \item For each $c \in \cC$, $\{ \alpha_{c,d} \}_{d \in \cD}$ is a natural
               transformation from $F(c,\Endash)$ to $G(c,\Endash)$.
         \item For each $d \in \cD$, $\{ \alpha_{c,d} \}_{c \in \cC}$ is a natural
               transformation from $F(\Endash,d)$ to $G(\Endash,d)$.
        \end{enumerate}
 \end{enumerate}
\end{lemma}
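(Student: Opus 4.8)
The plan is to prove the two implications separately, in both cases exploiting the decomposition of an arbitrary morphism of $\cC \times \cD$ into a pair of ``single-variable'' morphisms supplied by Eq.~\eqref{eq:basic_bifunc_sliding}.

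For the implication from (1) to (2), assume $\alpha \colon F \nto G$ is a natural transformation. Fix $c \in \cC$ and apply the bifunctor naturality condition Eq.~\eqref{eq:basic_bifunc_nat_cd} to the morphisms $\braket{\id_c, g} \colon \braket{c,d} \to \braket{c,d'}$ with $g \in \cD(d,d')$. Since $F(\id_c,g) = F(c,g)$ and $G(\id_c,g) = G(c,g)$ by the convention of Example~\ref{ex:FunctorBifunc}, the resulting square is exactly the naturality square~\eqref{eq:nat} witnessing that $\{\alpha_{c,d}\}_{d \in \cD}$ is a natural transformation $F(c,\Endash) \nto G(c,\Endash)$, i.e.\ condition~(2.a). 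Condition~(2.b) follows by the symmetric argument, using the morphisms $\braket{f,\id_d}$ instead.

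For the implication from (2) to (1), we must verify Eq.~\eqref{eq:basic_bifunc_nat_cd} for an arbitrary $\braket{f,g} \colon \braket{c,d} \to \braket{c',d'}$. Using Eq.~\eqref{eq:basic_bifunc_sliding} we factor $\braket{f,g} = \braket{\id_{c'},g} \c \braket{f,\id_d}$, so that $F(f,g) = F(\id_{c'},g) \c F(f,\id_d)$ and likewise for $G$. By condition~(2.b), the block $\alpha$ slides past $F(f,\id_d)$, replacing it by $G(f,\id_d)$; by condition~(2.a) it then slides past $F(\id_{c'},g)$, replacing it by $G(\id_{c'},g)$. Composing the two steps and invoking Eq.~\eqref{eq:basic_bifunc_sliding} once more to recombine $G(\id_{c'},g) \c G(f,\id_d) = G(f,g)$, we recover precisely Eq.~\eqref{eq:basic_bifunc_nat_cd} for $\braket{f,g}$; in string-diagram terms this is visually immediate, since one simply slides the $\alpha$-block across the two halves of the $F(f,g)$-block in turn.

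The argument is routine; the only point deserving attention is the bookkeeping that the pasting of the $\cD$-direction square coming from (2.a) and the $\cC$-direction square coming from (2.b) genuinely reassembles into the naturality square for $\braket{f,g}$ — which holds exactly because $F$ and $G$ preserve composition and Eq.~\eqref{eq:basic_bifunc_sliding} provides the required factorization. One could equally use the other factorization $\braket{f,g} = \braket{f,\id_{d'}} \c \braket{\id_c,g}$; the two routes give the same conclusion since $\alpha$ is defined componentwise.
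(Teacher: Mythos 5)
Your proof is correct and follows essentially the same route as the paper's: (1)$\Rightarrow$(2) by specializing Eq.~\eqref{eq:basic_bifunc_nat_cd} to identity morphisms, and (2)$\Rightarrow$(1) by factoring $\braket{f,g}$ via Eq.~\eqref{eq:basic_bifunc_sliding} and sliding $\alpha$ through the two single-variable pieces in turn. The paper happens to use the factorization $\braket{f,\id_{d'}} \c \braket{\id_c,g}$ rather than your $\braket{\id_{c'},g} \c \braket{f,\id_d}$, but as you note the two are interchangeable.
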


\begin{proof}
 $(1) \Rightarrow (2)$:
 Condition~(2.a) is equivalent to satisfying
 \begin{alignat}{1}
  \InsertPDF{basic_bifunc_nat_c.pdf}
  \label{eq:basic_bifunc_nat_c}
 \end{alignat}
 for any $g \in \cD(d,d')$ and $c \in \cC$, and 
 Condition~(2.b) is equivalent to satisfying
 \begin{alignat}{1}
  \InsertPDF{basic_bifunc_nat_d.pdf}
  \label{eq:basic_bifunc_nat_d}
 \end{alignat}
 for any $f \in \cC(c,c')$ and $d \in \cD$.
 Substituting $f = \id_c$ into Eq.~\eqref{eq:basic_bifunc_nat_cd} yields Eq.~\eqref{eq:basic_bifunc_nat_c}.
 Also, substituting $g = \id_d$ into Eq.~\eqref{eq:basic_bifunc_nat_cd} yields
 Eq.~\eqref{eq:basic_bifunc_nat_d}.

 $(2) \Rightarrow (1)$:
 For any $f \in \cC(c,c')$ and $g \in \cD(d,d')$, we have%
 \footnote{In mathematical notation,
 \begin{alignat}{1}
  G(f,g) \c \alpha_{c,d} &= G(f,d') \c G(c,g) \c \alpha_{c,d}
  = G(f,d') \c \alpha_{c,d'} \c F(c,g) \nonumber \\
  &= \alpha_{c',d'} \c F(f,d') \c F(c,g)
  = \alpha_{c',d'} \c F(f,g).
  \label{eq:basic_bifunc_nat}
 \end{alignat}
 }
 \begin{alignat}{1}
  \footnoteinset{-1.98}{2.1}{\eqref{eq:basic_bifunc_sliding}}{%
  \footnoteinset{1.98}{2.1}{\eqref{eq:basic_bifunc_nat_c}}{%
  \footnoteinset{-1.98}{-1.45}{\eqref{eq:basic_bifunc_nat_d}}{%
  \footnoteinset{1.98}{-1.45}{\eqref{eq:basic_bifunc_sliding}}{%
  \InsertPDF{basic_bifunc_nat_proof.pdf}}}}} \raisebox{1em}{.}
  \label{eq:basic_bifunc_nat_proof}
 \end{alignat}
 Therefore, Eq.~\eqref{eq:basic_bifunc_nat_cd} holds, which means that
 $\alpha$ is a natural transformation.
\end{proof}

Intuitively, Condition~(1) can be interpreted as allowing morphisms in $\cC$ and morphisms in $\cD$
to pass through $\alpha$ as shown in Eq.~\eqref{eq:basic_bifunc_nat_cd},
while Condition~(2) can be interpreted as allowing morphisms in $\cC$ to pass through
$\alpha$ as shown in Eq.~\eqref{eq:basic_bifunc_nat_c},
and also allowing morphisms in $\cD$ to pass through
$\alpha$ as shown in Eq.~\eqref{eq:basic_bifunc_nat_d}.
It should be easy to conceive that these conditions are equivalent.

Note that it might be easier to visually understand if we represent the natural transformation $\alpha$
as a pair of blocks, and each component $\alpha_{c,d}$ as
\begin{alignat}{1}
 \InsertPDF{basic_bifunc_nat_block.pdf} \raisebox{1em}{.}
 \label{eq:basic_bifunc_nat_block}
\end{alignat}
Due to the fact that we represents a functor with a pair of arrows, here we represents
a natural transformation with a pair of blocks.
Naturality of Eq.~\eqref{eq:basic_bifunc_nat_cd} is represented as
\begin{alignat}{1}
 \lefteqn{ G(f,g) \c \alpha_{c,d} = \alpha_{c',d'} \c F(f,g) } \nonumber \\
 &\diagram\qquad
 \InsertMidPDF{basic_bifunc_nat_proof_block.pdf}
 \label{eq:basic_bifunc_nat_proof_block}
\end{alignat}


\section{The Yoneda lemma and universal properties} \label{sec:repr}

Adjunctions, (co)limits, and Kan extensions, which will be explained in the following sections,
can be understood as certain types of universal morphisms.
In this sense, a universal property, which universal morphisms possess, is a vital concept
in category theory.
A universal property is closely related to representability, which can be proven
through the Yoneda lemma.
The Yoneda lemma is ``arguably the most important result in category theory'' \cite{Rie-2017}.
After proving the Yoneda lemma using string diagrams, we state some basic properties
of universal morphisms.

\subsection{The Yoneda lemma} \label{subsec:repr_yoneda}

\subsubsection{Preliminaries: diagrams for set-valued functors and natural transformations between them}
\label{subsubsec:repr_yoneda_preliminary1}

\myparagraph{Functor $\yoneda{c}$ and natural transformation $\yoneda{p}$}

\noindent
Let $\Set$ denote the category whose objects are sets and whose morphisms from a set $X$
to a set $Y$ are maps from $X$ to $Y$.
For each $c \in \cC$, we can define a set-valued functor $\cC(c,\Endash) \colon \cC \to \Set$
that
\begin{itemize}
 \item maps each object $a$ in $\cC$ to the hom-set $\cC(c,a)$.
 \item maps each morphism $f \colon a \to b$ in $\cC$ to the map
       $f \c \Endash \colon \cC(c,a) \ni g \mapsto fg \in \cC(c,b)$.
\end{itemize}
We often write $\cC(c,\Endash)$ as $\yoneda{c}$.
In what follows, we represent $\cC(c,a)$ with
\begin{alignat}{1}
 \InsertPDF{basic_natural_pre_composite_hom.pdf} \raisebox{1em}{.}
 \label{eq:basic_natural_pre_composite_hom}
\end{alignat}
Intuitively, the dotted box acts like a hole, which takes any morphism with domain $c$ and codomain $a$.
It is convenient to represent the functor $\yoneda{c}$ with
\begin{alignat}{1}
 \InsertPDF{basic_functor_yoneda_c.pdf} \raisebox{1em}{.}
 \label{eq:basic_functor_yoneda_c}
\end{alignat}
The right-hand side of this equation will contain objects or morphisms in $\cC$.
Specifically, $\yoneda{c} \b a = \cC(c,a)$ with $a \in \cC$ and
$\yoneda{c} \b f = f \c \Endash$ with $f \in \cC(a,b)$ are represented by%
\footnote{We identify a map $h \colon X \to Y$ with the indexed collection
$\{ h(x) \}_{x \in X}$.
Then, $f \c \Endash$ is identified with $\{ fg \}_{g \in \cC(c,a)}$.}
\begin{alignat}{1}
 \InsertPDF{basic_functor_postcomp.pdf} \raisebox{1em}{.}
 \label{eq:basic_functor_postcomp}
\end{alignat}
The last expression can be easily understood as the following map:
\begin{alignat}{1}
 \InsertMidPDF{basic_functor_postcomp_f1.pdf}
 &\qquad\xmapsto{f \c\Endash}\qquad
 \InsertMidPDF{basic_functor_postcomp_f2.pdf}.
 \label{eq:basic_functor_postcomp_f}
\end{alignat}
We can interpret $f \c \Endash$ as the map applying $f$ from the top side in a diagram.

For each morphism $p \in \cC(c,d)$, a natural transformation
$\yoneda{p} \colon \yoneda{d} \nto \yoneda{c}$ is defined as
\begin{alignat}{1}
 \yoneda{p} \coloneqq \{ \yoneda{p}{}_a \coloneqq \Endash \c p \in \Set(\cC(d,a),\cC(c,a)) \}_{a \in \cC},
 \label{eq:basic_nat_precomp}
\end{alignat}
where $\Endash \c p$ is the map $\cC(d,a) \ni g \mapsto gp \in \cC(c,a)$.
We represent $\yoneda{p}$ as
\begin{alignat}{1}
 \InsertPDF{basic_natural_pre_composite_p.pdf} \raisebox{1em}{.}
 \label{eq:basic_natural_pre_composite_p}
\end{alignat}
In the central expression, we represent $\Endash \c p$ in the same way as $f \c \Endash$
in Eq.~\eqref{eq:basic_functor_postcomp}.
The right-hand side is a visually easier representation of the central expression.
The dotted box on the right-hand side represents the functor $\yoneda{d}$
(recall Eq.~\eqref{eq:basic_functor_yoneda_c}).
Intuitively, it can be interpreted as the transformation of a functor $\yoneda{d}$
into a functor $\yoneda{c}$ by the pre-composition of $p$.
It would become clear if we consider placing each object in $\cC$ within the region
indicated by $\Endash$ on the right-hand side.
We have for each $f \in \cC(a,b)$,
\begin{alignat}{1}
 \footnoteinset{0.00}{0.3}{\eqref{eq:nat}}{%
 \InsertMidPDF{basic_natural_pre_composite0.pdf}}
 \qquad\myLeftrightarrow{equivalent}\qquad
 \InsertMidPDF{basic_natural_pre_composite.pdf},
 \label{eq:basic_natural_pre_composite}
\end{alignat}
and thus naturality of $\yoneda{p}$ holds.
In each of the left- and right-hand sides of the symbol ``$\myLeftrightarrow{equivalent}$'',
the left-hand side is the map that first applies the map $\Endash \c p$ and then applies
the map $\yoneda{c}(f) = f \c \Endash$,
while the right-hand side is the map that first applies the map $\yoneda{d}(f) = f \c \Endash$
and then applies the map $\Endash \c p$.
Both maps are equal to the map $f \c \Endash \c p \colon \cC(d,a) \ni h \mapsto fhp \in \cC(c,b)$.

A set-valued functor from an opposite category $\cC^\op$ is called a \termdef{presheaf}.
By replacing category $\cC$ with its opposite category $\cC^\op$,
we can define the presheaf $\yonedaop{c} \coloneqq \cC(\Endash,c) \colon \cC^\op \to \Set$.
It would be easy to understand if we represent functor $\yonedaop{c}$ with
\begin{alignat}{1}
 \InsertPDF{basic_functor_yonedaop_c.pdf} \raisebox{1em}{,}
 \label{eq:basic_functor_yonedaop_c}
\end{alignat}
which can be seen as the ``upside-down'' version of Eq.~\eqref{eq:basic_functor_yoneda_c}.
Note that, to avoid confusion, we have changed the background color of categories $\cC$ and $\cC^\op$.
$\yonedaop{c} \b a = \cC(a,c)$ with $a \in \cC^\op$ and
$\yonedaop{c} \b f = \Endash \c f$ with $f \in \cC^\op(a,b) = \cC(b,a)$ are represented by
\begin{alignat}{1}
 \InsertPDF{basic_functor_precomp.pdf} \raisebox{1em}{.}
 \label{eq:basic_functor_precomp}
\end{alignat}
We can interpret $\Endash \c f$ as the map applying $f \in \cC(b,a)$ from the bottom side in a diagram.
The domain $\cC(a,c)$ of $\Endash \c f$ corresponds to the codomain $a$ of $f$,
resulting in an upside-down diagram.
Using a similar approach to $\yoneda{p}$, we can define the natural transformation
$\yonedaop{q}$ for each $q \in \cC(c,d)$ as
\begin{alignat}{1}
 \yonedaop{q} \coloneqq \{ \yonedaop{q}{}_a \coloneqq q \c \Endash \in
 \Set(\yonedaop{c}(a),\yonedaop{d}(a)) \}_{a \in \cC}.
 \label{eq:basic_nat_postcomp}
\end{alignat}

Similar to Eq.~\eqref{eq:basic_functor_yoneda_c}, it would be natural to represent
the hom functor $\cC(\Endash,\Enndash)$ as
\begin{alignat}{1}
 \InsertPDF{basic_functor_Hom2.pdf} \raisebox{1em}{.}
 \label{eq:basic_functor_Hom2}
\end{alignat}
The hom functor $\cC(\Endash,\Enndash)$ maps an object $\braket{a,b} \in \cC^\op \times \cC$
to the hom-set $\cC(a,b)$ and a morphism $\braket{f,g} \in \mor (\cC^\op \times \cC)$
(with $f \in \cC(a',a)$ and $g \in \cC(b,b')$) to the map
$g \c \Endash \c f \colon \cC(a,b) \ni h \mapsto ghf \in \cC(a',b')$,
which can be represented by
\begin{alignat}{1}
 \InsertPDF{basic_functor_Hom.pdf} \raisebox{1em}{.}
 \label{eq:basic_functor_Hom}
\end{alignat}

\myparagraph{Rules for dotted box notation}

\noindent
Using the following examples, we clarify the rules for notation using a dotted box
(referred to as dotted box notation).
\begin{alignat}{4}
 &\text{(A) hom-set:} &~~~& \text{(B) map:}
 &~~~& \text{(C) set-valued functor:} &~~~& \text{(D) natural transformation:} \nonumber \\
 &\quad \text{$\cC(c,a)$ [Eq.~\eqref{eq:basic_natural_pre_composite_hom}]}
 &&~~~ \text{$f \c \Endash$ [Eq.~\eqref{eq:basic_functor_postcomp}]}
 &&~~~ \text{$\yoneda{c}$ [Eq.~\eqref{eq:basic_functor_yoneda_c}]}
 &&~~~ \text{$\yoneda{p}$ [Eq.~\eqref{eq:basic_natural_pre_composite_p}]} \nonumber \\
 &\quad \InsertMidPDF{basic_setvalued_ob.pdf}
 &&\quad \InsertMidPDF{basic_setvalued_mor.pdf}
 &&\quad \InsertMidPDF{basic_setvalued_functor.pdf}
 &&\quad \InsertMidPDF{basic_setvalued_nat.pdf}
\end{alignat}
As discussed below, depending on whether the wire ``$\Endash$'' is included and
whether blocks other than the dotted box are included, it can be classified into
$2 \times 2 = 4$ types of diagrams, such as the above four examples (A), (B), (C), and (D).
\begin{enumerate}[label=Rule \arabic*:, labelwidth=-2em]
 \item A diagram that does not include the wire ``$\Endash$'' represents a hom-set
       (which is an object of $\Set$)
       when it does not include any blocks other than a dotted box, as in Example~(A).
       When it includes such blocks, as in Example~(B), it represents a map (which is a morphism of $\Set$).
 \item A diagram that includes the wire ``$\Endash$'' represents a set-valued functor
       when it does not include any blocks other than a dotted box, as in Example~(C).
       When it includes such blocks, as in Example~(D), it represents a natural transformation
       between set-valued functors (more precisely, it also represents a natural transformation
       when a wire representing a functor is included next to the dotted box)%
       \footnote{The diagram of the identity map $\id_a \c \Endash$, which is obtained by substituting
       $f = \id_a$ into the diagram of Example~(B), cannot be distinguished from the diagram of
       Example~(A).
       This corresponds to the fact that in $\Set$, the identity morphism
       $\id_a \c \Endash = \id_{\cC(c,a)}$ can be essentially identified with the object $\cC(c,a)$.
       Similarly, the diagram of the identity natural transformation $\yoneda{\id_c}$,
       which is obtained by substituting $p = \id_c$ into the diagram of Example~(D),
       cannot be distinguished from the diagram of Example~(C).
       Thus, identity maps and identity natural transformations are exceptionally represented
       by the same diagrams as the corresponding objects and functors.}.
 \item In a diagram representing a map, such as Example~(B), when a morphism $g$
       (with appropriate domain and codomain) is input into the dotted box,
       and the morphism represented by the entire diagram is denoted as $s(g)$, the original diagram
       represents the map $g \mapsto s(g)$.
       For example, in Example~(B), when a morphism $g \in \cC(c,a)$ is input into the dotted box,
       the entire diagram represents the morphism $fg \in \cC(c,b)$,
       so the diagram of Example~(B) represents the map $\cC(c,a) \ni g \mapsto fg \in \cC(c,b)$
       (i.e., the map $f \c \Endash$).
 \item In a diagram such as Example~(C) or (D), when the wire ``$\Endash$'' is replaced with
       a block representing a morphism $g$, and the map represented by the entire diagram is
       denoted as $s(g)$, the original diagram represents a set-valued functor or natural transformation
       whose action on morphisms is the map $g \mapsto s(g)$.
       Note that functors and natural transformations are uniquely determined by their actions
       on morphisms.
       For example, in the diagram of Example~(C), when the wire ``$\Endash$'' is replaced with
       the block $f$, it becomes the diagram of Example~(B) (i.e., the map $f \c \Endash$).
       Therefore, the diagram of Example~(C) represents the set-valued functor (i.e., $\yoneda{c}$)
       whose action on morphisms is $\mor \cC \ni f \mapsto f \c \Endash \in \mor \Set$.
       Also, in the diagram of Example~(D), when the wire ``$\Endash$'' is replaced with the block $f$,
       it becomes the diagram of the right-hand side of Eq.~\eqref{eq:basic_natural_pre_composite}
       (i.e., the map $f \c \Endash \c p$).
       Thus, the diagram of Example~(D) represents a natural transformation (i.e., $\yoneda{p}$)
       whose action on morphisms
       is $\mor \cC \ni f \mapsto f \c \Endash \c p \in \mor \Set$%
       \footnote{As with the action on morphisms, the action on objects can be obtained by
       replacing the wire ``$\Endash$'' with a wire representing an object.
       For example, when the wire ``$\Endash$'' of the diagram of Example~(C) is replaced
       with the wire $a$, the diagram of Example~(A) is obtained.
       This is consistent with the fact that the action on objects of the functor $\yoneda{c}$
       is $\ob \cC \ni a \mapsto \cC(c,a) \in \ob \Set$.
       Also, when the wire ``$\Endash$'' of the diagram of Example~(D) is replaced with the wire $a$,
       the map $\Endash \c p$ is obtained (see Eq.~\eqref{eq:basic_natural_pre_composite_p}).
       This is consistent with the fact that the action on objects of the natural transformation
       $\yoneda{p}$ is $\ob \cC \ni a \mapsto \Endash \c p \in \mor \Set$.}.
 \item In a diagram that includes the wire ``$\Endash$'', one end of the wire is connected
       to the dotted box, and the other end is not connected anywhere.
       For example, in the diagrams of Examples~(C) and (D), the lower end of the wire ``$\Endash$''
       is connected to the dotted box, and the upper end is not connected.
\end{enumerate}
Note that diagrams that include the wire ``$\Enndash$'' in addition to the wire ``$\Endash$'',
as in the right-hand side of Eq.~\eqref{eq:basic_functor_Hom2}, can be considered,
in which case the same rules as above apply.

Dotted box notation follows the same composition rules as standard string diagrams
since it can be regarded as a mere morphism by substituting a morphism into the dotted box
(and replacing the wire ``$\Endash$'' with an object or morphism if it exists).
Intuitively, dotted box notation can be said to be a notation that is obtained by substituting
a part of a standard string diagram with a dotted box or the wire ``$\Endash$''.
When representing a set-valued functor or natural transformation as a diagram including
the wire ``$\Endash$'', we can easily check that it satisfies the conditions
as a functor or natural transformation.

\myparagraph{Typical set-valued functors and natural transroamtions betweem them}

\noindent
We show some examples of set-valued functors and natural transformations between them.

\begin{ex}{}{FuncCcG}
 For any object $c \in \cC$ and functor $G \colon \cD \to \cC$,
 we can consider the horizontal composite, $\yoneda{c} \b G \colon \cD \to \Set$, of
 $G$ and $\yoneda{c} = \cC(c,\Endash) \colon \cC \to \Set$.
 We often write $\yoneda{c} \b G$ as $\cC(c,G\Endash)$.
 For clarity, let us explicitly state this functor:
 \begin{itemize}
  \item It maps each object $d$ in $\cD$ to the hom-set $\cC(c,Gd)$.
  \item It maps each morphism  $f \colon a \to b$ in $\cD$ to the map $Gf \c \Endash$,
        which is depicted by
        \begin{alignat}{1}
         \InsertPDF{basic_natural_trans_CcG_Gf.pdf} \raisebox{1em}{.}
         \label{eq:basic_natural_trans_CcG_Gf}
        \end{alignat}
 \end{itemize}
 It would be natural to represent $\cC(c,G\Endash)$ in the following diagram
 (see Eq.~\eqref{eq:basic_functor_yoneda_c}):
 \begin{alignat}{1}
  \InsertPDF{basic_natural_trans_CcG.pdf} \raisebox{1em}{.}
  \label{eq:basic_natural_trans_CcG}
 \end{alignat}
 Indeed, replacing the wire ``$\Endash$'' of the right-hand side of Eq.~\eqref{eq:basic_natural_trans_CcG}
 with the block $f \in \cD(a,b)$ yields the right-hand side of Eq.~\eqref{eq:basic_natural_trans_CcG_Gf}.
\end{ex}

\begin{ex}{}{FuncCcG2}
 Similar to Example~\ref{ex:FuncCcG}, we often write $\yonedaop{c} \b G \colon \cD^\op \to \Set$
 (where $G \colon \cD^\op \to \cC^\op$ is the dual of $G \colon \cD \to \cC$)
 as $\cC(G\Endash,c)$.
 For each $f \in \cC^\op(a,b)$, the map $Gf \c \Endash \colon \cC^\op(c,Ga) \to \cC^\op(c,Gb)$
 is represented by
 \begin{alignat}{1}
  \InsertMidPDF{basic_natural_trans_CcG_Gf_op.pdf}
  &\qquad\myLeftrightarrow{equivalent}\qquad
  \InsertMidPDF{basic_natural_trans_CcG_Gf_op2.pdf}.
  \label{eq:basic_natural_trans_CcG_Gf_op}
 \end{alignat}
 This map can also be represented as the map $\Endash \c Gf \colon \cC(Ga,c) \to \cC(Gb,c)$
 with $Gf \in \cC(Gb,Ga)$.
 The functor $\cC(G\Endash,c)$ is depicted by
 \begin{alignat}{1}
  \cC(G\Endash,c) \quad\diagram\quad
  \InsertMidPDF{basic_natural_trans_CcG_op.pdf}.
  \label{eq:basic_natural_trans_CcG_Gf_op}
 \end{alignat}
\end{ex}

\begin{ex}{}{FuncHomFG}
 For two functors $F \colon \cE \to \cC$ and $G \colon \cD \to \cC$,
 let $\braket{F,G}$ be the functor from $\cE^\op \times \cD$ to $\cC^\op \times \cC$
 that maps each object $\braket{e,d}$ and each morphism $\braket{f,g}$ in $\cE^\op \times \cD$
 to the object $\braket{Fe,Gd}$ and the morphism $\braket{Ff,Gg}$ in $\cC^\op \times \cC$,
 respectively.
 Then, we can consider the functor
 \begin{alignat}{1}
  \cC(F\Endash,G\Enndash) &\coloneqq \cC(\Endash,\Enndash)
  \b \braket{F,G} \colon \cE^\op \times \cD \to \Set,
 \end{alignat}
 which is represented by
 \begin{alignat}{1}
  \InsertPDF{basic_natural_trans_CFG.pdf} \raisebox{1em}{.}
  \label{eq:basic_natural_trans_CFG}
 \end{alignat}
 The action on morphisms of the functor $\cC(F\Endash,G\Enndash)$ is
 \begin{alignat}{4}
  \mor (\cE^\op \times \cD) &\ni \braket{f,g}
  &&\quad\xmapsto{\braket{F,G}}\quad
  \braket{Ff,Gg}
  &&\quad\xmapsto{\cC(\Endash,\Enndash)}\quad
  &Gg \c \Endash \c Ff &\in \mor \Set,
 \end{alignat}
 where the map $Gg \c \Endash \c Ff$ is represented by
 \begin{alignat}{1}
  \InsertPDF{basic_natural_trans_CFGfg.pdf} \raisebox{1em}{.}
  \label{eq:basic_natural_trans_CFGfg}
 \end{alignat}
 Note that replacing the block $f$ on the right-hand side with the wire ``$\Endash$'' and the block $g$
 with the wire ``$\Enndash$'' yields the right-hand side of Eq.~\eqref{eq:basic_natural_trans_CFG}.
 In particular, we write $\cC(F\Endash,\id_\cC\Enndash)$ as $\cC(F\Endash,\Enndash)$,
 and $\cC(\id_\cC\Endash,G\Enndash)$ as $\cC(\Endash,G\Enndash)$.
\end{ex}

\begin{ex}{}{FuncCcGEeF}
 Consider a natural transformation from the set-valued functor $\cC(c,G\Endash)$ to
 the set-valued functor $\cC(e,F\Endash)$, where $c \in \cC$, $~e \in \cE$, $~G \colon \cD \to \cC$,
 and $F \colon \cD \to \cE$.
 As a slightly complex example, when we arbitrarily choose a morphism $f \in \cE(e,Hc)$ and
 a natural transformation $\alpha \colon H \b G \nto F$
 (where $H \colon \cC \to \cE$ is also arbitrary),
 we consider the map (i.e., a morphism in $\Set$) given by
 \begin{alignat}{1}
  \lefteqn{ \tau_x \colon \cC(c,Gx) \ni h \mapsto
  \alpha_x (H \b h) f \in \cE(e,Fx) } \nonumber \\
  &\diagram\qquad
  \InsertMidPDF{basic_natural_trans_CcG_nat.pdf}
  \label{eq:basic_natural_trans_CcG_nat}
 \end{alignat}
 for each $x \in \cD$.
 According to the rule mentioned earlier, the right-hand side of
 Eq.~\eqref{eq:basic_natural_trans_CcG_nat} represents a map.
 Specifically, when we insert a morphism $h \in \cC(c,Gx)$ into the dotted box,
 the entire diagram becomes a morphism $\alpha_x (H \b h) f \in \cE(e,Fx)$, so by Rule~3,
 it represents the map $\cC(c,Gx) \ni h \mapsto \alpha_x (H \b h) f \in \cE(e,Fx)$.
 It follows that $\tau \coloneqq \{ \tau_x \}_{x \in \cD}$ is a natural transformation
 from $\cC(c,G\Endash)$ to $\cE(e,F\Endash)$ since we have for each $g \in \cD(x,y)$,
 \begin{alignat}{1}
  \footnoteinset{-2.01}{-1.85}{\eqref{eq:sliding}}{%
  \InsertPDF{basic_natural_trans_CcG_nat_proof.pdf}} \raisebox{1em}{.}
  \label{eq:basic_natural_trans_CcG_nat_proof}
 \end{alignat}
 The natural transformation $\tau$ can be represented by
 \begin{alignat}{1}
  \InsertPDF{report_basic_natural_trans_CcG_nat2.pdf} \raisebox{1em}{,}
  \label{eq:basic_natural_trans_CcG_nat2}
 \end{alignat}
 where the area enclosed by the auxiliary line is grouped and represented as
 a block $\tau$ in the shape of ``$\raisebox{-.1em}{\InsertPDFtext{report_text_repr_def.pdf}}$''.
 Note that if you replace the wire ``$\Endash$'' in the central expression of
 Eq.~\eqref{eq:basic_natural_trans_CcG_nat2} with the wire $x$, it becomes equal to
 the right-hand side of Eq.~\eqref{eq:basic_natural_trans_CcG_nat}.
 Therefore, by Rule~4, it is understood that the central expression of
 Eq.~\eqref{eq:basic_natural_trans_CcG_nat2} represents a natural transformation, namely $\tau$,
 whose action on objects is $\ob \cD \ni x \mapsto \tau_x \in \mor \Set$
 (recall that a natural transformation is uniquely determined by its action on objects).
 Several natural transformations of this form appear throughout this paper.
 Note that this diagram can be considered to have the diagram on the right-hand side of
 Eq.~\eqref{eq:basic_natural_trans_CcG} embedded in it.
\end{ex}

\subsubsection{Preliminaries: diagrams for morphisms} \label{subsubsec:repr_yoneda_preliminary2}

In this paper, we identify any element $x \in X$ of any class $X$ with the map
$\{ * \} \ni * \mapsto x \in X$, where $\{ * \}$ is a singleton set.
Then, when $X$ is a set, we have $\Set(\{*\},X) = X$, or equivalently,
\begin{alignat}{1}
 X = \{ \{ * \} \ni * \mapsto x \in X \}_{x \in X}
 &\qquad\diagram\qquad \InsertMidPDF{basic_Set_object.pdf}.
 \label{eq:basic_Set_object}
\end{alignat}
We represent $\{ * \}$ with a gray dotted wire, as shown in this diagram.

It is worth mentioning that the identity morphism $\id_x$ can be expressed as
\begin{alignat}{1}
 \id_x &\qquad\diagram\qquad
 \InsertMidPDF{repr_yoneda_init_id.pdf},
 \label{eq:repr_yoneda_init_id}
\end{alignat}
where the black circles (dots) represent $\id_x$.
In the right-hand side of this diagram, $\id_x$ is represented as a map
(i.e., a morphism in $\Set$), $\{ * \} \ni * \mapsto \id_x \in \yoneda{x}(x) = \cC(x,x)$.
Here, as mentioned earlier, we identify $\id_x$ with the map $* \mapsto \id_x$.
A good intuitive interpretation of the diagram in Eq.~\eqref{eq:repr_yoneda_init_id} might be
to think of the right-hand side as ``the wire $x$ on the left-hand side, bent at the position of
the black circle to change into the wire $\yoneda{x}$''.

In the following, we explain that for any chosen morphism $a \in \cC(x,c)$
(with any $x,c \in \cC$), the following equation holds, and that the equation and diagram correspond.
\begin{alignat}{1}
 \lefteqn{ a = \yoneda{x}(a) \c \id_x = a = \yoneda{a}{}_c \c \id_c } \nonumber \\
 &\diagram\qquad
 \InsertMidPDF{repr_yoneda_init_summary.pdf},
 \label{eq:repr_yoneda_init_summary}
\end{alignat}
where, in this diagram, the morphism $a$ is represented as a round block for visual clarity.
(note that there is no semantic difference between a square block and a round block).
The left and right black circles represent $\id_x$ and $\id_c$, respectively.
In this section, we will often represent identity morphisms as black circles.

First, we derive the first equality of Eq.~\eqref{eq:repr_yoneda_init_summary}
using the following equation.
\begin{alignat}{1}
 a = a \c \id_x = \yoneda{x}(a) \c \id_x &\qquad\diagram\qquad
 \InsertMidPDF{repr_yoneda_init_summary0.pdf}.
 \label{eq:repr_yoneda_init_summary0}
\end{alignat}
The first equality is obvious since $\id_x$ is an identity morphism.
Also, the second equality follows from $\yoneda{x}(a) = a \c \Endash$,
which gives $\yoneda{x}(a) \c \id_x = a \c \id_x = a$.
Note that the right-hand side of the diagram in Eq.~\eqref{eq:repr_yoneda_init_summary0},
which represents $\yoneda{x}(a) \c \id_x$, may intuitively
understand if you think of it as the map (i.e., the morphism in $\Set$)
\begin{alignat}{1}
 \{ * \} \ni * \quad\xmapsto{* \mapsto \id_x}\quad \id_x \quad\xmapsto{\yoneda{x}(a)}\quad
 a \in \yoneda{x}(c) = \cC(x,c),
\end{alignat}
which is equal to the map $* \mapsto a$, and is identified with $a$.

Next, we explain the second equality of Eq.~\eqref{eq:repr_yoneda_init_summary}.
The equation $\yoneda{x}(a) \c \id_x = a$ has already been derived.
In the third expression in Eq.~\eqref{eq:repr_yoneda_init_summary}, similar to $\id_x$
in Eq.~\eqref{eq:repr_yoneda_init_id}, $a$ is represented as the map
$\{ * \} \ni * \mapsto a \in \yoneda{x}(c) = \cC(x,c)$.

Finally, we derive the last equality of Eq.~\eqref{eq:repr_yoneda_init_summary}.
From Eq.~\eqref{eq:basic_nat_precomp}, the map $\yoneda{a}{}_c$ is equal to $\Endash \c a$,
we have $\yoneda{a}{}_c \c \id_c = \id_c \c a = a$.
The right-hand side of the diagram can be considered as representing the map
\begin{alignat}{1}
 \{ * \} \ni * \quad\xmapsto{* \mapsto \id_c}\quad \id_c \quad\xmapsto{\yoneda{a}{}_c}\quad
 a \in \yoneda{x}(c) = \cC(x,c).
\end{alignat}

We give an intuitive interpretation of the diagram in Eq.~\eqref{eq:repr_yoneda_init_summary}.
Let us think of the wire $\yoneda{c}$ and the wire $c$ (as well as the wire $\yoneda{x}$
and the wire $x$), and the block $\yoneda{a}$ and the block $a$, as being similar things.
Then, it can be interpreted that the only main difference among the three diagrams other than
the left-hand side is the position of block $a$.
Intuitively, the functor $\yoneda{c}$ and the natural transformation $\yoneda{a}$ can be
considered as different ways of representing the object $c$ and the morphism $a$ in $\cC$, respectively.
Also, as already mentioned, the first equality can be thought of as ``bending the wire $x$
at the position of the black circle to change it into the wire $\yoneda{x}$''.

We also have for each $c,d \in \cC$,
\begin{alignat}{1}
 \yoneda{c}(\id_d) = \yoneda{c}(\Endash) \c \id_c &\qquad\diagram\qquad
 \InsertMidPDF{repr_yoneda_hom.pdf},
 \label{eq:repr_yoneda_hom}
\end{alignat}
where $\yoneda{c}(\Endash)$ is the map that maps each $a \in \cC(c,d)$ to
$\yoneda{c}(a) = a \c \Endash$.
The functor $\yoneda{c}$ maps each identity morphism to the identity morphism in $\Set$,
so $\yoneda{c}(\id_d)$ is equal to $\id_{\cC(c,d)}$.
Also, from the definition of $\yoneda{c}(\Endash)$, the map $\yoneda{c}(\Endash) \c \id_c$
maps each $a \in \cC(c,d)$ to $\yoneda{c}(a) \c \id_c = a \c \id_c = a$,
so it is equal to $\id_{\cC(c,d)}$.
Note that this can be understood by considering Eq.~\eqref{eq:repr_yoneda_init_summary0} with $x,c$
replaced by $c,d$.
Therefore, Eq.~\eqref{eq:repr_yoneda_hom} holds.

Let us consider any set-valued functor $X \colon \cC \to \Set$ and natural transformation
$\tau \colon \yoneda{c} \nto X$.
Each component $\tau_d$ of $\tau$ is a map from the hom-set $\cC(c,d)$ to the set $Xd$
and can be represented by
\begin{alignat}{1}
 \footnoteinset{-1.75}{0.3}{\eqref{eq:repr_yoneda_hom}}{%
 \footnoteinset{1.30}{0.3}{\eqref{eq:sliding}}{%
 \InsertPDF{repr_yoneda_hom_tau.pdf}}} \raisebox{1em}{.}
 \label{eq:repr_yoneda_hom_tau}
\end{alignat}
$\tau$ is represented as
\begin{alignat}{1}
 \footnoteinset{-2.41}{0.3}{\eqref{eq:repr_yoneda_hom_tau}}{%
 \InsertPDF{repr_yoneda_hom_tau_nat.pdf}} \raisebox{1em}{.}
 \label{eq:repr_yoneda_hom_tau_nat}
\end{alignat}
Recalling the rules of dotted box notation, one can see that $\tau$ can be represented as
this right-hand side.

\subsubsection{The Yoneda lemma} \label{subsubsec:repr_yoneda_lemma}

\myparagraph{Yoneda map}

Let us define a map $\alpha_{X,c}$, which is called the \termdef{Yoneda map}, as
\pagetarget{term:yoneda_map}{}%
\begin{alignat}{1}
 \alpha_{X,c} \colon \Func{\cC}{\Set}(\yoneda{c},X) \ni \tau \mapsto \tau_c(\id_c) \in Xc
 &\qquad\diagram\qquad
 \InsertMidPDF{repr_yoneda_alpha_map.pdf}, \nonumber \\
 \label{eq:repr_yoneda_alpha_map}
\end{alignat}
where the dotted circle represents the hom-set $\Func{\cC}{\Set}(\yoneda{c},X)$.
From this definition and Eq.~\eqref{eq:repr_yoneda_hom_tau_nat}, we have
\begin{alignat}{2}
 \footnoteinset{-0.39}{0.3}{\eqref{eq:repr_yoneda_hom_tau_nat}}{%
 \InsertMidPDF{repr_yoneda.pdf}}
 &\qquad\xmapsto{\alpha_{X,c}}\qquad
 \InsertMidPDF{repr_yoneda_alpha.pdf}.
 \label{eq:repr_yoneda_alpha}
\end{alignat}
Both sides of the formula to the right-hand side of `the symbol `$\xmapsto{\alpha_{X,c}}$''
represent $\alpha_{X,c}(\tau)$.
Note that in the dotted box notation of $\tau$ (i.e., the right-hand side of the formula
to the left-hand side of the symbol ``$\xmapsto{\alpha_{X,c}}$'' in Eq.~\eqref{eq:repr_yoneda_alpha}),
replacing the wire ``$\Endash$'' with the wire $c$ represents $\tau_c$,
and inserting $\id_c$ into the dotted box
(which yields the rightmost diagram in Eq.~\eqref{eq:repr_yoneda_alpha}) represents $\tau_c(\id_c)$,
i.e., $\alpha_{X,c}(\tau)$.
We also define the map $\beta_{X,c} \colon Xc \to \Func{\cC}{\Set}(\yoneda{c},X)$ as
\begin{alignat}{2}
 \InsertMidPDF{repr_yoneda_x.pdf}
 &\qquad\xmapsto{\beta_{X,c}}\qquad
 \InsertMidPDF{repr_yoneda_beta.pdf},
 \label{eq:repr_yoneda_beta}
\end{alignat}
whose mathematical notation is
\begin{alignat}{1}
 \beta_{X,c}(x) &\coloneqq \{ \cC(c,d) \ni f \mapsto (Xf)(x) \in Xd \}_{d \in \cC}.
\end{alignat}
For each $x \in Xc$, $\beta_{X,c}(x)$ is a natural transformation from $\yoneda{c}$ to $X$
(i.e., an element of $\Func{\cC}{\Set}(\yoneda{c},X)$) since we have for each $f \in \cC(a,b)$%
\footnote{One can also see that $\beta_{X,c}(x)$ is a special case of the natural transformation $\tau$,
represented in Eq.~\eqref{eq:basic_natural_trans_CcG_nat2}, specifically when
$\cD = \cC$, $~\cE = \Set$, $~F = H = X$, $~G = \id_\cC$, $~e = \{*\}$, $~\alpha = \id_X$,
and $f = x$, and thus is a natural transformation from $\cC(c,\Endash) = \yoneda{c}$ to
$\Set(\{ * \},X\Endash) = X$.
The reason why $\Set(\{ * \},X\Endash) = X$ holds is that the action on morphisms of
the functor $\Set(\{ * \},X\Endash)$ maps each $f \in \mor \cD$ to the map $* \mapsto Xf$,
and since $* \mapsto Xf$ is identified with $Xf$,
this is equivalent to the action on morphisms of $X$.},
\begin{alignat}{1}
 \InsertPDF{repr_yoneda_beta_nat.pdf} \raisebox{1em}{.}
 \label{eq:repr_yoneda_beta_nat}
\end{alignat}

Comparing Eqs.~\eqref{eq:repr_yoneda_alpha} and \eqref{eq:repr_yoneda_beta},
it appears that the maps $\alpha_{X,c}$ and $\beta_{X,c}$ are inverses of each other.
In fact, comparing the two blocks $\tau$ and $\alpha_{X,c}(\tau)$ in Eq.~\eqref{eq:repr_yoneda_alpha},
intuitively, the Yoneda map $\alpha_{X,c}$ simply converts the wire $\yoneda{c}$
attached to the bottom of $\tau$ to the wire $c$.
In contrast, $\beta_{X,c}$ converts the wire $c$ attached to the top of $x$
to the wire $\yoneda{c}$.
From another perspective, $\alpha_{X,c}$ removes the dotted box representing $\cC(c,\Endash)$,
and conversely, $\beta_{X,c}$ adds this dotted box.
As can be inferred from this, $\beta_{X,c}$ is the inverse map of $\alpha_{X,c}$,
which will be proven in Theorem~\ref{thm:Yoneda}.
With these maps, intuitively, the lower wire $\yoneda{c}$ and the upper wire $c$ can be freely
exchanged by $\alpha_{X,c}$ and $\beta_{X,c}$.
Note that if we regard the wire $c$ and the wire $\yoneda{c}$ as the same kind of thing,
intuitively, then $\alpha_{X,c}(\tau)$ is something like $\tau$ and
$\beta_{X,c}(x)$ is something like $x$.

\myparagraph{The Yoneda lemma}

\begin{thm}{the Yoneda lemma}{Yoneda}
 The following two properties hold:
 \begin{enumerate}
  \item For each $X \colon \cC \to \Set$ and $c \in \cC$, the Yoneda map $\alpha_{X,c}$
        is invertible.
  \item For each $\sigma \in \Func{\cC}{\Set}(X,Y)$ and $p \in \cC(c,d)$
        (where $X,Y \colon \cC \to \Set$ and $c,d \in \cC$ are arbitrary),
        \begin{alignat}{1}
         (\sigma \b p) \c \alpha_{X,c} = \alpha_{Y,d} \c (\sigma \c \Endash \c \yoneda{p})
         &\qquad\diagram\qquad
         \InsertMidPDF{repr_yoneda_alpha_nat_Xc2.pdf}
         \label{eq:repr_yoneda_alpha_nat_Xc2}
        \end{alignat}
        holds.
 \end{enumerate}
\end{thm}
The areas enclosed by the left and right auxiliary lines in Eq.~\eqref{eq:repr_yoneda_alpha_nat_Xc2}
are $\sigma \b p$ and $\sigma \c \Endash \c \yoneda{p}$, respectively.
In this diagram, the Yoneda map is represented as in the right-hand side of
Eq.~\eqref{eq:repr_yoneda_alpha_map}.
Note that the right-hand side of Eq.~\eqref{eq:repr_yoneda_alpha_nat_Xc2}
represents $\alpha_{Y,d} \c (\sigma \c \Endash \c \yoneda{p})$
by inserting the diagram representing $\sigma \c \Endash \c \yoneda{p}$
(i.e., the area enclosed by the auxiliary line) into the dotted circle of the diagram representing
$\alpha_{Y,d}$ (i.e., the right-hand side of Eq.~\eqref{eq:repr_yoneda_alpha_map}).
\begin{proof}
 (1): It is sufficient to show that the map $\beta_{X,c}$ defined in Eq.~\eqref{eq:repr_yoneda_beta}
 is the inverse of $\alpha_{X,c}$.
 We have for each $\tau \in \Func{\cC}{\Set}(\yoneda{c},X)$,
 \begin{alignat}{1}
  \InsertMidPDF{repr_yoneda_beta_alpha0.pdf}
  \quad\stackrel{\eqref{eq:repr_yoneda_alpha}}{\xmapsto{\alpha_{X,c}}}\quad
  \InsertMidPDF{repr_yoneda_beta_alpha1.pdf}
  \quad\stackrel{\eqref{eq:repr_yoneda_beta}}{\xmapsto{\beta_{X,c}}}\quad
  \footnoteinset{0.47}{0.3}{\eqref{eq:repr_yoneda_hom_tau_nat}}{%
  \InsertMidPDF{repr_yoneda_beta_alpha2.pdf}},
  \label{eq:repr_yoneda_beta_alpha}
 \end{alignat}
 and for each $x \in Xc$,
 \begin{alignat}{1}
  \InsertMidPDF{repr_yoneda_x.pdf}
  \quad\stackrel{\eqref{eq:repr_yoneda_beta}}{\xmapsto{\beta_{X,c}}}\quad
  \InsertMidPDF{repr_yoneda_alpha_beta1.pdf}
  \quad\stackrel{\eqref{eq:repr_yoneda_alpha}}{\xmapsto{\alpha_{X,c}}}\quad
  \InsertMidPDF{repr_yoneda_alpha_beta2.pdf},
  \label{eq:repr_yoneda_alpha_beta}
 \end{alignat}
 where we use the fact that the map ``$\xmapsto{\alpha_{X,c}}$'' in
 Eq.~\eqref{eq:repr_yoneda_alpha_beta} is equivalent to
 the operation of replacing the wire ``$\Endash$'' with the wire $c$ and substituting $\id_c$
 into the dotted box.
 Therefore, $\beta_{X,c}$ is the inverse of $\alpha_{X,c}$.

 (2): We have
 \begin{alignat}{1}
  \footnoteinset{-1.26}{0.3}{\eqref{eq:sliding}}{%
  \footnoteinset{1.65}{0.3}{\eqref{eq:repr_yoneda_init_summary}}{%
  \InsertPDF{repr_yoneda_alpha_nat_Xc2_proof.pdf}}} \raisebox{1em}{.}
  \label{eq:repr_yoneda_alpha_nat_Xc2_proof}
 \end{alignat}
\end{proof}

In what follows, we denote $\beta_{X,c}$ as $\alpha_{X,c}^{-1}$.
For any $x \in Xc$ (where $c$ is also arbitrary),
\begin{alignat}{1}
 x = \alpha_{X,c}(\alpha^{-1}_{X,c}(x))
 &\qquad\diagram\qquad
 \InsertMidPDF{repr_yoneda_beta2.pdf}
 \label{eq:repr_yoneda_beta2}
\end{alignat}
obviously holds.
Note that $a = \alpha_{\yoneda{x},c}(\yoneda{a})$ holds from
the last equality in Eq.~\eqref{eq:repr_yoneda_init_summary},
which yields
\begin{alignat}{1}
 \yoneda{a} = \alpha^{-1}_{\yoneda{x},c}(a).
 \label{eq:repr_yoneda_a_alpha_inv}
\end{alignat}

Equation~\eqref{eq:repr_yoneda_beta2} can be seen as a generalization of the last equality
in Eq.~\eqref{eq:repr_yoneda_init_summary}.

The Yoneda map $\alpha_{X,c}$ and its inverse $\alpha_{X,c}^{-1}$ can be expressed as
\begin{alignat}{1}
 \InsertMidPDF{repr_yoneda_cong_alpha3.pdf} &\quad\xmapsto{\alpha_{X,c}}\quad
 \InsertMidPDF{repr_yoneda_cong_alpha2.pdf}, \qquad
 \InsertMidPDF{repr_yoneda_cong_beta1.pdf} \quad\xmapsto{\alpha_{X,c}^{-1}}\quad
 \InsertMidPDF{repr_yoneda_cong_beta3.pdf},
 \label{eq:repr_yoneda_cong2}
\end{alignat}
where the blue block in the shape of
``$\raisebox{-.1em}{\InsertPDFtext{report_text_repr_yoneda_cong2.pdf}}$''
represents $\{ \alpha_{X,c}^{-1} \}_{X \in \Func{\cC}{\Set}}$%
\footnote{This blue block is the natural transformation obtained by substituting
$c = \{ * \}$, $~G = \ev_c$, $~e = \yoneda{c}$, and $F = \id_{\Func{\cC}{\Set}}$
into the right-hand side of Eq.~\eqref{eq:basic_natural_trans_CcG_nat2}.
As can be seen from Corollary~\ref{cor:YonedaNat}, this is a natural isomorphism from the functor
$\Set(\{ * \}, \Endash c) = \ev_c$ to the functor $\Func{\cC}{\Set}(\yoneda{c},\Endash)$.}.
The diagram to the right-hand side of the symbol ``$\xmapsto{\alpha_{X,c}^{-1}}$'' represents
the natural transformation
$\alpha_{X,c}^{-1}(x) \colon \yoneda{c} \nto X$.
Intuitively, the Yoneda map $\alpha_{X,c}$ simply converts the wire $\yoneda{c}$
into the wire $c$, and $\alpha_{X,c}^{-1}$ converts the wire $c$ into the wire $\yoneda{c}$.

\myparagraph{Naturality of the Yoneda lemma}

If we can express Eq.~\eqref{eq:repr_yoneda_alpha_nat_Xc2} as
\begin{alignat}{1}
 G(\sigma,p) \c \alpha_{X,c} &= \alpha_{Y,d} \c F(\sigma,p)
 \label{eq:repr_yoneda_nat_FG}
\end{alignat}
using two appropriate bifunctors $F,G \colon \Func{\cC}{\Set} \times \cC \to \Set$,
then this equation can be considered to represent naturality of
$\alpha \coloneqq \{ \alpha_{X,c} \}_{X \in \Func{\cC}{\Set}, c \in \cC}$,
and thus $\alpha$ is a natural transformation from $F$ to $G$.
In order to express it in such a way, it is sufficient to satisfy
\begin{alignat}{1}
 G(\sigma,p) &= \sigma \b p, \quad F(\sigma,p) = \sigma \c \Endash \c \yoneda{p}
 \qquad (\forall \braket{\sigma,p} \in \mor (\Func{\cC}{\Set} \times \cC))
 \label{eq:repr_yoneda_FG}
\end{alignat}
and this condition determines the action on morphisms of $G$ and $F$.
Since the action on morphisms of $G$ is $\braket{\sigma,p} \mapsto \sigma \b p$,
$G$ is the evaluation functor $\ev \colon \Func{\cC}{\Set} \times \cC \to \Set$.
From Example~\ref{ex:FuncEval2}, we have
\begin{alignat}{1}
 \ev(\sigma,p) = \sigma \b p &\qquad\diagram\qquad
 \InsertMidPDF{repr_yoneda_alpha_nat_ev.pdf}.
 \label{eq:repr_yoneda_alpha_nat_ev}
\end{alignat}
For $F$, let us consider the following bifunctor:
\begin{alignat}{1}
 \lefteqn{ \Func{\cC}{\Set}(\yoneda{\Enndash},\Endash)
 \coloneqq \Func{\cC}{\Set}(\Enndash,\Endash) \b
 \braket{\id_{\Func{\cC}{\Set}},\yoneda{\Endash}} } \nonumber \\
 &\diagram\qquad
 \InsertMidPDF{repr_yoneda_nat_bifunc.pdf},
 \label{eq:repr_yoneda_nat_bifunc}
\end{alignat}
where $\Func{\cC}{\Set}(\Enndash,\Endash)$ represents the hom-functor
$(\Func{\cC}{\Set})^\op(\Endash,\Enndash)$.
Also, $\braket{\id_{\Func{\cC}{\Set}},\yoneda{\Endash}}$ is a pair of the identity functor
$\id_{\Func{\cC}{\Set}}$ and the functor $\yoneda{\Endash} \colon \cC \to (\Func{\cC}{\Set})^\op$
(refer to the beginning of Example~\ref{ex:FuncHomFG}),
and the functor $\yoneda{\Endash}$ maps each object $c$ and each morphism $p$ in $\cC$
to $\yoneda{c}$ and $\yoneda{p}$, respectively.
Since the action on morphisms of $\Func{\cC}{\Set}(\yoneda{\Enndash},\Endash)$ is
\begin{alignat}{4}
 \mor (\Func{\cC}{\Set} \times \cC) &\ni \braket{\sigma,p}
 &&\quad\xmapsto{\braket{\id_{\Func{\cC}{\Set}},\yoneda{\Endash}}}\quad
 \braket{\sigma,\yoneda{p}}
 &&\quad\xmapsto{\Func{\cC}{\Set}(\Enndash,\Endash)}\quad
 &\sigma \c \Endash \c \yoneda{p} &\in \mor \Set,
 \label{eq:repr_yoneda_nat_bifunc_action}
\end{alignat}
$F = \Func{\cC}{\Set}(\yoneda{\Enndash},\Endash)$ holds from Eq.~\eqref{eq:repr_yoneda_FG}.
We have
\begin{alignat}{1}
 \lefteqn{ (\Func{\cC}{\Set}(\yoneda{\Enndash},\Endash))(\sigma,p)
 = \sigma \c \Endash \c \yoneda{p} }
 \nonumber \\
 &\diagram\qquad
 \InsertMidPDF{repr_yoneda_alpha_nat_Set.pdf}.
 \label{eq:repr_yoneda_alpha_nat_Set}
\end{alignat}

From the above discussion, we obtain the following corollary:
\begin{cor}{}{YonedaNat}
 Property~(2) of Theorem~\ref{thm:Yoneda} is equivalent to
 $\alpha \coloneqq \{ \alpha_{X,c} \}_{X \in \Func{\cC}{\Set}, c \in \cC}$
 being a natural transformation from the bifunctor
 $\Func{\cC}{\Set}(\yoneda{\Enndash},\Endash) \colon \Func{\cC}{\Set} \times \cC \to \Set$
 to the evaluation functor $\ev \colon \Func{\cC}{\Set} \times \cC \to \Set$.
 Or equivalently, for each $\sigma \in \Func{\cC}{\Set}(X,Y)$ and
 $p \in \cC(c,d)$ (where $X,Y \in \Func{\cC}{\Set}$ and $c,d \in \cC$ are arbitrary),
 \begin{alignat}{1}
  \lefteqn{ \ev(\sigma,p) \c \alpha_{X,c} = \alpha_{Y,d} \c
  (\Func{\cC}{\Set}(\yoneda{\Enndash},\Endash))(\sigma,p) } \nonumber \\
  &\diagram\qquad
  \InsertMidPDF{repr_yoneda_alpha_nat_Xc.pdf}
  \label{eq:repr_yoneda_alpha_nat_Xc}
 \end{alignat}
 holds.
\end{cor}
\begin{proof}
 The left- and right-hand side of Eq.~\eqref{eq:repr_yoneda_alpha_nat_Xc} can be expressed as
 \begin{alignat}{1}
  \footnoteinsets{0.91}{0.3}{\eqref{eq:repr_yoneda_alpha_nat_ev}}{\eqref{eq:repr_yoneda_alpha_map}}{%
  \InsertPDF{repr_yoneda_alpha_nat_Xc_left.pdf}}
  \label{eq:repr_yoneda_alpha_nat_Xc_left}
 \end{alignat}
 and
 \begin{alignat}{1}
  \footnoteinsets{1.11}{0.3}{\eqref{eq:repr_yoneda_alpha_map}}{\eqref{eq:repr_yoneda_alpha_nat_Set}}{%
  \InsertPDF{repr_yoneda_alpha_nat_Xc_right.pdf}} \raisebox{1em}{,}
  \label{eq:repr_yoneda_alpha_nat_Xc_right}
 \end{alignat}
 respectively.
 Therefore, Eq.~\eqref{eq:repr_yoneda_alpha_nat_Xc} and Eq.~\eqref{eq:repr_yoneda_alpha_nat_Xc2}
 are equivalent.
\end{proof}

Recall that the Yoneda map $\alpha_{X,c}$ can be interpreted as a
``transformation from the wire $\yoneda{c}$ to the wire $c$''.
Therefore, Eq.~\eqref{eq:repr_yoneda_alpha_nat_Xc2} can be interpreted as
``applying $\sigma$ and $p$ after transforming the wire $\yoneda{c}$ to the wire $c$''
and ``transforming the wire $\yoneda{c}$ to the wire $c$ after applying $\sigma$ and $\yoneda{p}$''
are equivalent.
In Eq.~\eqref{eq:repr_yoneda_nat_FG}, the map that applies $\sigma$ and $p$ can be considered
to be represented by $G(\sigma,p)$, and the map that applies $\sigma$ and $\yoneda{p}$ can be
considered to be represented by $F(\sigma,p)$.

Each component $\alpha_{X,c}$ of the natural transformation $\alpha$ is a morphism
from $\Func{\cC}{\Set}(\yoneda{c},X)$ to $Xc$,
which follows from $(\Func{\cC}{\Set}(\yoneda{\Enndash},\Endash))(X,c)
= \Func{\cC}{\Set}(\yoneda{c},X)$ and $\ev(X,c) = Xc$.
Also, we can see that Theorem~\ref{thm:Yoneda} is equivalent to $\alpha$ being
a natural isomorphism from $\Func{\cC}{\Set}(\yoneda{\Enndash},\Endash)$ to $\ev$,
or equivalently, the isomorphism $\alpha_{X,c} \colon \Func{\cC}{\Set}(\yoneda{c},X) \cong Xc$
being natural in $X$ and $c$.
This natural isomorphism can be depicted by
\begin{alignat}{1}
 \Func{\cC}{\Set}(\yoneda{\Enndash},\Endash) \cong \ev
 &\qquad\diagram\qquad
 \InsertMidPDF{repr_yoneda_cong.pdf}.
 \label{eq:repr_yoneda_cong}
\end{alignat}
Also, from Eq.~\eqref{eq:repr_yoneda_alpha_map}, one can depict $\alpha$ by%
\footnote{One might feel that this right-hand side appears to violate Rule~5 of the dotted box notation.
If this is a concern, then a slight modification to this rule could be considered.}
\begin{alignat}{1}
 \alpha &\qquad\diagram\qquad
 \InsertMidPDF{repr_yoneda_alpha_map2.pdf}.
 \label{eq:repr_yoneda_alpha_map2}
\end{alignat}

\myparagraph{Dual to the Yoneda lemma}

We briefly state the diagram of $\yonedaop{x}$.
Considering the ``upside-down'' version of Eq.~\eqref{eq:repr_yoneda_init_summary},
we can see that an element $a$ of $\yonedaop{x}(c) = \cC^\op(x,c) = \cC(c,x)$ is represented by
\begin{alignat}{1}
 \lefteqn{ a = \yonedaop{x}(a) \c \id_x = a = \yonedaop{a}{}_c \c \id_c } \nonumber \\
 &\diagram\qquad
 \InsertMidPDF{repr_yoneda_init_summary_op.pdf},
 \label{eq:repr_yoneda_init_summary_op}
\end{alignat}
where the left and right black circles in the diagram represent $\id_x$ and $\id_c$, respectively.
Recall that we represent the objects and morphisms in $\cC^\op$ as the ``upside-down'' of
the objects and morphisms in $\cC$ (see Eq.~\eqref{eq:basic_contra_circ}).
For the natural transformation $\yonedaop{a} \colon \yonedaop{c} \nto \yonedaop{x}$,
the reader refers to Eq.~\eqref{eq:basic_nat_postcomp}.
A natural transformation from $\yonedaop{c}$ to a presheaf $X' \colon \cC^\op \to \Set$
can also be represented in the same way as Eq.~\eqref{eq:repr_yoneda_hom_tau_nat}.

For any category $\cC$, denote the functor category $\Func{\cC^\op}{\Set}$ as
the \termdef{category of presheaves} and write it as $\hat{\cC}$.
Considering the duals of Theorem~\ref{thm:Yoneda} and Corollary~\ref{cor:YonedaNat}
yields the following corollary.
\begin{cor}{the Yoneda lemma}{Yonedaop}
 For any object $c \in \cC^\op$ and presheaf $X \in \hat{\cC}$,
 define a map $\alpha_{X,c} \colon \hat{\cC}(\yonedaop{c},X) \to Xc$,
 which is also called the \termdef{Yoneda map}, as
 \begin{alignat}{1}
  \alpha_{X,c}(\tau) \coloneqq \tau_c(\id_c)
  &\qquad\diagram\qquad
  \InsertMidPDF{repr_yonedaop_alpha.pdf}.
  \label{eq:repr_yonedaop_alpha}
 \end{alignat}
 Then, the following two properties hold:
 \begin{enumerate}
  \item For each $X \in \hat{\cC}$ and $c \in \cC^\op$, $\alpha_{X,c}$ is invertible.
  \item For each $\sigma \in \hat{\cC}(X,Y)$ and $p \in \cC^\op(c,d)$
        (where $X,Y \in \hat{\cC}$ and $c,d \in \cC^\op$ are arbitrary),
        \begin{alignat}{1}
         (\sigma \b p) \c \alpha_{X,c} &= \alpha_{Y,d} \c (\sigma \c \Endash \c \yonedaop{p})
        \end{alignat}
        holds.
        In other words, $\{ \alpha_{X,c} \}_{X \in \hat{\cC}, c \in \cC^\op}$ is
        a natural transformation from the bifunctor
        $\hat{\cC}(\yonedaop{\Enndash},\Endash) \colon \hat{\cC} \times \cC^\op \to \Set$
        to the evaluation functor $\ev \colon \hat{\cC} \times \cC^\op \to \Set$.
 \end{enumerate}
\end{cor}
Similar to Eq.~\eqref{eq:repr_yoneda_cong}, this natural isomorphism can be depicted by
\begin{alignat}{1}
 \hat{\cC}(\yonedaop{\Enndash},\Endash) \cong \ev
 &\qquad\diagram\qquad
 \InsertMidPDF{repr_yoneda_cong_op.pdf}.
 \label{eq:repr_yoneda_cong_op}
\end{alignat}

\subsubsection{Yoneda embeddings} \label{subsubsec:repr_yoneda_embedding}

Let us define a functor $\yonedaop{\cC}$ from a category $\cC$ to
the category of presheaves $\hat{\cC}$ as follows:
\begin{itemize}
 \item It maps each object $c$ in $\cC$ to the functor $\yonedaop{c} = \cC(\Endash,c)$.
 \item It maps each morphism $f \colon c \to d$ in $\cC$ to the natural transformation
       $\yonedaop{f} \colon \yonedaop{c} \nto \yonedaop{d}$.
\end{itemize}
This functor is called the \termdef{Yoneda embedding}.
We have for each morphism $f \in \cC(c,d)$,
\begin{alignat}{1}
 \footnoteinset{1.13}{0.3}{\eqref{eq:repr_yoneda_a_alpha_inv}}{%
 \InsertPDF{repr_yoneda_embedding_f.pdf}} \raisebox{1em}{.}
 \label{eq:repr_yoneda_embedding_f}
\end{alignat}

\begin{cor}{}{YonedaEmbedding}
 For any category $\cC$, the Yoneda embedding $\yonedaop{\cC}$ is fully faithful.
\end{cor}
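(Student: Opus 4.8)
The plan is to deduce this corollary directly from the (contravariant) Yoneda lemma, Corollary~\ref{cor:Yonedaop}, in exactly the same way the covariant embedding result would follow from Theorem~\ref{thm:Yoneda}. Recall that fully faithful means the map $\cC(c,d) \ni f \mapsto \yonedaop{f} \in \hat{\cC}(\yonedaop{c},\yonedaop{d})$ is a bijection for each $c,d \in \cC$. By the criterion in Eq.~\eqref{eq:basic_full_faithful}, it suffices to show that for every natural transformation $\tau \colon \yonedaop{c} \nto \yonedaop{d}$ there is a \emph{unique} $\ol{\tau} \in \cC(c,d)$ with $\yonedaop{\ol{\tau}} = \tau$.

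First I would invoke Corollary~\ref{cor:Yonedaop} with the presheaf $X = \yonedaop{d}$: it gives an isomorphism $\alpha_{\yonedaop{d},c} \colon \hat{\cC}(\yonedaop{c},\yonedaop{d}) \cong \yonedaop{d}(c) = \cC(c,d)$, sending $\tau \mapsto \tau_c(\id_c)$, with inverse $\beta_{\yonedaop{d},c}$. So given $\tau$, set $\ol{\tau} \coloneqq \alpha_{\yonedaop{d},c}(\tau) = \tau_c(\id_c) \in \cC(c,d)$; this is the only candidate, since $\alpha_{\yonedaop{d},c}$ is injective. It remains to check that this candidate actually maps to $\tau$ under the Yoneda embedding, i.e.\ that $\yonedaop{\ol{\tau}} = \beta_{\yonedaop{d},c}(\ol{\tau})$, equivalently that the embedding on hom-sets coincides with $\beta_{\yonedaop{d},c}$.

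That coincidence is precisely the content of Eq.~\eqref{eq:repr_yoneda_embedding_f} (its dual version): for each $f \in \cC(c,d)$ one has $\yonedaop{f} = \beta_{\yonedaop{d},c}(f)$, obtained diagrammatically by reading $\yonedaop{f}$ through Eqs.~\eqref{eq:repr_yoneda_hom_tau_nat}, \eqref{eq:repr_yoneda_init_nat}, \eqref{eq:repr_yoneda_init}, and \eqref{eq:repr_yoneda_beta}. Hence the embedding's action $f \mapsto \yonedaop{f}$ equals $\beta_{\yonedaop{d},c}$, which is a bijection as the inverse of the isomorphism $\alpha_{\yonedaop{d},c}$ from Corollary~\ref{cor:Yonedaop}. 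Therefore $\cC(c,d) \to \hat{\cC}(\yonedaop{c},\yonedaop{d})$ is a bijection for all $c,d$, so $\yonedaop{\cC}$ is fully faithful.

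The only genuine subtlety — the "hard part", though it is light — is making sure the identification $\yonedaop{f} = \beta_{\yonedaop{d},c}(f)$ is legitimate, i.e.\ that the natural transformation produced by the Yoneda embedding really is the one the Yoneda lemma's inverse map $\beta$ outputs. This is a direct diagrammatic unwinding already carried out in Eq.~\eqref{eq:repr_yoneda_embedding_f}; in the string-diagram language both sides are the picture that pastes $f$ onto the wire and folds it, so no further computation is needed. Everything else is a formal consequence of "an isomorphism is in particular a bijection."
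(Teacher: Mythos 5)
Your proposal is correct and follows essentially the same route as the paper: both reduce the claim to the identity $\yonedaop{f} = \beta_{\yonedaop{d},c}(f)$ from Eq.~\eqref{eq:repr_yoneda_embedding_f} and then conclude via the Yoneda isomorphism $\alpha_{\yonedaop{d},c}$ of Corollary~\ref{cor:Yonedaop}. Your phrasing (the embedding on hom-sets coincides with the bijection $\beta_{\yonedaop{d},c}$) is if anything slightly more explicit about the existence half than the paper's version, but the content is the same.
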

\begin{proof}
 For each $c,d \in \cC$, the Yoneda embedding provides the map
 \begin{alignat}{1}
  \yonedaop{\cC} &\colon \cC(c,d) \ni f \mapsto
  \yonedaop{f} = \alpha_{\yonedaop{d},c}^{-1}(f) \in \hat{\cC}(\yonedaop{c},\yonedaop{d}).
 \end{alignat}
 This map is invertible and its inverse is $\alpha_{\yonedaop{d},c}$.
 Therefore, the Yoneda embedding is fully faithful.
\end{proof}

From Corollary~\ref{cor:YonedaEmbedding}, we can derive the following lemma.
\begin{lemma}{}{YonedaopCongFunc}
 For two functors $F,G \colon \cC \to \cD$, we have
 \begin{alignat}{1}
  F \cong G &\quad\Leftrightarrow\quad
  \text{there exists an isomorphism $\cD(d,Fc) \cong \cD(d,Gc)$ that is natural in $c$ and $d$}
  \nonumber \\
  &\quad\Leftrightarrow\quad
  \text{there exists an isomorphism $\cD(Fc,d) \cong \cD(Gc,d)$ that is natural in $c$ and $d$}.
  \nonumber \\
  \label{eq:YonedaopCongFunc}
 \end{alignat}
\end{lemma}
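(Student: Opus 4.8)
The plan is to reduce the statement to two facts already in hand: the Yoneda embedding is fully faithful (Corollary~\ref{cor:YonedaEmbedding}), and a fully faithful functor both preserves and reflects isomorphisms (Lemma~\ref{lemma:FullFaithfulPostcomp} will also be needed). The only genuine work is to translate the phrase ``the isomorphism $\cD(d,Fc)\cong\cD(d,Gc)$ is natural in $c$ and $d$'' into an honest isomorphism of functors.

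First I would unwind the first condition. For fixed $c\in\cC$, the assignment $d\mapsto\cD(d,Fc)$ is the presheaf $\yonedaop{Fc}=\yonedaop{\cD}(Fc)$ on $\cD$, so a family $\phi_{c,d}\colon\cD(d,Fc)\to\cD(d,Gc)$ of isomorphisms that is natural in $d$ is exactly an isomorphism $\phi_c\colon\yonedaop{\cD}(Fc)\cong\yonedaop{\cD}(Gc)$ in $\hat{\cD}$ (a natural transformation of presheaves with invertible components is invertible). By Lemma~\ref{lemma:BifuncNat}, requiring $\{\phi_{c,d}\}$ to be natural in $c$ and $d$ simultaneously is the same as requiring it to be natural in $d$ for each $c$ together with being natural in $c$ for each $d$; the latter says exactly that $\{\phi_c\}_{c\in\cC}$ is a natural transformation $\yonedaop{\cD}\b F\nto\yonedaop{\cD}\b G$, which, its components being invertible, is a natural isomorphism. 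Hence the first condition in \eqref{eq:YonedaopCongFunc} is equivalent to $\yonedaop{\cD}\b F\cong\yonedaop{\cD}\b G$ in $\Func{\cC}{\hat{\cD}}$.

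Then I would bring in the fully faithfulness. By Corollary~\ref{cor:YonedaEmbedding} the Yoneda embedding $\yonedaop{\cD}\colon\cD\to\hat{\cD}$ is fully faithful, so by Lemma~\ref{lemma:FullFaithfulPostcomp} the functor $\yonedaop{\cD}\b\Endash\colon\Func{\cC}{\cD}\to\Func{\cC}{\hat{\cD}}$ is fully faithful too. A fully faithful functor $H$ reflects isomorphisms: if $H\sigma$ is invertible, take its inverse to be $H\rho$ by fullness; then $H(\rho\c\sigma)$ and $H(\sigma\c\rho)$ are identities, hence so are $\rho\c\sigma$ and $\sigma\c\rho$ by faithfulness, and the same reasoning applied to objects gives $X\cong Y\Leftrightarrow HX\cong HY$. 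Therefore $F\cong G$ iff $\yonedaop{\cD}\b F\cong\yonedaop{\cD}\b G$, and together with the previous paragraph this gives the first equivalence of \eqref{eq:YonedaopCongFunc}.

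Finally, the second equivalence is the dual statement. As $d$ varies, $\cD(Fc,d)$ is the representable $\cD(Fc,\Endash)$, i.e.\ the value at $Fc$ of the Yoneda embedding of $\cD^\op$, which is again fully faithful by Corollary~\ref{cor:YonedaEmbedding} applied to $\cD^\op$; and $F\cong G$ in $\Func{\cC}{\cD}$ holds if and only if $F^\op\cong G^\op$ in $\Func{\cC^\op}{\cD^\op}$, the same family of components (or its pointwise inverse) witnessing either side. So running the argument above with $F^\op,G^\op\colon\cC^\op\to\cD^\op$ in place of $F,G$ produces the second condition. I expect the main obstacle to be purely bookkeeping: making the informal ``natural in $c$ and $d$'' precise enough to invoke Lemma~\ref{lemma:BifuncNat} and the ``pointwise-invertible implies invertible'' observation, and keeping the opposite-category identifications straight in the dual; there is no substantive difficulty once the first condition has been recast as an isomorphism in $\Func{\cC}{\hat{\cD}}$.
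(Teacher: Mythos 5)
Your proposal is correct and follows essentially the same route as the paper: both recast the jointly natural family as an isomorphism $\yonedaop{\cD}\b F\cong\yonedaop{\cD}\b G$ in $\Func{\cC}{\hat{\cD}}$ (you via Lemma~\ref{lemma:BifuncNat}, the paper via its canonical functor $P$, which is the same reindexing), then invoke Corollary~\ref{cor:YonedaEmbedding} and Lemma~\ref{lemma:FullFaithfulPostcomp} to conclude via fully faithful functors reflecting isomorphisms, with the second equivalence obtained by duality. The only cosmetic difference is that you spell out the reflection-of-isomorphisms argument that the paper leaves to a footnote.
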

\begin{proof}
 It is sufficient to show the first ``$\Leftrightarrow$'' in Eq.~\eqref{eq:YonedaopCongFunc}
 since its dual proves the second ``$\Leftrightarrow$''.
 Let $R$ denote the standard functor from $\Func{\cC}{\hat{\cD}} = \Functwo{\cC}{\cD^\op}{\Set}$
 to $\Func{\cD^\op \times \cC}{\Set}$.
 From Corollary~\ref{cor:YonedaEmbedding}, the Yoneda embedding $\yonedaop{\cD}$ is fully faithful,
 and thus, from Lemma~\ref{lemma:FullFaithfulPostcomp},
 $\yonedaop{\cD} \b \Endash \colon \Func{\cC}{\cD} \to \Func{\cC}{\hat{\cD}}$ is also
 fully faithful.
 For two functors $F,G \colon \cC \to \cD$, we have
 \begin{alignat}{2}
  F \cong G \quad\Leftrightarrow\quad \yonedaop{\cD} \b F \cong \yonedaop{\cD} \b G
  &\quad\Leftrightarrow\quad &R (\yonedaop{\cD} \b F) &\cong R (\yonedaop{\cD} \b G) \nonumber \\
  &\quad\Leftrightarrow\quad &\cD(\Endash,F\Enndash) &\cong \cD(\Endash,G\Enndash).
  \label{eq:repr_yonedaop_cong_func_proof}
 \end{alignat}
 The first and second ``$\Leftrightarrow$'' follow from the fact that $\yonedaop{\cD} \b \Endash$ and $R$
 are fully faithful and the fact that for any functor $H \colon \cE \to \cF$ and $e,e' \in \cE$
 (where $\cE$ and $\cF$ are arbitrary), $e \cong e'$ is equivalent to $He \cong He'$.
 The last ``$\Leftrightarrow$'' follows from
 $R (\yonedaop{\cD} \b F) = \cD(\Endash,F\Enndash)$ and
 $R (\yonedaop{\cD} \b G) = \cD(\Endash,G\Enndash)$%
 \footnote{Proof: The functor $R (\yonedaop{\cD} \b F) \in \Func{\cD^\op \times \cC}{\Set}$
 maps each morphism $\braket{f,g} \in \mor (\cD^\op \times \cC)$ to
 $((\yonedaop{\cD} \b F)g) \b f \in \mor \Set$.
 Also, $((\yonedaop{\cD} \b F)g) \b f = \yonedaop{(Fg)} \b f = Fg \c \Endash \c f$ holds.
 Therefore, the action on morphisms of $R (\yonedaop{\cD} \b F)$ is
 $\mor (\cD^\op \times \cC) \ni \braket{f,g} \mapsto Fg \c \Endash \c f \in \mor \Set$,
 which is equal to the action on morphisms of the functor $\cD(\Endash,F\Enndash)$.
 Thus, we have $R (\yonedaop{\cD} \b F) = \cD(\Endash,F\Enndash)$.
 Similarly, we have $R (\yonedaop{\cD} \b G) = \cD(\Endash,G\Enndash)$.}.
 Therefore, the first ``$\Leftrightarrow$'' in Eq.~\eqref{eq:YonedaopCongFunc} holds.
\end{proof}

Since the Yoneda embedding $\yonedaop{\cD} \b \Endash \colon \Func{\cC}{\cD}
\to \Func{\cC}{\hat{\cD}}$ is fully faithful, for any natural transformation
$\theta \colon \cD(\Endash,F\Enndash) \nto \cD(\Endash,G\Enndash)$, we can find
a unique $\sigma \colon F \nto G$ satisfying
\begin{alignat}{1}
 \footnoteinset{0.00}{0.3}{\eqref{eq:basic_full_faithful}}{%
 \InsertPDF{repr_yoneda_func_iso_tau.pdf}} \raisebox{1em}{.}
 \label{eq:repr_yoneda_func_iso_tau}
\end{alignat}
This may be easier to understand if expressed as
\begin{alignat}{1}
 \InsertPDF{repr_yoneda_dFc_dGc.pdf} \raisebox{1em}{,}
 \label{eq:repr_yoneda_dFc_dGc}
\end{alignat}
where we represent $\theta$ as the block in the shape of
``$\raisebox{-.1em}{\InsertPDFtext{report_text_repr_dFc_dGc.pdf}}$''.
The left-hand side can be interpreted as the map that sends a morphism $g \in \cD(d,Fc)$
(where $d \in \cD$ and $c \in \cC$ are arbitrary) to the morphism $\theta_{d,c}(g) \in \cD(d,Gc)$
(by substituting $d$ and $c$ for $\Endash$ and $\Enndash$ respectively).
The right-hand side can be interpreted as the map that sends a morphism $g$ to
$\sigma_c g \in \cD(d,Gc)$.

\subsection{Universal properties} \label{subsec:repr_cG}

\subsubsection{Comma category $c \comma G$} \label{subsubsec:repr_cG_cG}

Let us arbitrarily choose an object $c \in \cC$ and a functor $G \colon \cD \to \cC$.
The category defined as follows is called a \termdef{comma category} and is denoted by $c \comma G$:
\begin{itemize}
 \item Its each object is a pair, $\braket{x,a}$, of an object $x \in \cD$
       and a morphism $a \in \cC(c,Gx)$.
 \item Its each morphism from an object $\braket{x,a}$ to an object $\braket{y,a'}$ is
       a morphism $f \in \cD(x,y)$ in $\cD$ satisfying%
       \footnote{Since the domain and the codomain of each morphism in $c \comma G$ must be
       objects in $c \comma G$, strictly speaking morphisms in $c \comma G$
       should be distinguished from morphisms in $\cD$.
       However, since it will always be clear from context which meaning is intended,
       we often represent morphisms in $c \comma G$ as morphisms in $\cD$.
       The same applies to comma categories $G \comma c$ and categories of elements mentioned later.}
       \begin{alignat}{1}
        a' = Gf \c a &\qquad\diagram\qquad
        \InsertMidPDF{repr_cG.pdf}.
        \label{eq:repr_cG}
       \end{alignat}
 \item The composite of its morphisms is the composite of morphisms in $\cD$,
       and its identity morphism is the identity morphism in $\cD$.
\end{itemize}

As mentioned earlier, we identify elements $x$ of a set $X$ with maps
$\{ * \} \ni * \mapsto x \in X$.
This identification leads to the following lemma.
\begin{lemma}{}{CommacGX}
 For any categories $\cC$ and $\cD$, object $c \in \cC$, and functor $G \colon \cD \to \cC$,
 the comma category $c \comma G$ is equal to the comma category $\{ * \} \comma \cC(c,G\Endash)$.
\end{lemma}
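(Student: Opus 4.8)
The plan is to unwind both sides of the claimed equality directly from the definitions and observe that the data defining an object or a morphism in each comma category are literally the same. This is a ``definitional identity'' rather than a substantive theorem, so the proof is short; the only thing to be careful about is the identification of elements with maps out of a singleton.

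First I would recall the two definitions side by side. An object of $c \comma G$ is a pair $\braket{x,a}$ with $x \in \cD$ and $a \in \cC(c,Gx)$. An object of $\{*\} \comma \cC(c,G\Endash)$ is, by the definition of the comma category applied to the object $\{*\} \in \Set$ and the functor $\cC(c,G\Endash) \colon \cD \to \Set$ (this is the functor $\yoneda{c} \b G$ from Eq.~\eqref{eq:basic_natural_trans_CcG}), a pair $\braket{x,a}$ with $x \in \cD$ and $a \in \Set(\{*\}, \cC(c,Gx))$. By the identification $\Set(\{*\},X) = X$ recorded in Eq.~\eqref{eq:basic_Set_object}, we have $\Set(\{*\},\cC(c,Gx)) = \cC(c,Gx)$, so the two collections of objects coincide. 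I would present this step with the string diagram already used in Eq.~\eqref{eq:repr_yoneda_init}, which exhibits $a$ simultaneously as a morphism $c \to Gx$ and as the map $\{*\} \ni * \mapsto a$.

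Next I would check morphisms. A morphism $\braket{x,a} \to \braket{y,a'}$ in $c \comma G$ is an $f \in \cD(x,y)$ with $a' = Gf \c a$, as in Eq.~\eqref{eq:repr_cG}. A morphism in $\{*\} \comma \cC(c,G\Endash)$ is an $f \in \cD(x,y)$ such that $a' = \cC(c,G\Endash)(f) \c a$, where $\cC(c,G\Endash)(f) = Gf \c \Endash$ is the map depicted in Eq.~\eqref{eq:basic_natural_trans_CcG_Gf}. Under the singleton identification, post-composing the map $a \colon \{*\} \to \cC(c,Gx)$ with $Gf \c \Endash$ is exactly the morphism $Gf \c a \in \cC(c,Gy)$; so the two conditions on $f$ are the same equation. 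Since composition and identities in both comma categories are by definition inherited from $\cD$, the two categories have the same objects, the same morphisms, and the same composition, hence are equal.

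I do not anticipate a genuine obstacle here; the ``hard part,'' such as it is, is simply being explicit that the comma-category construction $c \comma G$ presupposes $c$ is an \emph{object} of the base category, and that the phrase ``$\{*\} \comma \cC(c,G\Endash)$'' means the comma category over $\Set$ with the chosen singleton $\{*\}$ regarded as an object of $\Set$ (equivalently as a functor $\cOne \to \Set$, per the identifications in Subsection~\ref{subsec:category_category}). Once that bookkeeping is in place, the equality is immediate from Eq.~\eqref{eq:basic_Set_object} together with the description of $\cC(c,G\Endash)$ on morphisms in Eq.~\eqref{eq:basic_natural_trans_CcG_Gf}.
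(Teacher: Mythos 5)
Your proposal is correct and follows essentially the same route as the paper's proof: unwind both comma categories, use the identification $\Set(\{*\},X)=X$ from Eq.~\eqref{eq:basic_Set_object} to match objects, observe that the morphism condition $a' = (Gf \c \Endash)(a)$ is literally $a' = Gf \c a$, and note that composition and identities are inherited identically. No gaps.
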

\begin{proof}
 An object in $\{ * \} \comma \cC(c,G\Endash)$ is a pair, $\braket{x,* \mapsto a}$, of
 an object $x \in \cD$ and a map $\{ * \} \ni * \mapsto a \in \cC(c,Gx)$.
 Since we identify $* \mapsto a$ with $a$, this is equal to the object $\braket{x,a}$ in $c \comma G$.
 This can also be seen from the following equation (note that $\cC(c,G\Endash) = \yoneda{c} \b G$):
 \begin{alignat}{1}
  (* \mapsto a) = a &\qquad\diagram\qquad
  \footnoteinset{0.19}{0.3}{\eqref{eq:repr_yoneda_init_summary}}{%
  \InsertMidPDF{repr_cG_X.pdf}}.
  \label{eq:repr_cG_X}
 \end{alignat}
 Also, a morphism $f \colon \braket{x,a} \to \braket{y,a'}$ in $\{ * \} \comma \cC(c,G\Endash)$
 is a morphism $f \in \cD(x,y)$ satisfying $a' = Gf \c a$, and thus is a morphism in $c \comma G$.
 Conversely, objects and morphisms in $c \comma G$ are contained in $\{ * \} \comma \cC(c,G\Endash)$.
 Composites of morphisms and identity morphisms are also the same.
 Therefore, $c \comma G = \{ * \} \comma \cC(c,G\Endash)$ holds.
\end{proof}

\subsubsection{Universal morphisms} \label{subsubsec:repr_cG_univ}

\begin{define}{universal morphisms}{Univ}
 An initial object%
 \footnote{An \termdef{initial object} in $\cC$ is an object $a$ in $\cC$ such that
 for any $x \in \cC$, there exists a unique morphism with domain $a$ and codomain $x$.}
 $\braket{u,\eta}$ in a comma category $c \comma G$ is
 called a \termdef{universal morphism} from $c$ to $G$.
 In other words, $\braket{u,\eta}$ with $u \in \cD$ and $\eta \in \cC(c,Gu)$ is called
 a universal morphism from $c$ to $G$
 if, for any morphism $a \in \cC(c,Gx)$ with any $x \in \cD$,
 there exists a unique morphism $\ol{a} \in \cD(u,x)$ satisfying
 \begin{alignat}{1}
  a = G \ol{a} \c \eta &\qquad\diagram\qquad
  \InsertMidPDF{repr_univ.pdf},
  \tag{univ}
  \label{eq:univ}
 \end{alignat}%
 where the black circle represents $\eta$.
\end{define}

Sometimes $\eta$ is simply called a universal morphism.
In this paper, when we mention the universal property of $\braket{u,\eta}$,
we are often referring to Eq.~\eqref{eq:univ}.
It follows from Eq.~\eqref{eq:univ} that $a$ and $\ol{a}$ correspond one-to-one.
In particular, $a$ is $\eta$ if and only if the corresponding morphism $\ol{a}$ is $\id_u$.

\begin{ex}{}{}
 Let us restate the second equality of Eq.~\eqref{eq:repr_yoneda_init_summary}:
 \begin{alignat}{1}
  \InsertPDF{repr_yoneda_init2.pdf} \raisebox{1em}{.}
  \label{eq:repr_yoneda_init2}
 \end{alignat}
 Comparing it with Eq.~\eqref{eq:univ}, it can be seen that $\braket{x,a}$ is an object
 in the comma category $\{ * \} \comma \yoneda{c}$.
 In this example, $\ol{a}$ in Eq.~\eqref{eq:univ} is $a$ itself.
 The black circle represents the object $\braket{c,\id_c}$ in $\{ * \} \comma \yoneda{c}$,
 which can be understood as an initial object,
 i.e., a universal morphism from $\{ * \}$ to $\yoneda{c}$.
\end{ex}
\begin{ex}{the Yoneda lemma}{ReprYonedaUniv}
 For any object $c \in \cC$ and set-valued functor $X \colon \cC \to \Set$,
 there is a one-to-one correspondence between $x \in Xc$ and
 $\ol{x} \coloneqq \alpha_{X,c}^{-1}(x) \in \Func{\cC}{\Set}(\yoneda{c},X)$
 (recall Eq.~\eqref{eq:repr_yoneda_cong} or Eq.~\eqref{eq:repr_yoneda_cong2}).
 This correspondence is expressed by
 \begin{alignat}{1}
  \InsertMidPDF{repr_yoneda_init3.pdf}
  &\qquad\myLeftrightarrow{equivalent}\qquad
  \InsertMidPDF{repr_yoneda_init4.pdf},
  \nonumber \\
  \label{eq:repr_yoneda_init3}
 \end{alignat}
 where the black circle represents the identity morphism $\id_c$.
 Equation~\eqref{eq:repr_yoneda_init3} expresses the same equation in two ways using
 $\ev_c = \Endash \b c$ (recall Example~\ref{ex:FuncEval}).
 The right-hand side of the symbol ``$\Leftrightarrow$'' corresponds to Eq.~\eqref{eq:repr_yoneda_alpha}.
 It follows from Eq.~\eqref{eq:repr_yoneda_init3} that the black circle represents
 the object $\braket{\yoneda{c},\id_c}$ in the comma category $\{ * \} \comma \ev_c$,
 which is an initial object, i.e., a universal morphism from $\{ * \}$ to $\ev_c$.
\end{ex}

\begin{ex}{}{ReprUnivFF}
 Equation~\eqref{eq:basic_full_faithful} means that for each $a \in \cC$, $\braket{a,\id_{Fa}}$
 is a universal morphism from $Fa$ to $F$
 (please substitute $\braket{u,\eta}$ in Eq.~\eqref{eq:univ} with $\braket{a,\id_{Fa}}$).
 Thus, it follows that a functor $F \colon \cC \to \cD$ is fully faithful if and only if
 $\braket{a,\id_{Fa}}$ is a universal morphism from $Fa$ to $F$ for each $a \in \cC$.
\end{ex}

A universal morphism from $c$ to $G$ is an initial object in a comma category $c \comma G$,
and thus is essentially unique if it exists%
\footnote{''Essentially'' means ``up to isomorphism''.
We here used the fact that an initial object in any category is essentially unique.}.
More specifically, when $\braket{u,\eta}$ is a universal morphism from $c$ to $G$,
for any universal morphism $\braket{u',\eta'}$ from $c$ to $G$,
there exists a unique isomorphism $\psi \in \cD(u,u')$ such that
\begin{alignat}{1}
 \eta' = G\psi \c \eta
 &\qquad\diagram\qquad
 \InsertMidPDF{repr_cG_init_cong.pdf},
 \label{eq:repr_cG_init_cong}
\end{alignat}
where the black circle represents $\eta$, and the diamond block represents $\psi$
(note that in this paper, an isomorphism is often represented by a diamond block).
Therefore, objects $u$ and $u'$ in $\cD$ are isomorphic.
In particular, if $\eta$ is an isomorphism, then so is $\eta'$.
Conversely, for a universal morphism $\braket{u,\eta}$, $\braket{u',\eta'}$ represented
in the form of Eq.~\eqref{eq:repr_cG_init_cong} is also a universal morphism from $c$ to $G$.

For a set-valued functor $X \colon \cC \to \Set$, the \termdef{category of elements} $\el(X)$
can be defined as the comma category $\{ * \} \comma X$.
From Lemma~\ref{lemma:CommacGX}, we have $\el(\cC(c,G\Endash)) = c \comma G$.

\subsubsection{Representable functors} \label{subsubsec:repr_repr_repr}

For a set-valued functor $X \colon \cC \to \Set$,
if there exists an object $u \in \cC$ and a natural isomorphism $\sigma \colon \yoneda{u} \ntocong X$,
then we call $X$ \termdef{representable} and the pair $\braket{u,\sigma}$
a \termdef{representation} of $X$.
$\yoneda{u}$ itself is obviously representable.
Each component $\sigma_c$ of $\sigma$ (where $c \in \cC$) is an invertible map
from $\yoneda{u}(c) = \cC(u,c)$ to $Xc$.

\begin{thm}{}{ReprCG}
 For any categories $\cC$ and $\cD$, object $c \in \cC$, and functor $G \colon \cD \to \cC$,
 we have
 \begin{alignat}{1}
  \text{a universal morphism from $c$ to $G$ exists}
  &\quad\Leftrightarrow\quad \text{$\cC(c,G\Endash)$ is representable},
  \label{eq:repr_CG_repr}
 \end{alignat}
 or more specifically, we have that for any $u \in \cD$ and
 $\sigma \colon \yoneda{u} \nto \cC(c,G\Endash)$,
 \begin{alignat}{1}
  \lefteqn{ \text{$\braket{u,\alpha_{X,u}(\sigma)}$ is
  a universal morphism from $c$ to $G$} }
  \nonumber \\
  &\quad\Leftrightarrow\quad
  \text{$\braket{u,\sigma}$ is a representation for $\cC(c,G\Endash)$},
  \label{eq:repr_univ_usigma}
 \end{alignat}
 where $X \coloneqq \cC(c,G\Endash)$.
\end{thm}
\begin{proof}
 From Lemma~\ref{lemma:CommacGX}, we have $c \comma G = \{ * \} \comma X$,
 and being a universal morphism from $c$ to $G$ is equivalent to being an initial object in
 $c \comma G = \{ * \} \comma X$.
 Thus, Eq.~\eqref{eq:repr_univ_usigma} can be rewritten as
 \begin{alignat}{1}
  \text{$\braket{u,\alpha_{X,u}(\sigma)}$ is an initial object in $\{ * \} \comma X$}
  &\quad\Leftrightarrow\quad
  \text{$\braket{u,\sigma}$ is a representation for $X$}.
  \label{eq:repr_univ_usigma_another}
 \end{alignat}
 Also, if Eq.~\eqref{eq:repr_univ_usigma_another} holds, then the ``$\Leftarrow$'' direction in
 Eq.~\eqref{eq:repr_CG_repr} obviously holds.
 The ``$\Rightarrow$'' direction in Eq.~\eqref{eq:repr_CG_repr} holds since the Yoneda map $\alpha_{X,u}$
 is invertible by the Yoneda lemma (see Theorem~\ref{thm:Yoneda})%
 \footnote{Indeed, if $\{ * \} \comma X$ has an initial object $\braket{u,\eta}$
 (where $u \in \cD$ and $\eta \in Xu$), then
 since $\braket{u,\eta}$ is equal to $\braket{u,\alpha_{X,u}(\sigma)}$ with
 $\sigma \coloneqq \alpha_{X,u}^{-1}(\eta)$,
 $\braket{u,\sigma}$ is a representation for $X$ by Eq.~\eqref{eq:repr_univ_usigma_another}.}.
 In what follows, we show Eq.~\eqref{eq:repr_univ_usigma_another}.

 Arbitrarily choose an object $u \in \cD$ and a natural transformation
 $\sigma \colon \yoneda{u} \nto X$.
 Then, since $\alpha_{X,u}(\sigma)$ is in $Xu$, $\braket{u,\alpha_{X,u}(\sigma)}$ is
 an object in $\{ * \} \comma X$.
 $\braket{u,\alpha_{X,u}(\sigma)}$ is an initial object of $\{ * \} \comma X$
 if and only if for any $\braket{x,a} \in \{ * \} \comma X$
 (i.e., for any $x \in \cD$ and $a \in Xx$), there exists a unique $\ol{a} \in \cD(u,x)$
 satisfying
 \begin{alignat}{1}
  a = X \ol{a} \c \alpha_{X,u}(\sigma) &\qquad\diagram\qquad
  \footnoteinset{-0.06}{0.3}{\eqref{eq:univ}}{%
  \InsertMidPDF{repr_repr_univ.pdf}}.
  \label{eq:repr_repr_univ}
 \end{alignat}
 Note that the area enclosed by the auxiliary line represents the initial object
 $\braket{u,\alpha_{X,u}(\sigma)}$.
 The right-hand side of Eq.~\eqref{eq:repr_repr_univ} can be rewritten as
 \begin{alignat}{1}
  \footnoteinset{1.52}{0.3}{\eqref{eq:repr_yoneda_init_summary}}{%
  \InsertPDF{repr_repr_univ2.pdf}} \raisebox{1em}{.}
  \label{eq:repr_repr_univ2}
 \end{alignat}
 Therefore, $\braket{u,\alpha_{X,u}(\sigma)}$ is an initial object of $\{ * \} \comma X$
 if and only if for any $\braket{x,a} \in \{ * \} \comma X$, there exists
 a unique $\ol{a} \in \cD(u,x)$ satisfying
 \begin{alignat}{1}
  a = \sigma_x(\ol{a}) &\qquad\diagram\qquad
  \footnoteinsets{-0.01}{0.3}{\eqref{eq:repr_repr_univ}}{\eqref{eq:repr_repr_univ2}}{%
  \InsertMidPDF{repr_repr_univ3.pdf}}.
  \label{eq:repr_repr_univ3}
 \end{alignat}
 Furthermore, this is equivalent to each map $\sigma_x \colon \cD(u,x) \to Xx$ being invertible
 (i.e., an isomorphism in $\Set$)%
 \footnote{Proof: Arbitrarily choose $x \in \cD$. If for each $a \in Xx$, there exists a unique
 $\ol{a}$ satisfying Eq.~\eqref{eq:repr_repr_univ3}, then the map
 $\rho_x \colon Xx \ni a \mapsto \ol{a} \in \cD(u,x)$ is the inverse of $\sigma_x$ since
 \begin{alignat}{2}
  \sigma_x(\rho_x(a)) &= a ~(\forall a \in Xx), \qquad
  \rho_x(\sigma_x(b)) = b ~(\forall b \in \cD(u,x))
 \end{alignat}
 holds.
 Therefore, $\sigma_x$ is invertible.
 Conversely, if $\sigma_x$ is invertible, then $\ol{a}$ satisfying Eq.~\eqref{eq:repr_repr_univ3}
 (i.e., $a = \sigma_x(\ol{a})$) is uniquely determined by $\ol{a} = \sigma_x^{-1}(a)$.},
 and thus equivalent to $\sigma$ being a natural isomorphism.
 Since $\sigma$ being a natural isomorphism is equivalent to $\braket{u,\sigma}$ being
 a representation for $X$, Eq.~\eqref{eq:repr_univ_usigma_another} holds.
\end{proof}

Substituting $\eta \coloneqq \alpha_{X,u}(\sigma)$ into Eq.~\eqref{eq:repr_univ_usigma} yields
\begin{alignat}{1}
 \lefteqn{ \text{$\braket{u,\alpha_{X,u}^{-1}(\eta)}$ is a representation for $\cC(c,G\Endash)$} }
 \nonumber \\
 &\qquad\Leftrightarrow\qquad \text{$\braket{u,\eta}$ is
 a universal morphism from $c$ to $G$}.
 \label{eq:repr_univ_usigma2}
\end{alignat}
These equations indicate that two seemingly different concepts, the representation
$\braket{u,\sigma}$ of $\cC(c,G\Endash)$ and the universal morphism $\braket{u,\eta}$
from $c$ to $G$, can be interconverted via the Yoneda map $\alpha_{X,u}$ and its inverse.
The relationship between $\sigma$ and $\eta$ is represented by
\begin{alignat}{2}
 \eta = \alpha_{X,u}(\sigma) &\qquad\diagram\qquad
 \InsertMidPDF{repr_repr_eta_sigma1.pdf}, \nonumber \\
 \sigma = \alpha^{-1}_{X,u}(\eta) &\qquad\diagram\qquad
 \InsertMidPDF{repr_repr_eta_sigma2.pdf}.
 \label{eq:repr_repr_eta_sigma}
\end{alignat}
It can be seen that the black circle representing $\eta$ is connected to three wires: $G,c,u$,
while the block representing $\sigma$ is connected to three wires: $G,\yoneda{c},\yoneda{u}$.
Intuitively, it can be said that $\sigma$ corresponds to replacing the two wires $c$ and $u$ in $\eta$
with $\yoneda{c}$ and $\yoneda{u}$, respectively.

As a special case of this theorem, considering the case where $c$ is $\{ * \} \in \Set$ and
$G$ is $X \colon \cC \to \Set$ yields
\begin{alignat}{1}
 \text{a universal morphism from $\{ * \}$ to $X$ exists}
 &\qquad\Leftrightarrow\qquad \text{$X$ is representable},
 \label{eq:adj_1X_repr}
\end{alignat}
where we used the equality $\Set(\{ * \},X\Endash) = X$.
We can consider the case of a presheaf $X \colon \cC^\op \to \Set$
by simply replacing $\cC$ with $\cC^\op$;
in this case, Eq.~\eqref{eq:adj_1X_repr} still holds.

\subsubsection{Basic diagrams for universal morphisms} \label{subsubsec:repr_repr_diagram}

Assuming that there exists a universal morphism from $c \in \cC$ to $G \colon \cD \to \cC$,
we show basic diagrams for this morphism.
From Theorem~\ref{thm:ReprCG}, there exists such a universal morphism if and only if
the set-valued functor $\cC(c,G\Endash)$ is representable.
Let $\braket{u,\sigma}$ be a representation for $\cC(c,G\Endash)$; then,
$\sigma \colon \yoneda{u} \ntocong \cC(c,G\Endash)$ holds.
Using a diagram like Eq.~\eqref{eq:basic_natural_trans_CcG},
this isomorphism is expressed by
\begin{alignat}{1}
 \cD(u,\Endash) \cong \cC(c,G\Endash) &\qquad\diagram\qquad
 \InsertMidPDF{repr_cong.pdf}.
 \label{eq:repr_cong}
\end{alignat}
Let $\eta \coloneqq \alpha_{X,u}(\sigma)$; then, from Eq.~\eqref{eq:repr_univ_usigma},
$\braket{u,\eta}$ is a universal morphism from $c$ to $G$.
For each $x \in \cD$, $\sigma_x$ maps a morphism $\ol{a} \in \cD(u,x)$ to $a \in \cC(c,Gx)$ represented by
\begin{alignat}{2}
 \InsertMidPDF{repr_def_inv.pdf} &\qquad\xmapsto{\sigma_x}\qquad
 \InsertMidPDF{repr_def_inv2.pdf},
 \label{eq:repr_natural_trans_reprdef_inv}
\end{alignat}
where the black circle represents the universal morphism $\eta$.
The right-hand side of the symbol ``$\xmapsto{\sigma_x}$'' corresponds to Eq.~\eqref{eq:univ}.
Conversely, each $a \in \cC(c,Gx)$ is mapped to $\ol{a} \in \cD(u,x)$ by $\sigma^{-1}_x$,
which is represented by
\begin{alignat}{2}
 \InsertMidPDF{repr_def.pdf}
 &\qquad\xmapsto{\sigma^{-1}_x}\qquad
 \InsertMidPDF{repr_def2.pdf},
 \label{eq:repr_natural_trans_reprdef}
\end{alignat}
where we represent the natural transformation $\sigma^{-1}$ with the blue block
in the shape of ``$\raisebox{-.1em}{\InsertPDFtext{report_text_repr_def.pdf}}$''.
The morphism $a$ inside this block is an input to $\sigma^{-1}$.

For any $a \in \cC(c,Gx)$ and $b \in \cD(u,x)$,
we have $\sigma_x(\sigma_x^{-1}(a)) = a$ and $\sigma_x^{-1}(\sigma_x(b)) = b$, i.e.,
\begin{alignat}{1}
 \InsertPDF{repr_twice.pdf} \raisebox{1em}{.}
 \label{eq:repr_twice}
\end{alignat}
Thus, we have for any $b,b' \in \cD(u,x)$,
\begin{alignat}{1}
 \InsertMidPDF{repr_univ_eq1.pdf}
 &\qquad\Rightarrow\qquad
 \InsertMidPDF{repr_univ_eq2.pdf}.
 \label{eq:repr_univ_eq}
\end{alignat}
Also, for each $f \in \cD(x,y)$, from
\begin{alignat}{1}
 \InsertPDF{repr_natural_trans_inv2.pdf} \raisebox{1em}{,}
 \label{eq:repr_natural_trans_inv2}
\end{alignat}
we have
\begin{alignat}{1}
 \footnoteinset{-2.71}{0.3}{\eqref{eq:repr_natural_trans_reprdef}}{%
 \footnoteinset{-0.46}{0.3}{\eqref{eq:repr_natural_trans_reprdef}}{%
 \footnoteinset{2.71}{0.3}{\eqref{eq:repr_natural_trans_inv2}}{%
 \InsertPDF{repr_natural_trans2.pdf}}}} \raisebox{1em}{.}
 \label{eq:repr_natural_trans2}
\end{alignat}
Equation~\eqref{eq:repr_natural_trans2} is nothing but naturality of $\sigma^{-1}$,
i.e., $(f \c \Endash) \c \sigma^{-1}_x = \sigma^{-1}_y \c (Gf \c \Endash)$.

\begin{ex}{the Yoneda lemma}{}
 In Eq.~\eqref{eq:repr_yoneda_cong2}, we introduced a blue block in the shape of
 ``$\raisebox{-.1em}{\InsertPDFtext{report_text_repr_yoneda_cong2.pdf}}$''.
 It can be considered as a special case of the blue block introduced
 in Eq.~\eqref{eq:repr_natural_trans_reprdef}.
 Indeed, from Example~\ref{ex:ReprYonedaUniv}, $\braket{\yoneda{c},\id_c}$ is
 a universal morphism from $\{ * \}$ to $\ev_c$.
 Considering Eq.~\eqref{eq:repr_natural_trans_reprdef} for this universal morphism
 (by substituting $\{ * \}$ for $c$ and $\ev_c$ for $G$)
 yields the right-hand side of the comma in Eq.~\eqref{eq:repr_yoneda_cong2}.
\end{ex}

Adjoints, (co)limits, and Kan extensions, which will be discussed in Sections
\ref{sec:adj}, \ref{sec:limit}, and \ref{sec:Kan}, can be seen as
special cases of universal morphisms.
The relationships between these concepts and universal morphisms are summarized in Table~\ref{tab:repr_examples}.
This table also shows the diagrams mainly used in this paper.
As will be shown in Subsubsection~\ref{subsubsec:Kan_property_adj}, adjoints and limits are
also special cases of Kan extensions.
\begin{table}[hbt]
 \centering
 \caption{Relationships among universal morphisms, adjoints, (co)limits, and Kan extensions
 (the equations within the brackets correspond to each other).}
 \label{tab:repr_examples}
 \begin{tabular}{c|l|c}
  \bhline{1pt}
  universal morphisms & \multirow{2}{*}{
      \begin{minipage}{18mm}
       \centering\InsertMidPDF{repr_univ_general.pdf}
      \end{minipage}}
      & a universal morphism $\braket{u,\eta}$ \\
  & & from $c \in \cC$ to $G \colon \cD \to \cC$ \\
  & & [Eq.~\eqref{eq:univ}, ~Eq.~\eqref{eq:repr_cong}] \\ \hline
  left adjoints & \multirow{2}{*}{
      \begin{minipage}{18mm}
       \centering\InsertMidPDF{repr_univ_adj.pdf}
      \end{minipage}}
      & a universal morphism $\braket{u_c,\eta_c}$ \\
  (Theorem~\ref{thm:AdjNasUniv})
  & & from $c \in \cC$ to $G \colon \cD \to \cC$ (for each $c$) \\
  & & ($F \colon \cC \to \cD$ with $Fc = u_c$ is a left adjoint to $G$) \\
  & & [Eq.~\eqref{eq:adj_conj_inv}, ~Eq.~\eqref{eq:adj_def_nat2}] \\ \hline
  colimits & \multirow{2}{*}{
      \begin{minipage}{18mm}
       \centering\InsertMidPDF{repr_univ_colim.pdf}
      \end{minipage}}
      & a universal morphism $\braket{d,\eta}$ \\
  (Subsubsection~\ref{subsubsec:limit_def_colimit})
  & & from $D \in \Func{\cJ}{\cC}$ to
          $\Delta_\cJ = \Endash \b {!} \colon \cC \to \Func{\cJ}{\cC}$ \\
  & & (i.e., a colimit of $D$) \\
  & & [Eq.~\eqref{eq:limit_repr_terminal_op}, ~Eq.~\eqref{eq:limit_repr_cong_op}] \\ \hline
  left Kan extensions & \multirow{2}{*}{
      \begin{minipage}{18mm}
       \centering\InsertMidPDF{repr_univ_Kan.pdf}
      \end{minipage}}
      &  a universal morphism $\braket{L,\eta}$ \\
  (Definition~\ref{def:Kan})
  & & from $F \in \Func{\cC}{\cE}$ to
          $\Endash \b K \colon \Func{\cD}{\cE} \to \Func{\cC}{\cE}$ \\
  & & (i.e., a left Kan extension of $F$ along $K$) \\
  & & [Eq.~\eqref{eq:Kan_left_universal}, ~Eq.~\eqref{eq:Kan_left_cong}] \\
  \bhline{1pt}
 \end{tabular}
\end{table}

\subsubsection{Comma category $G \comma c$} \label{subsubsec:repr_cG_Gc}

We can consider an upside-down version of the comma category $c \comma G$:
\begin{itemize}
 \item Its each object is a pair, $\braket{x,a}$, of an object $x \in \cD$ and
       a morphism $a \in \cC(Gx,c)$.
 \item Its each morphism from an object $\braket{x,a}$ to an object $\braket{y,a'}$
       is a morphism $f \in \cD(x,y)$ in $\cD$ satisfying
       \begin{alignat}{1}
        a' \c Gf = a &\qquad\diagram\qquad
        \InsertMidPDF{repr_Gc.pdf}.
        \label{eq:repr_Gc}
       \end{alignat}
\item The composite of its morphisms is the composite of morphisms in $\cD$,
      and its identity morphism is the identity morphism in $\cD$.
\end{itemize}
This category is denoted by $G \comma c$ and is also called a comma category.
Note that $G \comma c$ is not exactly the opposite of $c \comma G$
(please compare Eq.~\eqref{eq:repr_cG} and Eq.~\eqref{eq:repr_Gc}).

\begin{define}{universal morphisms}{UnivOp}
 A terminal object%
 \footnote{A \termdef{terminal object} in $\cC$ is an object $a$ in $\cC$ such that
 for any $x \in \cC$, there exists a unique morphism with domain $x$ and codomain $a$.}
 $\braket{u,\varepsilon}$ (or simply $\varepsilon$)
 in a comma category $G \comma c$ is called a universal morphism from $G$ to $c$.
 In other words, $\braket{u,\varepsilon}$ with $u \in \cD$ and $\varepsilon \in \cC(Gu,c)$
 is called a universal morphism from $G$ to $c$
 if, for any morphism $a \in \cC(Gx,c)$ with any $x \in \cD$,
 there exists a unique morphism $\ol{a} \in \cD(x,u)$ satisfying
\begin{alignat}{1}
 a = \varepsilon \c G \ol{a} &\qquad\diagram\qquad
 \InsertMidPDF{repr_univ_op.pdf},
 \label{eq:univ_op}
\end{alignat}%
 where the black circle represents $\varepsilon$.
\end{define}

Since terminal objects are essentially unique, universal morphisms from $G$ to $c$ are
also essentially unique.
It follows from the dual of Lemma~\ref{lemma:CommacGX} that $G \comma c$ is equal to
the comma category $(\cC(G\Endash,c))^\op \comma \{ * \}$
(where $(\cC(G\Endash,c))^\op \colon \cD \to \Set^\op$ and $\{ * \} \in \Set^\op$).
Also, as the dual of Theorem~\ref{thm:ReprCG}, we have
\begin{alignat}{1}
 \text{a universal morphism from $G$ to $c$ exists}
 &\qquad\Leftrightarrow\qquad \text{$\cC(G\Endash,c)$ is representable}.
 \label{eq:adj_CG_repr_op}
\end{alignat}

By vertically flipping diagrams for a universal morphism from $c$ to $G$,
we obtain corresponding diagrams for a universal morphism from $G$ to $c$.
For example, if $\braket{u,\sigma}$ is a representation for the functor $\cC(G\Endash,c)$,
then any object $\braket{x,a}$ in $G \comma c$ is mapped to the following morphism
$\ol{a} \in \cD(x,u)$ by $\sigma^{-1}_x$:
\begin{alignat}{2}
 \InsertMidPDF{repr_def_op.pdf}
 &\qquad\xmapsto{\sigma^{-1}_x}\qquad
 \InsertMidPDF{repr_def2_op.pdf},
 \label{eq:repr_def_op}
\end{alignat}
which corresponds to the upside-down version of Eq.~\eqref{eq:repr_natural_trans_reprdef}.

The category of elements of a set-valued functor $Y \colon \cC^\op \to \Set$, $\el(Y)$, is
defined as the comma category $Y^\op \comma \{ * \}$ with $\{ * \} \in \Set^\op$.
Then, we have $\el(\cC(G\Endash,c)) = G \comma c$.
Some basic properties of comma categories will be discussed in
Subsubsection~\ref{subsubsec:Kan_pointwise_comma}.

\subsection{Basic properties of universal morphisms} \label{subsec:repr_univ} 

We show some basic properties of universal morphisms as the following three lemmas,
which will be used in the following sections.

\begin{lemma}{}{ReprUnivCirc}
 Consider any object $c \in \cC$ and functors $G \colon \cD \to \cC$ and $G' \colon \cD' \to \cD$.
 Assume that there exists a universal morphism $\braket{u,\eta}$ from $c$ to $G$;
 then, the following properties hold:
 \begin{enumerate}
  \item If there exists a universal morphism $\braket{u',\eta'}$ from $u$ to $G'$,
        then $\braket{u',v}$ with $v \coloneqq (G \b \eta') \c \eta$
        is a universal morphism from $c$ to $G \b G'$.
  \item If there exists a universal morphism $\braket{u',v}$ from $c$ to $G \b G'$,
        then there exists a universal morphism $\braket{u',\eta'}$ from $u$ to $G'$
        satisfying $v = (G \b \eta') \c \eta$.
 \end{enumerate}
\end{lemma}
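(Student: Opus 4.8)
The plan is to reduce everything to the defining factorization property \eqref{eq:univ}, so that Part~1 becomes ``compose the two universal factorizations'' and Part~2 becomes ``split one universal factorization through the other.'' For Part~1 fix $x\in\cD'$ and an arbitrary $a\in\cC(c,(G\b G')x)=\cC(c,G(G'x))$. Universality of $\braket{u,\eta}$ from $c$ to $G$ gives a unique $a^{\sharp}\in\cD(u,G'x)$ with $a=Ga^{\sharp}\c\eta$, and then universality of $\braket{u',\eta'}$ from $u$ to $G'$ gives a unique $\ol{a}\in\cD'(u',x)$ with $a^{\sharp}=G'\ol{a}\c\eta'$. Substituting one into the other yields $a=G(G'\ol{a}\c\eta')\c\eta=(G\b G')\ol{a}\c\bigl((G\b\eta')\c\eta\bigr)=(G\b G')\ol{a}\c v$, which is \eqref{eq:univ} for $G\b G'$ and $v$. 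Uniqueness of $\ol{a}$ follows by chaining the two uniqueness clauses: any $\ol{a}'$ with $(G\b G')\ol{a}'\c v=a$ makes $G'\ol{a}'\c\eta'$ a factorization of $a$ through $\eta$, hence $G'\ol{a}'\c\eta'=a^{\sharp}$, hence $\ol{a}'=\ol{a}$. In string-diagram terms this is just stacking the two black-circle diagrams of \eqref{eq:univ}, one for $G$ above one for $G'$, so that the composite black circle is $v$.

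For Part~2 we must first produce $\eta'$, and the only available source is the universal property already in hand: applying \eqref{eq:univ} for $\braket{u,\eta}$ to the morphism $v\in\cC(c,G(G'u'))$, let $\eta'\in\cD(u,G'u')$ be the unique morphism with $v=G\eta'\c\eta=(G\b\eta')\c\eta$, which at once delivers the required identity $v=(G\b\eta')\c\eta$. It remains to verify that $\braket{u',\eta'}$ is universal from $u$ to $G'$. Given $x\in\cD'$ and $b\in\cD(u,G'x)$, push $b$ down to $\cC$ by forming $Gb\c\eta\in\cC(c,(G\b G')x)$ and factor it through $v$: there is a unique $\ol{b}\in\cD'(u',x)$ with $Gb\c\eta=(G\b G')\ol{b}\c v=G(G'\ol{b}\c\eta')\c\eta$. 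Now $b$ and $G'\ol{b}\c\eta'$ are two morphisms $u\to G'x$ whose images under $G(\Endash)\c\eta$ coincide, so the \emph{uniqueness} half of \eqref{eq:univ} for $\braket{u,\eta}$ forces $b=G'\ol{b}\c\eta'$, i.e.\ \eqref{eq:univ} for $G'$ and $\eta'$. Finally, if $b=G'\ol{b}'\c\eta'$ then $Gb\c\eta=(G\b G')\ol{b}'\c v$, so $\ol{b}'=\ol{b}$ by uniqueness of the factorization through $v$; hence $\ol{b}$ is unique and $\braket{u',\eta'}$ is universal.

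The one step that needs care is precisely this last invocation in Part~2: existence of a factorization of $Gb\c\eta$ through $\eta$ is automatic, but to conclude $b=G'\ol{b}\c\eta'$ (rather than merely something equal to it after applying $G(\Endash)\c\eta$) one genuinely needs that such factorizations are unique, i.e.\ that $\braket{u,\eta}$ is a universal and not just a weakly universal morphism; one also has to note that the $\ol{b}$ obtained from factoring through $v$ is the same morphism that witnesses \eqref{eq:univ} for $\braket{u',\eta'}$. Everything else is routine concatenation of universal factorizations. If a more structural phrasing is preferred, both parts follow from Theorem~\ref{thm:ReprCG}: the hypotheses are equivalent to natural isomorphisms $\cD(u,\Endash)\cong\cC(c,G\Endash)$ and $\cD'(u',\Endash)\cong\cD(u,G'\Endash)$; post-composing the first with $G'$ and composing gives $\cD'(u',\Endash)\cong\cC(c,(G\b G')\Endash)$, and unwinding the associated universal morphism via \eqref{eq:repr_repr_eta_sigma} recovers $v=(G\b\eta')\c\eta$, while for Part~2 one composes a representation of $\cC(c,(G\b G')\Endash)$ at $u'$ with the inverse of the (post-composed) isomorphism $\cD(u,\Endash)\cong\cC(c,G\Endash)$ to obtain a representation of $\cD(u,G'\Endash)$ at $u'$, that is, a universal morphism from $u$ to $G'$.
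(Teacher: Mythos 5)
Your proof is correct and follows essentially the same route as the paper's: Part~1 by chaining the two universal factorizations (first through $\eta$, then through $\eta'$), and Part~2 by defining $\eta'$ via the universal property of $\eta$ applied to $v$, factoring $Gb \c \eta$ through $v$, and then invoking the uniqueness half of the universal property of $\eta$ to upgrade the equality back to $\cD(u,G'x)$ --- which is exactly the step the paper handles via Eq.~\eqref{eq:repr_univ_eq}. The alternative representability phrasing you sketch at the end is a valid reformulation but is not needed.
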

Note that the relationships between the universal morphisms $\braket{u,\eta}$, $\braket{u',\eta'}$,
and $\braket{u',v}$ can be represented by
\begin{alignat}{1}
 \InsertPDF{repr_univ_circ_u.pdf} \raisebox{1em}{,}
 \label{eq:repr_univ_circ_u}
\end{alignat}
where the black circle represents $\eta$.
Property~(1) asserts that $v \coloneqq (G \b \eta') \c \eta$, which is something corresponding to
the ``vertical composite of universal morphisms $\eta$ and $\eta'$'', is a universal morphism.
Conversely, Property~(2) asserts that if such a universal morphism $v$ exists, then $v$ can be
decomposed into two universal morphisms $\eta$ and $\eta'$.
\begin{proof}
 (1): For any object $\braket{x,a}$ in $c \comma (G \b G')$, there exist
 unique morphisms $\ol{a} \in \cD(u,G'x)$ and $\ol{a'} \in \cD'(u',x)$
 satisfying
 \begin{alignat}{1}
  \footnoteinset{-1.77}{0.3}{\eqref{eq:univ}}{%
  \footnoteinset{1.53}{0.3}{\eqref{eq:univ}}{%
  \InsertPDF{repr_univ_circ.pdf}}} \raisebox{1em}{,}
  \label{eq:repr_univ_circ}
 \end{alignat}
 where the black circles represent $\eta$, and the morphism
 enclosed by the auxiliary line is $v$.
 Therefore, $\braket{u',v}$ is a universal morphism from $c$ to $G \b G'$.

 (2): By the universal property of $\eta$, there exists a unique $\eta' \in \cD(u,G'u')$
 satisfying Eq.~\eqref{eq:repr_univ_circ_u} (i.e., $v = (G \b \eta') \c \eta$).
 For any object $\braket{x,a}$ in $u \comma G'$, there exists a unique morphism $\ol{a} \in \cD'(u',x)$
 satisfying
 \begin{alignat}{1}
  \footnoteinset{-1.64}{0.35}{\eqref{eq:univ}}{%
  \footnoteinset{1.80}{0.35}{\eqref{eq:repr_univ_circ_u}}{%
  \InsertPDF{repr_univ_decomp.pdf}}} \raisebox{1em}{,}
  \label{eq:repr_univ_decomp}
 \end{alignat}
 where the first equality follows from the universal property of $v$ for the morphism
 enclosed by the auxiliary line.
 Therefore, by the universal property of $\eta$ (recall Eq.~\eqref{eq:repr_univ_eq}),
 $\braket{u',\eta'}$ is a universal morphism from $u$ to $G'$.
\end{proof}

\begin{lemma}{}{ReprIsoUniv}
 Consider any object $c \in \cC$ and functor $G \colon \cD \to \cC$.
 Arbitrarily choose any category $\cC'$ isomorphic to $\cC$ and invertible functor $P \colon \cC \to \cC'$.
 Then, $\braket{u,\eta}$ is a universal morphism from $c$ to $G$ if and only if
 $\braket{u,P \b \eta}$ is a universal morphism from $Pc$ to $P \b G$.
 Dually, $\braket{u',\varepsilon}$ is a universal morphism from $G$ to $c$ if and only if
 $\braket{u',P \b \varepsilon}$ is a universal morphism from $P \b G$ to $Pc$.
\end{lemma}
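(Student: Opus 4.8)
The plan is to observe that when $P\colon\cC\to\cC'$ is an invertible functor, it gives a bijection between the objects and morphisms of the comma category $c\comma G$ and those of the comma category $Pc\comma(P\b G)$, and that this bijection respects the comma-category structure; hence it carries initial objects to initial objects, which is exactly the claimed equivalence. Concretely, an object of $c\comma G$ is a pair $\braket{x,a}$ with $x\in\cD$ and $a\in\cC(c,Gx)$; applying $P$ gives $\braket{x,P\b a}$ with $P\b a\in\cC'(Pc,(P\b G)x)$, and since $P$ is fully faithful (indeed invertible), the assignment $a\mapsto P\b a$ is a bijection $\cC(c,Gx)\cong\cC'(Pc,(P\b G)x)$ for each $x$. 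A morphism $f\in\cD(x,y)$ witnesses $\braket{x,a}\to\braket{y,a'}$ in $c\comma G$ iff $a'=Gf\c a$ (Eq.~\eqref{eq:repr_cG}); applying the functor $P$ to this equation and using functoriality gives $P\b a'=(P\b Gf)\c(P\b a)$, which is precisely the condition for $f$ to witness $\braket{x,P\b a}\to\braket{y,P\b a'}$ in $Pc\comma(P\b G)$. Since $P$ is invertible, this equivalence of conditions runs in both directions, so $f$ is a morphism in one comma category iff it is a morphism in the other; composites and identities are manifestly preserved.

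First I would set up this correspondence explicitly (a short paragraph identifying the object and morphism maps and checking they are mutually inverse using $P^{-1}$). Next I would invoke Definition~\ref{def:Univ}: $\braket{u,\eta}$ is a universal morphism from $c$ to $G$ iff $\braket{u,\eta}$ is an initial object of $c\comma G$; by the correspondence just established, this holds iff $\braket{u,P\b\eta}$ is an initial object of $Pc\comma(P\b G)$, i.e.\ iff $\braket{u,P\b\eta}$ is a universal morphism from $Pc$ to $P\b G$. Diagrammatically, the point is simply that placing the wire $P$ on the right of every block in Eq.~\eqref{eq:univ} turns the universal property of $\braket{u,\eta}$ into that of $\braket{u,P\b\eta}$: the equation $a=G\ol{a}\c\eta$ becomes $P\b a=(P\b G\ol{a})\c(P\b\eta)$, and uniqueness of $\ol{a}$ transfers because, $P$ being faithful, $P\b\ol{a}=P\b\ol{b}$ forces $\ol{a}=\ol{b}$, while fullness (or invertibility) guarantees every morphism of the target type arises as some $P\b\ol{a}$. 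Finally the dual statement for $G\comma c$ follows by the same argument applied to terminal objects, or simply by passing to opposite categories (replacing everything with its dual, as in the passage between Definitions~\ref{def:Univ} and \ref{def:UnivOp}).

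I do not expect any genuine obstacle here: the statement is essentially a formality about transporting a universal property along an equivalence, and invertibility of $P$ is far stronger than needed (fully faithfulness plus the relevant objects being in the image would suffice). The only thing requiring a modicum of care is making sure the naturality/uniqueness part of the universal property transfers cleanly, which is handled by faithfulness of $P$ for uniqueness and fullness of $P$ for the existence of $\ol{a}$ in the reverse direction; both are immediate from $P$ being invertible. An alternative, even shorter route is to cite Theorem~\ref{thm:ReprCG}: a universal morphism from $c$ to $G$ exists iff $\cC(c,G\Endash)$ is representable, and since $P$ is an isomorphism of categories one has $\cC(c,G\Endash)\cong\cC'(Pc,(P\b G)\Endash)$ as functors (the isomorphism being induced by $P$), so representability of one is equivalent to representability of the other, with the representing data transported by $P$; however, tracking the precise correspondence of the chosen $\eta$ versus $P\b\eta$ is cleanest via the comma-category argument above, so that is the route I would write up.
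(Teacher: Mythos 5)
Your proposal is correct and takes essentially the same approach as the paper: the paper proves the ``only if'' direction by pulling an arbitrary $a' \in \cC'(Pc,(P \b G)x)$ back along $P^{-1}$, applying the universal property of $\eta$, and pushing forward along $P$, then gets the converse by swapping $P$ for $P^{-1}$ — which is exactly the transport-along-$P$ argument you describe, merely packaged by you as an explicit isomorphism of comma categories. The dual statement is likewise dispatched in the paper by duality, as you propose.
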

\begin{proof}
 We show that $\braket{u,\eta}$ is a universal morphism from $c$ to $G$ if and only if
 $\braket{u,P \b \eta}$ is a universal morphism from $Pc$ to $P \b G$;
 the remainder of the proof is obvious from its dual%
 \footnote{Properties related to duality can be shown simply by considering their duals,
 so we usually omit their proofs.}.
 It suffices to show the ``only if'' part since the ``if'' part is obvious
 by considering $P^{-1}$ instead of $P$.
 Let $\braket{u,\eta}$ be a universal morphism from $c$ to $G$.
 Also, arbitrarily choose any $x \in \cD$ and $a' \in \cC'(Pc,(P \b G)x)$ and let
 $a \coloneqq P^{-1} \bullet a'$; then,
 there exists a unique $\ol{a}$ satisfying
 \begin{alignat}{1}
  \footnoteinset{1.85}{0.3}{\eqref{eq:univ}}{%
  \InsertPDF{repr_iso_univ.pdf}} \raisebox{1em}{,}
  \label{eq:repr_iso_univ}
 \end{alignat}
 where the black circle represents $\eta$.
 The natural transformation enclosed by the auxiliary line is $P \b \eta$.
 Therefore, $\braket{u,P \b \eta}$ is a universal morphism from $Pc$ to $P \b G$.
\end{proof}

\begin{lemma}{}{ReprFuncH}
 Consider any functors $H \colon \cJ \to \cC$ and $G \colon \cD \to \cC$.
 Assume that for each $j \in \cJ$, there exists a universal morphism
 $\braket{u_j,\eta_j}$ from $Hj$ to $G$.
 Then, the following two properties hold:
 \begin{enumerate}
  \item There exists a unique functor $F \colon \cJ \to \cD$ that maps each object $j \in \cJ$
        to $Fj = u_j$ and such that $\eta \coloneqq \{ \eta_j \}_{j \in \cJ}$ is
        a natural transformation from $H$ to $G \b F$.
        Note that $\eta$ can be represented by
        \begin{alignat}{1}
         \InsertPDF{repr_funcH_univ.pdf} \raisebox{1em}{,}
         \label{eq:repr_funcH_univ}
        \end{alignat}
        where the blue circle represents $\eta$ and the black ellipse represents $\eta_j$.
  \item $\braket{F,\eta}$ is a universal morphism from the object $H \in \Func{\cJ}{\cC}$ to
        the functor $G \b \Endash \colon \Func{\cJ}{\cD} \to \Func{\cJ}{\cC}$
        (i.e., an initial object in $H \comma (G \b \Endash)$).
 \end{enumerate}
\end{lemma}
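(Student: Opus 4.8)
The plan is to construct $F$ by transporting the defining universal property of each $\braket{u_j,\eta_j}$ along the morphisms of $\cJ$, and then to obtain Property~(2) by applying those same universal properties one component at a time. The only tool needed is the bijection furnished by Eq.~\eqref{eq:univ}: for every $x\in\cD$ the map $\cD(u_j,x)\to\cC(Hj,Gx)$, $\ol{a}\mapsto G\ol{a}\c\eta_j$, is a bijection, so a morphism out of $u_j$ is pinned down by its image under $G(-)\c\eta_j$.

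\textbf{Property (1).} First I would observe that $F$ is \emph{forced}. If $F$ is any functor with $Fj=u_j$ making $\eta=\{\eta_j\}_{j\in\cJ}$ a natural transformation $H\nto G\b F$, then for each $h\in\cJ(j,j')$ the naturality square reads $G(Fh)\c\eta_j=\eta_{j'}\c Hh$, and by the universal property of $\braket{u_j,\eta_j}$ this equation has a unique solution in $\cD(u_j,u_{j'})$; so I may simply \emph{define} $Fh\coloneqq\ol{\eta_{j'}\c Hh}$ to be that solution, whereupon $G(Fh)\c\eta_j=\eta_{j'}\c Hh$ holds by construction --- which is precisely naturality of $\eta$, the content of Eq.~\eqref{eq:repr_funcH_univ}. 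It then remains to check that $F$ is a functor. For identities, $G(\id_{u_j})\c\eta_j=\eta_j=\eta_j\c H\id_j$, so uniqueness gives $F\id_j=\id_{u_j}$. For composites, given $h\in\cJ(j,j')$ and $h'\in\cJ(j',j'')$, one computes $G(Fh'\c Fh)\c\eta_j=G(Fh')\c\bigl(G(Fh)\c\eta_j\bigr)=G(Fh')\c\eta_{j'}\c Hh=\eta_{j''}\c Hh'\c Hh=\eta_{j''}\c H(h'h)$, so uniqueness gives $F(h'h)=Fh'\c Fh$. In string-diagram form each of these is a one-line manipulation using the sliding rule and Eq.~\eqref{eq:univ}.

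\textbf{Property (2).} To see that $\braket{F,\eta}$ is an initial object in $H\comma(G\b\Endash)$ --- equivalently, a universal morphism from $H$ to $G\b\Endash$ in the sense of Eq.~\eqref{eq:univ} --- I would take an arbitrary object $\braket{F',\alpha}$ of that comma category, with $F'\colon\cJ\to\cD$ and $\alpha\colon H\nto G\b F'$, and exhibit the unique $\ol{\alpha}\colon F\nto F'$ satisfying $\alpha=(G\b\ol{\alpha})\c\eta$. For each $j$, the universal property of $\braket{u_j,\eta_j}$ applied to $\alpha_j\in\cC(Hj,GF'j)$ yields a unique $\ol{\alpha_j}\in\cD(Fj,F'j)$ with $\alpha_j=G(\ol{\alpha_j})\c\eta_j$; set $\ol{\alpha}\coloneqq\{\ol{\alpha_j}\}_{j\in\cJ}$. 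Then $\alpha=(G\b\ol{\alpha})\c\eta$ holds componentwise by construction, and uniqueness is immediate: any $\beta\colon F\nto F'$ with $\alpha=(G\b\beta)\c\eta$ satisfies $\alpha_j=G(\beta_j)\c\eta_j$, hence $\beta_j=\ol{\alpha_j}$ for all $j$. The only step with any content is naturality of $\ol{\alpha}$: for $h\in\cJ(j,j')$ one shows $F'h\c\ol{\alpha_j}$ and $\ol{\alpha_{j'}}\c Fh$ agree by comparing their images under $G(-)\c\eta_j$ --- using naturality of $\alpha$ for one side and the defining equation of $F$ from Part~(1) for the other, both landing on $\alpha_{j'}\c Hh$ --- and then cancelling via the uniqueness clause of Eq.~\eqref{eq:univ} once more.

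\textbf{Expected difficulty.} I do not expect a genuine obstacle: every step reduces to the same principle, that a morphism out of $u_j$ is determined by its $G(-)\c\eta_j$-image, so the work is organizational --- keeping track of which universal morphism is invoked where, and (in the diagrammatic presentation) arranging the picture in Eq.~\eqref{eq:repr_funcH_univ} so that $\eta_j$ and $Fh$ slide past one another correctly. Conceptually this is the standard ``build a left adjoint / pointwise Kan extension objectwise'' argument, and indeed this lemma is the engine behind the adjoint and Kan-extension rows of Table~\ref{tab:repr_examples}; the mild point worth stating explicitly is that the action of $F$ on morphisms is not an additional choice but is dictated by the demand that $\eta$ be natural.
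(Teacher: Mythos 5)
Your proposal is correct and follows essentially the same route as the paper's proof: defining $Fh$ as the unique morphism forced by the required naturality of $\eta$, verifying functoriality via the uniqueness clause of Eq.~\eqref{eq:univ}, and for Property~(2) constructing $\ol{\alpha}$ componentwise and reducing everything to the naturality check, which you resolve by comparing images under $G(\Endash)\c\eta_j$ exactly as the paper does in Eq.~\eqref{eq:repr_funcH_alpha_nat}. No gaps.
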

\begin{proof}
 (1): First, assuming that there exists a functor $F$ satisfying the conditions,
 we show that $F$ is uniquely determined.
 By assumption, we have for any $f \in \cJ(j,j')$,
 \begin{alignat}{1}
  \footnoteinset{-1.51}{0.3}{\eqref{eq:nat}}{%
  \footnoteinset{1.53}{0.3}{\eqref{eq:univ}}{%
  \InsertPDF{repr_funcH.pdf}}} \raisebox{1em}{,}
  \label{eq:repr_funcH}
 \end{alignat}
 where the black ellipses represent $\eta_j$ and $\eta_{j'}$ and $\ol{f}$ is the morphism
 uniquely determined by the universal property of $\eta_j$.
 Thus, we obtain $Ff = \ol{f}$ from Eq.~\eqref{eq:repr_univ_eq}.
 Therefore, if a functor $F$ exists, then it is uniquely determined
 as one that maps each morphism $f$ in $\cJ$ to $\ol{f}$.

 Next, we show that $F$ defined as above is a functor.
 We have for any $f \in \cJ(j,j')$ and $g \in \cJ(j',j'')$,
 \begin{alignat}{1}
  \footnoteinset{-3.04}{0.3}{\eqref{eq:nat}}{%
  \footnoteinset{0.00}{0.3}{\eqref{eq:univ}}{%
  \footnoteinset{3.04}{0.3}{\eqref{eq:univ}}{%
  \InsertPDF{repr_funcH_nat.pdf}}}} \raisebox{1em}{.}
  \label{eq:repr_funcH_nat}
 \end{alignat}
 Therefore, it follows from Eq.~\eqref{eq:repr_univ_eq} that $F(gf) = Fg \c Ff$ holds.
 Also, $F \id_j = \id_{Fj}$ obviously holds, and thus $F$ is a functor.
 It can be easily seen that $F$ satisfies the conditions.

 (2): It suffices to show that for any natural transformation $\alpha \colon H \nto G \b X$
 with any $X \colon \cJ \to \cD$,
 there exists a unique natural transformation $\ol{\alpha} \colon F \nto X$ satisfying
 \begin{alignat}{1}
  \footnoteinset{-0.14}{0.3}{\eqref{eq:univ}}{%
  \InsertPDF{repr_funcH_alpha_univ.pdf}} \raisebox{1em}{,}
  \label{eq:repr_funcH_alpha_univ}
 \end{alignat}
 where the blue circle represents $\eta$.
 Since the first and second equalities hold, it suffices to show the last equation,
 which is equivalent to $\ol{\alpha} \coloneqq \{ \ol{\alpha_j} \}_{j \in \cJ}$
 being a natural transformation.
 We have
 \begin{alignat}{1}
  \footnoteinset{-3.30}{0.3}{\eqref{eq:repr_funcH_alpha_univ}}{%
  \footnoteinset{0.00}{0.3}{\eqref{eq:sliding}}{%
  \footnoteinsets{3.31}{0.3}{\eqref{eq:repr_funcH_alpha_univ}}{\eqref{eq:nat}}{%
  \InsertPDF{repr_funcH_alpha_nat.pdf}}}} \raisebox{1em}{.}
  \label{eq:repr_funcH_alpha_nat}
 \end{alignat}
 Therefore,
 \begin{alignat}{1}
  \InsertPDF{repr_funcH_alpha_nat2.pdf}
  \label{eq:repr_funcH_alpha_nat2}
 \end{alignat}
 holds from Eq.~\eqref{eq:repr_univ_eq}, and thus $\ol{\alpha}$ is a natural transformation.
\end{proof}


\section{Adjunctions} \label{sec:adj}

Roughly speaking, an adjunction is a concept that describes the relationship between
two functors that are weakly inverse to each other.
It is well known that adjunctions can be intuitively understood through
the use of string diagrams.
Adjunctions are also closely related to monads.
String diagrams for adjunctions and monads are discussed in detail in Ref.~\cite{Mar-2014}.
In this paper, we briefly describe some basic properties of adjunctions in a self-contained manner.
Monads are not discussed in this paper, except for a brief coverage
in Subsubsection~\ref{subsubsec:Kan_property_monad}.

\subsection{Definition of adjunctions} \label{subsec:adj_def}

\begin{define}{adjunctions}{Adjunction}
 Let us consider a pair of two functors $F \colon \cC \to \cD$ and $G \colon \cD \to \cC$.
 When there is a natural isomorphism
 $\varphi \colon \cD(F\Endash,\Enndash) \ntocong \cC(\Endash,G\Enndash)$,
 we call $\braket{F,G,\varphi}$ an \termdef{adjunction}.
 Also, $F$ is called a \termdef{left adjoint} to $G$, and $G$ is called
 a \termdef{right adjoint} to $F$.
 The symbol ``$\dashv$'' is used to denote adjunctions: $F \dashv G$ means that $F$ is a left adjoint
 to $G$.
\end{define}

$\varphi = \{ \varphi_{c,d} \}_{\braket{c,d} \in \cC \times \cD}$ is
a collection of morphisms (i.e., maps) in $\Set$,
whose each component $\varphi_{c,d}$ is a bijective map from $\cD(Fc,d)$ to $\cC(c,Gd)$.
Naturality of $\varphi$ is equivalent to satisfying
\begin{alignat}{1}
 \InsertPDF{adj_def_psi.pdf}
 \label{eq:adj_def_psi}
\end{alignat}
for any $g \in \cC^\op(c,c') = \cC(c',c)$ and $h \in \cD(d,d')$,
where we used the expression for direct products (recall Subsection~\ref{subsec:category_prod}).
This equation is equivalent to satisfying
\begin{alignat}{2}
 \InsertPDF{adj_phi_nat.pdf}
 \label{eq:adj_phi_nat}
\end{alignat}
for each $f \in \cD(Fc,d)$ (see Eq.~\eqref{eq:basic_functor_Hom}).
Using the expression of Eq.~\eqref{eq:basic_natural_trans_CcG},
the isomorphism $\cD(F\Endash,\Enndash) \cong \cC(\Endash,G\Enndash)$ can be
represented by
\begin{alignat}{1}
 \InsertPDF{adj_def_nat2.pdf} \raisebox{1em}{.}
 \label{eq:adj_def_nat2}
\end{alignat}

\subsection{Necessary and sufficient conditions for adjunctions} \label{subsec:adj_nas}

Necessary and sufficient conditions for $F \dashv G$ are shown by two theorems
(Theorems~\ref{thm:AdjNasSnake} and \ref{thm:AdjNasUniv}).

\subsubsection{Preliminaries} \label{subsubsec:adj_nas_preliminary}

\begin{lemma}{}{AdjUniv}
 Consider an adjunction $\braket{F,G,\varphi}$ with $F \colon \cC \to \cD$.
 Also, let $\eta_c \coloneqq \varphi_{c,Fc}(\id_{Fc})$ and
 $\varepsilon_d \coloneqq \varphi^{-1}_{Gd,d}(\id_{Gd})$.
 Then, the following properties hold:
 \begin{enumerate}
  \item For each $c \in \cC$, $\braket{Fc,\eta_c}$ is
        a universal morphism from $c$ to $G$.
  \item For each $d \in \cD$, $\braket{Gd,\varepsilon_d}$ is
        a universal morphism from $F$ to $d$.
 \end{enumerate}
\end{lemma}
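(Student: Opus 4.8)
The plan is to prove Property~(1) directly and obtain Property~(2) by duality. For Property~(1), fix $c \in \cC$; I must verify that $\braket{Fc,\eta_c}$ satisfies the universal property~\eqref{eq:univ}: for every $x \in \cD$ and every $a \in \cC(c,Gx)$ there should be a unique $\ol{a} \in \cD(Fc,x)$ with $a = G\ol{a} \c \eta_c$, and the natural candidate is $\ol{a} \coloneqq \varphi^{-1}_{c,x}(a)$.

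The one computation that does all the work is the identity $\varphi_{c,x}(b) = Gb \c \eta_c$, valid for every $b \in \cD(Fc,x)$. It is the special case of naturality of $\varphi$ in its second variable (Eq.~\eqref{eq:adj_def_psi}, equivalently Eq.~\eqref{eq:adj_phi_nat}) with $g = \id_c$: sliding the block $b$ across the $\varphi$-box turns $\varphi_{c,x}(b \c \id_{Fc})$ into $Gb \c \varphi_{c,Fc}(\id_{Fc})$, and the right-hand factor is $\eta_c$ by definition. In terms of the diagram in Eq.~\eqref{eq:adj_def_nat2} this is a single slide of $b$ along its wire through the box. Given this identity, Property~(1) is immediate from bijectivity of $\varphi_{c,x}$: for existence take $\ol{a} \coloneqq \varphi^{-1}_{c,x}(a)$, so that $G\ol{a} \c \eta_c = \varphi_{c,x}(\ol{a}) = a$; for uniqueness, $G\ol{a}' \c \eta_c = a$ forces $\varphi_{c,x}(\ol{a}') = a = \varphi_{c,x}(\ol{a})$, hence $\ol{a}' = \ol{a}$ by injectivity. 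This is exactly~\eqref{eq:univ}, so $\braket{Fc,\eta_c}$ is a universal morphism from $c$ to $G$.

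Property~(2) is the mirror image, and I would handle it by the dual argument: naturality of $\varphi^{-1}\colon \cC(\Endash,G\Enndash) \ntocong \cD(F\Endash,\Enndash)$ in its first variable, specialized with $\id_{Gd}$, gives $\varphi^{-1}_{x,d}(b) = \varepsilon_d \c Fb$ for every $b \in \cC(x,Gd)$; then, for any $a \in \cD(Fx,d)$, the morphism $\ol{a} \coloneqq \varphi_{x,d}(a)$ is the unique one with $\varepsilon_d \c F\ol{a} = a$, which is precisely~\eqref{eq:univ_op}. Equivalently, Property~(2) is nothing but Property~(1) applied to the adjunction between opposite categories induced by $\braket{F,G,\varphi}$, so it follows by duality.

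I do not expect a genuine obstacle. The only points requiring care are choosing the correct naturality square---naturality in the $\cD$-variable for Property~(1), in the $\cC$-variable for Property~(2)---and reading off $\eta_c = \varphi_{c,Fc}(\id_{Fc})$ and $\varepsilon_d = \varphi^{-1}_{Gd,d}(\id_{Gd})$ from the degenerate case in which the sliding morphism is an identity. In string-diagram form each step is a single slide of one block, so no extended calculation is involved.
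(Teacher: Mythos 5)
Your proof is correct, but it takes a genuinely different route from the paper's. The paper proves Property~(1) in one line by invoking the machinery it has already built: $\{\varphi_{c,d}\}_{d \in \cD}$ is a natural isomorphism $\cD(Fc,\Endash) \cong \cC(c,G\Endash)$, hence $\braket{Fc,\{\varphi_{c,d}\}_d}$ is a representation of $\cC(c,G\Endash)$, and Eq.~\eqref{eq:repr_univ_usigma} (the correspondence between representations and universal morphisms established via the Yoneda lemma in Theorem~\ref{thm:ReprCG}) immediately converts this into the statement that $\braket{Fc,\eta_c}$ with $\eta_c = \varphi_{c,Fc}(\id_{Fc})$ is a universal morphism from $c$ to $G$. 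You instead unpack that correspondence by hand for this particular case: the naturality square in the $\cD$-variable specialized at $\id_{Fc}$ gives $\varphi_{c,x}(b) = Gb \c \eta_c$, and bijectivity of $\varphi_{c,x}$ then yields existence and uniqueness of $\ol{a}$ directly. Both arguments are sound and both dispose of Property~(2) by duality; yours is more elementary and self-contained (it needs only the definition of an adjunction), while the paper's is shorter and makes visible that this lemma is a special case of the general representability--universality dictionary. Your key identities $\varphi_{c,x}(b) = Gb \c \eta_c$ and $\varphi^{-1}_{x,d}(b) = \varepsilon_d \c Fb$ are exactly the ones the paper records afterwards as Eqs.~\eqref{eq:adj_conj_inv} and \eqref{eq:adj_conj}, so your computation is not wasted --- it just appears in the paper as a consequence of the lemma rather than as its proof.
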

\begin{proof}
 (1): For each $c \in \cC$, $\{ \varphi_{c,d} \}_{d \in \cD}$ is a natural isomorphism
 from $\cD(Fc,\Endash)$ to $\cC(c,G\Endash)$.
 Thus, $\braket{Fc, \{ \varphi_{c,d} \}_{d \in \cD}}$ is a representation for $\cC(c,G\Endash)$.
 Therefore, from Eq.~\eqref{eq:repr_univ_usigma}, $\braket{Fc, \eta_c}$ is a universal morphism
 from $c$ to $G$.

 (2): The dual of (1) proves (2).
\end{proof}

For each $c \in \cC$ and $d \in \cD$, 
$f \in \cD(Fc,d)$ and $\ol{f} \coloneqq \varphi_{c,d}(f) \in \cC(c,Gd)$
correspond one-to-one.
Using the diagram in Eq.~\eqref{eq:repr_natural_trans_reprdef_inv} to represent the universal morphism
$\braket{Fc,\eta_c}$, we obtain
\begin{alignat}{2}
 \InsertMidPDF{adj_conj_inv.pdf}
 &\qquad\xmapsto{\varphi_{c,d}}\qquad
 \InsertMidPDF{adj_conj_inv2.pdf},
 \label{eq:adj_conj_inv}
\end{alignat}
where the area enclosed by the gray dotted line represents the morphism $\eta_c$.
The right-hand side of the symbol ``$\xmapsto{\varphi_{c,d}}$''
corresponds to Eq.~\eqref{eq:univ}.
Similarly, using the universal property of $\braket{Gd, \varepsilon_d}$, we obtain
\begin{alignat}{2}
 \InsertMidPDF{adj_conj.pdf}
 &\qquad\xmapsto{\varphi_{c,d}^{-1}}\qquad
 \InsertMidPDF{adj_conj2.pdf},
 \label{eq:adj_conj}
\end{alignat}
where the area enclosed by the gray dotted line represents the morphism $\varepsilon_d$.
The right-hand side of the symbol ``$\xmapsto{\varphi_{c,d}^{-1}}$''
corresponds to Eq.~\eqref{eq:univ_op}.
Here, the morphisms $\eta_c$ and $\varepsilon_d$ are represented by special-shaped blocks;
it will soon become clear that these representations help with intuitive understanding
and that the gray dotted lines can be considered simply as auxiliary lines.
From Eq.~\eqref{eq:adj_conj_inv}, Eq.~\eqref{eq:adj_phi_nat} can be expressed by
\begin{alignat}{2}
 \InsertMidPDF{adj_phi_nat_elegant.pdf} \qquad (\forall f \in \cD(Fc,d)).
 \label{eq:adj_phi_nat_elegant}
\end{alignat}
Similarly, using the notation of Eq.~\eqref{eq:adj_conj}, naturality of
$\varphi^{-1} = \{ \varphi^{-1}_{c,d} \}_{c \in \cC, d \in \cD}$ is represented
by the fulfillment of
\begin{alignat}{2}
 \InsertMidPDF{adj_phi_nat_elegant_inv.pdf} \qquad (\forall \ol{f} \in \cC(c,Gd))
 \label{eq:adj_phi_nat_elegant_inv}
\end{alignat}
for any $g \in \cC^\op(c,c') = \cC(c',c)$ and $h \in \cD(d,d')$.
 
\subsubsection{Necessary and sufficient conditions using unit and counit}

The first necessary and sufficient condition uses two natural transformations
$\eta$ and $\varepsilon$.

\begin{thm}{}{AdjNasSnake}
 Consider a pair of two functors $F \colon \cC \to \cD$ and $G \colon \cD \to \cC$.
 The following are equivalent:
 \begin{enumerate}
  \item $F \dashv G$ holds.
  \item There exist two natural transformations $\eta \colon \id_\cC \nto G \b F$ and
        $\varepsilon \colon F \b G \nto \id_\cD$ satisfying
        \begin{alignat}{1}
         \lefteqn{ (G \b \varepsilon) \c (\eta \b G) = \id_G,
         ~(\varepsilon \b F) \c (F \b \eta) = \id_F } \nonumber \\
         &\diagram\qquad
         \InsertMidPDF{adj_zigzag.pdf},
         \tag{zigzag}
         \label{eq:zigzag}
        \end{alignat}%
        where  $\eta$ and $\varepsilon$ are represented by the following semicircular blocks:
        \begin{alignat}{1}
         \InsertPDF{adj_eta_epsilon.pdf} \raisebox{1em}{.}
         \label{eq:adj_eta_epsilon}
        \end{alignat}
 \end{enumerate}
\end{thm}
$\eta$ is called the \termdef{unit} of the adjunction $F \dashv G$,
and $\varepsilon$ is called its \termdef{counit}.
Intuitively, $\varepsilon$ can be interpreted as the ``natural transformation obtained by rotating
$\eta$ by 180 degrees''.
Equation~\eqref{eq:zigzag} is often called the \termdef{zigzag} (or triangle) \termdef{identities}.
\begin{proof}
 $(1) \Rightarrow (2)$:
 For each $d \in \cD$, by setting $c = Gd$ and $\ol{f} = \id_{Gd}$
 in Eqs.~\eqref{eq:adj_conj_inv} and \eqref{eq:adj_conj}, we obtain
 \begin{alignat}{1}
  \footnoteinsets{0.63}{0.3}{\eqref{eq:adj_conj}}{\eqref{eq:adj_conj_inv}}{%
  \InsertPDF{adj_zigzag_proof1.pdf}} \raisebox{1em}{.}
  \label{eq:adj_zigzag_proof1}
 \end{alignat}
 Similarly, for each $c \in \cC$, by setting $d = Fc$ and $f = \id_{Fc}$, we obtain
 \begin{alignat}{1}
  \footnoteinsets{0.53}{0.3}{\eqref{eq:adj_conj_inv}}{\eqref{eq:adj_conj}}{%
  \InsertPDF{adj_zigzag_proof2.pdf}} \raisebox{1em}{.}
  \label{eq:adj_zigzag_proof2}
 \end{alignat}
 It remains to show that $\eta \coloneqq \{ \eta_c \}_{c \in \cC}$ and
 $\varepsilon \coloneqq \{ \varepsilon_d \}_{d \in \cD}$ are natural transformations,
 as Eq.~\eqref{eq:zigzag} obviously holds from Eqs.~\eqref{eq:adj_zigzag_proof1} and
 \eqref{eq:adj_zigzag_proof2} if they are.
 Substituting $d = d' = Fc$ and $f = h = \id_{Fc}$ into Eq.~\eqref{eq:adj_phi_nat_elegant}
 yields the naturality condition for $\eta$
 (i.e., $\eta_c \c g = (G \b F)g \c \eta_{c'}$), thus $\eta$ is a natural transformation.
 Similarly, substituting $c = c' = Gd$ and $\ol{f} = g = \id_{Gd}$ into
 Eq.~\eqref{eq:adj_phi_nat_elegant_inv} yields the naturality condition for $\varepsilon$
 (i.e., $h \c \varepsilon_d = \varepsilon_{d'} \c (F \b G)h$),
 thus $\varepsilon$ is also a natural transformation.

 $(2) \Rightarrow (1)$:
 Defining a map $\varphi_{c,d}$ by Eq.~\eqref{eq:adj_conj_inv},
 it follows from Eq.~\eqref{eq:zigzag} that its inverse $\varphi^{-1}_{c,d}$ is
 given by Eq.~\eqref{eq:adj_conj}.
 Also, Eq.~\eqref{eq:adj_phi_nat_elegant} holds from naturality of $\eta$.
 Therefore, $\varphi$ is a natural isomorphism, and thus $F \dashv G$ holds.
\end{proof}

It can be seen that $\eta_c$ in Eq.~\eqref{eq:adj_conj_inv} and $\varepsilon_d$ in Eq.~\eqref{eq:adj_conj}
are nothing more than representations using Eq.~\eqref{eq:adj_eta_epsilon}.
Since $\eta$ and $\varepsilon$ are natural transformations, there is no problem
with removing the gray dotted lines from the above diagrams.
In what follows, these gray dotted lines are omitted.

Instead of calling $\braket{F,G,\varphi}$ an adjunction,
it is also possible to call $\braket{F,G,\eta,\varepsilon}$ an adjunction.
It follows from Eqs.~\eqref{eq:adj_conj_inv} and \eqref{eq:adj_conj} that
$\varphi$ and $\eta$, as well as $\varphi$ and $\varepsilon$, can be interconverted.

\subsubsection{Necessary and sufficient conditions using universal morphisms}

The second necessary and sufficient condition uses universal morphisms.

\begin{thm}{}{AdjNasUniv}
 For any functor $G \colon \cD \to \cC$, the following are equivalent:
 \begin{enumerate}
  \item $G$ has a left adjoint.
  \item There exists a universal morphism from each $c \in \cC$ to $G$.
 \end{enumerate}
 Also, if Condition~(1) holds, then for a left adjoint $F$ to $G$ and its unit $\eta$,
 $\braket{Fc,\eta_c}$ is a universal morphism from $c$ to $G$.
 On the other hand, if Condition~(2) holds, then for a universal morphism $\braket{d_c,\eta_c}$
 from $c$ to $G$, there exists a unique left adjoint $F$ to $G$
 such that $Fc = d_c$ holds and $\{ \eta_c \}_{c \in \cC}$ is its unit.
\end{thm}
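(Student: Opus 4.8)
The plan is to prove the equivalence $(1)\Leftrightarrow(2)$ together with the two supplementary claims, relying on Lemma~\ref{lemma:AdjUniv} and Lemma~\ref{lemma:ReprFuncH} (with $\cJ = \cC$ and $H = \id_\cC$). The direction $(1)\Rightarrow(2)$ is essentially already done: if $F\dashv G$ with natural isomorphism $\varphi$, then Lemma~\ref{lemma:AdjUniv}(1) tells us that $\braket{Fc,\eta_c}$ with $\eta_c \coloneqq \varphi_{c,Fc}(\id_{Fc})$ is a universal morphism from $c$ to $G$ for each $c \in \cC$; this simultaneously establishes Condition~(2) and the first supplementary claim. So the real content is $(2)\Rightarrow(1)$ and the uniqueness claim attached to it.

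For $(2)\Rightarrow(1)$, I would start from a chosen universal morphism $\braket{d_c,\eta_c}$ from $c$ to $G$ for each $c \in \cC$, and apply Lemma~\ref{lemma:ReprFuncH} with $\cJ = \cC$ and $H = \id_\cC$. Part~(1) of that lemma produces a unique functor $F\colon \cC \to \cD$ with $Fc = d_c$ for which $\eta \coloneqq \{\eta_c\}_{c\in\cC}$ is a natural transformation $\id_\cC \nto G \b F$; this $\eta$ is exactly our candidate unit. To conclude $F \dashv G$, I would then invoke Theorem~\ref{thm:AdjNasSnake}: it suffices to produce a counit $\varepsilon\colon F \b G \nto \id_\cD$ satisfying the snake equations~\eqref{eq:snake}. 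Define $\varepsilon_d \in \cD(FGd, d)$ to be the unique morphism obtained by applying the universal property of $\braket{Gd,\eta_{Gd}}$ to the morphism $\id_{Gd} \in \cC(Gd, G d)$, i.e.\ $\varepsilon_d \coloneqq \ol{\id_{Gd}}$ so that $G\varepsilon_d \c \eta_{Gd} = \id_{Gd}$. Naturality of $\varepsilon$ follows by a string-diagram manipulation using naturality of $\eta$ and uniqueness in the universal property (the same style as Eq.~\eqref{eq:adj_epsilon_nat_proof}); one of the two snake equations, $(G\b\varepsilon)\c(\eta\b G) = \id_G$, is immediate from the definition of $\varepsilon_d$, and the other, $(\varepsilon\b F)\c(F\b\eta) = \id_F$, follows because both $\varepsilon_{Fc} \c F\eta_c$ and $\id_{Fc}$ become equal after post-composing with $G$ and pre-composing with $\eta_c$, so uniqueness in the universal property of $\braket{Fc,\eta_c}$ forces them equal. (Alternatively, and perhaps more cleanly, Lemma~\ref{lemma:ReprFuncH}(2) already gives that $\braket{F,\eta}$ is a universal morphism from $\id_\cC$ to $G\b\Endash\colon \Func{\cC}{\cD}\to\Func{\cC}{\cC}$, from which one can extract the adjunction bijection $\varphi$ directly; I would mention this route as well.)

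For the uniqueness of $F$ with the stated properties: suppose $F'\colon\cC\to\cD$ is another left adjoint to $G$ with $F'c = d_c$ and unit $\{\eta_c\}_{c\in\cC}$. Then by Lemma~\ref{lemma:AdjUniv}(1) applied to $F'$, $\braket{F'c,\eta_c} = \braket{d_c,\eta_c}$ is a universal morphism from $c$ to $G$ — the same data we started from. But Lemma~\ref{lemma:ReprFuncH}(1) asserts that the functor extending this assignment of objects-and-universal-morphisms to a functor is \emph{unique}, so $F' = F$. This also shows $\{\eta_c\}_{c\in\cC}$ is the unit of $F$, completing the last supplementary claim.

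The main obstacle, such as it is, will be checking the naturality of $\varepsilon$ and the second snake equation cleanly — these are the points where one must be careful to apply the universal property at the right object and use the uniqueness clause rather than just existence. In the string-diagram language of this paper these are short slides of $\eta$ past morphisms combined with the ``cancellation'' principle of Eq.~\eqref{eq:repr_univ_eq}, so the argument should be a couple of diagrams rather than a long computation; the bookkeeping (which universal morphism is being used where) is the only thing that requires attention.
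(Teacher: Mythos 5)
Your proposal is correct, and its skeleton matches the paper's: $(1)\Rightarrow(2)$ is delegated to Lemma~\ref{lemma:AdjUniv}, and $(2)\Rightarrow(1)$ begins by invoking Lemma~\ref{lemma:ReprFuncH} with $H=\id_\cC$ to obtain the unique functor $F$ with $Fc=d_c$ for which $\eta$ is natural; the uniqueness of the left adjoint also falls out of that lemma exactly as you say. Where you diverge is the final step of $(2)\Rightarrow(1)$. The paper does not construct a counit at all: it defines $\varphi_{c,d}$ by $f\mapsto Gf\c\eta_c$ (Eq.~\eqref{eq:adj_conj_inv}), observes that bijectivity of each $\varphi_{c,d}$ is literally the universal property of $\braket{Fc,\eta_c}$ (via Eq.~\eqref{eq:repr_univ_usigma2}), and gets naturality for free from Eq.~\eqref{eq:adj_phi_nat_elegant}, landing directly on Definition~\ref{def:Adjunction}. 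You instead build $\varepsilon_d\coloneqq\ol{\id_{Gd}}$ and verify naturality of $\varepsilon$ plus both snake equations by the uniqueness clause of the universal property, then invoke Theorem~\ref{thm:AdjNasSnake}. Your three verifications are all correct (each is a standard ``apply $G$, precompose with $\eta$, slide, and cancel'' argument), but they are extra work the paper's route avoids, since the hom-set bijection is the universal property restated rather than something to be derived. Your parenthetical alternative --- extracting the adjunction from Lemma~\ref{lemma:ReprFuncH}(2) --- is closer in spirit to what the paper actually does. Both routes are valid; the paper's buys brevity, yours buys an explicit counit, which is occasionally useful downstream.
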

By considering the dual of this theorem, it can be immediately shown that
a functor $F \colon \cC \to \cD$ has a right adjoint if and only if
there exists a universal morphism from $F$ to each $d \in \cD$.
\begin{proof}
 $(1) \Rightarrow (2)$: This is obvious from Lemma~\ref{lemma:AdjUniv}.

 $(2) \Rightarrow (1)$:
 For each $c \in \cC$, let $\braket{d_c,\eta_c}$ be a universal morphism from $c$ to $G$.
 Considering the case where $H$ is the identity functor in Lemma~\ref{lemma:ReprFuncH},
 there exists a unique functor $F$ that maps each $c \in \cC$ to
 $Fc = d_c$ and such that $\eta \coloneqq \{ \eta_c \}_{c \in \cC}$
 is a natural transformation from $\id_\cC$ to $G \b F$.
 Choose the functor $F$ in this way.

 For each $c \in \cC$ and $d \in \cD$, consider the map $\varphi_{c,d}$
 given by Eq.~\eqref{eq:adj_conj_inv}.
 Since $\braket{Fc,\eta_c}$ is a universal morphism from $c$ to $G$,
 it follows that $\varphi_{c,d}$ is invertible
 (see Eq.~\eqref{eq:repr_univ_usigma2} or Eq.~\eqref{eq:repr_repr_eta_sigma}).
 Also, it follows from Eq.~\eqref{eq:adj_phi_nat_elegant} that
 $\varphi \coloneqq \{ \varphi_{c,d} \}_{c \in \cC, d \in \cD}$ is a natural isomorphism.
 Therefore, $\braket{F,G,\varphi}$ is an adjunction.
 Furthermore, since $\eta_c = \varphi_{c,Fc}(\id_{Fc})$, $\eta$ is its unit.
\end{proof}

\subsection{Basic properties of adjunctions} \label{subsec:adj_property}

\subsubsection{Uniqueness of adjunctions}

Consider an adjunction $F \dashv G$.
We show that any left adjoint to $G$ is isomorphic to $F$,
which means that a left adjoint is essentially unique (see Proposition~\ref{pro:AdjUniqueNat}).
Conversely, any functor isomorphic to $F$ is a left adjoint to $G$
(see Proposition~\ref{pro:AdjUnique2}).
Dually, the same applies to right adjoints.

\begin{proposition}{}{AdjUniqueNat}
 Let $\braket{F,G,\eta,\varepsilon}$ and $\braket{F',G,\eta',\varepsilon'}$ be
 adjunctions, where $F,F' \colon \cC \to \cD$ and $G \colon \cD \to \cC$.
 Then, there exists a unique natural isomorphism $\alpha \colon F \ntocong F'$ satisfying
 \begin{alignat}{1}
  \InsertPDF{adj_unique_eta.pdf} \raisebox{1em}{,}
  \label{eq:adj_unique_eta}
 \end{alignat}
 where the area enclosed by the auxiliary line is $\eta$.
 Note that the isomorphism $F \cong F'$ implies that a left adjoint to $G$ is essentially unique
 if it exists.
 Also,
 \begin{alignat}{1}
  \InsertPDF{adj_unique_epsilon.pdf}
  \label{eq:adj_unique_epsilon}
 \end{alignat}
 holds, where the area enclosed by the auxiliary line is $\varepsilon$.
\end{proposition}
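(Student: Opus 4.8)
The plan is to recognize $\alpha$ as the canonical comparison between two initial objects of one and the same comma category, and then to extract the counit identity from the snake equation by a one-line computation. The key observation is that an adjunction packages its unit into a universal property not just objectwise but also ``one level up'', at the level of functor categories, so that the uniqueness of $F$ (up to isomorphism) is literally the essential uniqueness of an initial object.

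First I would recall from Lemma~\ref{lemma:AdjUniv} that for each $c \in \cC$ both $\braket{Fc,\eta_c}$ and $\braket{F'c,\eta'_c}$ are universal morphisms from $c$ to $G$. Feeding this into Lemma~\ref{lemma:ReprFuncH} with $\cJ = \cC$ and $H = \id_\cC$ (whose part~(1) recovers $F$, resp.\ $F'$, itself), part~(2) tells us that $\braket{F,\eta}$ and $\braket{F',\eta'}$ are both universal morphisms from the object $\id_\cC \in \Func{\cC}{\cC}$ to the functor $G \b \Endash \colon \Func{\cC}{\cD} \to \Func{\cC}{\cC}$, i.e.\ both are initial objects of $\id_\cC \comma (G \b \Endash)$. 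By essential uniqueness of initial objects (Eq.~\eqref{eq:repr_cG_init_cong}), there is then a unique isomorphism from $F$ to $F'$ in $\Func{\cC}{\cD}$ — that is, a unique natural isomorphism $\alpha \colon F \ntocong F'$ — satisfying $\eta' = (G \b \alpha) \c \eta$, which is exactly Eq.~\eqref{eq:adj_unique_eta}. This single move delivers existence, naturality, and uniqueness of $\alpha$ simultaneously.

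It then remains to check Eq.~\eqref{eq:adj_unique_epsilon}, i.e.\ that $\varepsilon_d = \varepsilon'_d \c \alpha_{Gd}$ for every $d \in \cD$. Here I would use that $\braket{FGd,\eta_{Gd}}$ is a universal morphism from $Gd$ to $G$ (Lemma~\ref{lemma:AdjUniv}(1)): since $\varepsilon_d \colon FGd \to d$ is characterized as the unique morphism with $G\varepsilon_d \c \eta_{Gd} = \id_{Gd}$, it suffices to verify that $\varepsilon'_d \c \alpha_{Gd}$ satisfies the same relation. Indeed $G(\varepsilon'_d \c \alpha_{Gd}) \c \eta_{Gd} = G\varepsilon'_d \c (G\alpha_{Gd} \c \eta_{Gd}) = G\varepsilon'_d \c \eta'_{Gd} = \id_{Gd}$, where the middle equality is Eq.~\eqref{eq:adj_unique_eta} at $c = Gd$ and the last is the snake equation for $\braket{F',G,\eta',\varepsilon'}$. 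Uniqueness in the universal property then forces $\varepsilon'_d \c \alpha_{Gd} = \varepsilon_d$, and since both sides are natural in $d$ this is the asserted identity of natural transformations. Diagrammatically this is just sliding $\alpha$ along wires and applying the zigzag moves of Eq.~\eqref{eq:snake}.

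The only genuine subtlety is the opening move: one has to be willing to treat $\id_\cC$ as an object of a functor category and $G \b \Endash$ as a functor between functor categories, so that Lemma~\ref{lemma:ReprFuncH} applies and the ``comparison of initial objects'' runs one level up; modulo the usual size caveat on $\Func{\cC}{\cC}$, nothing deeper is involved, and the rest is the short manipulations above. If one prefers to stay objectwise, the same $\alpha$ arises componentwise from Eq.~\eqref{eq:repr_cG_init_cong} applied to the universal morphisms $\braket{Fc,\eta_c}$ and $\braket{F'c,\eta'_c}$ from $c$ to $G$, with naturality of $\{\alpha_c\}_{c \in \cC}$ checked directly via Eq.~\eqref{eq:repr_univ_eq}; the counit step is verbatim the same.
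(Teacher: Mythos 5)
Your proof is correct, but it takes a genuinely different route from the paper's. The paper argues computationally: assuming Eq.~\eqref{eq:adj_unique_eta}, it derives an explicit closed-form expression for $\alpha$ in terms of $\eta$ and $\varepsilon'$ via the snake equation (Eq.~\eqref{eq:adj_unique_eta2}), checks that this formula conversely satisfies Eq.~\eqref{eq:adj_unique_eta} (giving existence and uniqueness of the natural \emph{transformation}), then writes down the symmetric candidate $\beta$ built from $\eta'$ and $\varepsilon$ and verifies $\beta = \alpha^{-1}$ by two further snake computations. You instead promote everything one level up: Lemma~\ref{lemma:AdjUniv} plus Lemma~\ref{lemma:ReprFuncH} with $H = \id_\cC$ exhibit $\braket{F,\eta}$ and $\braket{F',\eta'}$ as two initial objects of $\id_\cC \comma (G \b \Endash)$, so essential uniqueness of initial objects (Eq.~\eqref{eq:repr_cG_init_cong}) delivers existence, uniqueness, and invertibility of $\alpha$ in one stroke; your counit identity then follows componentwise from the universal property of $\braket{FGd,\eta_{Gd}}$ rather than from a direct diagram manipulation. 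What the paper's approach buys is an explicit formula for $\alpha$ and its inverse, which is often useful downstream and keeps the argument self-contained at the level of string-diagram rewriting; what yours buys is economy and conceptual clarity, reusing the universal-morphism machinery of Section~\ref{subsec:repr_univ} so that no inverse needs to be constructed by hand. Your closing caveat about the size of $\Func{\cC}{\cC}$ is harmless here, since the initial-object argument does not require local smallness. Both proofs are complete and correct.
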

\begin{proof}
 If a natural isomorphism  $\alpha$ satisfying Eq.~\eqref{eq:adj_unique_eta} exists,
 then from
 \begin{alignat}{1}
  \footnoteinset{-3.50}{0.3}{\eqref{eq:zigzag}}{%
  \footnoteinset{-0.06}{0.3}{\eqref{eq:adj_unique_eta}}{%
  \footnoteinset{3.38}{0.3}{\eqref{eq:zigzag}}{%
  \InsertPDF{adj_unique_epsilon_alpha.pdf}}}} \raisebox{1em}{,}
  \label{eq:adj_unique_epsilon_alpha}
 \end{alignat}
 Eq.~\eqref{eq:adj_unique_epsilon} holds.
 Thus, it suffices to show that there exists a unique natural isomorphism $\alpha$
 satisfying Eq.~\eqref{eq:adj_unique_eta}.
 If there exists a natural transformation $\alpha$ satisfying Eq.~\eqref{eq:adj_unique_eta},
 then we have
 \begin{alignat}{1}
  \footnoteinset{-1.91}{0.3}{\eqref{eq:zigzag}}{%
  \footnoteinset{1.13}{0.3}{\eqref{eq:adj_unique_eta}}{%
  \InsertPDF{adj_unique_eta2.pdf}}} \raisebox{1em}{.}
  \label{eq:adj_unique_eta2}
 \end{alignat}
 Conversely, when $\alpha$ is defined by Eq.~\eqref{eq:adj_unique_eta2}, 
 Eq.~\eqref{eq:adj_unique_eta} obviously holds.
 Thus, there exists a unique natural transformation $\alpha$
 satisfying Eq.~\eqref{eq:adj_unique_eta}.
 Also, let
 \begin{alignat}{1}
  \InsertPDF{adj_unique_eta_alpha_inv.pdf} \raisebox{1em}{;}
  \label{eq:adj_unique_eta_alpha_inv}
 \end{alignat}
 then, from
 \begin{alignat}{1}
  \footnoteinsets{-2.91}{1.2}{\eqref{eq:adj_unique_eta2}}{\eqref{eq:adj_unique_eta_alpha_inv}}{%
  \footnoteinset{-2.91}{-1.15}{\eqref{eq:zigzag}}{%
  \footnoteinset{-0.01}{-1.15}{\eqref{eq:zigzag}}{%
  \InsertPDF{adj_unique_eta_alpha_inv_proof1.pdf}}}}
  \label{eq:adj_unique_eta_alpha_inv_proof1}
 \end{alignat}
 and
 \begin{alignat}{1}
  \footnoteinsets{-2.91}{1.35}{\eqref{eq:adj_unique_eta2}}{\eqref{eq:adj_unique_eta_alpha_inv}}{%
  \footnoteinset{-2.91}{-1.15}{\eqref{eq:zigzag}}{%
  \footnoteinset{-0.01}{-1.15}{\eqref{eq:zigzag}}{%
  \InsertPDF{adj_unique_eta_alpha_inv_proof2.pdf}}}} \raisebox{1em}{,}
  \label{eq:adj_unique_eta_alpha_inv_proof2}
 \end{alignat}
 we have $\beta = \alpha^{-1}$.
 Therefore, $\alpha$ is an isomorphism.
\end{proof}

\begin{proposition}{}{AdjUnique2}
 Consider an adjunction $F \dashv G$.
 Any functor isomorphic to $F$ is a left adjoint to $G$.
\end{proposition}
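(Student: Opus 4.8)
The plan is to reduce the claim to Theorem~\ref{thm:AdjNasSnake} by transporting the unit and counit of the adjunction along the given isomorphism. Write $\braket{F,G,\eta,\varepsilon}$ for the adjunction $F\dashv G$ (invoking Theorem~\ref{thm:AdjNasSnake} to equip it with a unit and counit), let $F'$ be a functor isomorphic to $F$, and fix a natural isomorphism $\alpha\colon F\ntocong F'$. First I would set
\[
 \eta'\coloneqq (G\b\alpha)\c\eta\colon \id_\cC\nto G\b F',\qquad
 \varepsilon'\coloneqq \varepsilon\c(\alpha^{-1}\b G)\colon F'\b G\nto\id_\cD;
\]
diagrammatically one simply grafts the block for $\alpha$ onto the $F$-wire leaving the $\eta$-block and the block for $\alpha^{-1}$ onto the $F$-wire entering the $\varepsilon$-block. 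These are natural transformations, being built from $\eta,\varepsilon,\alpha,\alpha^{-1}$ by vertical and horizontal composition.

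Next I would verify the two snake equations of Eq.~\eqref{eq:snake} for the pair $\braket{\eta',\varepsilon'}$. For $(G\b\varepsilon')\c(\eta'\b G)$, expanding the definitions yields $(G\b\varepsilon)\c(G\b\alpha^{-1}\b G)\c(G\b\alpha\b G)\c(\eta\b G)$; the two middle factors compose to $G\b(\alpha^{-1}\c\alpha)\b G=\id$, and what remains is $(G\b\varepsilon)\c(\eta\b G)=\id_G$ by the snake equation for $\braket{\eta,\varepsilon}$. For $(\varepsilon'\b F')\c(F'\b\eta')$ the cancellation is slightly less immediate: here I would use the interchange law Eq.~\eqref{eq:basic_bullet_circ} (equivalently, repeated application of the sliding rule Eq.~\eqref{eq:sliding}) to move the copies of $\alpha$ and $\alpha^{-1}$ past the bend so that the $F\to F'$ and $F'\to F$ strands line up, collapsing the expression to $\alpha\c\bigl((\varepsilon\b F)\c(F\b\eta)\bigr)\c\alpha^{-1}=\alpha\c\id_F\c\alpha^{-1}=\id_{F'}$. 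In string-diagram terms this is just straightening a wire and deleting an $\alpha\alpha^{-1}$ pair. With both snake equations in hand, Theorem~\ref{thm:AdjNasSnake} yields $F'\dashv G$.

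I expect the only genuine bookkeeping to be the second snake equation, where one must keep track of which side of $G$ each copy of $\alpha^{\pm1}$ sits on and apply the sliding/interchange moves in an order that makes the isomorphisms adjacent; the rest is formal. An alternative route that never touches the counit is to invoke Theorem~\ref{thm:AdjNasUniv}: since $\braket{Fc,\eta_c}$ is a universal morphism from $c$ to $G$ and $\alpha_c$ is an isomorphism, $\braket{F'c,\,G\alpha_c\c\eta_c}$ is again a universal morphism from $c$ to $G$ by the essential uniqueness of such morphisms recorded after Eq.~\eqref{eq:repr_cG_init_cong}; and since $\eta'=(G\b\alpha)\c\eta$ is a natural transformation $\id_\cC\nto G\b F'$, the uniqueness clause of Theorem~\ref{thm:AdjNasUniv} together with Lemma~\ref{lemma:ReprFuncH}(1) (which determines the functor on morphisms) forces $F'$ itself to be the asserted left adjoint to $G$. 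On this route the one point needing care is the identification of the given $F'$ with the functor produced by the theorem.
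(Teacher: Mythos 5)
Your main argument is exactly the paper's proof: transport the unit and counit along $\alpha$ and $\alpha^{-1}$, check the two snake equations by cancelling the $\alpha^{-1}\c\alpha$ and $\alpha\c\alpha^{-1}$ pairs (using sliding/interchange for the second one), and conclude via Theorem~\ref{thm:AdjNasSnake}. The computations are correct, and the alternative route through universal morphisms that you sketch at the end would also work, but the route you actually carry out coincides with the paper's.
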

\begin{proof}
 Let $\alpha$ be a natural isomorphism from $F$ to a functor $F'$.
 Also, let
 \begin{alignat}{1}
  \InsertPDF{adj_cong_eta_epsilon.pdf} \raisebox{1em}{,}
  \label{eq:adj_cong_eta_epsilon}
 \end{alignat}
 where the natural transformations enclosed by the auxiliary lines on the left and right are
 respectively the unit and counit of the adjunction $F \dashv G$.
 Then, from
 \begin{alignat}{1}
  \footnoteinset{3.43}{0.3}{\eqref{eq:zigzag}}{%
  \InsertPDF{adj_cong_proof.pdf}}
  \label{eq:adj_cong_proof}
 \end{alignat}
 and
 \begin{alignat}{1}
  \footnoteinset{2.52}{0.3}{\eqref{eq:zigzag}}{%
  \InsertPDF{adj_cong_proof2.pdf}} \raisebox{1em}{,}
  \label{eq:adj_cong_proof2}
 \end{alignat}
 $\eta'$ and $\varepsilon'$ satisfy Eq.~\eqref{eq:zigzag}.
 Thus, from Theorem~\ref{thm:AdjNasSnake}, $F' \dashv G$ holds.
\end{proof}

\subsubsection{Properties of equivalences of categories} \label{subsubsec:adj_property_simeq}

An equivalence of categories, $\cC \simeq \cD$, is the existence of functors
$F \colon \cC \to \cD$ and $G \colon \cD \to \cC$ such that
$\id_\cC \cong G \b F$ and $F \b G \cong \id_\cD$.
We show that each equivalence of categories yields an adjunction.
\begin{proposition}{}{AdjCong2Adj}
 Let us consider two functors $F \colon \cC \to \cD$ and $G \colon \cD \to \cC$.
 If there exist natural isomorphisms $\eta \colon \id_\cC \ntocong G \b F$ and
 $\tau \colon F \b G \ntocong \id_\cD$, which indicates $\cC \simeq \cD$,
 then there exists an adjunction $F \dashv G$ whose unit is $\eta$.
\end{proposition}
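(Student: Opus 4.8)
The plan is to reduce everything to Theorem~\ref{thm:AdjNasUniv}: it suffices to prove that for each $c \in \cC$ the pair $\braket{Fc,\eta_c}$ is a universal morphism from $c$ to $G$, i.e. that for every $x \in \cD$ the map
$\cD(Fc,x) \ni \ol{a} \mapsto G\ol{a} \c \eta_c \in \cC(c,Gx)$
is a bijection. Once this is done, Theorem~\ref{thm:AdjNasUniv} yields a left adjoint to $G$ whose object values and unit are prescribed by these universal morphisms, and a short uniqueness argument will identify it with $F$ and its unit with $\eta$.

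For injectivity I would first observe that $G$ is faithful, using only that $\tau$ is a natural isomorphism: if $Gf = Gg$ with $f,g \in \cD(x,y)$, naturality of $\tau$ gives $f \c \tau_x = \tau_y \c FGf = \tau_y \c FGg = g \c \tau_x$, and cancelling the isomorphism $\tau_x$ gives $f = g$. Then $G\ol{a}\c\eta_c = G\ol{b}\c\eta_c$ forces $G\ol{a} = G\ol{b}$ after cancelling the isomorphism $\eta_c$, hence $\ol{a} = \ol{b}$. For surjectivity, set $\theta_x \coloneqq G\tau_x \c \eta_{Gx} \in \cC(Gx,Gx)$, which is a composite of isomorphisms and hence invertible, and given $a \in \cC(c,Gx)$ define $\ol{a} \coloneqq \tau_x \c F(\theta_x^{-1}\c a)$. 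Applying naturality of $\eta$ to the morphism $\theta_x^{-1}\c a \colon c \to Gx$ gives
\[
 G\ol{a} \c \eta_c = G\tau_x \c GF(\theta_x^{-1}\c a) \c \eta_c
 = G\tau_x \c \eta_{Gx} \c \theta_x^{-1}\c a
 = \theta_x \c \theta_x^{-1} \c a = a ,
\]
so the map is onto, and $\braket{Fc,\eta_c}$ is a universal morphism from $c$ to $G$ for every $c$.

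By Theorem~\ref{thm:AdjNasUniv} there is then a unique left adjoint $F'' \dashv G$ with $F''c = Fc$ for all $c$ and with unit $\{\eta_c\}_{c\in\cC}$; in particular $\eta$ is a natural transformation $\id_\cC \nto G \b F''$. But by the uniqueness clause of Lemma~\ref{lemma:ReprFuncH}(1) (taken with $H = \id_\cC$), there is only one functor with object values $Fc$ for which $\eta$ is natural from $\id_\cC$ to its composite with $G$, and $F$ itself is such a functor by hypothesis; hence $F'' = F$, so $F \dashv G$ with unit $\eta$.

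The only genuinely delicate point is surjectivity: the given $\tau$ need not satisfy $G\tau_x \c \eta_{Gx} = \id_{Gx}$, so one cannot simply take $\ol{a} = \tau_x \c Fa$; this is repaired by the correction factor $\theta_x^{-1}$. I would emphasize that this route deliberately avoids producing a counit and checking the second triangle identity directly — the content of the snake equations is absorbed into the universal-property reformulation together with Theorem~\ref{thm:AdjNasUniv}, so no separate verification of $\eqref{eq:snake}$ for a hand-built $\varepsilon$ is needed.
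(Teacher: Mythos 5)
Your proof is correct, but it takes a genuinely different route from the paper's. The paper builds a candidate counit $\varepsilon$ out of $\tau$ and $\eta$, verifies one snake identity directly, and then obtains the other by an idempotency trick: it shows the natural isomorphism $\alpha \coloneqq (G \b \varepsilon) \c (\eta \b G)$ satisfies $\alpha\alpha = \alpha$, cancels $\alpha^{-1}$ to get $\alpha = \id_G$, and concludes via Theorem~\ref{thm:AdjNasSnake}. You instead verify pointwise that $\braket{Fc,\eta_c}$ is a universal morphism from $c$ to $G$ and invoke Theorem~\ref{thm:AdjNasUniv}; your correction factor $\theta_x \coloneqq G\tau_x \c \eta_{Gx}$ plays the same role as the paper's modification of $\tau$ into a genuine counit (in both arguments the essential obstacle is that the given $\eta$ and $\tau$ need not satisfy any triangle identity, and in both it is resolved by composing with an automorphism built from $\eta$ and $\tau$). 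Your faithfulness-plus-cancellation argument for injectivity, the explicit preimage for surjectivity, and the identification $F'' = F$ via the uniqueness clause of Lemma~\ref{lemma:ReprFuncH}(1) are all sound. What the paper's approach buys is that it stays entirely at the level of whole natural transformations and string-diagram manipulations, never descending to components, which fits the expository aim of the article; what your approach buys is that it avoids having to guess the correct counit and check Eq.~\eqref{eq:snake} by hand, at the cost of an elementwise bijectivity check and a slightly longer chain of citations.
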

\begin{proof}
 Let
 \begin{alignat}{1}
  \InsertPDF{adj_simeq_epsilon.pdf} \raisebox{1em}{;}
  \label{eq:adj_simeq_epsilon}
 \end{alignat}
 then, we have
 \begin{alignat}{1}
  \footnoteinset{-2.91}{0.3}{\eqref{eq:adj_simeq_epsilon}}{%
  \footnoteinset{1.05}{0.3}{\eqref{eq:sliding}}{%
  \InsertPDF{adj_simeq_zigzag2.pdf}}} \raisebox{1em}{,}
  \label{eq:adj_simeq_zigzag2}
 \end{alignat}
 where the natural transformation enclosed by the auxiliary line on the left is $\eta$.
 Also, let
 \begin{alignat}{1}
  \InsertPDF{adj_simeq_zigzag1.pdf} \raisebox{1em}{;}
  \label{eq:adj_simeq_zigzag1}
 \end{alignat}
 then, $\alpha$ is a natural isomorphism since it is composed of only natural isomorphisms.
 From
 \begin{alignat}{1}
  \footnoteinset{-2.92}{-1.0}{\eqref{eq:adj_simeq_zigzag2}}{%
  \InsertPDF{adj_simeq_zigzag1b.pdf}} \raisebox{1em}{,}
  \label{eq:adj_simeq_zigzag1b}
 \end{alignat}
 we have $\alpha\alpha = \alpha$.
 Applying $\alpha^{-1}$ to both sides yields $\alpha = \id_G$.
 Therefore, Eq.~\eqref{eq:zigzag} holds, and thus, from Theorem~\ref{thm:AdjNasSnake},
 there exists an adjunction $F \dashv G$ whose unit is $\eta$.
\end{proof}

\begin{proposition}{}{AdjFullFaithful}
 Consider an adjunction $F \dashv G$.
 The following are equivalent:
 \begin{enumerate}
  \item $G \b F \cong \id_\cC$ holds.
  \item $F$ is fully faithful.
  \item The unit $\eta$ is an isomorphism.
 \end{enumerate}
 Dually, the following are equivalent:
 \begin{enumerate}
  \item $F \b G \cong \id_\cD$ holds.
  \item $G$ is fully faithful.
  \item The counit $\varepsilon$ is an isomorphism.
 \end{enumerate}
\end{proposition}
\begin{proof}
 $(1) \Rightarrow (2)$:
 Arbitrarily choose a natural isomorphism $\phi \colon G \b F \ntocong \id_\cC$
 and two objects $a,b \in \cC$.
 It suffices to show that for each $f \in \cD(Fa,Fb)$, there exists a unique $\ol{f} \in \cC(a,b)$
 satisfying (see Eq.~\eqref{eq:basic_full_faithful})
 \begin{alignat}{1}
  \InsertPDF{adj_simeq_full_faithful1.pdf} \raisebox{1em}{.}
  \label{eq:adj_simeq_full_faithful1}
 \end{alignat}
 If such $\ol{f}$ exists, then
 \begin{alignat}{1}
  \InsertPDF{adj_simeq_full_faithful2.pdf}
  \label{eq:adj_simeq_full_faithful2}
 \end{alignat}
 uniquely determines $\ol{f}$,
 where the white and blue diamond blocks represent $\phi$ and $\phi^{-1}$, respectively.
 The first equality follows from $\phi \phi^{-1} = \id_{\id_\cC}$.
 Also, such $\ol{f}$ satisfies
 \begin{alignat}{1}
  \footnoteinset{-4.09}{2.35}{\eqref{eq:adj_simeq_full_faithful2}}{%
  \footnoteinset{0.00}{2.35}{\eqref{eq:zigzag}}{%
  \footnoteinset{3.85}{-0.95}{\eqref{eq:zigzag}}{%
  \InsertPDF{adj_simeq_full_faithful3.pdf}}}} \raisebox{1em}{,}
  \label{eq:adj_simeq_full_faithful3}
 \end{alignat}
 and thus Eq.~\eqref{eq:adj_simeq_full_faithful1} holds.
 We use the fact that the areas enclosed by auxiliary lines represent $\phi^{-1} \phi = \id_{G \b F}$.
 Therefore, there exists a unique $\ol{f}$ satisfying Eq.~\eqref{eq:adj_simeq_full_faithful1}.

 $(2) \Rightarrow (3)$:
 It suffices to show that for any $c \in \cC$, $\eta_c$ is an isomorphism.
 Let $\braket{F,G,\varphi}$ be an adjunction.
 For each $a \in \cC$, consider the map
 \begin{alignat}{1}
  \rho \coloneqq \varphi_{a,c} \c \hat{F} \colon \cC(a,c) \to \cC(a,G \b Fc),
 \end{alignat}
 where $\hat{F}$ is the map $\cC(a,c) \ni f \mapsto Ff \in \cD(Fa,Fc)$.
 $\rho$ maps each $f \in \cC(a,c)$ to $\eta_c f \in \cC(a,G \b Fc)$ since
 \begin{alignat}{1}
  \InsertMidPDF{adj_simeq_eta_f.pdf}
  &\quad\xmapsto{\hat{F}}\quad
  \InsertMidPDF{adj_simeq_eta_f_F.pdf}
  \quad\xmapsto{\varphi_{a,c}}\quad
  \InsertMidPDF{adj_simeq_eta_f_GF.pdf}
  \label{eq:adj_simeq_eta_f}
 \end{alignat}
 holds.
 $\hat{F}$ is invertible since $F$ is fully faithful, and
 $\varphi_{a,c}$ is invertible since $\braket{F,G,\varphi}$ is an adjunction;
 thus, $\rho$ is also invertible.
 Consider the case of $a = G \b F c$ and consider a morphism $h \in \cC(G \b Fc,c)$
 that satisfies
 \begin{alignat}{1}
  \rho(h) = \id_{G \b Fc}
  &\qquad\diagram\qquad
  \InsertMidPDF{adj_simeq_eta_h.pdf}.
  \label{eq:adj_simeq_eta_h}
 \end{alignat}
 Note that since $\rho$ is invertible, such $h$ is uniquely determined by $h = \rho^{-1}(\id_{G \b Fc})$.
 We show that $h$ is the inverse of $\eta_c$.
 Since the left-hand side, $\rho(h)$, of Eq.~\eqref{eq:adj_simeq_eta_h} is equal to $\eta_c h$,
 $\eta_c h = \id_{G \b F c}$ holds.
 Therefore, it suffices to show that $h \eta_c = \id_c$.
 We have
 \begin{alignat}{1}
  \footnoteinset{-3.19}{0.35}{\eqref{eq:zigzag}}{%
  \footnoteinset{1.18}{0.35}{\eqref{eq:adj_simeq_eta_h}}{%
  \footnoteinset{4.35}{0.35}{\eqref{eq:zigzag}}{%
  \InsertPDF{adj_simeq_eta_hF.pdf}}}} \raisebox{1em}{,}
  \label{eq:adj_simeq_eta_hF}
 \end{alignat}
 i.e., $F \b h \eta_c = F \b \id_c$.
 Therefore, since $F$ is fully faithful, $h \eta_c = \id_c$ holds
 (see Eq.~\eqref{eq:basic_full_faithful}).

 $(3) \Rightarrow (1)$:
 It is clear from $\eta \colon G \b F \ntocong \id_\cC$.
\end{proof}

\subsubsection{Properties of bifunctors}

\begin{proposition}{}{AdjBifunc}
 Let us consider a bifunctor $F \colon \cC \times \cE \to \cD$.
 For each $e \in \cE$, if the functor $F(\Endash,e)$ has a right adjoint $G_e$,
 then there exists a unique bifunctor $G \colon \cE^\op \times \cD \to \cC$ satisfying
 the following conditions:
 \begin{enumerate}
  \item $G(e,\Endash) = G_e$ holds for each $e \in \cE$.
  \item The isomorphism
        \begin{alignat}{1}
         \varphi_{c,e,d} \colon \cD(F(c,e),d) \cong \cC(c,G(e,d)),
        \end{alignat}
        which is natural in $c,d$ and given by the adjunction $F(\Endash,e) \dashv G(e,\Endash)$,
        is also natural in $e$.
 \end{enumerate}
\end{proposition}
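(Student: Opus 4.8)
The plan is to glue the given right adjoints $G_e$ together into a single bifunctor, essentially by the classical parametrised-adjoint argument, but organised around the universal-morphism description of adjunctions (Theorem~\ref{thm:AdjNasUniv} and Lemma~\ref{lemma:AdjUniv}) so that the string-diagram bookkeeping stays light.

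First I would fix, for each $e \in \cE$, an adjunction $F(\Endash,e) \dashv G_e$ together with its counit $\varepsilon^e \colon F(\Endash,e) \b G_e \nto \id_\cD$; by the dual of Lemma~\ref{lemma:AdjUniv}, $\braket{G_e d, \varepsilon^e_d}$ is a universal morphism from $F(\Endash,e)$ to $d$ for every $d \in \cD$ (in the sense of Eq.~\eqref{eq:univ_op}). Set $G$ on objects by $G(e,d) \coloneqq G_e d$ and on morphisms of $\cD$ by $G(e,h) \coloneqq G_e h$, so Condition~(1) holds by construction. The only thing left to define is the action on a morphism $k \in \cE^\op(e,e') = \cE(e',e)$: the composite $\varepsilon^e_d \c F(G(e,d),k)$ is a morphism out of $F(\Endash,e')$ evaluated at $G(e,d)$, so the universal property of $\braket{G(e',d),\varepsilon^{e'}_d}$ produces a unique $G(k,d) \in \cC\big(G(e,d),G(e',d)\big)$ with
\[
 \varepsilon^e_d \c F(G(e,d),k) = \varepsilon^{e'}_d \c F(G(k,d),e').
\]
Diagrammatically this $G(k,d)$ is just the ``mate'' of $F(\Endash,k)$, obtained by sliding $k$ across the (former) dotted blocks of Eq.~\eqref{eq:adj_conj}.

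Then I would verify that these data assemble into a bifunctor, every step being forced by the uniqueness half of the universal property (Eq.~\eqref{eq:repr_univ_eq}), exactly in the style of the proof of Lemma~\ref{lemma:ReprFuncH}: (i) for fixed $k$, $\{G(k,d)\}_{d \in \cD}$ is natural in $d$, hence a natural transformation $G_e \nto G_{e'}$; (ii) $G(\id_e,d) = \id_{G(e,d)}$ and $G(k'\c k,d) = G(k',d) \c G(k,d)$ for composable $k,k'$ in $\cE^\op$; (iii) the common value $G(k,h) \coloneqq G(e',h) \c G(k,d) = G(k,d') \c G(e,h)$ (the two agreeing by~(i)) defines, via Lemma~\ref{lemma:BifuncNat}, a bifunctor $G \colon \cE^\op \times \cD \to \cC$ with $G(e,\Endash) = G_e$. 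For Condition~(2), observe that $\varphi^e_{c,d} \colon \cD(F(c,e),d) \cong \cC(c,G(e,d))$, where $\varphi^e$ is the adjunction isomorphism of $F(\Endash,e) \dashv G_e$, is natural in $c$ and $d$ (that is just naturality of $\varphi^e$, Eq.~\eqref{eq:adj_def_psi}) and natural in $e$ precisely because of the defining equation of $G(k,d)$ transported through the two adjunctions — take $(\varphi^{e'})^{-1}$ of both sides and use bifunctoriality of $F$ (apply $F$ to Eq.~\eqref{eq:basic_bifunc_sliding}). By the multivariable form of Lemma~\ref{lemma:BifuncNat} these separate naturalities give joint naturality in $c,e,d$. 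Uniqueness is then immediate: any $G'$ satisfying~(1) agrees with $G$ on objects and on $\cD$-morphisms, and~(2) forces $\varepsilon^{e'}_d \c F(G'(k,d),e') = \varepsilon^e_d \c F(G(e,d),k)$, so $G'(k,d) = G(k,d)$ by the same universal property.

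The only genuine work is the bookkeeping in the previous paragraph — checking that the mate morphisms $G(k,d)$ are natural in $d$, functorial in $k$, interchange correctly with the $G_e h$, and make the combined isomorphism natural in $e$ — but each of these reduces, with no new idea, to the uniqueness clause of a universal property (Eq.~\eqref{eq:repr_univ_eq}) together with bifunctoriality of $F$; so I expect it to be routine, and the main care needed is simply keeping the variances straight, since $G$ is contravariant in the $\cE$-slot.
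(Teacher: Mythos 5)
Your proposal is correct and follows essentially the same route as the paper: both construct $G$ on morphisms as the mate of $F(\Endash,k)$ determined by the universal property of the counit components $\varepsilon^{e'}_d$ (equivalently, by naturality of the adjunction isomorphism in $e$), and both reduce every functoriality and naturality check to the uniqueness clause of that universal property. The only difference is organizational — the paper extracts an explicit unit--counit formula for $G(f,g)$ on a general morphism $\braket{f,g}$ of $\cE^\op \times \cD$ and verifies bifunctoriality in one joint computation, whereas you characterize $G(k,d)$ implicitly and assemble the two partial functors into a bifunctor via Lemma~\ref{lemma:BifuncNat}.
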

\begin{proof}
 Assuming the existence of such a bifunctor $G$, let us consider properties that $G$ must satisfy.
 By Condition~(2), the isomorphism
 $\varphi_{c,e,d} \colon \cD(F(c,e),d) \cong \cC(c,G(e,d))$ is natural in $c$, $e$, and $d$.
 For each $e$, since $G(e,\Endash) = G_e$ is a right adjoint to $F(\Endash,e)$,
 this adjunction has a unit $\eta^e \coloneqq \{ \varphi_{c,e,F(c,e)}(\id_{F(c,e)}) \}_{c \in \cC}$
 and a counit $\varepsilon^e \coloneqq \{ \varphi^{-1}_{G(e,d),e,d}(\id_{G(e,d)}) \}_{d \in \cD}$.
 Since $\varphi_{c,e,d}$ is natural in $e$ and $d$, this naturality is expressed as
 \begin{alignat}{1}
  \InsertPDF{adj_bifunc_proof_nat.pdf}
  \label{eq:adj_bifunc_proof_nat}
 \end{alignat}
 for any $f \in \cE^\op(e',e) = \cE(e,e')$ and $g \in \cD(d,d')$.
 This is equivalent to
 \begin{alignat}{1}
  \InsertPDF{adj_bifunc_proof1.pdf}
  \label{eq:adj_bifunc_proof1}
 \end{alignat}
 for each $h \in \cD(F(c,e'),d)$,
 where the areas enclosed by the auxiliary lines are $\eta^{e'}$ and $\eta^e$
 (see Eq.~\eqref{eq:adj_conj_inv}).
 Substituting $c = G(e',d)$ and
 \begin{alignat}{1}
  \InsertPDF{adj_bifunc_proof_h.pdf}
  \label{eq:adj_bifunc_proof_h}
 \end{alignat}
 (where the area enclosed by the auxiliary line is $\varepsilon^{e'}$) into this equation
 and using Eq.~\eqref{eq:zigzag} yields
 \begin{alignat}{1}
  \InsertPDF{adj_bifunc_proof2.pdf} \raisebox{1em}{.}
  \label{eq:adj_bifunc_proof2}
 \end{alignat}
 This equation uniquely determines $G(f,g)$, so the bifunctor $G$ is unique.
 Conversely, if we define $G(f,g)$ as in Eq.~\eqref{eq:adj_bifunc_proof2},
 then $G(\Endash,\Enndash)$ is a bifunctor satisfying the conditions.
 Indeed, it follows that $G(\Endash,\Enndash)$ is a bifunctor since
 \begin{alignat}{1}
  \footnoteinset{-2.52}{1.2}{\eqref{eq:adj_bifunc_proof2}}{%
  \InsertPDF{adj_bifunc_proof3.pdf}}
  \label{eq:adj_bifunc_proof3}
 \end{alignat}
 holds for each $f \in \cE(e,e')$, $f' \in \cE(e',e'')$, $g \in \cD(d,d')$,
 and $g' \in \cD(d',d'')$, and also $G(\id_e,\id_d) = \id_{G(e,d)}$ holds.
\end{proof}


\section{Limits and colimits} \label{sec:limit}

A (co)limit is a concept that can be used to handle various specific concepts such as
products, equalizers, and pullbacks in a unified manner.
Instead of dealing with these specific (co)limits, this section shows how to represent (co)limits
and explains their basic properties using string diagrams.

\subsection{Definition of limits and colimits} \label{subsec:limit_def}

\subsubsection{Cones}

\begin{define}{cones}{Cone}
 For an object $c \in \cC$ and a functor $D \colon \cJ \to \cC$,
 a natural transformation from $\Delta_\cJ c \colon \cJ \to \cC$ to $D$ is called
 a \termdef{cone} from $c$ to $D$, or simply a cone to $D$.
\end{define}
As shown in Example~\ref{ex:FuncDelta}, the functor $\Delta_\cJ c$ is the same as $c \b {!}$,
where $\Delta_\cJ = \Endash \b {!} \colon \cC \to \Func{\cJ}{\cC}$ is a diagonal functor and 
${!}$ is the only functor from $\cJ$ to $\cOne$.
$\Delta_\cJ c$ maps all objects in $\cJ$ to $c$ and all morphisms in $\cJ$ to $\id_c$.
The collection of all cones from $c$ to $D$ (i.e., $\Func{\cJ}{\cC}(\Delta_\cJ c,D)$) is denoted by
$\Cone(c,D)$.
In this paper, when considering cones, it is often assumed that its domain $\cJ$ is small
(and sometimes this assumption is made implicitly).

A cone $\alpha \coloneqq \{ \alpha_j \}_{j \in \cJ} \in \Cone(c,D)$ is represented by
\begin{alignat}{1}
 \footnoteinset{-3.59}{0.35}{\eqref{eq:basic_functor_delta}}{%
 \footnoteinset{3.02}{0.35}{\eqref{eq:basic_functor_terminal}}{%
 \InsertPDF{limit_cone.pdf}}} \raisebox{1em}{,}
 \label{eq:limit_cone}
\end{alignat}
where the functor ${!}$ is represented by a gray dotted wire.
Note that we often represent a cone, which is a natural transformation, using a trapezoidal block.
Naturality of $\alpha$ says that for any morphism $h \in \cJ(i,j)$ in $\cJ$,
\begin{alignat}{1}
 \InsertPDF{limit_nat.pdf}
 \label{eq:limit_nat}
\end{alignat}
holds.

The comma category $\Cone_D \coloneqq \Delta_\cJ \comma D$ is often called
the \termdef{category of cones} to $D$.
Let us specify $\Cone_D$ (recall Subsubsection~\ref{subsubsec:repr_cG_Gc}):

\begin{itemize}
 \item Its each object is a pair, $\braket{c,\alpha}$, of $c \in \cC$ and $\alpha \in \Cone(c,D)$.
 \item Its each morphism with domain $\braket{c,\alpha}$ and codomain $\braket{c',\alpha'}$
       is a morphism $f \in \cC(c,c')$ in $\cC$ such that
       \begin{alignat}{1}
        \InsertMidPDF{limit_cone_D1.pdf}
        \qquad\myLeftrightarrow{equivalent}\qquad
        \InsertMidPDF{limit_cone_D2.pdf}.
        \label{eq:limit_cone_D}
       \end{alignat}
 \item The composite of its morphisms is the composite of morphisms in $\cC$,
       and its identity morphism is the identity morphism in $\cC$.
\end{itemize}
Roughly speaking, $\Cone_D$ is a category whose objects are cones to $D$.
Since
\begin{alignat}{1}
 \Cone_D &= \Delta_\cJ \comma D = \el(\Func{\cJ}{\cC}(\Delta_\cJ\Endash,D))
 = \el(\Cone(\Endash,D)),
\end{alignat}
$\Cone_D$ is equal to the category of elements of $\Cone(\Endash,D)$.

\subsubsection{Limits}

\begin{define}{limits}{Limit}
 A universal morphism from the diagonal functor $\Delta_\cJ \colon \cC \to \Func{\cJ}{\cC}$
 to a functor $D \in \Func{\cJ}{\cC}$ is called a \termdef{limit} of $D$.
 Rewriting it using Eq.~\eqref{eq:univ_op}, $\braket{d,\kappa}$ is called a limit of $D$
 if, for any cone $\alpha \in \Cone(c,D)$ (where $c$ is arbitrary),
 there exists a unique morphism $\ol{\alpha} \in \cC(c,d)$ satisfying
 \begin{alignat}{1}
  \InsertMidPDF{limit_repr_terminal.pdf}
  \qquad\myLeftrightarrow{equivalent}\qquad
  \InsertMidPDF{limit_repr_terminal2.pdf},
  \label{eq:limit_repr_terminal}
 \end{alignat}
 where the blue circles denote $\kappa$.
\end{define}
In another expression, a limit of $D$ is a terminal object in $\Cone_D$.

Applying the evaluation functor $\ev_j$ to Eq.~\eqref{eq:limit_repr_terminal} yields
each component $\alpha_j$ of a cone $\alpha$,
so Eq.~\eqref{eq:limit_repr_terminal} is equivalent to the existence of a unique
$\ol{\alpha} \in \cC(c,d)$ satisfying
\begin{alignat}{2}
 \InsertMidPDF{limit_repr_leg.pdf}
 &\qquad(\forall j \in \cJ),
 \label{eq:limit_repr_leg}
\end{alignat}
where the black circle is the component $\kappa_j$ of $\kappa$.
Note that one can differentiate between the cone $\kappa$ and its component $\kappa_j$
in a string diagram by examining the wires connected to it.
Equation~\eqref{eq:limit_repr_terminal} can be regarded as an equation for obtaining
the corresponding $\alpha$ from $\ol{\alpha}$.
Conversely, to obtain the corresponding $\ol{\alpha}$ from $\alpha$,
we can use a diagram introduced in Subsubsection~\ref{subsubsec:repr_repr_diagram}
and express it as follows (where we show two expressions as in Eq.~\eqref{eq:limit_repr_terminal}):
\begin{alignat}{1}
 \InsertPDF{limit_repr_terminal_repr.pdf} \raisebox{1em}{.}
 \label{eq:limit_repr_terminal_repr}
\end{alignat}

When $\braket{d,\kappa}$ is a limit of $D$, we call $d$ its \termdef{limit object}
and $\kappa$ its \termdef{limit cone}.
We often write $d$ as $\lim D$.
It should be understood that $\lim D$ represents one of the limit objects of $D$.
In what follows, we sometimes represent $\kappa$ as the right-hand side of
\begin{alignat}{1}
 \InsertPDF{limit_repr_u.pdf} \raisebox{1em}{,}
 \label{eq:limit_repr_u}
\end{alignat}
where the dotted wire labeled ``$\lim$'' represents something like a map that maps
a functor $D \in \Func{\cJ}{\cC}$ to its limit object $\lim D \in \cC$.
However, it should be noted that if $D$ does not have a limit,
then $\lim D$ should be considered undefined;
in this sense, this dotted wire is not strictly regarded as a map.

\subsubsection{Cocones and colimits} \label{subsubsec:limit_def_colimit}

The duals of cones and limits are called cocones and colimits, respectively.
Specifically, for an object $c \in \cC$ and a functor $D \colon \cJ \to \cC$,
a natural transformation from $D$ to $\Delta_\cJ c$ is called
a \termdef{cocone} from $D$ to $c$ or simply a cocone from $D$.
We write $\Cocone(D,c)$ for the collection of all cocones from $D$ to $c$
(i.e., $\Func{\cJ}{\cC}(D,\Delta_\cJ c)$).
The comma category $\Cocone_D \coloneqq D \comma \Delta_\cJ$ is called the \termdef{category of cocones}
from $D$.
Also, a universal morphism from $D$ to $\Delta_\cJ$
(i.e., an initial object in $\Cocone_D$) is called a \termdef{colimit} of $D$.

A cocone $\alpha \coloneqq \{ \alpha_j \}_{j \in \cJ} \in \Cocone(D,c)$ is represented
by the following diagram as the dual of Eq.~\eqref{eq:limit_cone}:
\begin{alignat}{1}
 \footnoteinset{-3.59}{0.35}{\eqref{eq:basic_functor_delta}}{%
 \footnoteinset{3.02}{0.35}{\eqref{eq:basic_functor_terminal}}{%
 \InsertPDF{limit_cone_op.pdf}}} \raisebox{1em}{.}
 \label{eq:limit_cone_op}
\end{alignat}
When $D$ has a colimit $\braket{d,\kappa}$, for any cocone $\alpha \in \Cocone(D,c)$,
there exists a unique morphism $\ol{\alpha} \in \cC(d,c)$ satisfying
\begin{alignat}{1}
 \InsertMidPDF{limit_repr_terminal_op.pdf}
 \qquad\myLeftrightarrow{equivalent}\qquad
 \InsertMidPDF{limit_repr_terminal_op2.pdf},
 \nonumber \\
 \label{eq:limit_repr_terminal_op}
\end{alignat}
where the blue circles are $\kappa$.
Since each component $\alpha_j$ is obtained by applying the evaluation functor $\ev_j$ to $\alpha$,
Eq.~\eqref{eq:limit_repr_terminal_op} is equivalent to
the existence of a unique $\ol{\alpha} \in \cC(d,c)$ satisfying
\begin{alignat}{2}
 \InsertMidPDF{limit_repr_leg_op.pdf}
 &\qquad(\forall j \in \cJ),
 \label{eq:limit_repr_leg_op}
\end{alignat}
where the black circle denotes $\kappa_j$.
We also have a similar equation to Eq.~\eqref{eq:limit_repr_terminal_repr}.
When $\braket{d,\kappa}$ is a colimit of $D$, we call $d$ its \termdef{colimit object}
and $\kappa$ its \termdef{colimit cocone}.
$d$ is often written as $\colim D$.

\subsubsection{Properties immediately derived from definitions}

Since initial and terminal objects are essentially unique,
(co)limits are essentially unique if they exist.
Let us specifically state this fact for limits.
When $\braket{d,\kappa}$ is a limit of $D$, for any limit $\braket{d',\kappa'}$ of $D$,
there exists a unique isomorphism $\psi \in \cC(d',d)$ such that
$\kappa' = \kappa \c \Delta_\cJ \psi$, i.e.,
\begin{alignat}{1}
 \InsertMidPDF{limit_cong.pdf}
 \qquad\myLeftrightarrow{equivalent}\qquad
 \InsertMidPDF{limit_cong2.pdf},
 \label{eq:limit_cong}
\end{alignat}
where the blue circles represent $\kappa$ and the diamond block represents $\psi$
(see Eq.~\eqref{eq:repr_cG_init_cong}).
Conversely, any $\braket{d',\kappa'}$ represented in the form of Eq.~\eqref{eq:limit_cong} is
a limit of $D$.
The same applies to colimits.

From the discussion on representability in Subsubsection~\ref{subsubsec:repr_repr_repr},
we obtain the following lemma.
\begin{lemma}{}{LimitRepr}
 For any small category $\cJ$, a functor $D \colon \cJ \to \cC$ has a limit if and only if
 the presheaf $\Cone(\Endash,D) = \Func{\cJ}{\cC}(\Delta_\cJ \Endash, D)
 \colon \cC^\op \to \Set$ is representable, i.e., there exists $d \in \cC$ such that
 \begin{alignat}{1}
  \cC(\Endash,d) \cong \Cone(\Endash,D)
  &\qquad\diagram\qquad
  \InsertMidPDF{limit_repr_cong.pdf}.
  \label{eq:limit_repr_cong}
 \end{alignat}
 In this case, $d \cong \lim D$ holds.

 Dually, $D$ has a colimit if and only if
 the set-valued functor $\Cocone(D,\Endash) \coloneqq \Func{\cJ}{\cC}(D, \Delta_\cJ \Endash)
 \colon \cC \to \Set$ is representable, i.e., there exists $d \in \cC$ such that
 \begin{alignat}{1}
  \cC(d,\Endash) \cong \Cocone(D,\Endash)
  &\qquad\diagram\qquad
  \InsertMidPDF{limit_repr_cong_op.pdf}.
  \label{eq:limit_repr_cong_op}
 \end{alignat}
 In this case, $d \cong \colim D$ holds.
\end{lemma}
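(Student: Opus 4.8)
The plan is to derive this lemma directly as a special case of the representability characterization of universal morphisms established in Theorem~\ref{thm:ReprCG}, together with the definition of a limit (Definition~\ref{def:Limit}) as a universal morphism from $\Delta_\cJ$ to $D$. The key observation is that Theorem~\ref{thm:ReprCG} applies verbatim to the comma category $G \comma c$ via its dual (see Eq.~\eqref{eq:adj_CG_repr_op}): a universal morphism from a functor $G$ to an object $c$ exists if and only if $\cC(G\Endash,c)$ is representable. First I would instantiate this dual form with the functor $\Delta_\cJ \colon \cC \to \Func{\cJ}{\cC}$ playing the role of $G$ and the object $D \in \Func{\cJ}{\cC}$ playing the role of $c$. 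This immediately gives: a limit of $D$ (i.e.\ a universal morphism from $\Delta_\cJ$ to $D$) exists if and only if the presheaf $\Func{\cJ}{\cC}(\Delta_\cJ\Endash, D) \colon \cC^\op \to \Set$ is representable. Since this presheaf is precisely $\Cone(\Endash, D)$ by Definition~\ref{def:Cone}, the first equivalence follows.

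Next I would unpack what the representation gives us concretely. By Theorem~\ref{thm:ReprCG} applied in this setting, $\Cone(\Endash,D)$ is representable exactly when there is an object $d \in \cC$ and a natural isomorphism $\cC(\Endash,d) \cong \Cone(\Endash,D)$; the diagram in Eq.~\eqref{eq:limit_repr_cong} is simply the string-diagrammatic rendering of this isomorphism, drawn in the style of Eq.~\eqref{eq:basic_natural_trans_CcG} (or rather its opposite-category analogue, since we are working with a presheaf). Then, to conclude $d \cong \lim D$, I would invoke the essential uniqueness of limits noted just before the lemma (Eq.~\eqref{eq:limit_cong}): any representing object is, via Eq.~\eqref{eq:repr_univ_usigma2} or the discussion around Eq.~\eqref{eq:repr_repr_eta_sigma}, the limit object of a genuine limit cone, and any two limit objects are canonically isomorphic. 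The colimit half is then obtained purely formally by dualizing — replacing $\cC$ by $\cC^\op$ throughout, so that colimits of $D \colon \cJ \to \cC$ become limits of the corresponding functor $\cJ^\op \to \cC^\op$, and cocones become cones — which swaps $\Cone(\Endash,D)$ for $\Cocone(D,\Endash)$ and flips the hom-functor accordingly.

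Honestly, there is no serious obstacle here: the lemma is essentially a bookkeeping translation of Theorem~\ref{thm:ReprCG} into the language of cones. The only point requiring a little care is making sure the variances line up — a limit is a universal morphism from $\Delta_\cJ$ \emph{to} $D$ (the $G \comma c$ flavor, hence the \emph{terminal}-object / \emph{presheaf} side), whereas a colimit is a universal morphism from $D$ \emph{to} $\Delta_\cJ$ (the $c \comma G$ flavor, hence the \emph{initial}-object / \emph{covariant-set-valued-functor} side). Getting the op's in the right places — $\Cone(\Endash,D) \colon \cC^\op \to \Set$ versus $\Cocone(D,\Endash) \colon \cC \to \Set$, and $\cC(\Endash,d)$ versus $\cC(d,\Endash)$ — is the whole content of the verification. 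Once that is set up, the naturality of the isomorphism and the identification $d \cong \lim D$ (resp.\ $d \cong \colim D$) follow immediately from the corresponding clauses of Theorem~\ref{thm:ReprCG} and the essential uniqueness of universal morphisms.
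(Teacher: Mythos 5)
Your proposal is correct and follows essentially the same route as the paper: the paper likewise observes that a limit is a terminal object in $\Cone_D = \Delta_\cJ \comma D$ and invokes Eq.~\eqref{eq:adj_CG_repr_op} (the dual of Theorem~\ref{thm:ReprCG}) with $G = \Delta_\cJ$ and $c = D$ to get representability of $\Cone(\Endash,D)$, identifying the representing object with the limit object. Your attention to the variance bookkeeping matches what the paper leaves implicit, and the colimit half is handled by duality in both.
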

\begin{proof}
 $D$ has a limit if and only if the category of cones $\Cone_D = \Delta_\cJ \comma D$ has
 a terminal object.
 From Eq.~\eqref{eq:adj_CG_repr_op}, this is equivalent to $\Cone(\Endash,D)$ being representable.
 In particular, $d$ is a limit object of $D$ if and only if $\Cone_D$ has
 a terminal object of the form $\braket{d,\kappa}$, which is equivalent to
 $\Cone(\Endash,D) \cong \yonedaop{d} = \cC(\Endash,d)$.
\end{proof}

\begin{lemma}{}{LimitCone1cD}
 For any functor $D \colon \cJ \to \cC$, the following equation holds%
 \footnote{$\Cocone(\cC(D \Endash,c), \{*\})$
 is the collection of all cocones from $\cC(D \Endash,c) \colon \cJ \to \Set^\op$ to $\{*\} \in \Set^\op$.}:
 \begin{alignat}{1}
  \Cone(c,D) = \Cone(\{*\}, \cC(c,D \Endash)), \qquad
  \Cocone(D,c) = \Cocone(\cC(D \Endash,c), \{*\}).
  \label{eq:limit_Cone_1cD}
 \end{alignat}
\end{lemma}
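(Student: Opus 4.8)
The plan is to prove each of the two equalities by unfolding the definitions and observing that, under the identification of an element $a$ of a set $X$ with the map $\{*\} \ni * \mapsto a \in X$ (Eq.~\eqref{eq:basic_Set_object}), the two indexed families of data on the two sides coincide, and so do their naturality conditions; this is essentially the tautology $\Set(\{*\},X) = X$ applied one leg at a time.

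For the first equality I would start from $\Cone(c,D) = \Func{\cJ}{\cC}(\Delta_\cJ c, D)$: such a cone is a family $\{\alpha_j \in \cC(c,Dj)\}_{j \in \cJ}$ with $Dh \c \alpha_i = \alpha_j$ for every $h \in \cJ(i,j)$, since $(\Delta_\cJ c)(h) = \id_c$. On the other side, using $\cC(c,D\Endash) = \yoneda{c} \b D \colon \cJ \to \Set$, an element of $\Cone(\{*\}, \cC(c,D\Endash)) = \Func{\cJ}{\Set}(\Delta_\cJ \{*\}, \cC(c,D\Endash))$ is a family $\{\beta_j \in \Set(\{*\}, \cC(c,Dj))\}_{j \in \cJ}$ with $(Dh \c \Endash) \c \beta_i = \beta_j$, where $Dh \c \Endash = (\yoneda{c} \b D)(h)$ is post-composition by $Dh$. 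Since $\Set(\{*\}, \cC(c,Dj)) = \cC(c,Dj)$, identifying each $\beta_j$ with an element of $\cC(c,Dj)$ turns this naturality condition into precisely $Dh \c \beta_i = \beta_j$, so the two sets are literally the same. I would also record the diagrammatic content: each leg $\alpha_j$ in Eq.~\eqref{eq:limit_cone} ends in the wire $c$ and can be folded at $\id_c$ into the wire $\yoneda{c}$ exactly as in Eq.~\eqref{eq:repr_yoneda_init} (cf.\ Eq.~\eqref{eq:repr_cG_X}); this folding is reversible and carries a cone to $D$ to a cone to $\cC(c,D\Endash)$.

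For the second equality I would argue dually. A cocone $\{\alpha_j \in \cC(Dj,c)\}_{j \in \cJ}$ in $\Cocone(D,c)$ satisfies $\alpha_i = \alpha_j \c Dh$ for $h \in \cJ(i,j)$, while an element of $\Cocone(\cC(D\Endash,c), \{*\})$ --- where $\cC(D\Endash,c) \colon \cJ \to \Set^\op$ and $\{*\} \in \Set^\op$ --- has components $\beta_j \in \Set^\op(\cC(Dj,c), \{*\}) = \Set(\{*\}, \cC(Dj,c)) = \cC(Dj,c)$ whose naturality, read in $\Set^\op$ with composition reversed, again reduces to $\beta_i = \beta_j \c Dh$; diagrammatically this is the upside-down folding of Eq.~\eqref{eq:repr_yoneda_init_op}. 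Alternatively one can simply invoke the first equality with $\cC$ replaced by $\cC^\op$. The only point that needs a little care --- and the nearest thing to an obstacle --- is the bookkeeping in this dual case: one must check that $\cC(D\Endash,c)$ sends $h \in \cJ(i,j)$ to the pre-composition map $\Endash \c Dh \colon \cC(Dj,c) \to \cC(Di,c)$, i.e.\ a morphism $\cC(Di,c) \to \cC(Dj,c)$ in $\Set^\op$, so that the naturality square in $\Set^\op$ matches the cocone condition. Once this is in place the equality holds on the nose.
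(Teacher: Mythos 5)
Your proposal is correct and follows essentially the same route as the paper: both rest on the identification $\Set(\{*\},X)=X$ together with the folding of the cone legs at $\id_c$ via Eq.~\eqref{eq:repr_yoneda_init}, which shows the two naturality conditions coincide, and both dispose of the second equality by duality. Your extra bookkeeping for the $\Set^\op$ case is a harmless elaboration of what the paper leaves implicit in the word ``dual.''
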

\begin{proof}
 We show $\Cone(c,D) = \Cone(\{*\}, \cC(c,D \Endash))$; the other formula is dual.
 We have for any $\alpha \in \Cone(c,D)$,
 \begin{alignat}{1}
  \footnoteinset{-2.26}{-0.55}{\eqref{eq:repr_yoneda_init_summary}}{%
  \InsertPDF{limit_cone_1cD.pdf}} \raisebox{1em}{.}
  \label{eq:limit_cone_1cD}
 \end{alignat}
 Note that it follows from Eq.~\eqref{eq:repr_yoneda_init_summary} that the last equality holds,
 i.e., $\alpha$ is a cone from $\{*\}$ to $\yoneda{c} \b D = \cC(c,D \Endash)$.
 Therefore, we have $\Cone(c,D) \subseteq \Cone(\{*\}, \cC(c,D \Endash))$.
 Also, by tracing this equation backwards we obtain
 $\Cone(c,D) \supseteq \Cone(\{*\}, \cC(c,D \Endash))$.
\end{proof}
Note that, as a corresponding equation to Eq.~\eqref{eq:repr_yoneda_init_summary},
any $\alpha \in \Cone(c,D)$ can be represented as
\begin{alignat}{1}
 \InsertPDF{limit_cone_1cD2.pdf} \raisebox{1em}{.}
 \label{eq:limit_cone_1cD2}
\end{alignat}

\subsection{Basic properties of limits and colimits} \label{subsec:limit_property}

\subsubsection{Limits as right adjoints to diagonal functors} \label{subsubsec:limit_def_adj}

From the properties of limits and adjoints, the following proposition holds.
\begin{proposition}{}{LimitAdj}
 For a small category $\cJ$ and a category $\cC$, the following are equivalent:
 \begin{enumerate}
  \item Any functor from $\cJ$ to $\cC$ has a limit.
  \item The diagonal functor $\Delta_\cJ \colon \cC \to \Func{\cJ}{\cC}$ has a right adjoint.
 \end{enumerate}
 Furthermore, if they hold, then there exists a functor $\lim \colon \Func{\cJ}{\cC} \to \cC$
 that maps each functor $D \colon \cJ \to \cC$ to one of its limit objects,
 and $\lim$ is a right adjoint to the diagonal functor $\Delta_\cJ$.
 Moreover, for the counit $\varepsilon$ of the adjunction $\Delta_\cJ \dashv \lim$,
 $\varepsilon_D$ is a limit cone of $D$.
\end{proposition}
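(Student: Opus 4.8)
The plan is to apply Theorem~\ref{thm:AdjNasUniv} (in its dual-free form, using $\Delta_\cJ$ in place of $G$) with the following dictionary: a limit of $D$ is precisely a universal morphism \emph{from} $\Delta_\cJ \colon \cC \to \Func{\cJ}{\cC}$ \emph{to} the object $D \in \Func{\cJ}{\cC}$ (this is Definition~\ref{def:Limit}). So ``every $D$ has a limit'' says exactly ``there is a universal morphism from $\Delta_\cJ$ to each $D$'', which by the dual of Theorem~\ref{thm:AdjNasUniv} is equivalent to ``$\Delta_\cJ$ has a right adjoint''. That settles $(1) \Leftrightarrow (2)$ with essentially no work beyond citing the already-proven theorem and unwinding the definitions.

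For the ``furthermore'' part, I would invoke the constructive half of (the dual of) Theorem~\ref{thm:AdjNasUniv}: given that Condition~(2) of the present proposition holds, choose for each $D \in \Func{\cJ}{\cC}$ a limit $\braket{\lim D, \kappa^D}$, i.e.\ a universal morphism $\braket{\lim D,\varepsilon_D}$ from $\Delta_\cJ$ to $D$ with $\varepsilon_D \coloneqq \kappa^D$. The dual of Theorem~\ref{thm:AdjNasUniv} then supplies a \emph{unique} functor $R \colon \Func{\cJ}{\cC} \to \cC$ with $RD = \lim D$ for every $D$ and such that $\{\varepsilon_D\}_{D}$ is the counit of an adjunction $\Delta_\cJ \dashv R$. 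Taking $\lim \coloneqq R$ gives the desired functor, which by construction maps each $D$ to a chosen limit object and is a right adjoint to $\Delta_\cJ$, and whose counit $\varepsilon$ has each $\varepsilon_D$ equal to a limit cone of $D$. One small bookkeeping point: Theorem~\ref{thm:AdjNasUniv} as stated concerns \emph{left} adjoints and universal morphisms \emph{to} $G$; here I need the dual statement (right adjoints, universal morphisms \emph{from} $\Delta_\cJ$), which the paper already remarks holds ``by considering the dual of this theorem'' immediately after Theorem~\ref{thm:AdjNasUniv}, so I can cite it directly.

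The main (and really only) obstacle is purely notational: matching the variance conventions so that ``universal morphism from $\Delta_\cJ$ to $D$'' is correctly identified with the limit and fed into the dual of Theorem~\ref{thm:AdjNasUniv}, and confirming that the functor $\lim$ produced there indeed sends $D \mapsto \lim D$ on objects and acts on morphisms as forced by the universal property (uniqueness of $\lim$ is part of what the cited theorem already guarantees). I would also note in passing, for the reader, that the functoriality of $\lim$ is exactly the content of Lemma~\ref{lemma:ReprFuncH}(1) applied with $H = \id$, which is precisely the ingredient used in the proof of Theorem~\ref{thm:AdjNasUniv}. No diagrammatic computation is needed beyond what those earlier results supply.
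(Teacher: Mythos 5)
Your proposal is correct and follows essentially the same route as the paper: both directions of $(1)\Leftrightarrow(2)$ are obtained by identifying a limit of $D$ with a universal morphism from $\Delta_\cJ$ to $D$ and then citing the dual of Theorem~\ref{thm:AdjNasUniv} (the paper phrases the $(2)\Rightarrow(1)$ direction via Lemma~\ref{lemma:AdjUniv}, which is the same content), and the ``furthermore'' clause likewise comes from the constructive half of that theorem, with functoriality of $\lim$ ultimately resting on Lemma~\ref{lemma:ReprFuncH}(1) exactly as you note.
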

\begin{proof}
 $(1) \Rightarrow (2)$:
 Since each $D \colon \cJ \to \cC$ has a limit, i.e.,
 a universal morphism from $\Delta_\cJ$ to each $D$ exists,
 from the dual of Theorem~\ref{thm:AdjNasUniv}, $\Delta_\cJ$ has a right adjoint.

 $(2) \Rightarrow (1)$:
 Let $\lim$ be a right adjoint to $\Delta_\cJ$.
 Also, arbitrarily choose $D \colon \cJ \to \cC$.
 Then, from Lemma~\ref{lemma:AdjUniv},
 $\braket{\lim D, \varepsilon_D}$ is a universal morphism
 from $\Delta_\cJ$ to $D$, i.e., a limit of $D$.
\end{proof}

Dually, any functor from $\cJ$ to $\cC$ has a colimit if and only if
$\Delta_\cJ \colon \cC \to \Func{\cJ}{\cC}$ has a left adjoint.

Let us specify such a functor $\lim$.
For each $D \colon \cJ \to \cC$, choose one of its limits and denote it by
$\braket{\lim D, \varepsilon_D}$.
Also, for each natural transformation $\tau \in \Func{\cJ}{\cC}(D,D')$,
let $\lim \tau$ be $\ol{\tau} \in \cC(\lim D, \lim D')$ satisfying
\begin{alignat}{1}
 \InsertPDF{limit_adj_mor.pdf} \raisebox{1em}{,}
 \label{eq:limit_adj_mor}
\end{alignat}
where the blue circles are $\varepsilon_D$ and $\varepsilon_{D'}$.
Then, for each $\tau \in \Func{\cJ}{\cC}(D,D')$ and $\tau' \in \Func{\cJ}{\cC}(D',D'')$,
$\lim \tau'\tau = (\lim \tau')(\lim \tau)$ holds from
\begin{alignat}{1}
 \InsertPDF{limit_adj_mor_func.pdf}
 \label{eq:limit_adj_mor_func}
\end{alignat}
and the dual of Eq.~\eqref{eq:repr_univ_eq}.
Also, $\lim$ maps the identity morphism $\id_D$ to the identity morphism $\id_{\lim D}$,
and thus $\lim$ is a functor.
When a functor $\lim$ exists, using the expression of counits introduced
in Eq.~\eqref{eq:adj_eta_epsilon}, the limit cone $\varepsilon_D$ can be represented by
\begin{alignat}{1}
 \InsertPDF{limit_repr_u_adj.pdf} \raisebox{1em}{.}
 \label{eq:limit_repr_u_adj}
\end{alignat}
The counit $\varepsilon$ of this adjunction is the collection
$\{ \varepsilon_D \}_{D \in \Func{\cJ}{\cC}}$.
Equations~\eqref{eq:limit_repr_terminal} and \eqref{eq:limit_repr_terminal_repr} can be rewritten
as follows:
\begin{alignat}{1}
 \InsertPDF{limit_repr_adj.pdf} \raisebox{1em}{.}
 \label{eq:limit_repr_adj}
\end{alignat}

A category $\cC$ is called \termdef{complete} if any functor $D \colon \cJ \to \cC$
with any small category $\cJ$ has a limit and \termdef{cocomplete}
if any functor $D \colon \cJ \to \cC$ with any small category $\cJ$ has a colimit.

\subsubsection{Preservation of limits and colimits} \label{subsubsec:limit_property_preserve}

Let us consider two functors $G \colon \cD \to \cC$ and $D \colon \cJ \to \cD$
with a small category $\cJ$.
We say that $G$ \termdef{preserves} the limits of $D$ if
when $D$ has a limit $\braket{d,\kappa}$, $\braket{Gd, G \b \kappa}$ is a limit of $G \b D$,
i.e.,
\begin{alignat}{1}
 \InsertMidPDF{limit_preserve_def1.pdf} \text{is a limit cone of $D$}
 &\qquad\Rightarrow\qquad
 \InsertMidPDF{limit_preserve_def2.pdf} \text{is a limit cone of $G \b D$}
 \label{eq:limit_preserve_def}
\end{alignat}
holds.
Also, we call $G$ preserving any limit (or continuous) if it preserves
the limits of any functor $D \colon \cJ \to \cD$ with any small category $\cJ$.
The preservation of colimits is defined as their dual.

$G$ preserves the limits of $D$ if and only if
\begin{alignat}{1}
 \lefteqn{ \InsertMidPDF{limit_repr_terminal2.pdf} \quad (\forall c \in \cC, ~\alpha \in \Cone(c,D)) }
 \nonumber \\
 &\Rightarrow\qquad
 \InsertMidPDF{limit_preserve.pdf} \quad (\forall c \in \cC, ~\beta \in \Cone(c, G \b D))
 \label{eq:limit_preserve}
\end{alignat}
holds, where the blue circle is a cone, denoted by $\kappa$, to $D$.
In this case, $\braket{d,\kappa}$ is a limit of $D$.
Intuitively, it can be said that the equation will still preserve its structure
even if we insert the wire $G$ on the left side of the wire $D$.
Also, the relationship between limit cones of $D$ and $G \b D$ is represented by
\begin{alignat}{1}
 \InsertPDF{limit_preserve_terminal.pdf} \raisebox{1em}{,}
 \label{eq:limit_preserve_terminal}
\end{alignat}
where the blue circle represents $\kappa$ and the blue ellipse represents any limit cone of $G \b D$.
The diamond-shaped block represents an isomorphism, which means $G \b \lim D \cong \lim (G \b D)$.
Equation~\eqref{eq:limit_preserve_terminal} intuitively means that
connecting the wire $G$ to the blue circle can be identified with the blue ellipse.

\begin{thm}{Right adjoints preserve limits (RAPL)}{LimitAdjPreserve}
 Any right adjoint preserves any limit.
 Dually, any left adjoint preserves any colimit.
\end{thm}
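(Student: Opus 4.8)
The plan is to reduce preservation of limits to the hom-set isomorphism of the adjunction, exploiting that in string-diagram form the bijection $\varphi_{c,d}\colon\cD(Fc,d)\cong\cC(c,Gd)$ is nothing but ``bending a wire along the unit $\eta$ or the counit $\varepsilon$''. It suffices to treat the limit case: let $G\colon\cD\to\cC$ be a right adjoint, fix a left adjoint $F\dashv G$ with unit $\eta$ and counit $\varepsilon$ (Theorem~\ref{thm:AdjNasSnake}), and let $D\colon\cJ\to\cD$ possess a limit $\braket{\lim D,\kappa}$. I must show that $\braket{G(\lim D),\,G\b\kappa}$ is a limit of $G\b D$, i.e.\ verify the universal property in the form of Eq.~\eqref{eq:limit_preserve}.

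First I would check that the data assemble correctly. Given any $c'\in\cC$ and any cone $\beta\in\Cone(c',G\b D)$, applying $\varphi^{-1}$ leg-by-leg yields a family $\{\varphi^{-1}_{c',Dj}(\beta_j)\}_{j\in\cJ}$, and this is a cone from $Fc'$ to $D$: that is exactly naturality of $\varphi$ in its second argument, which in the string picture (Eq.~\eqref{eq:adj_phi_nat} or Eq.~\eqref{eq:adj_phi_nat_elegant}) is the statement that morphisms of $\cJ$ slide through $\eta$. By the universal property of the limit cone $\kappa$ there is then a unique $\gamma\in\cD(Fc',\lim D)$ with $\kappa_j\c\gamma=\varphi^{-1}_{c',Dj}(\beta_j)$ for every $j$, and I set $\ol\beta:=\varphi_{c',\lim D}(\gamma)\in\cC(c',G(\lim D))$. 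Bending back (naturality of $\varphi$ once more, using $\varphi_{c,d}(f)=Gf\c\eta_c$ from Eq.~\eqref{eq:adj_conj_inv}) gives $G\kappa_j\c\ol\beta=\beta_j$, and uniqueness of $\ol\beta$ follows from bijectivity of $\varphi_{c',\lim D}$ together with uniqueness of $\gamma$. Diagrammatically: pull the cone $\beta$ off the wire $G$ with $\eta$/$\varepsilon$, factor through $\kappa$, and push back — the snake equation~\eqref{eq:snake} is precisely what makes ``push back'' inverse to ``pull off''. An equivalent route staying inside the representability picture of the preceding lemmas is the chain of natural-in-$c$ bijections $\cC(c,G(\lim D))\cong\cD(Fc,\lim D)\cong\Cone(Fc,D)=\Cone(\{*\},\cD(Fc,D\Endash))\cong\Cone(\{*\},\cC(c,GD\Endash))=\Cone(c,G\b D)$, whose outer steps are $\varphi^{-1}$ and $\varphi$ applied pointwise, whose second step is representability of $\braket{\lim D,\kappa}$ (Lemma~\ref{lemma:LimitRepr}), and whose two equalities are Lemma~\ref{lemma:LimitCone1cD}; Lemma~\ref{lemma:LimitRepr} then delivers at once that $G\b D$ has a limit with limit object $G(\lim D)$.

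The step I expect to need the most care is not producing the factorization but pinning down the \emph{cone}: one must confirm that the limit cone singled out is literally $G\b\kappa$, not merely an isomorphic one. Concretely this amounts to tracing $\id_{G(\lim D)}$ through the chain above: the $\varphi^{-1}$-step turns it into $\varepsilon_{\lim D}$ (Lemma~\ref{lemma:AdjUniv}), the representability step into the cone $\{\kappa_j\c\varepsilon_{\lim D}\}_j$, and the $\varphi$-step into $\{G\kappa_j\c G\varepsilon_{\lim D}\c\eta_{G(\lim D)}\}_j$, which straightens to $\{G\kappa_j\}_j=G\b\kappa$ by a single zig-zag of Eq.~\eqref{eq:snake}. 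So the whole argument ultimately rests on one application of Theorem~\ref{thm:AdjNasSnake}. Finally, the colimit statement is immediate by dualization: for an adjunction $F\dashv G$ the left adjoint induces a right adjoint $F^{\op}\colon\cC^{\op}\to\cD^{\op}$, and colimits in $\cD$ are limits in $\cD^{\op}$, so ``left adjoints preserve colimits'' is just the limit statement applied in the opposite categories.
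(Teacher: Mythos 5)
Your proof is correct and follows essentially the same route as the paper's: transpose a cone to $G\b D$ across the adjunction to get a cone to $D$, factor it uniquely through the limit cone $\kappa$, and transpose back, with uniqueness coming from the bijectivity of $\varphi$ at each stage. The extra verifications you flag (that the transposed family is a cone, and that the resulting limit cone is literally $G\b\kappa$) are handled implicitly in the paper's single string-diagram computation, but your treatment is a valid elaboration of the same argument.
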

\begin{proof}
 Assume that there exists a limit, denoted by $\braket{d,\kappa}$,
 of a functor $D \colon \cJ \to \cD$, where $\cJ$ is a small category.
 Also, arbitrarily choose $F \dashv G \colon \cC \to \cD$.
 We have for any cone $\alpha \in \Cone(c,G \b D)$ (where $c$ is arbitrary),
 \begin{alignat}{1}
  \footnoteinset{-3.45}{0.3}{\eqref{eq:adj_conj}}{%
  \footnoteinset{0.12}{0.3}{\eqref{eq:limit_repr_terminal}}{%
  \footnoteinset{3.42}{0.3}{\eqref{eq:adj_conj}}{%
  \InsertPDF{limit_preserve_adj.pdf}}}} \raisebox{1em}{,}
  \label{eq:limit_preserve_adj}
 \end{alignat}
 where the blue circle is $\kappa$.
 In the first equality, we set the transpose of $\alpha$ to $\ol{\alpha}$
 (note that $F \b \Endash \dashv G \b \Endash$ holds).
 The second equality follows from applying Eq.~\eqref{eq:limit_repr_terminal} to the cone $\ol{\alpha}$
 (we set $\ol{\alpha}$ on the right-hand side of Eq.~\eqref{eq:limit_repr_terminal} to $\ol{\beta}$).
 In the last equality, we set the transpose of $\ol{\beta}$ to $\beta$.
 Since $\beta$ is uniquely determined from $\alpha$, $\braket{Gd,G \b \kappa}$ is a limit of $G \b D$.
 Therefore, $G$ preserves any limit.
\end{proof}

\begin{thm}{}{LimitPresheaf}
 For each object $c \in \cC$, $\yoneda{c} \colon \cC \to \Set$ preserves any limit.
\end{thm}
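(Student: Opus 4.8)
The plan is to verify the universal property directly: given a limit $\braket{d,\kappa}$ of a functor $D\colon\cJ\to\cC$ with $\cJ$ small, I want to show that $\braket{\cC(c,d),\,\yoneda{c}\b\kappa}$ is a limit of $\yoneda{c}\b D=\cC(c,D\Endash)\colon\cJ\to\Set$. By Eq.~\eqref{eq:limit_preserve} (with the ambient target category taken to be $\Set$) this reduces to the following: for every set $S$ and every cone $\beta\in\Cone(S,\yoneda{c}\b D)$ there is a unique map $\ol\beta\colon S\to\cC(c,d)$ with $(\yoneda{c}\b\kappa)\c\Delta_\cJ\ol\beta=\beta$. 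Note that $\yoneda{c}$ is \emph{not} covered by RAPL (Theorem~\ref{thm:LimitAdjPreserve}), since it need not be a right adjoint, so a separate argument is required; a virtue of verifying the universal property directly is that it produces the limit \emph{cone} $\yoneda{c}\b\kappa$, not merely an abstract isomorphism of limit objects.

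The key device is to reduce to the apex $\{*\}$, where Lemma~\ref{lemma:LimitCone1cD} converts cones in $\Set$ into cones in $\cC$. Concretely, since a map of sets is determined by its composites with the elements $s\colon\{*\}\to S$, I would precompose $\beta$ with $\Delta_\cJ s$ to obtain $\gamma_s\coloneqq\beta\c\Delta_\cJ s\in\Cone(\{*\},\yoneda{c}\b D)=\Cone(c,D)$, the last equality being exactly Lemma~\ref{lemma:LimitCone1cD} (pictorially, the folding move behind Eq.~\eqref{eq:limit_cone_1cD2}). The universal property of $\kappa$, i.e.\ Eq.~\eqref{eq:limit_repr_terminal}, then gives a unique $\ol{\gamma_s}\in\cC(c,d)$ with $\kappa\c\Delta_\cJ\ol{\gamma_s}=\gamma_s$; setting $\ol\beta(s)\coloneqq\ol{\gamma_s}$ defines the desired map $\ol\beta\colon S\to\cC(c,d)$. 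Existence of the factorization is checked elementwise: precomposing $(\yoneda{c}\b\kappa)\c\Delta_\cJ\ol\beta$ with each $s$ and unfolding via Lemma~\ref{lemma:LimitCone1cD} turns the claim into $\kappa\c\Delta_\cJ\ol{\gamma_s}=\gamma_s$, which holds by construction. Uniqueness follows the same route: any competitor $\ol\beta'$ must satisfy $\kappa\c\Delta_\cJ(\ol\beta'\!\circ s)=\gamma_s$ after precomposition with $s$, forcing $\ol\beta'\!\circ s=\ol{\gamma_s}$ for all $s$ by the uniqueness clause of Eq.~\eqref{eq:limit_repr_terminal}.

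Diagrammatically I would draw $\beta$ as a trapezoidal cone block with apex wire $S$ and the $\yoneda{c}$ wire on its right, depict precomposition by an element $s\colon\{*\}\to S$, and use the folding of Eq.~\eqref{eq:limit_cone_1cD2} to slide the wire $c$ into position so that Eq.~\eqref{eq:limit_repr_terminal} applies; the transpose $\ol\beta$ then appears as the (un)folded assembly of the family $\{\ol{\gamma_s}\}_{s\in S}$. I expect the one delicate point to be the passage from apex $\{*\}$ to a general apex $S$ — namely, justifying that a cone over a $\Set$-valued diagram with apex $S$ is the same data as an $S$-indexed family of cones with apex $\{*\}$, so that the elementwise-defined $\ol\beta$ really is a cone morphism and is the unique one — together with keeping the two meanings of $\Cone$ (in $\cC$ versus in $\Set$) straight. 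Everything else is routine bookkeeping with Lemma~\ref{lemma:LimitCone1cD} and the universal property of $\kappa$.
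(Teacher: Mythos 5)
Your proposal is correct and follows essentially the same route as the paper's proof: both reduce a cone with set apex $S$ to an $S$-indexed family of cones with apex $\{*\}$ (your precomposition with elements $s\colon\{*\}\to S$ is exactly the paper's swap map $\Gamma$), convert each such cone into a cone in $\cC$ with apex $c$ via Lemma~\ref{lemma:LimitCone1cD}, and then apply the universal property of $\kappa$ elementwise. The ``delicate point'' you flag is precisely the step the paper handles with its displayed equivalence characterizing $\alpha\in\Cone(X,F)$ componentwise in $x$.
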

\begin{proof}
 Assume that there exists a limit, denoted by $\braket{d,\kappa}$,
 of a functor $D \colon \cJ \to \cD$, where $\cJ$ is a small category.
 Then, it suffices to show that $\braket{\yoneda{c} \b d,\yoneda{c} \b \kappa}$ is a limit of
 $\yoneda{c} \b D = \cC(c,D \Endash)$.
 Arbitrarily choose a set $X$ and a cone $\alpha \in \Cone(X,\yoneda{c} \b D)$,
 and let $\alpha(x) \coloneqq \{ \alpha_j(x) \}_{j \in \cJ}$ for each $x \in X$.
 Also, let $\Gamma$ be the map that sends a collection of the form
 $\{ \{ f_{x,y} \}_{x \in X} \}_{y \in Y}$
 to the collection $\{ \{ f_{x,y} \}_{y \in Y} \}_{x \in X}$
 (by swapping the order of $\{ \Endash \}_{x \in X}$ and $\{ \Endash \}_{y \in Y}$).
 $\Gamma$ is obviously invertible and satisfies
 \begin{alignat}{1}
  \lefteqn{ \alpha = \{ \{ \alpha_j(x) \}_{x \in X} \}_{j \in \cJ}
  \quad\xmapsto{\Gamma}\quad
  \{ \alpha(x) \}_{x \in X} = \{ \{ \alpha_j(x) \}_{j \in \cJ} \}_{x \in X} } \nonumber \\
  &\diagram\qquad
  \InsertMidPDF{report_limit_preserve_yoneda_swap.pdf}.
  \label{eq:limit_preserve_yoneda_swap}
 \end{alignat}
 Note that since, for $F \coloneqq \yoneda{c} \b D \colon \cJ \to \Set$,
 \begin{alignat}{1}
  \alpha \in \Cone(X,F)
  &\quad\Leftrightarrow\quad
  \alpha_{\cod h} = Fh \c \alpha_{\dom h} \quad(\forall h \in \mor \cJ) \nonumber \\
  &\quad\Leftrightarrow\quad
  \alpha_{\cod h}(x) = Fh \c \alpha_{\dom h}(x) \quad(\forall h \in \mor \cJ,~ x\in X)
 \end{alignat}
 holds, each $\alpha(x)$ is a cone from $\{*\}$ to $F$.
 It follows that $\ol{\alpha} \coloneqq \{ \ol{\alpha(x)} \}_{x \in X}$ satisfying
 \begin{alignat}{1}
  \footnoteinset{1.13}{1.30}{\eqref{eq:limit_cone_1cD2}}{%
  \footnoteinset{-4.29}{-0.55}{\eqref{eq:limit_repr_terminal}}{%
  \footnoteinset{-0.72}{-0.55}{\eqref{eq:limit_cone_1cD2}}{%
  \InsertPDF{report_limit_preserve_yoneda.pdf}}}}
  \label{eq:limit_preserve_yoneda}
 \end{alignat}
 (where the blue circles are $\kappa$) is uniquely determined.
 Therefore, $\braket{\yoneda{c} \b d,\yoneda{c} \b \kappa}$ is a limit of
 $\yoneda{c} \b D$.
\end{proof}

\begin{cor}{}{LimitPresheaf2}
 $\yonedaop{c} \colon \cC^\op \to \Set$ preserves any limit.
 Alternatively, $\yonedaop{c} \colon \cC \to \Set^\op$ preserves any colimit.
\end{cor}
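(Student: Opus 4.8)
The plan is to observe that $\yonedaop{c} \colon \cC^\op \to \Set$ is, up to the standard identification, exactly the covariant hom functor $\yoneda{c}$ of Theorem~\ref{thm:LimitPresheaf}, but taken with respect to the base category $\cC^\op$ in place of $\cC$. On objects, $\yonedaop{c}$ sends $a \in \cC^\op$ to $\cC(a,c) = (\cC^\op)(c,a)$, which is the value of $(\cC^\op)(c,\Endash)$ at $a$; on a morphism $f \in \cC^\op(a,b) = \cC(b,a)$, it sends $g \mapsto g \c f$ (recall $\yonedaop{c} \b f = \Endash \c f$ in the notation of the excerpt), which is precisely post-composition with $f$ inside $\cC^\op$, i.e.\ the action of $(\cC^\op)(c,\Endash)$ on $f$. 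Thus $\yonedaop{c}$ equals the functor ``$\yoneda{c}$'' for the category $\cC^\op$.

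Given this identification, the corollary follows immediately: I would apply Theorem~\ref{thm:LimitPresheaf} with $\cC$ replaced throughout by $\cC^\op$ and with $c$ viewed as an object of $\cC^\op$; it then states that $(\cC^\op)(c,\Endash) = \yonedaop{c} \colon \cC^\op \to \Set$ preserves any limit. For the second phrasing, I would invoke the general fact that a functor preserves limits if and only if its opposite functor preserves colimits; since the opposite of $\yonedaop{c} \colon \cC^\op \to \Set$ is $\yonedaop{c} \colon \cC \to \Set^\op$, the latter preserves any colimit. (Equivalently, one could dualize the diagrammatic argument of Theorem~\ref{thm:LimitPresheaf} directly, which just amounts to reflecting all the string diagrams vertically.)

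There is essentially no difficult step here. The only point requiring care is the bookkeeping of the identification $\cC(\Endash,c) = (\cC^\op)(c,\Endash)$ as functors out of $\cC^\op$ — in particular checking the action on morphisms matches — together with the remark that the two formulations in the statement are genuinely dual rather than needing separate treatment. Once those are in place, the result is a direct specialization of the previously established theorem, so no new diagrammatic computation is needed.
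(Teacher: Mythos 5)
Your proposal is correct and matches the paper's own argument, which likewise identifies $\yonedaop{c} = \cC(\Endash,c) = \cC^\op(c,\Endash)$ and substitutes $\cC^\op$ for $\cC$ in Theorem~\ref{thm:LimitPresheaf}. The extra care you take with the action on morphisms and with the duality between the two formulations is sound but not a departure from the paper's route.
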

\begin{proof}
 Since $\yonedaop{c} = \cC(\Endash,c) = \cC^\op(c,\Endash)$,
 it is clear by substituting $\cC^\op$ for $\cC$ in Theorem~\ref{thm:LimitPresheaf}.
\end{proof}

\begin{proposition}{}{LimitPreserveCong}
 Consider a small category $\cJ$, categories $\cC$ and $\cD$, and a functor $G \colon \cD \to \cC$.
 If any functor from $\cJ$ to $\cC$ and any functor from $\cJ$ to $\cD$ have limits,
 and $G$ preserves any limit, then the isomorphism $G \b \lim D \cong \lim (G \b D)$
 is natural in $D \colon \cJ \to \cD$.
\end{proposition}
\begin{proof}
 By Proposition~\ref{pro:LimitAdj}, a right adjoint, $\lim$, to
 $\Delta_\cJ \colon \cC \to \Func{\cJ}{\cC}$ and a right adjoint, $\lim$, to
 $\Delta_\cJ \colon \cD \to \Func{\cJ}{\cD}$ exist.
 Also, since $G$ preserves any limit, we have for each $D \colon \cJ \to \cD$
 (see Eqs.~\eqref{eq:limit_repr_u_adj} and \eqref{eq:limit_preserve_terminal}),
 \begin{alignat}{1}
  \InsertPDF{limit_preserve_cong_phi.pdf} \raisebox{1em}{,}
  \label{eq:limit_preserve_cong_phi}
 \end{alignat}
 where the diamond block represents the isomorphism from $G \b \lim D$ to $\lim (G \b D)$,
 which we denote as $\varphi_D$.
 Note that the functor $G \b \Delta_\cJ \colon \cD \to \Func{\cJ}{\cC}$ and
 the functor $\Delta_\cJ \b G \colon \cD \to \Func{\cJ}{\cC}$ are equal
 since both have the action on morphisms
 $\mor \cD \ni f \mapsto \{ Gf \}_{j \in \cJ} \in \mor \Func{\cJ}{\cC}$.
 If $\varphi \coloneqq \{ \varphi_D \}_{D \in \Func{\cJ}{\cD}}$ is a natural transformation,
 then $\varphi$ is a natural isomorphism from $G \b \lim$ to $\lim \b (G \b \Endash)$.
 Thus, it remains to show naturality of $\varphi$.
 We have for any $\tau \colon D \nto D'$ (where $D,D' \colon \cJ \to \cD$ are also arbitrary),
 \begin{alignat}{1}
  \footnoteinset{-1.60}{1.55}{\eqref{eq:sliding}}{%
  \footnoteinset{1.58}{1.55}{\eqref{eq:limit_preserve_cong_phi}}{%
  \footnoteinset{-1.60}{-0.9}{\eqref{eq:sliding}}{%
  \footnoteinset{1.58}{-0.9}{\eqref{eq:limit_preserve_cong_phi}}{%
  \InsertPDF{limit_preserve_cong_phi_nat.pdf}}}}} \raisebox{1em}{.}
  \label{eq:limit_preserve_cong_phi_nat}
 \end{alignat}
 Therefore, due to the universality of $\lim (G \b D')$, we have
 \begin{alignat}{1}
  \InsertPDF{limit_preserve_cong_phi_nat2.pdf} \raisebox{1em}{,}
  \label{eq:limit_preserve_cong_phi_nat2}
 \end{alignat}
 and thus $\varphi$ is a natural transformation.
\end{proof}

\begin{proposition}{}{LimitCong}
 If a functor $D \colon \cJ \to \cC$ with a small category $\cJ$ has a limit, then
 there exists an isomorphism
 \begin{alignat}{1}
  \Cone(c,D) \cong \cC(c,\lim D) \cong \lim \cC(c,D \Endash)
  \label{eq:limit_cong_limit}
 \end{alignat}
 that is natural in $c \in \cC$.
 Dually, if $D$ has a colimit, then there exists an isomorphism
 \begin{alignat}{1}
  \Cocone(D,c) \cong \cC(\colim D,c) \cong \lim \cC(D \Endash,c)
  \label{eq:limit_cong_colimit}
 \end{alignat}
 that is natural in $c \in \cC$.
\end{proposition}
\begin{proof}
 Since $\Cone(\Endash,D) \cong \cC(\Endash,\lim D)$ holds from Eq.~\eqref{eq:limit_repr_cong},
 the left isomorphism in Eq.~\eqref{eq:limit_cong_limit} holds and is natural in $c \in \cC$.
 Also, since from Theorem~\ref{thm:LimitPresheaf} $\cC(c,\lim D)$ is a limit object of
 $\yoneda{c} \b D = \cC(c,D\Endash)$, the right isomorphism in Eq.~\eqref{eq:limit_cong_limit} holds.
 We show that the right isomorphism is natural in $c \in \cC$.

 The isomorphism $\lim \cC(c,D \Endash) \cong \cC(c,\lim D)$ is depicted by
 \begin{alignat}{1}
  \footnoteinset{0.00}{0.3}{\eqref{eq:limit_preserve_terminal}}{%
  \InsertPDF{limit_cong_preserve.pdf}} \raisebox{1em}{,}
  \label{eq:limit_cong_preserve}
 \end{alignat}
 where the blue ellipse and the blue circle represent limit cones of $\yoneda{c} \b D$ and $D$,
 respectively.
 The area enclosed by the auxiliary line represents the isomorphism, which we denote as $\psi_c$,
 from $\lim \cC(c,D\Endash)$ to $\cC(c,\lim D)$.
 Also, the solid wire $\lim$ represents the functor $\lim \colon \Func{\cJ}{\Set} \to \Set$,
 which exists since $\Set$ is complete.
 With no loss of generality we can assume that this functor $\lim$ is defined
 for each $f \in \cC(a,b)$ to satisfy
 \begin{alignat}{1}
  \InsertPDF{limit_cong_nat_proof2.pdf}
  \label{eq:limit_cong_nat_proof2}
 \end{alignat}
 (see Eq.~\eqref{eq:limit_adj_mor}).
 The isomorphism $\psi_c \colon \lim \cC(c,D \Endash) \cong \cC(c,\lim D)$ is natural
 in $c \in \cC$%
 \footnote{This means that $\{ \psi_c \}_{c \in \cC}$ is a natural isomorphism
 from the presheaf $\lim_{j \in \cJ} \cC(\Endash,Dj) \in \hat{\cC}$
 to the presheaf $\cC(\Endash,\lim D) \in \hat{\cC}$.}
 if and only if
 \begin{alignat}{1}
  \yoneda{f}{}_{\lim D} \c \psi_b = \psi_a \c \lim(\yoneda{f} \b D)
  &\qquad\diagram\qquad
  \InsertMidPDF{limit_cong_nat.pdf}
  \label{eq:limit_cong_nat}
 \end{alignat}
 holds for each $f \in \cC(a,b)$ (where $a,b \in \cC$ are arbitrary).
 Therefore, it suffices to show Eq.~\eqref{eq:limit_cong_nat}.
 From Eqs.~\eqref{eq:limit_cong_preserve} and \eqref{eq:limit_cong_nat_proof2}, we have
 \begin{alignat}{1}
  \footnoteinset{-1.66}{1.75}{\eqref{eq:sliding}}{%
  \footnoteinset{1.65}{1.75}{\eqref{eq:limit_cong_preserve}}{%
  \footnoteinset{-1.66}{-1.1}{\eqref{eq:limit_cong_nat_proof2}}{%
  \footnoteinset{1.65}{-1.1}{\eqref{eq:limit_cong_preserve}}{%
  \InsertPDF{limit_cong_nat_proof.pdf}}}}} \raisebox{1em}{.}
  \label{eq:limit_cong_nat_proof}
 \end{alignat}
 Let the blue circle be denoted as $\kappa$; then, since $\yoneda{a} \b \kappa$ is
 a limit cone of $\yoneda{a} \b D$, the two natural transformations
 enclosed by the auxiliary lines are equal due to the universality of $\yoneda{a} \b \kappa$.
 Therefore, Eq.~\eqref{eq:limit_cong_nat} holds.
\end{proof}

\subsubsection{Limits and colimits of functors to functor categories} \label{subsubsec:limit_property_func}

The following proposition asserts that a limit of a functor from a small category $\cJ$
to a functor category $\Func{\cI}{\cC}$ can be obtained from limits of
the functors $\ev_i \b D$ $~(i \in \cI)$.
\begin{proposition}{}{LimitBifunc}
 Consider a functor $D \colon \cJ \to \Func{\cI}{\cC}$ with a small category $\cJ$.
 Assume that, for each $i \in \cI$, $D_i \coloneqq \ev_i \b D = D(\Endash)(i)$ has
 a limit $\braket{l_i,\kappa_i}$.
 Let us express $\kappa_i$ as $\{ \kappa_{i,j} \}_{j \in \cJ}$
 and let $\kappa \coloneqq \{ \{ \kappa_{i,j} \}_{i \in \cI} \}_{j \in \cJ}$.
 Then, there exists a unique functor $L \in \Func{\cI}{\cC}$ such that
 $L(i) = l_i$ holds and that $\kappa$ is a cone from $L$ to $D$.
 Also, $\braket{L,\kappa}$ is a limit of $D$.
 Dually, the same holds for colimits.
\end{proposition}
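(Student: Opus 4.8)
The plan is to read this as the standard fact that limits in a functor category $\Func{\cI}{\cC}$ are computed pointwise, and to derive it from Lemma~\ref{lemma:ReprFuncH} (used in its dual, ``terminal-object'' form) together with Lemma~\ref{lemma:ReprIsoUniv}, since the hypotheses are arranged exactly for this. The starting observation is that, by Definition~\ref{def:Limit}, for each $i \in \cI$ the given limit $\braket{l_i,\kappa_i}$ of $D_i = \ev_i \b D$ is precisely a universal morphism from the diagonal functor $\Delta_\cJ \colon \cC \to \Func{\cJ}{\cC}$ to the object $D_i$ of $\Func{\cJ}{\cC}$. So first I would repackage the family $(D_i)_{i\in\cI}$ into a functor $\tilde D \colon \cI \to \Func{\cJ}{\cC}$ sending $i \mapsto D_i$ and a morphism $f \colon i \to i'$ to the natural transformation with components $D(j)(f)$; naturality in $j$ of this assignment is just naturality of the natural transformations $D(h)$ for $h \in \mor\cJ$. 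Here $\tilde D$ is nothing but the image of $D$ under the canonical isomorphism of categories $P \colon \Func{\cJ}{\Func{\cI}{\cC}} \to \Func{\cI}{\Func{\cJ}{\cC}}$ that reorders the two indices.

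Next I would apply the dual of Lemma~\ref{lemma:ReprFuncH}, with the role of ``$\cJ$'' played by $\cI$, ambient category $\Func{\cJ}{\cC}$, $G \coloneqq \Delta_\cJ$, and $H \coloneqq \tilde D$; the hypothesis of that lemma is exactly ``for each $i$ there is a universal morphism from $\Delta_\cJ$ to $\tilde D(i) = D_i$'', which holds. Its part~(1) then yields a unique functor $L \colon \cI \to \cC$ with $L(i) = l_i$ such that $\{\kappa_i\}_{i\in\cI}$ is a natural transformation $\Delta_\cJ \b L \nto \tilde D$ in $\Func{\cI}{\Func{\cJ}{\cC}}$. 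Under $P$ this natural transformation is precisely the reindexed family $\kappa = \{\{\kappa_{i,j}\}_{i}\}_{j}$ regarded as a cone from $L$ to $D$, so the two conditions ``$\{\kappa_i\}_i$ is a natural transformation $\Delta_\cJ \b L \nto \tilde D$'' and ``$\kappa$ is a cone from $L$ to $D$'' say the same thing; this establishes the first assertion of the proposition, uniqueness included. Its part~(2) gives moreover that $\braket{L, \{\kappa_i\}_i}$ is a universal morphism from $\Delta_\cJ \b \Endash \colon \Func{\cI}{\cC} \to \Func{\cI}{\Func{\cJ}{\cC}}$ to $\tilde D$.

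Finally I would transport this universal morphism along $P$ using the ``dual'' clause of Lemma~\ref{lemma:ReprIsoUniv}: $\braket{L, \{\kappa_i\}_i}$ is universal from $\Delta_\cJ \b \Endash$ to $\tilde D$ if and only if $\braket{L, P \b \{\kappa_i\}_i}$ is universal from $P \b (\Delta_\cJ \b \Endash)$ to $P(\tilde D)$. Here $P(\tilde D) = D$, $P \b \{\kappa_i\}_i = \kappa$, and $P \b (\Delta_\cJ \b \Endash)$ is just the diagonal functor $\Func{\cI}{\cC} \to \Func{\cJ}{\Func{\cI}{\cC}}$ (post-composing with the $\cJ$-diagonal and reordering indices is the $\cJ$-diagonal on $\Func{\cI}{\cC}$). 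Hence $\braket{L,\kappa}$ is a universal morphism from the diagonal functor to $D$, i.e.\ a limit of $D$ by Definition~\ref{def:Limit}. The colimit statement then follows by applying the above to $\cC^\op$.

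I expect the only genuine friction to be the explicit bookkeeping around the index-swapping isomorphism $P$ — verifying that it sends $\tilde D$ to $D$, the family $\{\kappa_i\}_i$ to $\kappa$, and $\Delta_\cJ \b \Endash$ to the diagonal — which is conceptually trivial but must be written out (or drawn) with care. A fully self-contained alternative that avoids $P$ altogether is the direct pointwise construction: for $f \colon i \to i'$ in $\cI$ define $L(f)$ to be the unique mediating morphism from the cone $\{D(j)(f) \c \kappa_{i,j}\}_{j}$ to the limit $\braket{l_{i'},\kappa_{i'}}$; check functoriality and uniqueness of $L$ and that $\kappa$ is a cone from $L$ to $D$; then, given $E \in \Func{\cI}{\cC}$ and any cone $\alpha$ from $E$ to $D$, build the transpose $\ol\alpha \colon E \nto L$ componentwise from the universal property of each $\braket{l_i,\kappa_i}$ and verify its naturality by a short diagram chase combining naturality of the $\alpha_j$ with the defining equation of $L(f)$. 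In that route the naturality of $\ol\alpha$ is the one place where an actual computation happens.
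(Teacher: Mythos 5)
Your proposal is correct and follows essentially the same route as the paper: repackage $D$ via the canonical index-swapping functor $P$ into $D_\Endash \colon \cI \to \Func{\cJ}{\cC}$, apply the dual of Lemma~\ref{lemma:ReprFuncH} with $H = D_\Endash$ and $G = \Delta_\cJ$ to get the unique $L$ and the universal morphism $\braket{L,\{\kappa_i\}_i}$, and transport back along $P$ using Lemma~\ref{lemma:ReprIsoUniv}. The only difference is cosmetic ordering of the two transport steps, so nothing further is needed.
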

\begin{proof}
 Let $P$ be the canonical functor from $\Functwo{\cJ}{\cI}{\cC}$
 to $\Functwo{\cI}{\cJ}{\cC}$%
 \footnote{$P$ can be defined as the map of
 each morphism $\{ \{ \alpha_{i,j} \}_{i \in \cI} \}_{j \in \cJ}$ in $\Functwo{\cJ}{\cI}{\cC}$
 to $\{ \{ \alpha_{i,j} \}_{j \in \cJ} \}_{i \in \cI}$.
 The canonical functor from $\Func{\cI \times \cJ}{\cC}$ to $\Functwo{\cJ}{\cI}{\cC}$
 can be defined in a similar manner.},
 and let $D^* \coloneqq P \b D$; then, we have $D^*(i) = D_i$.
 Note that the two functors $D \colon \cJ \to \Func{\cI}{\cC}$ and
 $D^* \colon \cI \to \Func{\cJ}{\cC}$ are related as if the order of $\cI$ and $\cJ$
 were swapped.
 We prove this proposition by replacing the discussion about $D$ with the discussion about $D^*$.

 Let $\kappa^* \coloneqq \{ \kappa_i \}_{i \in \cI}$;
 then, $\kappa = \{ \{ \kappa_{i,j} \}_{i \in \cI} \}_{j \in \cJ}$
 is a natural transformation from $\Delta_\cJ L \colon \cJ \to \Func{\cI}{\cC}$ to $D$
 (i.e., a cone from $L$ to $D$) if and only if
 $\kappa^* = \{ \{ \kappa_{i,j} \}_{j \in \cJ} \}_{i \in \cI}$ is
 a natural transformation from $\Delta_\cJ \b L \colon \cI \to \Func{\cJ}{\cC}$ to $D^*$.
 Indeed, if $\kappa^*$ is a natural transformation, then
 \begin{alignat}{1}
  P \b \kappa = \kappa^*
  &\qquad\diagram\qquad
  \InsertMidPDF{limit_bifunc_P.pdf}
  \label{eq:limit_bifunc_P}
 \end{alignat}
 holds, and thus $\kappa$ is a natural transformation.
 The converse also holds since $P^{-1} \b \kappa^* = \kappa$.
 
 It follows from Lemma~\ref{lemma:ReprIsoUniv} that $\braket{L,\kappa}$ is
 a universal morphism from $\Delta_\cJ$ to $D$ if and only if
 $\braket{L,\kappa^*}$ is a universal morphism from $\Delta_\cJ \b \Endash$ to $D^*$.
 Thus, it is sufficient to show that a functor $L$ satisfying the conditions is uniquely determined
 and that $\braket{L,\kappa^*}$ is a universal morphism from $\Delta_\cJ \b \Endash$
 to $D^*$.
 This can be shown by substituting $D^* \colon \cI \to \Func{\cJ}{\cC}$
 and $\Delta_\cJ \colon \cC \to \Func{\cJ}{\cC}$ for $H$ and $G$, respectively,
 in the dual of Lemma~\ref{lemma:ReprFuncH}.
 Indeed, if we let $L$ be $F$ of this lemma, then it follows that there exists
 a unique functor $L \in \Func{\cI}{\cC}$ such that
 $L(i) = l_i$ holds and that $\kappa^*$ is a natural transformation
 from $\Delta_\cJ \b L$ to $D^*$, i.e.,
 \begin{alignat}{1}
  \InsertPDF{limit_bifunc_limit.pdf} \raisebox{1em}{.}
  \label{eq:limit_bifunc_limit}
 \end{alignat}
 Also, this lemma says that $\braket{L,\kappa^*}$ is a universal morphism
 from $\Delta_\cJ \b \Endash$ to $D^*$.
\end{proof}

\begin{cor}{}{LimitBifuncFunc}
 If any functor from a small category $\cJ$ to a category $\cC$ has a limit,
 then any functor from $\cJ$ to $\Func{\cI}{\cC}$ also has a limit.
 The same holds for colimits.
\end{cor}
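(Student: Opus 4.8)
The plan is to reduce the statement immediately to Proposition~\ref{pro:LimitBifunc}, whose hypothesis is automatically met under the assumption here. Given any functor $D \colon \cJ \to \Func{\cI}{\cC}$ with $\cJ$ small, I would first form, for each object $i \in \cI$, the functor $D_i \coloneqq \ev_i \b D \colon \cJ \to \cC$, where $\ev_i$ is the evaluation functor of Example~\ref{ex:FuncEval}. Since $\cJ$ is small and, by hypothesis, every functor from $\cJ$ to $\cC$ has a limit, each $D_i$ admits a limit $\braket{l_i,\kappa_i}$. This is exactly the input required by Proposition~\ref{pro:LimitBifunc}.

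Next, I would apply Proposition~\ref{pro:LimitBifunc} verbatim: writing $\kappa_i = \{\kappa_{i,j}\}_{j \in \cJ}$ and setting $\kappa \coloneqq \{\{\kappa_{i,j}\}_{i \in \cI}\}_{j \in \cJ}$, that proposition yields a (unique) functor $L \in \Func{\cI}{\cC}$ with $L(i) = l_i$ such that $\kappa$ is a cone from $L$ to $D$ and $\braket{L,\kappa}$ is a limit of $D$. As $D$ was arbitrary, every functor from $\cJ$ to $\Func{\cI}{\cC}$ has a limit. The colimit half follows in the same way from the dual of Proposition~\ref{pro:LimitBifunc}.

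I do not expect any real obstacle: the entire mathematical content sits in Proposition~\ref{pro:LimitBifunc}, and this corollary merely records the observation that its assumption ``each $D_i$ has a limit'' is guaranteed the moment $\cC$ has all $\cJ$-shaped limits. The only thing to check along the way — that $\ev_i \b D$ is genuinely a functor $\cJ \to \cC$ and that its limit is being assumed for every $i \in \cI$ — is immediate from the definition of the evaluation functor, so the proof is essentially a one-line invocation.
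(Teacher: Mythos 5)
Your proposal is correct and follows exactly the paper's own argument: both reduce the statement to Proposition~\ref{pro:LimitBifunc} by noting that each $D_i \coloneqq \ev_i \b D$ has a limit under the hypothesis, and dualize for colimits. Nothing further is needed.
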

\begin{proof}
 From Proposition~\ref{pro:LimitBifunc}, for any functor $D \colon \cJ \to \Func{\cI}{\cC}$,
 since $D_i \coloneqq \ev_i \b D$ has a limit, so does $D$.
\end{proof}

\subsubsection{Interactions between limits}

Let us consider a bifunctor $D \colon \cI \times \cJ \to \cC$,
where $\cI$ and $\cJ$ are small categories.
Let $P$ be the canonical functor from $\Func{\cI \times \cJ}{\cC}$ to $\Functwo{\cJ}{\cI}{\cC}$.
Also, let one of the limit objects of $P \b D \colon \cJ \to \Func{\cI}{\cC}$
be written as $\lim_{j \in \cJ} D(\Endash,j) \in \Func{\cI}{\cC}$.
Note that this limit object is a functor from $\cI$ to $\cC$ and does not depend on $j$;
it may be helpful to think of $j$ as representing something like an internal variable.
Similarly, by swapping the roles of $\cI$ and $\cJ$, we can define
$\lim_{i \in \cI} D(i,\Endash) \in \Func{\cJ}{\cC}$.
Using the same notation, let $\lim_{j \in \cJ} Ej$ denote a limit object
of a functor $E \colon \cJ \to \cC$ (i.e., $\lim_{j \in \cJ} Ej \cong \lim E$ holds).
Then, a limit object of $D(i,\Endash)$ can be
expressed as $\lim_{j \in \cJ} D(i,j) \in \cC$ and a limit object of $D$ can be expressed
as $\lim_{\braket{i,j} \in \cI \times \cJ} D(i,j)$.
Applying Proposition~\ref{pro:LimitBifunc} to the functor $P \b D$,
if there exists a limit object $\lim_{j \in \cJ} D(i,j)$ for each $i \in \cI$,
then there exists a functor $L$ satisfying $L(i) = \lim_{j \in \cJ} D(i,j)$
and $L \cong \lim_{j \in \cJ} D(\Endash,j)$.
Furthermore, when $\lim_{j \in \cJ} D(\Endash,j)$ has a limit, we can write its limit object
as $\lim_{i \in \cI} \lim_{j \in \cJ} D(i,j)$.

The following proposition holds.
\begin{proposition}{}{LimitLimit}
 Let us consider a bifunctor $D \colon \cI \times \cJ \to \cC$, where $\cI$ and $\cJ$
 are small categories.
 Assume that for each $i \in \cI$, $D(i,\Endash)$ has a limit,
 in which case we can define a functor $\lim_{j \in \cJ} D(\Endash,j)$.
 If either $\lim_{j \in \cJ} D(\Endash,j)$ or $D$ has a limit,
 then the other also has a limit and we have
 \begin{alignat}{1}
  \lim_{i \in \cI} \lim_{j \in \cJ} D(i,j)
  &\cong \lim_{\braket{i,j} \in \cI \times \cJ} D(i,j).
  \label{eq:limit_limit}
 \end{alignat}
\end{proposition}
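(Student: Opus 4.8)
The plan is to reduce the iterated limit to a single composition of universal morphisms, using the currying isomorphism of functor categories together with Proposition~\ref{pro:LimitBifunc}, Lemma~\ref{lemma:ReprIsoUniv}, and Lemma~\ref{lemma:ReprUnivCirc}. Write $P \colon \Func{\cI \times \cJ}{\cC} \to \Func{\cJ}{\Func{\cI}{\cC}}$ for the canonical (invertible) functor, so that $P \b D$ is the functor $\cJ \to \Func{\cI}{\cC}$ with $(P \b D)(j) = D(\Endash,j)$ and $\ev_i \b (P \b D) = D(i,\Endash)$ for each $i \in \cI$. By hypothesis each $D(i,\Endash)$ has a limit, so Proposition~\ref{pro:LimitBifunc}, applied to $P \b D$ in place of its ``$D$'', shows that $P \b D$ has a limit, which we may write $\braket{M,\kappa}$ with $M = \lim_{j \in \cJ} D(\Endash,j) \in \Func{\cI}{\cC}$; equivalently, $\braket{M,\kappa}$ is a universal morphism from $\Delta_\cJ \colon \Func{\cI}{\cC} \to \Func{\cJ}{\Func{\cI}{\cC}}$ to $P \b D$.

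Next I would record the elementary identity $P \b \Delta_{\cI \times \cJ} = \Delta_\cJ \b \Delta_\cI$, where on the right $\Delta_\cI \colon \cC \to \Func{\cI}{\cC}$ and $\Delta_\cJ \colon \Func{\cI}{\cC} \to \Func{\cJ}{\Func{\cI}{\cC}}$: both composites send an object $c$ to the constant functor $\cJ \to \Func{\cI}{\cC}$ whose value is the constant functor $\cI \to \cC$ of value $c$, and act accordingly on morphisms. Since $P$ is invertible, Lemma~\ref{lemma:ReprIsoUniv}, applied with $P$ and with the functor $\Delta_{\cI \times \cJ}$, shows that $\braket{d,\lambda}$ is a limit of $D$ (i.e. a universal morphism from $\Delta_{\cI \times \cJ}$ to $D$) if and only if $\braket{d, P \b \lambda}$ is a universal morphism from $\Delta_\cJ \b \Delta_\cI$ to $P \b D$, the underlying object $d$ being unchanged. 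In particular $D$ has a limit iff a universal morphism from $\Delta_\cJ \b \Delta_\cI$ to $P \b D$ exists, and then the two limit objects agree.

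Now I would invoke the dual of Lemma~\ref{lemma:ReprUnivCirc}, with ambient category $\Func{\cJ}{\Func{\cI}{\cC}}$, object $P \b D$, $G \coloneqq \Delta_\cJ \colon \Func{\cI}{\cC} \to \Func{\cJ}{\Func{\cI}{\cC}}$ and $G' \coloneqq \Delta_\cI \colon \cC \to \Func{\cI}{\cC}$, taking for the required universal morphism from $G$ to $P \b D$ the pair $\braket{M,\kappa}$ of the first paragraph. Part~(1) of the dual lemma then says: if there is a universal morphism from $\Delta_\cI$ to $M$ — that is, if $M = \lim_{j \in \cJ} D(\Endash,j)$ has a limit — then there is a universal morphism from $G \b G' = \Delta_\cJ \b \Delta_\cI$ to $P \b D$ with the same underlying object, namely $\lim M$; by the previous paragraph this means $D$ has a limit with $\lim D \cong \lim M = \lim_{i \in \cI}\lim_{j \in \cJ} D(i,j)$. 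Part~(2) gives the converse: if $D$ has a limit, then transporting along $P$ yields a universal morphism from $\Delta_\cJ \b \Delta_\cI$ to $P \b D$, hence one from $\Delta_\cI$ to $M$, i.e. $M$ has a limit, again with the same underlying object. Either way the two limit objects coincide up to isomorphism, which is exactly \eqref{eq:limit_limit}; the dual statement for colimits follows by replacing every category in sight with its opposite.

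The only real work is bookkeeping: distinguishing the three diagonal functors $\Delta_\cI$, $\Delta_\cJ$, $\Delta_{\cI \times \cJ}$ while keeping straight that they land in different categories, checking $P \b \Delta_{\cI \times \cJ} = \Delta_\cJ \b \Delta_\cI$, and remembering that a limit sits on the terminal side of its comma category, so that it is the \emph{dual} of Lemma~\ref{lemma:ReprUnivCirc} — not the lemma itself — that applies. None of these steps is conceptually hard, and all of the composition/decomposition content is supplied directly by that lemma, with Proposition~\ref{pro:LimitBifunc} supplying the one non-formal input (existence of the intermediate limit $\braket{M,\kappa}$).
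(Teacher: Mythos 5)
Your proposal is correct and follows essentially the same route as the paper's own proof: currying via the canonical invertible functor $P$, existence of the limit of $P \b D$ from Proposition~\ref{pro:LimitBifunc}, transport along $P$ via Lemma~\ref{lemma:ReprIsoUniv} together with the identity $P \b \Delta_{\cI \times \cJ} = \Delta_\cJ \b \Delta_\cI$, and then the dual of Lemma~\ref{lemma:ReprUnivCirc} to compose and decompose the universal morphisms. The bookkeeping you flag (which diagonal functor lives where, and that it is the dual of the composition lemma that applies) is exactly where the paper spends its care as well.
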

\begin{proof}
 Let $P$ be the canonical functor from $\Func{\cI \times \cJ}{\cC}$ to $\Functwo{\cJ}{\cI}{\cC}$.
 From Proposition~\ref{pro:LimitBifunc}, $P \b D$ has a limit,
 which we denote as $\braket{\lim_{j \in \cJ} D(\Endash,j),\kappa}$.
 From Lemma~\ref{lemma:ReprIsoUniv}, we have
 \begin{alignat}{1}
  \lefteqn{\text{$\braket{\lim_{\braket{i,j} \in \cI \times \cJ} D(i,j),v'}$
  is a limit of $D$ (i.e., a universal morphism from $\Delta_{\cI \times \cJ}$ to $D$)}}
  \nonumber \\
  &\quad\Leftrightarrow\quad
  \text{$\braket{\lim_{\braket{i,j} \in \cI \times \cJ} D(i,j),P \b v'}$
  is a universal morphism from $P \b \Delta_{\cI \times \cJ}$ to $P \b D$}
  \nonumber \\
  &\quad\Leftrightarrow\quad
  \text{$\braket{\lim_{\braket{i,j} \in \cI \times \cJ} D(i,j),P \b v'}$ is
  a universal morphism from $\Delta_\cJ \b \Delta_\cI$ to $P \b D$},
  \nonumber \\
  \label{eq:limit_limit_D_P}
 \end{alignat}
 where the last line follows from $P \b \Delta_{\cI \times \cJ} = \Delta_\cJ \b \Delta_\cI$.
 Note that $P \b v'$ is depicted by
 \begin{alignat}{1}
  \InsertPDF{limit_limit_D_decomp.pdf} \raisebox{1em}{.}
  \label{eq:limit_limit_D_decomp}
 \end{alignat}
 The proof can be easily obtained by applying the dual of Lemma~\ref{lemma:ReprUnivCirc}.

 First, consider the case where $\lim_{j \in \cJ} D(\Endash,j)$ has the limit
 $\braket{\lim_{i \in \cI} \lim_{j \in \cJ} D(i,j),\kappa'}$.
 Let
 \begin{alignat}{1}
  \InsertPDF{limit_limit_D_decomp1.pdf} \raisebox{1em}{,}
  \label{eq:limit_limit_D_decomp1}
 \end{alignat}
 where the blue circle represents $\kappa$; then, from the dual of Lemma~\ref{lemma:ReprUnivCirc},
 $\braket{\lim_{i \in \cI} \lim_{j \in \cJ} D(i,j),v}$ is a universal morphism
 from $\Delta_\cJ \b \Delta_\cI$ to $P \b D$.
 Therefore, from Eq.~\eqref{eq:limit_limit_D_P},
 $\braket{\lim_{i \in \cI} \lim_{j \in \cJ} D(i,j), P^{-1} \b v}$ is a limit of $D$,
 so Eq.~\eqref{eq:limit_limit} holds.

 Next, consider the case where $D$ has a limit.
 From Eq.~\eqref{eq:limit_limit_D_P}, there exists a universal morphism
 $\braket{\lim_{\braket{i,j} \in \cI \times \cJ} D(i,j),v}$
 from $\Delta_\cJ \b \Delta_\cI$ to $P \b D$.
 Thus, from the dual of Lemma~\ref{lemma:ReprUnivCirc} (refer to Eq.~\eqref{eq:limit_limit_D_decomp1}),
 $\lim_{j \in \cJ} D(\Endash,j)$ has a limit of the form
 $\braket{\lim_{\braket{i,j} \in \cI \times \cJ} D(i,j),\kappa'}$.
 Therefore, Eq.~\eqref{eq:limit_limit} holds.
\end{proof}

This proposition leads directly to the following corollary.
\begin{cor}{}{LimitLimitAdj}
 Consider any category $\cC$ and small categories $\cI$ and $\cJ$.
 Assume that any functor from $\cI$ to $\cC$ and functor from $\cJ$ to $\cC$ have limits.
 Then, a functor $D \colon \cI \times \cJ \to \cC$ has a limit
 and satisfies
 \begin{alignat}{1}
  \lim_{i \in \cI} \lim_{j \in \cJ} D(i,j)
  &\cong \lim_{\braket{i,j} \in \cI \times \cJ} D(i,j)
  \cong \lim_{j \in \cJ} \lim_{i \in \cI} D(i,j).
  \label{eq:limit_commutes_with_limit}
 \end{alignat}
\end{cor}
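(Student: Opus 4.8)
The plan is to obtain everything as an immediate corollary of Proposition~\ref{pro:LimitLimit}, applied twice.

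First I would check that the hypothesis of Proposition~\ref{pro:LimitLimit} is met for $D \colon \cI \times \cJ \to \cC$. For each $i \in \cI$ the functor $D(i,\Endash) \colon \cJ \to \cC$ is a functor out of the small category $\cJ$, hence has a limit by assumption; so the functor $\lim_{j \in \cJ} D(\Endash,j) \colon \cI \to \cC$ is defined, with $\bigl(\lim_{j \in \cJ} D(\Endash,j)\bigr)(i) = \lim_{j \in \cJ} D(i,j)$ (Proposition~\ref{pro:LimitBifunc}). This functor is itself a functor out of the small category $\cI$, so it too has a limit by assumption. Thus the ``$\lim_{j \in \cJ} D(\Endash,j)$ has a limit'' alternative in Proposition~\ref{pro:LimitLimit} holds, and the proposition yields that $D$ has a limit and that $\lim_{i \in \cI} \lim_{j \in \cJ} D(i,j) \cong \lim_{\braket{i,j} \in \cI \times \cJ} D(i,j)$.

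Next I would repeat the argument with $\cI$ and $\cJ$ swapped. Let $S \colon \cJ \times \cI \to \cI \times \cJ$ be the (invertible) swap functor and put $D' \coloneqq D \b S \colon \cJ \times \cI \to \cC$, so $D'(j,i) = D(i,j)$. Since $D'(j,\Endash) = D(\Endash,j) \colon \cI \to \cC$ has a limit for each $j$, and since $\lim_{i \in \cI} D'(\Endash,i) \colon \cJ \to \cC$ is again a functor out of a small category and hence has a limit, Proposition~\ref{pro:LimitLimit} applied to $D'$ gives $\lim_{j \in \cJ} \lim_{i \in \cI} D(i,j) \cong \lim_{\braket{j,i} \in \cJ \times \cI} D'(j,i)$. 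Finally, because $\Endash \b S \colon \Func{\cI \times \cJ}{\cC} \to \Func{\cJ \times \cI}{\cC}$ is an invertible functor that carries $\Delta_{\cI \times \cJ}$ to $\Delta_{\cJ \times \cI}$ and $D$ to $D'$, the dual of Lemma~\ref{lemma:ReprIsoUniv} identifies a limit of $D$ with a limit of $D'$, so $\lim_{\braket{j,i} \in \cJ \times \cI} D'(j,i) \cong \lim_{\braket{i,j} \in \cI \times \cJ} D(i,j)$; chaining the three isomorphisms gives Eq.~\eqref{eq:limit_commutes_with_limit}.

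Since each step is a bookkeeping reduction to results already proved, there is no genuine obstacle; the only place demanding a little care is the last one — keeping the two product categories $\cI \times \cJ$ and $\cJ \times \cI$ apart and invoking Lemma~\ref{lemma:ReprIsoUniv} (via precomposition with the swap functor) to match up the respective limits of $D$ and $D \b S$.
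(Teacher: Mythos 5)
Your proposal is correct and follows the paper's own route: the paper likewise derives the first isomorphism directly from Proposition~\ref{pro:LimitLimit} and obtains the second by swapping $\cI$ and $\cJ$. You merely spell out the details the paper leaves implicit (verifying the hypothesis of Proposition~\ref{pro:LimitLimit} and identifying the limits of $D$ and $D \b S$ via the invertible precomposition functor and Lemma~\ref{lemma:ReprIsoUniv}), which is done correctly.
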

\begin{proof}
 The first isomorphism is obvious from Proposition~\ref{pro:LimitLimit}.
 Swapping $\cI$ and $\cJ$ yields the second isomorphism.
\end{proof}


\section{Kan extensions} \label{sec:Kan}

Kan extensions are concepts that can unify the treatment of adjoints and limits
from different perspectives.
It has been pointed out that ``The notion of Kan extensions subsumes all the other fundamental
concepts of category theory'' \cite{Mac-2013}.
String diagrams are also useful when investigating the properties of Kan extensions.
After discussing general Kan extensions, we explain
the properties of pointwise Kan extensions,
which are often recognized as important Kan extensions.

\subsection{Definition of Kan extensions} \label{subsec:Kan_def}

\begin{define}{Kan extensions}{Kan}
 Consider two functors $K \colon \cC \to \cD$ and $F \colon \cC \to \cE$.
 A universal morphism $\braket{L,\eta}$ from $F$ to
 $\Endash \b K \colon \Func{\cD}{\cE} \to \Func{\cC}{\cE}$
 is called a \termdef{left Kan extension} of $F$ along $K$.
 $\eta$ is also called its \termdef{unit}.
 In other words, $\braket{L,\eta}$ is called a left Kan extension of $F$ along $K$ if
 for any $H \colon \cD \to \cE$ and $\sigma \colon F \nto H \b K$,
 there exists a unique $\ol{\sigma} \colon L \nto H$ satisfying
 \begin{alignat}{1}
  \InsertPDF{Kan_left_universal.pdf} \raisebox{1em}{,}
  \label{eq:Kan_left_universal}
 \end{alignat}
 where the blue circle is the unit $\eta$.

 As the dual to a left Kan extension, a universal morphism
 $\braket{R,\varepsilon}$ from $\Endash \b K$ to $F$
 is called a \termdef{right Kan extension} of $F$ along $K$.
 $\varepsilon$ is also called its \termdef{counit}.
 In other words, $\braket{R,\varepsilon}$ is called a right Kan extension of $F$ along $K$ if
 for any $H \colon \cD \to \cE$ and $\sigma \colon H \b K \nto F$,
 there exists a unique $\ol{\sigma} \colon H \nto R$ satisfying
 \begin{alignat}{1}
  \InsertPDF{Kan_right_universal.pdf} \raisebox{1em}{,}
  \label{eq:Kan_right_universal}
 \end{alignat}
 where the blue circle is the counit $\varepsilon$.
\end{define}

We sometimes simply refer to $L$ as a left Kan extension and $R$ as a right Kan extension.
We often write $L$ as $\Lan_K F$ and $R$ as $\Ran_K F$.
Diagrams for right Kan extensions correspond to the ``upside-down'' version of
diagrams for left Kan extensions.
In what follows, we often only consider the properties of left Kan extensions,
but by flipping diagrams upside-down, we can immediately obtain the properties of
right Kan extensions.

Using a diagram introduced in Subsubsection~\ref{subsubsec:repr_repr_diagram},
the inverse map $\sigma \mapsto \ol{\sigma}$ of the map $\ol{\sigma} \mapsto \sigma$
of Eq.~\eqref{eq:Kan_left_universal} is represented by
\begin{alignat}{1}
 \InsertPDF{Kan_left_universal2.pdf} \raisebox{1em}{.}
 \label{eq:Kan_left_universal2}
\end{alignat}
The same applies to right Kan extensions.

Left Kan extensions are essentially unique if they exist, and so are right Kan extensions.
Here, we specifically discuss the case of left Kan extensions.
Let $\braket{L,\eta}$ be a left Kan extension of $F$ along $K$; then,
for any left Kan extension $\braket{L',\eta'}$ of $F$ along $K$, there exists a natural isomorphism
$\psi \colon L \ntocong L'$ satisfying
\begin{alignat}{1}
 \InsertPDF{Kan_left_unique.pdf} \raisebox{1em}{,}
 \label{eq:Kan_left_unique}
\end{alignat}
where the blue circle is $\eta$, and the diamond block is $\psi$
(see Eq.~\eqref{eq:repr_cG_init_cong}).
Conversely, any $\braket{L',\eta'}$ represented in the form of Eq.~\eqref{eq:Kan_left_unique} is
a left Kan extension of $F$ along $K$.
Thus, any functor isomorphic to $L$ is a left Kan extension.

\begin{ex}{(co)limits are Kan extensions}{KanLimit}
 Consider the case of $\cD = \cOne$, in which $K$ is only one functor ${!}$ from $\cC$ to $\cOne$.
 A left Kan extension of $F$ along ${!}$ is equal to a universal morphism from $F$ to
 $\Endash \b {!} = \Delta_\cC$ (recall Example~\ref{ex:FuncDelta}), i.e., a colimit of $F$.
 This Kan extension is represented by
 \begin{alignat}{1}
  \InsertPDF{Kan_colim.pdf} \raisebox{1em}{.}
  \label{eq:Kan_colim}
 \end{alignat}
 The left blue circle represents the unit of this Kan extension
 and the right blue circle represents a colimit cocone; they can be identified.
 Therefore, a colimit is a special case of a left Kan extension.
 Dually, a limit is a special case of a right Kan extension.
\end{ex}

\begin{ex}{the Yoneda lemma}{}
 From Example~\ref{ex:ReprYonedaUniv}, since $\braket{\yoneda{c},\id_c}$ is a universal morphism
 from $\{*\} \in \Set$ to $\ev_c = \Endash \b c \colon \Func{\cC}{\Set} \to \Set$,
 it is a left Kan extension of $\{*\}$ along $c$.
 The right-hand side of the symbol ``$\Leftrightarrow$'' in Eq.~\eqref{eq:repr_yoneda_init3}
 corresponds to Eq.~\eqref{eq:Kan_left_universal}.
\end{ex}

Assume that $\Func{\cC}{\cE}$ and $\Func{\cD}{\cE}$ are locally small.
By the properties of universal morphisms, there exists a left Kan extension
of $F \colon \cC \to \cE$ along $K \colon \cC \to \cD$ if and only if
the set-valued functor $\Func{\cC}{\cE}(F, \Endash \b K)$ is representable, i.e.,
there exists an $L \colon \cD \to \cE$ satisfying
\begin{alignat}{1}
 \Func{\cD}{\cE}(L, \Endash) \cong \Func{\cC}{\cE}(F, \Endash \b K)
 &\qquad\diagram\qquad
 \InsertMidPDF{Kan_left_cong.pdf},
 \label{eq:Kan_left_cong}
\end{alignat}
in which case $L \cong \Lan_K F$ holds.
Furthermore, by the properties of adjunctions, there exists a left Kan extension
of each $F \colon \cC \to \cE$ along $K$ if and only if there exists
a left adjoint to the functor $\Endash \b K \colon \Func{\cD}{\cE} \to \Func{\cC}{\cE}$.
If $\Endash \b K$ has a left adjoint, denoted by $\Lan_K$, then for its unit $\eta$,
$\eta_F$ is the unit of the left Kan extension $\Lan_K F$.
In this case, from Eq.~\eqref{eq:Kan_left_universal}, for any natural transformation
$\sigma \colon F \nto H \b K$ (where $H \colon \cD \to \cE$ is arbitrary),
there exists a unique $\ol{\sigma} \colon \Lan_K F \nto H$ satisfying
\begin{alignat}{1}
 \footnoteinset{-1.62}{0.3}{\eqref{eq:Kan_left_universal}}{%
 \InsertPDF{Kan_left_universal_adj.pdf}} \raisebox{1em}{.}
 \label{eq:Kan_left_universal_adj}
\end{alignat}

Dually, there exists a right Kan extension of $F$ along $K$ if and only if
the presheaf $\Func{\cC}{\cE}(\Endash \b K,F)$ is representable, i.e., there exists
an $R \colon \cD \to \cE$ satisfying
\begin{alignat}{1}
 \Func{\cD}{\cE}(\Endash,R) \cong \Func{\cC}{\cE}(\Endash \b K, F)
 &\qquad\diagram\qquad
 \InsertMidPDF{Kan_right_cong.pdf},
 \label{eq:Kan_right_cong}
\end{alignat}
in which case $R \cong \Ran_K F$ holds.
Furthermore, there exists a right Kan extension of each $F \colon \cC \to \cE$
along $K$ if and only if there exists
a right adjoint to the functor $\Endash \b K \colon \Func{\cD}{\cE} \to \Func{\cC}{\cE}$.
If $\Endash \b K$ has a right adjoint, denoted by $\Ran_K$, then for its counit $\varepsilon$,
$\varepsilon_F$ is the counit of the right Kan extension $\Ran_K F$.
In this case, from Eq.~\eqref{eq:Kan_right_universal}, for any natural transformation
$\sigma \colon H \b K \nto F$ (where $H \colon \cD \to \cE$ is arbitrary),
there exists a unique $\ol{\sigma} \colon H \nto \Ran_K F$ satisfying
\begin{alignat}{1}
 \footnoteinset{-1.62}{0.3}{\eqref{eq:Kan_right_universal}}{%
 \InsertPDF{Kan_right_universal_adj.pdf}} \raisebox{1em}{.}
 \label{eq:Kan_right_universal_adj}
\end{alignat}

\subsection{Pointwise Kan extensions} \label{subsec:Kan_pointwise}

\subsubsection{Preliminaries: Basic properties of comma categories} \label{subsubsec:Kan_pointwise_comma}

In this subsubsection, we provide an overview of the fundamental properties of comma categories,
which are often used when investigating pointwise left Kan extensions.
Specifically, we consider the comma category $K \comma d$ with $K \colon \cC \to \cD$
and $d \in \cD$.
Note that $K \comma d = \el(\cD(K\Endash,d))$ holds.

Let us call the following functor, denoted by $P_d \colon K \comma d \to \cC$,
the \termdef{forgetful functor} from $K \comma d$ to $\cC$:
\begin{itemize}
 \item It maps each object $\braket{c,p}$ in $K \comma d$ to $c$.
 \item It maps each morphism $h \colon \braket{c,p} \to \braket{c',p'}$ in $K \comma d$
       to the morphism $h \colon c \to c'$ in $\cC$.
\end{itemize}
It is readily confirmed that $P_d$ is a functor.
The behavior of this functor can be represented by
\begin{alignat}{1}
 \InsertPDF{Kan_left_pointwise_comma_Pd.pdf} \raisebox{1em}{.}
 \label{eq:Kan_left_pointwise_comma_Pd}
\end{alignat}
The collection of morphisms
\begin{alignat}{1}
 \lefteqn{ \theta^d = \{ \theta^d_{\braket{c,p}} \coloneqq p \}_{\braket{c,p} \in K \comma d} }
 \nonumber \\
 &\diagram\qquad \InsertMidPDF{Kan_left_pointwise_comma_cp.pdf}
 \label{eq:Kan_left_pointwise_comma_cp}
\end{alignat}
(where the blue circles represent $\theta^d$, and the dotted lines represent
the functor ${!} \colon K \comma d \to \cOne$) is a natural transformation
from $K \b P_d$ to $d \b {!}$ (i.e., a cocone from $K \b P_d$ to $d$).
Indeed, since we have for any morphism $h \colon \braket{c,p} \to \braket{c',p'}$
in $K \comma d$,
\begin{alignat}{1}
 \footnoteinset{-3.05}{0.3}{\eqref{eq:Kan_left_pointwise_comma_cp}}{%
 \footnoteinset{0.00}{0.3}{\eqref{eq:repr_Gc}}{%
 \footnoteinsets{3.05}{0.3}{\eqref{eq:Kan_left_pointwise_comma_cp}}{\eqref{eq:Kan_left_pointwise_comma_Pd}}{%
 \InsertPDF{Kan_left_pointwise_comma_ph.pdf}}}} \raisebox{1em}{,}
 \label{eq:Kan_left_pointwise_comma_ph}
\end{alignat}
$\theta^d$ satisfies naturality.
We call $\theta^d$ the \termdef{canonical cocone} for $K \comma d$.
From Eq.~\eqref{eq:Kan_left_pointwise_comma_cp},
\begin{alignat}{1}
 \theta^{Kc}_{\braket{c,\id_{Kc}}} = \id_{Kc}
 &\qquad\diagram\qquad \InsertMidPDF{Kan_left_pointwise_comma_c1.pdf}
 \label{eq:Kan_left_pointwise_comma_c1}
\end{alignat}
holds.

The map, denoted by $\Phi$, that maps $\{ \{ p \}_{p \in \cD(Kc,d)} \}_{c \in \cC}$
to $\{ p \}_{c \in \cC, p \in \cD(Kc,d)} = \theta^d$
(by removing nesting) can be represented by
\begin{alignat}{1}
 \InsertPDF{Kan_left_pointwise_comma_theta.pdf} \raisebox{1em}{.}
 \label{eq:Kan_left_pointwise_comma_theta}
\end{alignat}
Note that $c \in \cC$ and $p \in \cD(Kc,d)$ is equivalent to $\braket{c,p} \in K \comma d$.
$\Phi$ is obviously invertible.

For any morphism $f \colon d \to d'$ in $\cD$, there exists a functor, denoted by $f \c \Endash$,
from $K \comma d$ to $K \comma d'$ that
\begin{itemize}
 \item maps each object $\braket{c,p}$ in $K \comma d$ to $\braket{c,fp} \in K \comma d'$.
 \item maps each morphism $h \colon \braket{c,p} \to \braket{c',p'}$ in $K \comma d$
       to $h \colon \braket{c,fp} \to \braket{c',fp'}$ in $K \comma d'$.
       Note that both morphisms correspond to $h \in \cC(c,c')$ satisfying $p = p' \c Kh$.
\end{itemize}
One can easily verify that $f \c \Endash$ is a functor.
$\braket{c,p} \in K \comma d$ can be represented by
\begin{alignat}{1}
 \InsertPDF{Kan_left_pointwise_comma_cp_id.pdf}
 \label{eq:Kan_left_pointwise_comma_cp_id}
\end{alignat}
since $p = p \c \id_{Kc}$ holds.
Also, we have for any $f \in \cD(d,d')$ and $\braket{c,p} \in K \comma d$,
\begin{alignat}{1}
 \footnoteinset{0.20}{0.3}{\eqref{eq:Kan_left_pointwise_comma_cp}}{%
 \footnoteinset{3.11}{0.3}{\eqref{eq:Kan_left_pointwise_comma_cp}}{%
 \InsertPDF{Kan_left_pointwise_comma_fp_proof.pdf}}} \raisebox{1em}{,}
 \label{eq:Kan_left_pointwise_comma_fp_proof}
\end{alignat}
which gives
\begin{alignat}{1}
 \InsertPDF{Kan_left_pointwise_comma_fp.pdf} \raisebox{1em}{.}
 \label{eq:Kan_left_pointwise_comma_fp}
\end{alignat}

The comma category $d \comma K$, which is equal to $\el(\cD(d,K\Endash))$,
can be considered similarly.
The \termdef{forgetful functor}, denoted by $P_d \colon d \comma K \to \cC$, from $d \comma K$ to $\cC$
is defined as follows:
\begin{itemize}
    \item It maps each object $\braket{c,p}$ in $d \comma K$ to $c$.
    \item It maps each morphism $h$ in $d \comma K$ to the morphism $h$ in $\cC$.
\end{itemize}
In this case, the collection of morphisms
\begin{alignat}{1}
 \theta^d &\coloneqq \{ \theta^d_{\braket{c,p}} \coloneqq p \}_{\braket{c,p} \in d \comma K}
\end{alignat}
is a natural transformation from $d \b {!}$ to $K \b P_d$
(i.e., a cone from $d$ to $K \b P_d$).
We call $\theta^d$ the \termdef{canonical cone} for $d \comma K$.

\subsubsection{Pointwise Kan extensions} \label{subsubsec:Kan_pointwise_pointwise}

Assume that there exists a left Kan extension $\braket{L,\eta}$
of a functor $F \colon \cC \to \cE$ along a functor $K \colon \cC \to \cD$.
To explicitly determine $L$ and $\eta$, let us consider the composite
of $\eta$ and the canonical cocone $\theta^d$ for $K \comma d$
for each $d \in \cD$.
Specifically, we define $\mu^d$ as
\begin{alignat}{1}
 \mu^d \coloneqq (L \b \theta^d) (\eta \b P_d)
 &\qquad\diagram\qquad
 \InsertMidPDF{Kan_left_pointwise0.pdf},
 \label{eq:Kan_left_pointwise0}
\end{alignat}
where the left and right blue circles represent $\eta$ and $\theta^d$, respectively.
Since $\mu^d$ is simply a composite of the unit, $\eta$, of the left Kan extension and $\theta^d$,
it can be considered somewhat ``canonical''.
On the other hand, $\mu^d$ is a cocone from $F \b P_d$ to $Ld$.
If each $\mu^d$ is a universal morphism, i.e., a colimit of $F \b P_d$, then $Ld$ will be
essentially uniquely determined as a colimit object of $F \b P_d$.
Moreover, each component of the unit $\eta$ can be obtained by
\begin{alignat}{1}
 \footnoteinset{-2.24}{0.3}{\eqref{eq:Kan_left_pointwise_comma_c1}}{%
 \footnoteinset{1.18}{0.3}{\eqref{eq:Kan_left_pointwise0}}{%
 \InsertPDF{Kan_left_pointwise_eta.pdf}}} \raisebox{1em}{,}
 \label{eq:Kan_left_pointwise_eta}
\end{alignat}
and thus we obtain $\eta = \{ \mu^{Kc}_{\braket{c,1_{Kc}}} \}_{c \in \cC}$.

Unfortunately, the existence of a left Kan extension $\braket{L,\eta}$ does not
guarantee that $F \b P_d$ has a colimit for each $d \in \cD$.
However, conversely, if these colimits exist, then it can be shown that
\begin{itemize}
 \item There exists a left Kan extension $\braket{L,\eta}$ satisfying
       Eq.~\eqref{eq:Kan_left_pointwise0} (see Theorem~\ref{thm:KanPointwise}).
 \item For any left Kan extension $\braket{L,\eta}$,
       each $\mu^d$ defined by Eq.~\eqref{eq:Kan_left_pointwise0} is a colimit cocone of $F \bullet P_d$
       (see Corollary~\ref{cor:KanPointwiseNas0}).
\end{itemize}
Therefore, let us call a left Kan extension pointwise if $F \b P_d$ has a colimit for each $d \in \cD$.
Explicitly, it is defined as follows:
\begin{define}{pointwise Kan extensions}{KanPointwise}
 Consider two functors $K \colon \cC \to \cD$ and $F \colon \cC \to \cE$.
 A left Kan extension of $F$ along $K$ is called \termdef{pointwise}
 if $F \b P_d$ has a colimit for each $d \in \cD$.
 Dually, a right Kan extension of $F$ along $K$ is called \termdef{pointwise}
 if $F \b P_d$ has a limit for each $d \in \cD$.
\end{define}

If a pointwise left Kan extension $L$ exists, then any left Kan extension of $F$ along $K$
is pointwise (the same applies to right Kan extensions).
This is because whether or not $F \b P_d$ has a colimit does not depend on the choice of the left Kan extension.
As will gradually become clear, several useful properties hold if a Kan extension is pointwise.

If a left Kan extension $\braket{L,\eta}$ is pointwise, then we have
\begin{alignat}{1}
 \InsertPDF{Kan_left_pointwise.pdf} \raisebox{1em}{,}
 \label{eq:Kan_left_pointwise}
\end{alignat}
where the left and right blue circles are $\eta$ and $\theta^d$, respectively, and
the diamond block represents the isomorphism $Ld \cong \colim (F \b P_d)$.
The natural transformation enclosed by the auxiliary line is the colimit cocone $\mu^d$
of Eq.~\eqref{eq:Kan_left_pointwise0}.
Due to the universal property of $\mu^d$ (see Eq.~\eqref{eq:limit_repr_terminal_op}),
for any cocone $\alpha \in \Cocone(F \b P_d,e)$,
there exists a unique morphism $\ol{\alpha} \in \cE(Ld,e)$ satisfying
\begin{alignat}{1}
 \InsertPDF{Kan_left_pointwise_univ.pdf} \raisebox{1em}{.}
 \label{eq:Kan_left_pointwise_univ}
\end{alignat}

It follows from the dual of Eq.~\eqref{eq:Kan_left_pointwise} that a pointwise right Kan extension
$\braket{R,\varepsilon}$ satisfies
\begin{alignat}{1}
 \InsertPDF{Kan_right_pointwise.pdf} \raisebox{1em}{,}
 \label{eq:Kan_right_pointwise}
\end{alignat}
where the left and right blue circles are $\varepsilon$ and
the canonical cone $\theta^d$ for $d \comma K$, respectively.
The diamond block represents the isomorphism $Rd \cong \lim (F \b P_d)$ and
the natural transformation enclosed by the auxiliary line is a limit cone of $F \b P_d$.

\begin{thm}{}{KanPointwise}
 Consider two functors $K \colon \cC \to \cD$ and $F \colon \cC \to \cE$.
 Assume that for each $d \in \cD$, $F \b P_d$ has a colimit $\braket{e_d,\mu^d}$,
 where $P_d$ is the forgetful functor from $K \comma d$ to $\cC$.
 Then, the following two properties hold:
 \begin{enumerate}
  \item There exists a unique functor $L \colon \cD \to \cE$ that maps each object $d$ in $\cD$
        to $Ld = e_d$, and satisfies
        \begin{alignat}{1}
         \lefteqn{ \mu^{d'} \b (f \c \Endash) = (Lf \b {!}) \c \mu^d } \nonumber \\
         &\diagram\qquad
         \InsertMidPDF{Kan_left_pointwise_proof_Lf.pdf}
         \label{eq:Kan_left_pointwise_Lf}
        \end{alignat}
        for each morphism $f \colon d \to d'$ in $\cD$,
        where the left and right blue ellipses represent $\mu^{d'}$ and $\mu^d$, respectively.
  \item Let
        \begin{alignat}{1}
         \lefteqn{ \eta \coloneqq \{ \eta_c \coloneqq \mu^{Kc}_{\braket{c,\id_{Kc}}} \}_{c \in \cC} }
         \nonumber \\
         &\diagram\qquad \InsertMidPDF{Kan_left_pointwise_eta_mu.pdf},
         \nonumber \\
         \label{eq:Kan_left_pointwise_eta_mu}
        \end{alignat}
        where the blue circle is $\eta$ (see Eq.~\eqref{eq:Kan_left_pointwise_eta});
        then, $\braket{L,\eta}$ is a pointwise left Kan extension of $F$ along $K$.
 \end{enumerate}
\end{thm}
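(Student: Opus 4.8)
The plan is to establish (1) and (2) in order, using throughout only the colimit universal property of each $\braket{e_d,\mu^d}$ (Eq.~\eqref{eq:limit_repr_terminal_op}) together with the elementary identities for the functors $f\c\Endash\colon K\comma d\to K\comma d'$ and the forgetful functors $P_d$ collected in Subsubsection~\ref{subsubsec:Kan_pointwise_comma} --- in particular $P_{d'}\b(f\c\Endash)=P_d$ and $(g\c\Endash)\b(f\c\Endash)=(gf)\c\Endash$. I expect the only real difficulty to be bookkeeping: $\mu^d$, and later $\alpha^d$ and $\ol{\sigma}_d$, all live over the \emph{varying} category $K\comma d$, and precomposition by $f\c\Endash$ is the operation that converts a cocone over $K\comma d'$ into one over $K\comma d$.

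For (1), put $Ld\coloneqq e_d$. For $f\colon d\to d'$ in $\cD$, the identity $P_{d'}\b(f\c\Endash)=P_d$ together with the fact that precomposing a constant functor only changes its domain shows that $\mu^{d'}\b(f\c\Endash)$ is a cocone from $F\b P_d$ to $e_{d'}$; since $\braket{e_d,\mu^d}$ is a colimit of $F\b P_d$, there is a unique $Lf\in\cE(e_d,e_{d'})$ with $\mu^{d'}\b(f\c\Endash)=\Delta_{K\comma d}(Lf)\c\mu^d$, which is exactly Eq.~\eqref{eq:Kan_left_pointwise_Lf}, and conversely any $L$ with $Ld=e_d$ satisfying that equation is forced to assign this $Lf$, so $L$ is unique. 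Functoriality then comes entirely from the uniqueness clause of the colimit: $\id_d\c\Endash=\id_{K\comma d}$ gives $L\id_d=\id_{e_d}$, and expanding $\Delta_{K\comma d}(Lg\c Lf)\c\mu^d$ twice against Eq.~\eqref{eq:Kan_left_pointwise_Lf} and using $(g\c\Endash)\b(f\c\Endash)=(gf)\c\Endash$ shows it equals $\mu^{d''}\b((gf)\c\Endash)=\Delta_{K\comma d}(L(gf))\c\mu^d$, whence $L(gf)=Lg\c Lf$.

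For (2), the key preliminary observation is the componentwise form of Eq.~\eqref{eq:Kan_left_pointwise0}, namely $\mu^d_{\braket{c,p}}=Lp\c\eta_c$ for every $p\colon Kc\to d$: this follows by evaluating Eq.~\eqref{eq:Kan_left_pointwise_Lf} with $f\coloneqq p$ at the object $\braket{c,\id_{Kc}}\in K\comma Kc$ and recalling $\eta_c=\mu^{Kc}_{\braket{c,\id_{Kc}}}$. Naturality of $\eta\colon F\nto L\b K$ then follows: for $g\colon c\to c'$, evaluating Eq.~\eqref{eq:Kan_left_pointwise_Lf} with $f\coloneqq Kg$ at $\braket{c,\id_{Kc}}$ gives $L(Kg)\c\eta_c=\mu^{Kc'}_{\braket{c,Kg}}$, while cocone naturality of $\mu^{Kc'}$ at the $K\comma Kc'$-morphism $g\colon\braket{c,Kg}\to\braket{c',\id_{Kc'}}$ gives $\mu^{Kc'}_{\braket{c,Kg}}=\mu^{Kc'}_{\braket{c',\id_{Kc'}}}\c Fg=\eta_{c'}\c Fg$, so $(L\b K)(g)\c\eta_c=\eta_{c'}\c Fg$.

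It remains to verify the universal property of Eq.~\eqref{eq:Kan_left_universal}: for $H\colon\cD\to\cE$ and $\sigma\colon F\nto H\b K$ there is a unique $\ol{\sigma}\colon L\nto H$ with $\sigma=(\ol{\sigma}\b K)\c\eta$. For existence, for each $d$ the collection $\alpha^d\coloneqq\{\,Hp\c\sigma_c\,\}_{\braket{c,p}\in K\comma d}$ is a cocone from $F\b P_d$ to $Hd$ (its naturality at a morphism $h\colon\braket{c,p}\to\braket{c',p'}$ reduces, via naturality of $\sigma$, to $H(p'\c Kh)=Hp$), so the colimit gives a unique $\ol{\sigma}_d\in\cE(e_d,Hd)$ with $\ol{\sigma}_d\c\mu^d_{\braket{c,p}}=Hp\c\sigma_c$; naturality of $\ol{\sigma}\coloneqq\{\ol{\sigma}_d\}$ again follows from the uniqueness clause by comparing $\Delta_{K\comma d}(Hf\c\ol{\sigma}_d)\c\mu^d$ with $\Delta_{K\comma d}(\ol{\sigma}_{d'}\c Lf)\c\mu^d$ through Eq.~\eqref{eq:Kan_left_pointwise_Lf}; and $\sigma=(\ol{\sigma}\b K)\c\eta$ holds since $\ol{\sigma}_{Kc}\c\eta_c=\ol{\sigma}_{Kc}\c\mu^{Kc}_{\braket{c,\id_{Kc}}}=H(\id_{Kc})\c\sigma_c=\sigma_c$. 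For uniqueness, if $\tau\colon L\nto H$ satisfies $\sigma=(\tau\b K)\c\eta$, then $\tau_d\c\mu^d_{\braket{c,p}}=Hp\c\tau_{Kc}\c\eta_c=Hp\c\sigma_c=\ol{\sigma}_d\c\mu^d_{\braket{c,p}}$ for all $\braket{c,p}$, by naturality of $\tau$ and the identity $\mu^d_{\braket{c,p}}=Lp\c\eta_c$, so $\tau_d=\ol{\sigma}_d$ by the colimit universal property. Finally, $\braket{L,\eta}$ is pointwise by hypothesis, since $F\b P_d$ has a colimit for every $d\in\cD$.
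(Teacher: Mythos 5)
Your proposal is correct and follows essentially the same route as the paper: define $Lf$ by the colimit universal property of $\mu^d$ applied to the cocone $\mu^{d'}\b(f\c\Endash)$, derive functoriality and naturality of $\eta$ and of $\ol{\sigma}$ from the uniqueness clause of that universal property, and verify $\sigma=(\ol{\sigma}\b K)\c\eta$ by evaluating at $\braket{c,\id_{Kc}}$. The only cosmetic difference is that you isolate the componentwise identity $\mu^d_{\braket{c,p}}=Lp\c\eta_c$ as an explicit lemma inside the proof (the paper states it as a remark just before the proof) and your uniqueness argument for $\ol{\sigma}$ is spelled out slightly more explicitly than the paper's.
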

\begin{proof}
 (1): Since the left-hand side of Eq.~\eqref{eq:Kan_left_pointwise_Lf} is a cocone from $F \b P_d$
 and $\mu^d$ is a colimit cocone, $Lf$ satisfying Eq.~\eqref{eq:Kan_left_pointwise_Lf}
 is uniquely determined.
 It remains to show that $L$ is a functor.
 We have for any $f \in \cD(d,d')$ and $g \in \cD(d',d'')$,
 \begin{alignat}{1}
  \footnoteinset{-3.43}{0.3}{\eqref{eq:Kan_left_pointwise_Lf}}{%
  \footnoteinset{3.45}{0.3}{\eqref{eq:Kan_left_pointwise_Lf}}{%
  \InsertPDF{Kan_left_pointwise_proof_Lgf.pdf}}} \raisebox{1em}{.}
  \label{eq:Kan_left_pointwise_Lgf}
 \end{alignat}
 Therefore, due to the universal property of $\mu^d$ (see Eq.~\eqref{eq:repr_univ_eq}),
 we have $L(gf) = Lg \c Lf$.
 Also, $L \id_d = \id_{Ld}$ obviously holds, and thus $L$ is a functor.
 
 (2): First, let us verify that $\eta$ is a natural transformation from $F$ to $L \b K$.
 This can be seen from
 \begin{alignat}{1}
  \footnoteinset{-3.44}{1.5}{\eqref{eq:Kan_left_pointwise_eta_mu}}{%
  \footnoteinsets{0.00}{1.5}{\eqref{eq:Kan_left_pointwise_Lf}}{\eqref{eq:Kan_left_pointwise_comma_cp_id}}{%
  \footnoteinset{0.00}{-0.85}{\eqref{eq:sliding}}{%
  \footnoteinsets{3.44}{-0.85}{\eqref{eq:Kan_left_pointwise_eta_mu}}{\eqref{eq:Kan_left_pointwise_comma_Pd}}{%
  \InsertPDF{Kan_left_pointwise_proof_eta_nat.pdf}}}}} \raisebox{1em}{,}
  \label{eq:Kan_left_pointwise_eta_nat}
 \end{alignat}
 where the first equality on the second line follows from the fact that ${!} \b h$ is
 the unique morphism $\id_*$ in $\cOne$ regardless of $h$.
 Next, we show that $\braket{L,\eta}$ is a left Kan extension of $F$ along $K$.
 Note that if $\braket{L,\eta}$ is a left Kan extension, then it is pointwise by the definition
 of pointwise Kan extensions.
 We have for each $\braket{c,p} \in K \comma d$,
 \begin{alignat}{1}
  \footnoteinsets{-3.53}{0.3}{\eqref{eq:Kan_left_pointwise_comma_cp_id}}{\eqref{eq:Kan_left_pointwise_Lf}}{%
  \footnoteinset{0.18}{0.3}{\eqref{eq:Kan_left_pointwise_eta_mu}}{%
  \footnoteinset{3.35}{0.3}{\eqref{eq:Kan_left_pointwise_comma_cp}}{%
  \InsertPDF{Kan_left_pointwise_proof_eta.pdf}}}} \raisebox{1em}{,}
  \nonumber \\
  \label{eq:Kan_left_pointwise_proof_eta}
 \end{alignat}
 where the rightmost blue circle represents the canonical cocone $\theta^d$ for $K \comma d$,
 and thus $\mu^d = (L \b \theta^d) (\eta \b P_d)$ (i.e., Eq.~\eqref{eq:Kan_left_pointwise0}) holds.
 It suffices to show that for any natural transformation $\alpha \colon F \nto H \b K$
 (where $H \colon \cD \to \cE$ is arbitrary),
 there exists a unique natural transformation $\beta$ such that $\alpha = (\beta \b K) \c \eta$
 (see Eq.~\eqref{eq:Kan_left_universal}).
 Assuming such $\beta$ exists, we have for each $d \in \cD$,
 \begin{alignat}{1}
  \footnoteinset{-0.27}{0.4}{\eqref{eq:sliding}}{%
  \footnoteinset{2.38}{0.4}{\eqref{eq:Kan_left_pointwise_proof_eta}}{%
  \InsertPDF{Kan_left_pointwise_proof_beta.pdf}}} \raisebox{1em}{.}
  \label{eq:Kan_left_pointwise_proof_beta}
 \end{alignat}
 In contrast, since $\mu^d$ is a colimit cocone,
 there exists a unique morphism $\ol{\alpha}_d$ satisfying
 \begin{alignat}{1}
  \footnoteinset{-0.33}{0.3}{\eqref{eq:limit_repr_terminal_op}}{%
  \InsertPDF{Kan_left_pointwise_proof_alpha_univ.pdf}} \raisebox{1em}{,}
  \label{eq:Kan_left_pointwise_proof_alpha_univ}
 \end{alignat}
 where we use the fact that the left-hand side is a cocone from $F \b P_d$ to $Hd$.
 From Eqs.~\eqref{eq:Kan_left_pointwise_proof_beta} and \eqref{eq:Kan_left_pointwise_proof_alpha_univ},
 we have $\beta_d = \ol{\alpha}_d$.
 Thus, $\beta$ is uniquely determined as
 $\beta = \{ \ol{\alpha}_d \}_{d \in \cD} \eqqcolon \ol{\alpha}$.
 Also, if $\ol{\alpha}$ is a natural transformation, then
 \begin{alignat}{1}
  \footnoteinset{-3.52}{0.4}{\eqref{eq:Kan_left_pointwise_comma_c1}}{%
  \footnoteinset{-0.47}{0.4}{\eqref{eq:Kan_left_pointwise_proof_alpha_univ}}{%
  \footnoteinset{3.24}{0.4}{\eqref{eq:Kan_left_pointwise_eta_mu}}{%
  \InsertPDF{Kan_left_pointwise_proof_alpha.pdf}}}}
  \label{eq:Kan_left_pointwise_proof_alpha}
 \end{alignat}
 holds for each $c \in \cC$, and thus $\alpha = (\ol{\alpha} \b K) \c \eta$ holds.
 Therefore, it remains to show that $\ol{\alpha}$ is a natural transformation.
 Since we have for any morphism $f \colon d \to d'$ in $\cD$,
 \begin{alignat}{1}
  \footnoteinset{-1.78}{1.85}{\eqref{eq:Kan_left_pointwise_proof_alpha_univ}}{%
  \footnoteinset{1.13}{1.85}{\eqref{eq:Kan_left_pointwise_comma_fp}}{%
  \footnoteinset{-1.78}{-1.45}{\eqref{eq:Kan_left_pointwise_proof_alpha_univ}}{%
  \footnoteinset{1.77}{-1.45}{\eqref{eq:Kan_left_pointwise_Lf}}{%
  \InsertPDF{Kan_left_pointwise_proof_alpha_nat2.pdf}}}}}
  \label{eq:Kan_left_pointwise_proof_alpha_nat2}
 \end{alignat}
 by the universality of the colimit cocone $\mu^d$ (see Eq.~\eqref{eq:repr_univ_eq}),
 the two morphisms enclosed by the auxiliary lines,
 $Hf \c \ol{\alpha}_d$ and $\ol{\alpha}_{d'} \c Lf$, are equal.
 Therefore, $\ol{\alpha}$ is a natural transformation.
\end{proof}
\begin{supplemental}
 Let us point out that this theorem is similar to Lemma~\ref{lemma:ReprFuncH}.
 This lemma asserts that a functor $F$ is uniquely determined and that $\braket{F,\eta}$
 is a universal morphism.
 In contrast, this theorem asserts that a functor $L$ is uniquely determined
 and that $\braket{L,\eta}$ is a universal morphism
 (i.e., a left Kan extension of $F$ along $K$).
 Note that Eq.~\eqref{eq:Kan_left_pointwise_eta_mu} corresponds to Eq.~\eqref{eq:repr_funcH_univ}.
 The proof methods used also exhibit many similarities.
\end{supplemental}

This theorem shows that if $F \b P_d$ has a colimit $\braket{e_d,\mu^d}$ for each $d \in \cD$,
then there exists a pointwise left Kan extension $\braket{L,\eta}$ of $F$ along $K$ such that
$Ld = e_d$ and Eq.~\eqref{eq:Kan_left_pointwise_Lf} hold.
Conversely, the following corollary shows that for any pointwise left Kan extension $\braket{L,\eta}$
of $F$ along $K$, there exists such a colimit $\braket{e_d,\mu^d}$.
\begin{cor}{}{KanPointwiseNas0}
 Let us consider two functors $K \colon \cC \to \cD$ and $F \colon \cC \to \cE$.
 Assume that there exists a pointwise left Kan extension $\braket{L,\eta}$ of $F$ along $K$.
 Let $P_d$ be the forgetful functor from $K \comma d$ to $\cC$ and
 $\mu^d \coloneqq (L \b \theta^d) (\eta \b P_d)$ (see Eq.~\eqref{eq:Kan_left_pointwise0})
 for each $d \in \cD$.
 Then, Eq.~\eqref{eq:Kan_left_pointwise_Lf} holds and for each $d \in \cD$,
 $\braket{Ld,\mu^d}$ is a colimit of $F \b P_d$.
\end{cor}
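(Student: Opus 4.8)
The plan is to derive the statement from Theorem~\ref{thm:KanPointwise} together with the essential uniqueness of left Kan extensions, transporting the conclusions of that theorem along the comparison isomorphism. Since $\braket{L,\eta}$ is pointwise, by Definition~\ref{def:KanPointwise} the functor $F \b P_d$ has a colimit for each $d \in \cD$; write it as $\braket{e_d,\nu^d}$. Feeding these colimits into Theorem~\ref{thm:KanPointwise} produces a functor $L' \colon \cD \to \cE$ with $L'd = e_d$ satisfying Eq.~\eqref{eq:Kan_left_pointwise_Lf} (with $L'$ and the cocones $\nu^d$ in place of $L$ and $\mu^d$), and a unit $\eta'$ making $\braket{L',\eta'}$ a pointwise left Kan extension of $F$ along $K$; moreover, by the remark preceding the proof of that theorem (Eq.~\eqref{eq:Kan_left_pointwise_proof_eta}), $\nu^d = (L' \b \theta^d)(\eta' \b P_d)$, so $\nu^d$ is precisely the cocone produced from $\braket{L',\eta'}$ by the recipe of Eq.~\eqref{eq:Kan_left_pointwise0}.

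Now I would invoke the essential uniqueness of left Kan extensions, Eq.~\eqref{eq:Kan_left_unique}: since $\braket{L,\eta}$ and $\braket{L',\eta'}$ are both left Kan extensions of $F$ along $K$, there is a natural isomorphism $\psi \colon L \ntocong L'$ with $\eta' = (\psi \b K) \c \eta$. Substituting this into $\nu^d = (L' \b \theta^d)(\eta' \b P_d)$ and sliding $\psi$ past $\theta^d$ via Eq.~\eqref{eq:sliding} and the interchange law Eq.~\eqref{eq:basic_bullet_circ} --- using that $\psi \b (d \b !) = \Delta_{K \comma d}(\psi_d)$ because $d \b !$ is the constant functor at $d$ --- I get $\nu^d = \Delta_{K \comma d}(\psi_d) \c \mu^d$ for the cocone $\mu^d = (L \b \theta^d)(\eta \b P_d)$ of the statement, equivalently $\mu^d = \Delta_{K \comma d}(\psi_d^{-1}) \c \nu^d$. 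Since $\psi_d$ is an isomorphism and $\braket{e_d,\nu^d} = \braket{L'd,\nu^d}$ is a colimit of $F \b P_d$, the colimit dual of Eq.~\eqref{eq:limit_cong} then shows that $\braket{Ld,\mu^d}$ is a colimit of $F \b P_d$; this is the second assertion.

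For Eq.~\eqref{eq:Kan_left_pointwise_Lf} I would compute both sides and reduce to the already-established identity for $L'$. On one side, $\Delta_{K \comma d}(Lf) \c \mu^d = \Delta_{K \comma d}(Lf \c \psi_d^{-1}) \c \nu^d = \Delta_{K \comma d}(\psi_{d'}^{-1} \c L'f) \c \nu^d = \Delta_{K \comma d}(\psi_{d'}^{-1}) \c \bigl(\nu^{d'} \b (f \c \Endash)\bigr)$, where the middle equality uses naturality of $\psi$ (namely $Lf \c \psi_d^{-1} = \psi_{d'}^{-1} \c L'f$) and the last uses Eq.~\eqref{eq:Kan_left_pointwise_Lf} for $L'$. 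On the other side, $\mu^{d'} \b (f \c \Endash) = \bigl(\Delta_{K \comma d'}(\psi_{d'}^{-1}) \c \nu^{d'}\bigr) \b (f \c \Endash) = \Delta_{K \comma d}(\psi_{d'}^{-1}) \c \bigl(\nu^{d'} \b (f \c \Endash)\bigr)$ by the sliding rule, using that whiskering the constant natural transformation $\Delta_{K \comma d'}(\psi_{d'}^{-1})$ by the reindexing functor $f \c \Endash \colon K \comma d \to K \comma d'$ again yields a constant natural transformation, namely $\Delta_{K \comma d}(\psi_{d'}^{-1})$. Comparing the two gives Eq.~\eqref{eq:Kan_left_pointwise_Lf} for $L$ and $\mu^d$. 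The only point requiring care is the bookkeeping of horizontal versus vertical composites in these slides --- in particular that whiskering $\psi$ (or a constant natural transformation) by a constant, resp.\ reindexing, functor collapses it to a constant natural transformation --- but each such step is a direct instance of the sliding rule, and the real idea is simply to route through the construction of Theorem~\ref{thm:KanPointwise} plus essential uniqueness rather than re-proving the universality of $\mu^d$ by hand.
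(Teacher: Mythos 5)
Your proposal is correct and follows essentially the same route as the paper's proof: apply Theorem~\ref{thm:KanPointwise} to the colimits guaranteed by pointwiseness to obtain a second Kan extension $\braket{L',\eta'}$ with its canonical cocones, invoke the essential uniqueness of left Kan extensions (Eq.~\eqref{eq:Kan_left_unique}) to get a comparison isomorphism, and transport both the colimit property and Eq.~\eqref{eq:Kan_left_pointwise_Lf} along it. The paper compresses the sliding/whiskering bookkeeping into two diagrammatic equations (Eqs.~\eqref{eq:Kan_left_pointwise_uniq_mu} and \eqref{eq:Kan_left_pointwise_unique_mu_Lf}), but the content is the same as your explicit computation.
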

\begin{proof}
 By assumption, $F \b P_d$ has a colimit for each $d \in \cD$.
 Let $\mu'{}^d$ be its colimit cocone and $\braket{L',\eta'}$ be
 $\braket{L,\eta}$ obtained by Theorem~\ref{thm:KanPointwise} with $\mu^d = \mu'{}^d$.
 Then, $\mu^d$ satisfies
 \begin{alignat}{1}
  \footnoteinset{4.44}{0.3}{\eqref{eq:Kan_left_pointwise0}}{%
  \InsertPDF{report_Kan_left_pointwise_uniq_mu.pdf}} \raisebox{1em}{,}
  \label{eq:Kan_left_pointwise_uniq_mu}
 \end{alignat}
 where the leftmost blue circle represents $\eta$,
 the other blue circles represent the canonical cocone $\theta^d$ for $K \comma d$,
 and the blue ellipse represents $\mu'{}^d$.
 The second equality follows from the fact that since both $\braket{L,\eta}$ and $\braket{L',\eta'}$
 are left Kan extensions of $F$ along $K$, there exists a natural isomorphism
 $\gamma \colon L' \ntocong L$ such that $\eta = (\gamma \b K) \c \eta'$
 (see Eq.~\eqref{eq:Kan_left_unique}).
 Thus, $\braket{Ld,\mu^d}$, as well as $\braket{L'd,\mu'{}^d}$, is a colimit of $F \b P_d$
 (see Eq.~\eqref{eq:limit_cong}).
 Also, Eq.~\eqref{eq:Kan_left_pointwise_Lf} holds from
 \begin{alignat}{1}
  \footnoteinset{-2.95}{0.3}{\eqref{eq:Kan_left_pointwise_uniq_mu}}{%
  \footnoteinset{0.23}{0.3}{\eqref{eq:Kan_left_pointwise_Lf}}{%
  \footnoteinset{3.53}{0.3}{\eqref{eq:Kan_left_pointwise_uniq_mu}}{%
  \InsertPDF{Kan_left_pointwise_unique_mu_Lf.pdf}}}} \raisebox{1em}{.}
  \label{eq:Kan_left_pointwise_unique_mu_Lf}
 \end{alignat}
\end{proof}

\subsubsection{Cocones from $F \b P_d$}

For a pointwise left Kan extension $L$, $Ld$ is a colimit object of $F \b P_d$
for each $d \in \cD$.
Therefore, when considering pointwise left Kan extensions, it is useful to deepen
our understanding of cocones from $F \b P_d$ for future discussions.

Let us arbitrarily choose $e \in \cE$ and a cocone $\alpha \in \Cocone(F \b P_d,e)$.
$\alpha$ can be represented by
\begin{alignat}{1}
 \InsertPDF{Kan_pointwise_cocone_alpha.pdf} \raisebox{1em}{,}
 \label{eq:Kan_pointwise_cocone_alpha}
\end{alignat}
where we used $P_d \braket{c,p} = c$.
The cocone $\alpha = \{ \alpha_{\braket{c,p}} \}_{\braket{c,p} \in K \comma d}$
can be easily seen to correspond one-to-one with
$\tilde{\alpha} \coloneqq \{ \{ \alpha_{\braket{c,p}} \}_{p \in \cD(Kc,d)} \}_{c \in \cC}$,
where we used the fact that $\braket{c,p} \in K \comma d$ is equal to
$c \in \cC$ and $p \in \cD(Kc,d)$.
While $\alpha$ maps each pair $\braket{c,p}$ of $c \in \cC$ and $p \in \cD(Kc,d)$
to the morphism $\alpha_{\braket{c,p}} \in \cE(Fc,e)$,
$\tilde{\alpha}$ maps each $c \in \cC$ to the map (i.e., the morphism in $\Set$)
$\tilde{\alpha}_c \colon \cD(Kc,d) \ni p \mapsto \alpha_{\braket{c,p}} \in \cD(Fc,e)$.

As will be shown in Lemma~\ref{lemma:KanPointwiseCoconeCong}, this correspondence ensures that
the $\tilde{\alpha}$ corresponding to the cocone $\alpha \in \Cocone(F \b P_d,e)$
is an element of $\hat{\cC}(\cD(K\Endash,d),\cE(F\Endash,e))$
(i.e., a natural transformation) and the map
\begin{alignat}{1}
 \Psi_e \colon \Cocone(F \b P_d,e) \ni \alpha \mapsto \tilde{\alpha}
 \in \hat{\cC}(\cD(K\Endash,d),\cE(F\Endash,e))
 \label{eq:Kan_pointwise_cocone_alpha_Psi}
\end{alignat}
is invertible.
It is also understood that $\{ \Psi_e \}_{e \in \cE}$ is a natural isomorphism.
\begin{lemma}{}{KanPointwiseCoconeCong}
 Consider an object $d \in \cD$ and two functors $K \colon \cC \to \cD$ and $F \colon \cC \to \cE$.
 Let $P_d$ be the forgetful functor from $K \comma d$ to $\cC$; then, there exists an isomorphism
 \begin{alignat}{1}
  \Cocone(F \b P_d,e) &\cong \hat{\cC}(\cD(K\Endash,d),\cE(F\Endash,e))
  \label{eq:Kan_pointwise_cocone_cong}
 \end{alignat}
 that is natural in $e \in \cE$.
\end{lemma}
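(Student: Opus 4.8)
The plan is to make precise the correspondence $\alpha \leftrightarrow \tilde{\alpha}$ sketched in Eqs.~\eqref{eq:Kan_pointwise_cocone_alpha_diagram} and \eqref{eq:Kan_pointwise_cocone_alpha_cp} and then to check that it is natural in $e$. For each $e \in \cE$ I would define a map
\[
 \Theta_e \colon \Cocone(F \b P_d,e) \to \hat{\cC}(\cD(K\Endash,d),\cE(F\Endash,e))
\]
sending a cocone $\alpha = \{ \alpha_{\braket{c,p}} \}_{\braket{c,p} \in K \comma d}$ to the collection $\tilde{\alpha} = \{ \tilde{\alpha}_c \}_{c \in \cC}$, where $\tilde{\alpha}_c \coloneqq \{ \alpha_{\braket{c,p}} \}_{p \in \cD(Kc,d)} \colon \cD(Kc,d) \to \cE(Fc,e)$ (recall that giving $\braket{c,p} \in K \comma d$ is the same as giving $c \in \cC$ together with $p \in \cD(Kc,d)$). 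The first thing to verify is that $\Theta_e$ lands where claimed, i.e.\ that $\tilde{\alpha}$ is a morphism of presheaves on $\cC$. For $h \in \cC(c,c')$ and $p' \in \cD(Kc',d)$, the morphism $h$ is a morphism $\braket{c,p' \c Kh} \to \braket{c',p'}$ in $K \comma d$ (recall Eq.~\eqref{eq:repr_Gc}), so naturality of the cocone $\alpha$ gives $\alpha_{\braket{c,p' \c Kh}} = \alpha_{\braket{c',p'}} \c Fh$, which is exactly $\tilde{\alpha}_c(p' \c Kh) = \tilde{\alpha}_{c'}(p') \c Fh$, i.e.\ naturality of $\tilde{\alpha}$ (applied to the morphism $h$ of $\cC^\op$).

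Next I would exhibit the inverse. Given $\tilde{\alpha} \in \hat{\cC}(\cD(K\Endash,d),\cE(F\Endash,e))$, set $\alpha_{\braket{c,p}} \coloneqq \tilde{\alpha}_c(p) \in \cE(Fc,e)$ for each $\braket{c,p} \in K \comma d$. For a morphism $h \colon \braket{c,p} \to \braket{c',p'}$ in $K \comma d$, i.e.\ $h \in \cC(c,c')$ with $p = p' \c Kh$, naturality of $\tilde{\alpha}$ yields $\alpha_{\braket{c,p}} = \tilde{\alpha}_c(p' \c Kh) = \tilde{\alpha}_{c'}(p') \c Fh = \alpha_{\braket{c',p'}} \c Fh$, so $\alpha \coloneqq \{ \alpha_{\braket{c,p}} \}_{\braket{c,p} \in K \comma d}$ is indeed a cocone from $F \b P_d$ to $e$. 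By construction the two assignments are mutually inverse, so each $\Theta_e$ is a bijection.

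Finally I would check naturality in $e$. For $g \in \cE(e,e')$, functoriality of $\Cocone(F \b P_d,\Endash)$ sends $\alpha$ to the cocone $g \c \alpha$ with components $g \c \alpha_{\braket{c,p}}$, while the functor $\hat{\cC}(\cD(K\Endash,d),\cE(F\Endash,\Endash))$ sends $\tilde{\alpha}$ to the presheaf morphism obtained by post-composing each component $\tilde{\alpha}_c$ with $g \c \Endash$. Writing $\beta \coloneqq g \c \alpha$, we have $\tilde{\beta}_c(p) = \beta_{\braket{c,p}} = g \c \alpha_{\braket{c,p}} = g \c \tilde{\alpha}_c(p)$, hence $\Theta_{e'}(g \c \alpha) = (g \c \Endash) \c \Theta_e(\alpha)$, so the bijections $\Theta_e$ assemble into a natural isomorphism, proving Eq.~\eqref{eq:Kan_pointwise_cocone_cong}.

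There is no genuine obstacle here: the whole statement is an unwinding of the definition of the comma category $K \comma d$ together with the definition of naturality. The only point that requires a little care is aligning the naturality condition for a cocone on $K \comma d$ (which is indexed by morphisms $\braket{c,p} \to \braket{c',p'}$, subject to $p = p' \c Kh$) with the naturality condition for a presheaf morphism (indexed by morphisms $c \to c'$ of $\cC$); this matching is precisely the identity $p = p' \c Kh$ used above and can equally well be read off the diagram in Eq.~\eqref{eq:Kan_pointwise_cocone_alpha_cp}.
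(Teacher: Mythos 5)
Your proposal is correct and follows essentially the same route as the paper: both proofs establish the bijection by regrouping the indexed family $\{\alpha_{\braket{c,p}}\}$ between its "flat" form (a cocone on $K \comma d$) and its "nested" form (a presheaf morphism $\cD(K\Endash,d) \nto \cE(F\Endash,e)$), check that the comma-category naturality condition $p = p' \c Kh$ translates into presheaf naturality and back, and then verify naturality in $e$ by post-composition. The only cosmetic difference is that you take the cocone-to-presheaf direction as primary, whereas the paper's $\Phi_e$ goes the other way.
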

The isomorphism of Eq.~\eqref{eq:Kan_pointwise_cocone_cong} can be represented by
\begin{alignat}{1}
 \InsertPDF{Kan_pointwise_cocone_cong.pdf} \raisebox{1em}{,}
 \label{eq:Kan_pointwise_cocone_cong_picture}
\end{alignat}
where we used $\cD(K\Endash,d) = \yonedaop{d} \b K$ and $\cE(F\Endash,e) = \yonedaop{e} \b F$,
and replaced $e$ with $\Endash$.
Intuitively, the wire $P_d$ on the left-hand side corresponds to the two wires $K$ and
$\yonedaop{d}$ on the right-hand side.
\begin{proof}
 Let $T_d \coloneqq F \b P_d$.
 For any $e \in \cE$, consider the following map for
 $\tau \in \hat{\cC}(\cD(K\Endash,d),\cE(F\Endash,e))$:
 \begin{alignat}{1}
  \Phi_e \colon \tau = \{ \{ \tau_{\braket{c,p}} \}_{p \in \cD(Kc,d)} \}_{c \in \cC}
  \mapsto \{ \tau_{\braket{c,p} }\}_{c \in \cC, p \in \cD(Kc,d)}.
  \label{eq:Kan_pointwise_cocone_Phi_def}
 \end{alignat}
 It suffices to show that for each $e$, $\Phi_e$ is a bijection from
 $\hat{\cC}(\cD(K\Endash,d),\cE(F\Endash,e))$ to $\Cocone(T_d,e)$,
 and that these isomorphisms are natural in $e$.
 Note that $\Phi_e$ is the inverse of $\Psi_e$.

 We have for any $\tau \in \hat{\cC}(\cD(K\Endash,d),\cE(F\Endash,e))$,
 \begin{alignat}{1}
  \InsertPDF{Kan_pointwise_cocone_Phi.pdf} \raisebox{1em}{,}
  \label{eq:Kan_pointwise_cocone_Phi}
 \end{alignat}
 the black and blue circles represent $\id_d$ and the canonical cocone $\theta^d$ for $K \comma d$,
 respectively.
 The first equality follows from the fact that each component $\tau_c$ of $\tau$ is a map
 from $\cD(Kc,d)$ to $\cE(Fc,e)$, and $\cE(Fc,e)$ is represented in the form
 corresponding to the dual (i.e., ``upside-down'') of the third expression of
 Eq.~\eqref{eq:repr_yoneda_init_summary_op}.
 See Eq.~\eqref{eq:Kan_left_pointwise_comma_theta} for the map $\Phi_e$.
 Therefore, $\Phi_e(\tau) \in \Cocone(\cE(T_d\Endash,e), \{*\}) = \Cocone(T_d,e)$ holds,
 where the equality is due to Lemma~\ref{lemma:LimitCone1cD}.
 Also, it is easy to see that this is natural in $e$; indeed, the structure of the equation will not change
 even if we apply $\yonedaop{f}$ (where $f \in \cE(e,e')$ is arbitrary) from below at the position
 of the wire $\yonedaop{e}$.
 It remains to show that
 $\Phi_e \colon \hat{\cC}(\cD(K\Endash,d),\cE(F\Endash,e)) \to \Cocone(T_d,e)$ is a bijection.
 $\Phi_e$ is obviously injective; indeed, for any distinct $\tau, \tau' \in \Cocone(T_d,e)$,
 there exists $\braket{c,p} \in K \comma d$ satisfying
 $\tau_{\braket{c,p}} \neq \tau'_{\braket{c,p}}$.
 In what follows, we show that $\Phi_e$ is surjective.
 For each $\sigma \in \Cocone(T_d,e)$, let
 \begin{alignat}{1}
  \ol{\sigma} &\coloneqq \{ \{ \sigma_{\braket{c,p}} \}_{p \in \cD(Kc,d)} \}_{c \in \cC}.
  \label{eq:Kan_pointwise_cocone_sigma}
 \end{alignat}
 Let us verify that $\ol{\sigma} \in \hat{\cC}(\cD(K\Endash,d),\cE(F\Endash,e))$ holds,
 i.e., $\ol{\sigma}$ satisfies naturality.
 For any $p' \in \cD(Kc',d)$ and $h \in \cC(c,c')$, let $p \coloneqq p' \c Kh$; then,
 we have
 \begin{alignat}{1}
  \footnoteinset{0.40}{-1.25}{\eqref{eq:Kan_left_pointwise_comma_Pd}}{%
  \InsertPDF{Kan_pointwise_cocone_sigma_nat.pdf}} \raisebox{1em}{.}
  \label{eq:Kan_pointwise_cocone_sigma_nat}
 \end{alignat}
 Therefore,
 \begin{alignat}{1}
  \InsertPDF{Kan_pointwise_cocone_sigma_nat2.pdf}
  \label{eq:Kan_pointwise_cocone_sigma_nat2}
 \end{alignat}
 holds, and thus $\ol{\sigma}$ satisfies naturality.
 It follows from Eqs.~\eqref{eq:Kan_pointwise_cocone_Phi_def} and \eqref{eq:Kan_pointwise_cocone_sigma}
 that $\Phi_e(\ol{\sigma}) = \sigma$ holds, so $\Phi_e$ is surjective.
\end{proof}

The map $\Psi_e$ of Eq.~\eqref{eq:Kan_pointwise_cocone_alpha_Psi} can be expressed by
\begin{alignat}{1}
 \InsertMidPDF{Kan_pointwise_cocone_alpha_diagram.pdf}
 &\qquad\xmapsto{\Psi_e}\qquad
 \InsertMidPDF{Kan_pointwise_cocone_alpha_diagram2.pdf},
 \label{eq:Kan_pointwise_cocone_alpha_diagram}
\end{alignat}
where the natural transformation $\tilde{\alpha}$ is represented by
a block in the shape of ``$\raisebox{-.1em}{\InsertPDFtext{report_text_Kan_KdFe.pdf}}$''.
The equality in Eq.~\eqref{eq:Kan_pointwise_cocone_alpha_diagram} follows from
\begin{alignat}{1}
 \footnoteinset{2.76}{0.3}{\eqref{eq:Kan_left_pointwise_comma_cp}}{%
 \InsertPDF{Kan_pointwise_cocone_alpha_cp.pdf}}
 \label{eq:Kan_pointwise_cocone_alpha_cp}
\end{alignat}
for each $\braket{c,p} \in K \comma d$, where the second equality
follows from $\alpha_{\braket{c,p}} = \tilde{\alpha}_c(p)$.
Intuitively, the map $\Psi_e$ of Eq.~\eqref{eq:Kan_pointwise_cocone_alpha_diagram} replaces
$\theta^d$ with the dotted box representing $\cD(K\Endash,d)$.

We note that the equation on the left-hand side of the symbol ``$\xmapsto{\Psi_e}$''
in Eq.~\eqref{eq:Kan_pointwise_cocone_alpha_diagram} can be rewritten as
\begin{alignat}{1}
 \InsertMidPDF{Kan_pointwise_cocone_alpha_diagram.pdf}
 \qquad\myLeftrightarrow{equivalent}\qquad
 \InsertMidPDF{Kan_pointwise_cocone_theta.pdf},
 \nonumber \\
 \label{eq:Kan_pointwise_cocone_theta_sup}
\end{alignat}
where the functor $\{ * \} \b {!} \colon (K \comma d)^\op \to \Set$,
which is the horizontal composition of the functors $\{ * \} \colon \cOne \to \Set$
and ${!} \colon (K \comma d)^\op \to \cOne$, is represented by a gray dotted line.
In fact, $\alpha \in \Cocone(F \b P_d,e)$ in Eq.~\eqref{eq:Kan_pointwise_cocone_alpha_diagram}
can be depicted by
\begin{alignat}{1}
 \InsertPDF{Kan_pointwise_cocone_theta_alpha.pdf} \raisebox{1em}{,}
 \label{eq:Kan_pointwise_cocone_theta_alpha}
\end{alignat}
where the first equality follows from the ``upside-down'' version of
Eq.~\eqref{eq:limit_cone_1cD2} with $c = e$ and $D = F \b P_d$.
The second equality follows from $\yonedaop{e} \b F = \cE(F\Endash,e)$.
Note that we use both rectangles and ellipses as block shapes to represent $\alpha$,
but there is no semantic difference between them.
Let $\theta^d$ be the canonical cocone for $K \comma d$; then,
substituting $\alpha = \theta^d$ (and $F = K$, $~e = d$)
into Eq.~\eqref{eq:Kan_pointwise_cocone_theta_alpha} yields
\begin{alignat}{1}
 \InsertPDF{Kan_pointwise_cocone_theta_theta.pdf} \raisebox{1em}{,}
 \label{eq:Kan_pointwise_cocone_theta_theta}
\end{alignat}
where the blue circles represent $\theta^d$.
We can see that Eqs.~\eqref{eq:Kan_pointwise_cocone_theta_alpha} and
\eqref{eq:Kan_pointwise_cocone_theta_theta} give Eq.~\eqref{eq:Kan_pointwise_cocone_theta_sup}.

Equation~\eqref{eq:Kan_pointwise_cocone_theta_sup} can also be considered as
a generalization of the Yoneda lemma (see Corollary~\ref{cor:Yonedaop}).
Choose an arbitrary presheaf $X \colon \cC^\op \to \Set$, and substitute
$\cE = \Set^\op$, $~e = \{ * \}$, and $F = X^\op$ into Eq.~\eqref{eq:Kan_pointwise_cocone_theta_sup};
then, since $\cE(F\Endash,e) = \Set^\op(X^\op\Endash,\{*\}) = \Set(\{*\},X\Endash) = X$ holds,
for any cone $\alpha$ from $\{ * \}$ to $X \b P_d \colon (K \comma d)^\op \to \Set$,
there exists a unique natural transformation $\tilde{\alpha} \colon \cD(K\Endash,d) \nto X$
satisfying
\begin{alignat}{1}
 \footnoteinset{0.06}{0.3}{\eqref{eq:Kan_pointwise_cocone_theta_sup}}{%
 \InsertPDF{Kan_pointwise_cocone_theta2.pdf}} \raisebox{1em}{.}
 \label{eq:Kan_pointwise_cocone_theta2}
\end{alignat}
Furthermore, consider the case of $\cD = \cC$ and $K = \id_\cC$,
and rewrite the object $d$ in $\cC$ as $c$.
$\braket{X,\id_X}$ is a right Kan extension of $X$ along $\id_{\cC^\op}$
and is pointwise since $\Set$ is complete (see Corollary~\ref{cor:KanPointwiseCocomplete}).
Thus, from the dual of Eq.~\eqref{eq:Kan_left_pointwise_univ}, there exists
a unique $\ol{\alpha} \in \Set(\{*\},Xc) = Xc$ satisfying
\begin{alignat}{1}
 \InsertPDF{Kan_pointwise_cocone_theta_1X.pdf} \raisebox{1em}{,}
 \label{eq:Kan_pointwise_cocone_theta_1X}
\end{alignat}
where the blue circle represents $\theta^c$.
Therefore, $\ol{\alpha} \in Xc$ and $\tilde{\alpha} \colon \yonedaop{c} \nto X$ that satisfy
\begin{alignat}{1}
 \footnoteinset{-1.06}{0.3}{\eqref{eq:Kan_pointwise_cocone_theta_1X}}{%
 \footnoteinset{1.85}{0.3}{\eqref{eq:Kan_pointwise_cocone_theta2}}{%
 \InsertPDF{Kan_pointwise_cocone_theta_1X_yoneda.pdf}}}
 \label{eq:Kan_pointwise_cocone_theta_1X_yoneda}
\end{alignat}
correspond one-to-one.
Applying $\braket{c,\id_c} \in (\id_\cC \comma c)^\op$ from the right
to the left- and right-hand sides of Eq.~\eqref{eq:Kan_pointwise_cocone_theta_1X_yoneda}
and using $\theta^c_{\braket{c,\id_c}} = \id_c$
(see Eq.~\eqref{eq:Kan_left_pointwise_comma_c1}) yields
\begin{alignat}{1}
 \InsertPDF{Kan_pointwise_cocone_theta_1X_yoneda2.pdf} \raisebox{1em}{,}
 \label{eq:Kan_pointwise_cocone_theta_1X_yoneda2}
\end{alignat}
where the black circle represents $\id_c$.
The invertible map $\hat{\cC}(\yonedaop{c},X) \ni \tilde{\alpha} \mapsto \ol{\alpha} \in Xc$
obtained by this equation is clearly equal to the Yoneda map $\alpha_{X,c}$
defined by Eq.~\eqref{eq:repr_yonedaop_alpha}.
\begin{supplemental}
 Comparing Eqs.~\eqref{eq:Kan_pointwise_cocone_theta_sup} and
 \eqref{eq:Kan_pointwise_cocone_theta_1X_yoneda2}, it can be seen that $P_d$ corresponds to
 $c$ and $\theta^d$ corresponds to $\id_c$.
 Also, it can be seen that the map $\Phi_e \colon \tilde{\alpha} \mapsto \alpha$
 (which is the map $\Phi_e$ mentioned in the proof of Lemma~\ref{lemma:KanPointwiseCoconeCong})
 corresponds to the Yoneda map $\alpha_{X,c}$.
 In mathematical notation, these maps are expressed as
 \begin{alignat}{3}
  \hat{\cC}(\cD(K\Endash,d),\cE(F\Endash,e)) &\ni \tilde{\alpha}
  &&\qquad\xmapsto{\Phi_e}\qquad &(\tilde{\alpha} \b P_d) \theta^d &\in \Cocone(F \b P_d,e),
  \nonumber \\
  \hat{\cC}(\yonedaop{c},X) &\ni \tilde{\alpha}
  &&\qquad\xmapsto{\alpha_{X,c}}\qquad &(\tilde{\alpha} \b c) \id_c &\in Xc,
 \end{alignat}
 where note that $(\tilde{\alpha} \b c) \id_c = \tilde{\alpha}_c(\id_c)$ holds.
 Also, Eq.~\eqref{eq:Kan_pointwise_cocone_theta2} asserts that
 $\braket{\cD(K\Endash,d), \theta^d}$ is a left Kan extension of
 $\{*\} \b {!} \colon (K \comma d)^\op \to \Set$ along $P_d \colon (K \comma d)^\op \to \cC^\op$.
 This Kan extension is also pointwise since $\Set$ is cocomplete.
 Applying $\braket{c,p} \in K \comma d$ from the right to both sides of
 Eq.~\eqref{eq:Kan_pointwise_cocone_theta2} gives
 $\alpha_{\braket{c,p}} = \tilde{\alpha}_c(p)$, and thus
 $\tilde{\alpha} = \{ \cD(Kc,d) \ni p \mapsto \alpha_{\braket{c,p}} \in Xc \}_{c \in \cC}$
 holds.
 Note that the right-hand side of Eq.~\eqref{eq:Kan_pointwise_cocone_Phi} can be depicted by
 \begin{alignat}{1}
  \InsertPDF{Kan_pointwise_cocone_theta_Phi.pdf}
  \label{eq:Kan_pointwise_cocone_theta_Phi}
 \end{alignat}
 (where the black and blue circles represent $\id_d$ and $\theta^d$, respectively),
 which corresponds to the ``upside-down'' version of the equation on the right-hand side of
 the symbol ``$\myLeftrightarrow{equivalent}$'' in Eq.~\eqref{eq:Kan_pointwise_cocone_theta_sup}.
\end{supplemental}

\subsection{Basic properties of Kan extensions} \label{subsec:Kan_property}

\subsubsection{Functors preserving Kan extensions}

As with the preservation of limits, we can consider the preservation of Kan extensions as follows.
Consider a functor $S \colon \cE \to \cF$.
When there exists a left Kan extension of $F \colon \cC \to \cE$ along $K \colon \cC \to \cD$,
denoted by $\braket{\Lan_K F,\eta}$,
we say that $S$ \termdef{preserves} the left Kan extension $\braket{\Lan_K F,\eta}$ if
$\braket{S \b \Lan_K F, S \b \eta}$ is a left Kan extension of $S \b F$ along $K$.
Also, we call $S$ preserving any left Kan extension if it preserves
the left Kan extensions of any functor $F \colon \cC \to \cE$
along any functor $K \colon \cC \to \cD$, where $\cC$ and $\cD$ are arbitrary.
The preservation of right Kan extensions is defined as its dual.

Assume that there exists a left Kan extension $\braket{\Lan_K F,\eta}$ of $F$ along $K$
and that $S$ preserves this Kan extension; then, for any natural transformation
$\sigma \colon S \b F \nto H \b K$ (where $H \colon \cD \to \cF$ is arbitrary),
there exists a unique $\ol{\sigma} \colon S \b \Lan_K F \nto H$ satisfying
\begin{alignat}{1}
 \InsertPDF{Kan_left_preserve_univ.pdf} \raisebox{1em}{,}
 \label{eq:Kan_left_preserve_univ}
\end{alignat}
where the blue circle represents $\eta$.
This equation corresponds to Eqs.~\eqref{eq:Kan_left_universal} and \eqref{eq:Kan_left_universal2}.
The preservation of the left Kan extension $\braket{\Lan_K F,\eta}$ by $S$ can be
represented by
\begin{alignat}{1}
 \InsertPDF{Kan_left_preserve.pdf} \raisebox{1em}{,}
 \label{eq:Kan_left_preserve}
\end{alignat}
where for any arbitrarily chosen left Kan extension $\braket{\Lan_K (S \b F), \eta'}$
of $S \b F$ along $K$, $\eta'$ is represented by the blue ellipse on the right-hand side.
The diamond block represents the isomorphism $S \b \Lan_K F \cong \Lan_K (S \b F)$.

The following proposition holds:
\begin{proposition}{}{KanAdjPreserve}
 Any left adjoint preserves any left Kan extension.
 Dually, any right adjoint preserves any right Kan extension.
\end{proposition}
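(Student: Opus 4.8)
The plan is to leverage the representability picture of Kan extensions already set up in the paper: by Eq.~\eqref{eq:Kan_left_cong}, $\braket{\Lan_K F,\eta}$ being a left Kan extension of $F$ along $K$ is the same as $\braket{\Lan_K F, \eta}$ being a representation of $\Func{\cC}{\cE}(F,\Endash \b K)$, so it suffices to produce a representation of $\Func{\cC}{\cF}(S \b F,\Endash \b K)$ whose representing object is $S \b \Lan_K F$ and whose universal element is $S \b \eta$. Let $T \colon \cF \to \cE$ be a right adjoint to $S$, with unit $\eta^S \colon \id_\cE \nto T \b S$ and counit $\varepsilon^S \colon S \b T \nto \id_\cF$. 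Whiskering the snake equations \eqref{eq:snake} by an arbitrary domain category and invoking Theorem~\ref{thm:AdjNasSnake} shows that post-composition with $S$ is left adjoint to post-composition with $T$, i.e.\ $S \b \Endash \dashv T \b \Endash$ both as functors $\Func{\cD}{-}$ and as functors $\Func{\cC}{-}$; this refines Example~\ref{ex:FuncPostComp}.

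First I would assume a left Kan extension $\braket{\Lan_K F,\eta}$ of $F$ along $K$ exists and chain natural isomorphisms in $H \colon \cD \to \cF$:
\[ \Func{\cD}{\cF}(S \b \Lan_K F, H) \;\cong\; \Func{\cD}{\cE}(\Lan_K F, T \b H) \;\cong\; \Func{\cC}{\cE}(F, T \b H \b K) \;\cong\; \Func{\cC}{\cF}(S \b F, H \b K), \]
where the outer isomorphisms are the adjunction bijections for $S \b \Endash \dashv T \b \Endash$ and the middle one is the universal property \eqref{eq:Kan_left_cong} of $\braket{\Lan_K F,\eta}$. Hence $\Func{\cC}{\cF}(S \b F,\Endash \b K)$ is representable with representing object $S \b \Lan_K F$, so by Theorem~\ref{thm:ReprCG} a left Kan extension of $S \b F$ along $K$ exists with underlying functor isomorphic to $S \b \Lan_K F$.

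Second I would identify the unit of this Kan extension with $S \b \eta$. By Eq.~\eqref{eq:repr_univ_usigma} (equivalently Eq.~\eqref{eq:repr_repr_eta_sigma}), the unit is obtained by pushing $\id_{S \b \Lan_K F}$ through the composite isomorphism above. Tracing step by step: the first transpose sends $\id_{S \b \Lan_K F}$ to $\eta^S \b \Lan_K F$; the Kan universal property sends this to $(\eta^S \b \Lan_K F \b K) \c \eta$ (post-compose with $\eta$ and slide); and the last, inverse, transpose sends that to $(\varepsilon^S \b S \b \Lan_K F \b K) \c (S \b \eta^S \b \Lan_K F \b K) \c (S \b \eta)$, which collapses to $S \b \eta$ by the snake equation \eqref{eq:snake}. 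This is exactly the assertion of Eq.~\eqref{eq:Kan_left_preserve} with the displayed isomorphism being $S \b \Lan_K F \cong \Lan_K(S \b F)$, so $S$ preserves $\braket{\Lan_K F,\eta}$; since $F$ and $K$ were arbitrary, $S$ preserves every left Kan extension, and the dual claim for right adjoints follows by flipping every diagram upside-down as usual.

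I expect the only delicate point to be the bookkeeping in the second step — checking that the composite hom-isomorphism really is ``compose with $S \b \eta$'', which is precisely where the snake equation earns its keep; everything else is a mechanical use of results already in hand. If one prefers to avoid that identification, one can instead argue directly in string-diagram form: given $\sigma \colon S \b F \nto H \b K$, set $\sigma' \coloneqq (T \b \sigma) \c (\eta^S \b F)$, let $\ol{\sigma'} \colon \Lan_K F \nto T \b H$ be the unique transpose with $\sigma' = (\ol{\sigma'} \b K) \c \eta$, and put $\ol{\sigma} \coloneqq (\varepsilon^S \b H) \c (S \b \ol{\sigma'})$; a short computation using naturality of $\varepsilon^S$ (sliding it past $\sigma$) together with one snake identity gives $\sigma = (\ol{\sigma} \b K) \c (S \b \eta)$, and running the same computation in reverse yields uniqueness of $\ol{\sigma}$, hence Eq.~\eqref{eq:Kan_left_preserve_univ}.
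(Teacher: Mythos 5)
Your proposal is correct and is essentially the paper's own argument: both hinge on the adjunction $S \b \Endash \dashv T \b \Endash$ induced by $S \dashv T$ together with the universal property of $\braket{\Lan_K F,\eta}$, and your closing ``direct string-diagram'' variant (transpose $\sigma$, apply Eq.~\eqref{eq:Kan_left_universal}, transpose back) is precisely the computation in Eq.~\eqref{eq:Kan_left_preserve_adj}. The only presentational difference is that your primary route first establishes representability of $\Func{\cC}{\cF}(S \b F,\Endash \b K)$ by a chain of hom-isomorphisms and then separately traces $\id_{S \b \Lan_K F}$ through them to identify the unit as $S \b \eta$ via the snake equation, whereas the paper verifies the universal property for the candidate unit $S \b \eta$ in a single pass, sidestepping that bookkeeping.
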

Since colimits are left Kan extensions and limits are right Kan extensions as mentioned in
Example~\ref{ex:KanLimit}, this proposition can be regarded as a generalization of
Theorem~\ref{thm:LimitAdjPreserve}.
\begin{proof}
 We show that for any adjunction $S \dashv T \colon \cE \to \cF$,
 $S$ preserves any left Kan extension, denoted by $\braket{\Lan_K F,\eta}$,
 of a functor $F \colon \cC \to \cE$ along a functor $K \colon \cC \to \cD$.
 We have for any natural transformation $\sigma \colon S \b F \nto H \b K$
 (where $H \colon \cD \to \cF$ is arbitrary),
 \begin{alignat}{1}
  \footnoteinset{-3.36}{0.3}{\eqref{eq:adj_conj}}{%
  \footnoteinset{-0.59}{0.3}{\eqref{eq:Kan_left_universal}}{%
  \footnoteinset{2.72}{0.3}{\eqref{eq:adj_conj}}{%
  \InsertPDF{Kan_left_preserve_adj.pdf}}}} \raisebox{1em}{,}
  \label{eq:Kan_left_preserve_adj}
 \end{alignat}
 where, in the first equality, we set the transpose of $\sigma$ to $\ol{\sigma}$
 (note that $S \b \Endash \dashv T \b \Endash$ holds).
 The second equality follows from applying Eq.~\eqref{eq:Kan_left_universal} to $\ol{\sigma}$
 (where we let $\ol{\tau}$ be $\ol{\sigma}$ on the right-hand side of Eq.~\eqref{eq:Kan_left_universal}).
 The blue circle represents $\eta$.
 In the last equality, we set the transpose of $\ol{\tau}$ to $\tau$.
 Since $\tau$ is uniquely determined from $\sigma$,
 $\braket{S \b \Lan_K F,S \b \eta}$ is a left Kan extension of $S \b F$ along $K$.
\end{proof}

\subsubsection{Conditions for having a pointwise Kan extension}

\begin{cor}{}{KanPointwiseCocomplete}
 Consider two functors $K \colon \cC \to \cD$ and $F \colon \cC \to \cE$,
 where $\cC$ is a small category.
 If $\cE$ is cocomplete, then a pointwise left Kan extension of $F$ along $K$ exists.
 Dually, if $\cE$ is complete, then a pointwise right Kan extension of $F$ along $K$ exists.
\end{cor}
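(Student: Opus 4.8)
The plan is to reduce the statement to Theorem~\ref{thm:KanPointwise} by verifying its hypothesis, namely that $F \b P_d$ admits a colimit for every $d \in \cD$, where $P_d \colon K \comma d \to \cC$ is the forgetful functor introduced in Subsubsection~\ref{subsubsec:Kan_pointwise_comma}.

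First I would check that the comma category $K \comma d$ is small. Its objects are pairs $\braket{c,p}$ with $c \in \cC$ and $p \in \cD(Kc,d)$; since $\cC$ is small its objects form a set, and since every category considered in this paper is locally small, each hom-set $\cD(Kc,d)$ is a set, so the objects of $K \comma d$ form a set. A morphism of $K \comma d$ from $\braket{c,p}$ to $\braket{c',p'}$ is a morphism in $\cC(c,c')$ satisfying a condition, so the morphisms of $K \comma d$ form a subcollection of $\coprod_{c,c' \in \cC} \cC(c,c')$, which is again a set because $\cC$ is small. Hence $K \comma d$ is small, and $F \b P_d \colon K \comma d \to \cE$ is a functor from a small category to $\cE$.

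Next, since $\cE$ is cocomplete, every functor from a small category to $\cE$ has a colimit; in particular $F \b P_d$ has a colimit $\braket{e_d,\mu^d}$ for each $d \in \cD$. This is exactly the assumption of Theorem~\ref{thm:KanPointwise}, which then produces a functor $L \colon \cD \to \cE$ with $Ld = e_d$ together with a natural transformation $\eta$ such that $\braket{L,\eta}$ is a pointwise left Kan extension of $F$ along $K$. For the dual assertion, if $\cE$ is complete then, applying the same smallness argument, $F \b P_d$ (with $P_d$ the forgetful functor from $d \comma K$ to $\cC$) has a limit for each $d \in \cD$, and the dual of Theorem~\ref{thm:KanPointwise} yields a pointwise right Kan extension of $F$ along $K$.

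The only point that needs genuine attention is the smallness of $K \comma d$, which is precisely where the hypothesis that $\cC$ is small (together with local smallness of $\cD$) enters; the remainder is a direct appeal to (co)completeness of $\cE$ and to Theorem~\ref{thm:KanPointwise}, so I do not expect any real obstacle.
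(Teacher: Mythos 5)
Your proposal is correct and follows exactly the paper's own route: smallness of $K \comma d$ (from smallness of $\cC$ and local smallness of $\cD$), cocompleteness of $\cE$ to get a colimit of $F \b P_d$ for each $d$, and then Theorem~\ref{thm:KanPointwise}. The extra detail you supply on why $K \comma d$ is small is a welcome elaboration of a step the paper states without proof, but it is not a different argument.
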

\begin{proof}
 Since $\cC$ is small, for any $d \in \cD$, the comma category $K \comma d$ is small.
 Thus, if $\cE$ is cocomplete, then for each $d \in \cD$, a colimit of $F \b P_d$ exists.
 Therefore, a pointwise left Kan extension of $F$ along $K$ obviously exists
 from Theorem~\ref{thm:KanPointwise}.
\end{proof}

\begin{lemma}{}{KanPointwisePreserve}
 If a functor $S \colon \cE \to \cF$ preserves any colimit, then $S$ preserves
 any pointwise left Kan extension of any $F \colon \cC \to \cE$ along any $K \colon \cC \to \cD$.
\end{lemma}
\begin{proof}
 Let $\braket{L,\eta}$ be any pointwise left Kan extension of $F$ along $K$.
 For each $d \in \cD$, let $P_d$ be the forgetful functor from $K \comma d$ to $\cC$,
 and let $\mu^d \coloneqq (L \b \theta^d) (\eta \b P_d)$ (see Eq.~\eqref{eq:Kan_left_pointwise0});
 then, by Corollary~\ref{cor:KanPointwiseNas0}, $\braket{Ld,\mu^d}$ is a colimit of $F \b P_d$.
 Since $S$ preserves any colimit, $\braket{S \b Ld,S \b \mu^d}$ is a colimit of $S \b F \b P_d$.
 This colimit cocone $S \b \mu^d$ is depicted by
 \begin{alignat}{1}
  S \b \mu^d &\qquad\diagram\qquad
  \InsertMidPDF{Kan_left_pointwise_preserve_mu.pdf}
  \label{eq:Kan_left_pointwise_preserve_mu}
 \end{alignat}
 and satisfies
 \begin{alignat}{1}
  \lefteqn{ \{ S \b \mu^{Kc}_{\braket{c,\id_Kc}} \}_{c \in \cC} = S \b \eta } \nonumber \\
  &\diagram\qquad
  \InsertMidPDF{Kan_left_pointwise_preserve_mu_eta.pdf}.
  \label{eq:Kan_left_pointwise_preserve_mu_eta}
 \end{alignat}
 Thus, by Theorem~\ref{thm:KanPointwise}, $\braket{S \b L,S \b \eta}$ is
 a pointwise left Kan extension of $S \b F$ along $K$.
 Therefore, $S$ preserves $\braket{L,\eta}$.
\end{proof}

\begin{proposition}{}{KanPointwiseNas}
 Consider three functors $K \colon \cC \to \cD$, $F \colon \cC \to \cE$,
 and $L \colon \cD \to \cE$.
 The following conditions are equivalent:
 \begin{enumerate}
  \item $L$ is a pointwise left Kan extension of $F$ along $K$.
  \item $L$ is a left Kan extension of $F$ along $K$, and for any $e \in \cE$,
        $\yonedaop{e} \colon \cE \to \Set^\op$ preserves the left Kan extension $L$.
  \item There exists an isomorphism
        \begin{alignat}{1}
         \cE(Ld,e) &\cong \hat{\cC}(\cD(K\Endash,d),\cE(F\Endash,e))
         \label{eq:Kan_left_pointwise_proof_Lde_cong}
        \end{alignat}
        that is natural in $d \in \cD$ and $e \in \cE$.
 \end{enumerate}
 Dually, the following conditions are equivalent:
 \begin{enumerate}
  \item $R$ is a pointwise right Kan extension of $F$ along $K$.
  \item $R$ is a right Kan extension of $F$ along $K$, and for any $e \in \cE$,
        $\yoneda{e} \colon \cE \to \Set$ preserves the right Kan extension $R$.
  \item There exists an isomorphism
        \begin{alignat}{1}
         \cE(e,Rd) &\cong \Func{\cC}{\Set}(\cD(d,K\Endash),\cE(e,F\Endash))
         \label{eq:Kan_right_pointwise_proof_eRd_cong}
        \end{alignat}
        that is natural in $d \in \cD$ and $e \in \cE$.
 \end{enumerate}
\end{proposition}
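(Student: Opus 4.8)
The plan is to establish the left-hand equivalence by the chain of implications $(1)\Rightarrow(3)$, $(3)\Rightarrow(1)$, $(1)\Rightarrow(2)$, $(2)\Rightarrow(1)$, and then to read off the right-hand equivalence by passing to opposite categories. Throughout, $P_d\colon K\comma d\to\cC$ is the forgetful functor, $\theta^d$ the canonical natural transformation for $K\comma d$, and, whenever $\braket{L,\eta}$ is a left Kan extension of $F$ along $K$, I abbreviate $\mu^d\coloneqq(L\b\theta^d)(\eta\b P_d)$. For $(1)\Rightarrow(3)$, suppose $L$ is a pointwise left Kan extension. By Corollary~\ref{cor:KanPointwiseNas0}, for each $d$ the cocone $\mu^d$ exhibits $Ld$ as a colimit of $F\b P_d$, and by Eq.~\eqref{eq:Kan_left_pointwise_proof_eta} its components are $\mu^d_{\braket{c,p}}=(Lp)\c\eta_c$. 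Composing the universal property of this colimit with the natural isomorphism of Lemma~\ref{lemma:KanPointwiseCoconeCong} identifies $\cE(Ld,e)$ with $\hat{\cC}(\cD(K\Endash,d),\cE(F\Endash,e))$, the comparison sending $g\colon Ld\to e$ to the natural transformation with component $p\mapsto g\c(Lp)\c\eta_c$ at $c$. Naturality in $e$ is immediate, and naturality in $d$ holds because a morphism $f\colon d\to d'$ acts on the left by $g\mapsto g\c(Lf)$ and on the right by $p\mapsto fp$, and these agree by functoriality of $L$; this is Eq.~\eqref{eq:Kan_left_pointwise_proof_Lde_cong}.

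For $(3)\Rightarrow(1)$, fix $d$ and let $e$ vary: composing Eq.~\eqref{eq:Kan_left_pointwise_proof_Lde_cong} with Lemma~\ref{lemma:KanPointwiseCoconeCong} gives $\cE(Ld,\Endash)\cong\Cocone(F\b P_d,\Endash)$ natural in $e$, so by Lemma~\ref{lemma:LimitRepr} the colimit of $F\b P_d$ exists for every $d\in\cD$ (with colimit object $Ld$). Feeding these colimits into Theorem~\ref{thm:KanPointwise} yields a pointwise left Kan extension $\braket{L',\eta'}$ of $F$ along $K$. Applying the implication $(1)\Rightarrow(3)$ just proved to $L'$ gives $\cE(L'd,e)\cong\hat{\cC}(\cD(K\Endash,d),\cE(F\Endash,e))\cong\cE(Ld,e)$, natural in both $d$ and $e$, whence $L\cong L'$ by Lemma~\ref{lemma:YonedaopCongFunc}. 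As any functor isomorphic to a left Kan extension is again one, $L$ is a left Kan extension of $F$ along $K$, and since $F\b P_d$ has a colimit for each $d$ it is pointwise by Definition~\ref{def:KanPointwise}.

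For $(1)\Rightarrow(2)$, keep $L,\eta,\mu^d$ as above. Since $\yonedaop{e}\colon\cE\to\Set^\op$ preserves all colimits (Corollary~\ref{cor:LimitPresheaf2}), $\braket{\yonedaop{e}\b Ld,\yonedaop{e}\b\mu^d}$ is a colimit of $\yonedaop{e}\b F\b P_d$ in $\Set^\op$; applying $\yonedaop{e}$ to Eqs.~\eqref{eq:Kan_left_pointwise_Lf} and \eqref{eq:Kan_left_pointwise_eta_mu} and invoking the uniqueness clause of Theorem~\ref{thm:KanPointwise} shows that $\braket{\yonedaop{e}\b L,\yonedaop{e}\b\eta}$ is precisely the pointwise left Kan extension that theorem builds from those colimits; in particular it is a left Kan extension of $\yonedaop{e}\b F$ along $K$, so $\yonedaop{e}$ preserves $L$. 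For $(2)\Rightarrow(1)$, $L$ is a left Kan extension, and by hypothesis $\braket{\yonedaop{e}\b L,\yonedaop{e}\b\eta}$ is a left Kan extension of $\yonedaop{e}\b F$ along $K$ for every $e$; by Eq.~\eqref{eq:Kan_left_cong} there is an isomorphism $\Func{\cD}{\Set^\op}(\yonedaop{e}\b L,H)\cong\Func{\cC}{\Set^\op}(\yonedaop{e}\b F,H\b K)$ natural in $H\colon\cD\to\Set^\op$. Taking $H\coloneqq\yonedaop{d}$ and using that a morphism between functors into $\Set^\op$ is a natural transformation of the corresponding presheaves in the opposite direction, the left-hand side becomes $\hat{\cD}(\yonedaop{d},\cE(L\Endash,e))\cong\cE(Ld,e)$ by the dual Yoneda lemma (Corollary~\ref{cor:Yonedaop}) and the right-hand side becomes $\hat{\cC}(\cD(K\Endash,d),\cE(F\Endash,e))$; this isomorphism is natural in $e$ by functoriality of $e\mapsto\yonedaop{e}$, so $\cE(Ld,\Endash)\cong\Cocone(F\b P_d,\Endash)$ by Lemma~\ref{lemma:KanPointwiseCoconeCong}. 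Hence $F\b P_d$ has a colimit for each $d$ (Lemma~\ref{lemma:LimitRepr}), and $L$ is pointwise by Definition~\ref{def:KanPointwise}. The right-Kan-extension equivalence is the dual statement, obtained by replacing $\cC,\cD,\cE$ with their opposite categories throughout.

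The step I expect to be the main obstacle is the $\Set^\op$-bookkeeping in $(2)\Rightarrow(1)$: translating ``$\yonedaop{e}$ preserves the left Kan extension $L$'' into a Kan-extension statement in presheaf categories, handling the variance reversal when identifying morphisms in $\Func{\cD}{\Set^\op}$ with presheaf transformations, and confirming that the isomorphism $\cE(Ld,e)\cong\hat{\cC}(\cD(K\Endash,d),\cE(F\Endash,e))$ so obtained is the very one (natural in both variables) that the colimit description in $(1)\Rightarrow(3)$ produces. A secondary point is the naturality-in-$d$ check in $(1)\Rightarrow(3)$; both are kept routine by reducing every component computation to the identity $\mu^d_{\braket{c,p}}=(Lp)\c\eta_c$ of Eq.~\eqref{eq:Kan_left_pointwise_proof_eta}, which converts the diagram chases into plain functoriality of $L$.
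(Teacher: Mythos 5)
Your proof is correct and rests on exactly the same ingredients as the paper's: Corollary~\ref{cor:KanPointwiseNas0}, Lemma~\ref{lemma:KanPointwiseCoconeCong}, Lemma~\ref{lemma:LimitRepr}, Theorem~\ref{thm:KanPointwise}, Corollary~\ref{cor:LimitPresheaf2}, and Lemma~\ref{lemma:YonedaopCongFunc}. The only difference is organizational --- the paper closes the single cycle $(1)\Rightarrow(2)\Rightarrow(3)\Rightarrow(1)$, whereas you prove $(1)\Leftrightarrow(3)$ and $(1)\Leftrightarrow(2)$ separately, so your direct $(1)\Rightarrow(3)$ (colimit universal property composed with Lemma~\ref{lemma:KanPointwiseCoconeCong}) and your $(2)\Rightarrow(1)$ repeat computations the paper obtains for free from the cycle, but every step is sound.
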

\begin{proof}
 $(1) \Rightarrow (2)$:
 From Corollary~\ref{cor:LimitPresheaf2}, $\yonedaop{e}$ preserves any colimit.
 Therefore, the case where $S$ is $\yonedaop{e}$ in Lemma~\ref{lemma:KanPointwisePreserve}
 makes it appear.
 
 $(2) \Rightarrow (3)$:
 We have for any $\sigma \in \hat{\cC}(\cD(K\Endash,d),\cE(F\Endash,e))$,
 \begin{alignat}{1}
  \InsertMidPDF{Kan_left_pointwise_proof_Lde.pdf}
  &\quad\xmapsto{\eqref{eq:Kan_left_preserve_univ}}\quad
  \InsertMidPDF{Kan_left_pointwise_proof_Lde2a.pdf}
  \quad\xmapsto{\eqref{eq:repr_yonedaop_alpha}}\quad
  \InsertMidPDF{Kan_left_pointwise_proof_Lde2b.pdf}.
  \label{eq:Kan_left_pointwise_proof_Lde}
 \end{alignat}
 Note that instead of $\yonedaop{e} \colon \cE \to \Set^\op$,
 its dual $\yonedaop{e} \colon \cE^\op \to \Set$ is used (the same applies to $\yonedaop{d}$),
 and thus some parts of the diagram are ``upside-down''.
 The right ``$\mapsto$'' is the Yoneda map mentioned in Corollary~\ref{cor:Yonedaop}.
 The last expression (i.e., $\ol{\sigma}_d(\id_d)$) is an element of
 $\yonedaop{e} \b Ld = \cE(Ld,e)$.
 Thus, the isomorphism in Eq.~\eqref{eq:Kan_left_pointwise_proof_Lde_cong} is obtained.
 Naturality in $d$ and $e$ can be easily seen from Eq.~\eqref{eq:Kan_left_pointwise_proof_Lde};
 indeed, the structure of the equation will not change even if
 $\yonedaop{f}$ (where $f \in \cD(d',d)$ is arbitrary) is applied from
 below to the position of the wire $\yonedaop{d}$, and even if
 $\yonedaop{g}$ (where $g \in \cE(e,e')$ is arbitrary) is applied from
 above to the position of the wire $\yonedaop{e}$.

 $(3) \Rightarrow (1)$:
 For each $d \in \cD$, $\cE(Ld,\Endash) \cong \Cocone(F \b P_d,\Endash)$ holds
 from Lemma~\ref{lemma:KanPointwiseCoconeCong}, and thus,
 from Lemma~\ref{lemma:LimitRepr}, $F \b P_d$ has a colimit.
 Therefore, from Theorem~\ref{thm:KanPointwise}, there exists
 a pointwise left Kan extension $\Lan_K F$.
 This yields an isomorphism
 \begin{alignat}{1}
  \cE(Ld,e) \cong \hat{\cC}(\cD(K\Endash,d),\cE(F\Endash,e)) \cong \cE((\Lan_K F)d,e)
 \end{alignat}
 that is natural in $d \in \cD$ and $e \in \cE$.
 The right isomorphism follows from the fact that
 Eq.~\eqref{eq:Kan_left_pointwise_proof_Lde_cong} with $L$ replaced by $\Lan_K F$ holds,
 which is obtained by ``$(1) \Rightarrow (3)$''.
 Therefore, since $L \cong \Lan_K F$ from Lemma~\ref{lemma:YonedaopCongFunc},
 $L$ is a pointwise left Kan extension of $F$ along $K$.
\end{proof}

We give another proof of ``$(1) \Rightarrow (3)$'' in Proposition~\ref{pro:KanPointwiseNas}.
Let $L$ be a pointwise left Kan extension of $F$ along $K$; then,
by mapping both sides of Eq.~\eqref{eq:Kan_left_pointwise_univ} with the map $\Psi_e$
of Eq.~\eqref{eq:Kan_pointwise_cocone_alpha_diagram}, it can be seen that
for any natural transformation $\tilde{\alpha} \in \hat{\cC}(\cD(K\Endash,d),\cE(F\Endash,e))$
(where $d \in \cD$ and $e \in \cE$ are arbitrary),
there exists a unique morphism $\ol{\alpha} \in \cE(Ld,e)$ satisfying
\begin{alignat}{1}
 \InsertPDF{Kan_left_pointwise_univ_yoneda.pdf} \raisebox{1em}{,}
 \label{eq:Kan_left_pointwise_univ_yoneda}
\end{alignat}
where the blue circle represents $\eta$.
The area enclosed by the auxiliary line corresponds to a colimit cocone of $F \b P_d$.
Equation~\eqref{eq:Kan_left_pointwise_univ_yoneda} is essentially the same as
Eq.~\eqref{eq:Kan_left_pointwise_univ}.
Note that it follows from Eq.~\eqref{eq:Kan_left_pointwise_univ_yoneda} that it satisfies naturality in $e$
since applying a morphism in $\cE$ from above to the wire $e$ will not change
the structure of the equation.
Naturality in $d$ is also easily shown by Eq.~\eqref{eq:Kan_left_pointwise_univ_yoneda}.

Equation~\eqref{eq:Kan_left_pointwise_proof_Lde_cong} can be intuitively understood by representing
it as (see also Eq.~\eqref{eq:Kan_pointwise_cocone_cong_picture})
\begin{alignat}{1}
 \InsertPDF{Kan_left_pointwise_univ_yoneda_func.pdf} \raisebox{1em}{,}
 \label{eq:Kan_left_pointwise_univ_yoneda_func}
\end{alignat}
where the central expression should be thought of as a functor that
maps $\braket{d,e} \in \cD^\op \times \cE$ to $\hat{\cC}(\cD(K\Endash,d), \cE(F\Endash,e))$.
Note that in the case of $\cD = \cOne$ and $K = {!}$,
$L$ is a colimit of $F$ as mentioned in Example~\ref{ex:KanLimit},
in which case the first isomorphism of Eq.~\eqref{eq:Kan_left_pointwise_univ_yoneda_func}
corresponds to Eq.~\eqref{eq:limit_repr_cong_op}.

\begin{ex}{}{KanLimitPointwise}
 Any colimit is a pointwise left Kan extension.
 Indeed, as stated in Example~\ref{ex:KanLimit}, a colimit of $F \colon \cC \to \cE$
 is a left Kan extension of $F$ along ${!}$, and from Corollary~\ref{cor:LimitPresheaf2},
 any $\yonedaop{e} \colon \cE \to \Set^\op$ preserves any colimit,
 satisfying Condition~(2) of Proposition~\ref{pro:KanPointwiseNas}.
\end{ex}

A left (or right) Kan extension $L$ of $F \colon \cC \to \cE$ along $K \colon \cC \to \cD$
is called \termdef{absolute} if
any functor $H \colon \cE \to \cF$ (where $\cF$ is arbitrary) preserves $L$.
Any absolute Kan extension is pointwise since this condition is stronger than
Condition~(2) of Proposition~\ref{pro:KanPointwiseNas}.

\subsubsection{Dense functors} \label{subsubsec:Kan_property_dense}

Let us consider a functor $K \colon \cC \to \cD$.
When the identity functor $\id_\cD$ is a pointwise left Kan extension of $K$ along $K$
(or equivalently, when a pointwise left Kan extension $\Lan_K K$ exists and is naturally
isomorphic to $\id_\cD$),
$K$ is called \termdef{dense}.
As a dual concept, when $\id_\cD$ is a pointwise right Kan extension of $K$ along $K$,
$K$ is called \termdef{codense}.

\begin{proposition}{}{KanDense}
 For a functor $K \colon \cC \to \cD$, the following are equivalent:
 \begin{enumerate}
  \item $K$ is dense.
  \item $\braket{\id_\cD,\id_K}$ is a pointwise left Kan extension of $K$ along $K$.
  \item For each $d \in \cD$, $\braket{d,\theta^d}$ is a colimit of $K \b P_d$
        (where $\theta^d$ is the canonical cocone for $K \comma d$).
 \end{enumerate}
\end{proposition}
\begin{proof}
 $(1) \Rightarrow (2)$:
 Since $K$ is dense, there exists some $\eta \colon K \nto K$ such that $\braket{\id_\cD,\eta}$
 is a pointwise left Kan extension of $K$ along $K$.
 It suffices to show that $\eta$ is a natural isomorphism, i.e., each component
 $\eta_c$ $~(c \in \cC)$ is invertible.
 In fact, in this case, the natural transformation obtained by applying $\eta^{-1}$ to $\eta$,
 i.e., $\id_K$, is also a universal morphism,
 so $\braket{\id_\cD,\id_K}$ is a pointwise left Kan extension of $K$ along $K$.

 Let us arbitrarily choose $c \in \cC$.
 Since $\braket{\id_\cD,\eta}$ is a pointwise left Kan extension,
 there exists a unique $\phi \in \cD(Kc,Kc)$ satisfying
 \begin{alignat}{1}
  \InsertPDF{Kan_dense_Kc_univ.pdf} \raisebox{1em}{,}
  \label{eq:Kan_dense_Kc_univ}
 \end{alignat}
 where the blue circle represents the canonical cocone $\theta^{Kc}$ for $K \comma Kc$.
 The area enclosed by the auxiliary lines is denoted by $\mu$,
 and the first equality follows from the universality of $\mu$
 (note that $\mu$ is a colimit cocone of $K \b P_{Kc}$ by Corollary~\ref{cor:KanPointwiseNas0}).
 Applying $\braket{c,\id_{Kc}} \in K \comma Kc$ from the right to the left and central expressions
 gives
 \begin{alignat}{1}
  \InsertPDF{Kan_dense_Kc_univ_id.pdf} \raisebox{1em}{.}
  \label{eq:Kan_dense_Kc_univ_id}
 \end{alignat}
 Also, we have
 \begin{alignat}{1}
  \footnoteinset{-3.16}{0.97}{\eqref{eq:Kan_dense_Kc_univ}}{%
  \footnoteinset{-0.26}{0.97}{\eqref{eq:Kan_left_pointwise_comma_fp}}{%
  \footnoteinset{2.79}{0.97}{\eqref{eq:Kan_dense_Kc_univ}}{%
  \footnoteinsets{2.79}{-1.3}{\eqref{eq:Kan_left_pointwise_comma_fp}}{\eqref{eq:Kan_dense_Kc_univ_id}}{%
  \InsertPDF{Kan_dense_Kc_univ_eta.pdf}}}}} \raisebox{1em}{.}
  \label{eq:Kan_dense_Kc_univ_eta}
 \end{alignat}
 Applying the universality of $\mu$ to the first and last expressions gives $\eta_c \phi = \id_{Kc}$.
 Therefore, from this equation and Eq.~\eqref{eq:Kan_dense_Kc_univ_id}, $\eta_c$ is invertible,
 and $\phi$ is its inverse.

 $(2) \Rightarrow (3)$:
 From Corollary~\ref{cor:KanPointwiseNas0} and $(\id_\cD \b \theta^d) (\id_K \b P_d) = \theta^d$,
 $\braket{d,\theta^d}$ is a colimit of $K \b P_d$.

 $(3) \Rightarrow (1)$:
 From Theorem~\ref{thm:KanPointwise}, there exists a unique functor $L \colon \cD \to \cD$ satisfying
 \begin{alignat}{1}
  \InsertPDF{Kan_dense_proof_theta.pdf} \raisebox{1em}{,}
  \label{eq:Kan_dense_proof_theta}
 \end{alignat}
 where the left and right blue circles represent the canonical cocones $\theta^{d'}$ and $\theta^d$
 for $K \comma d'$ and $K \comma d$, respectively,
 and $L$ is a pointwise left Kan extension of $K$ along $K$.
 Also, from Eq.~\eqref{eq:Kan_left_pointwise_comma_fp},
 the equation obtained by substituting $L = \id_\cD$ into Eq.~\eqref{eq:Kan_dense_proof_theta} holds,
 so the uniquely determined $L$ is $\id_\cD$.
 Therefore, $\id_\cD$ is a pointwise left Kan extension of $K$ along $K$, and thus $K$ is dense.
\end{proof}

From this proposition, for a dense functor $K$, $\theta^d$ with each $d \in \cD$ is
a colimit cocone of $K \b P_d$.
Thus, from the dual of Eq.~\eqref{eq:limit_cong}, any colimit cocone of $K \b P_d$ can be expressed
in the following form
\begin{alignat}{1}
 \InsertPDF{Kan_dense.pdf} \raisebox{1em}{,}
 \label{eq:Kan_dense}
\end{alignat}
where the blue circle is $\theta^d$.
The diamond block represents the isomorphism $d \cong \colim (K \b P_d)$.

If $K$ is dense, then from Eq.~\eqref{eq:Kan_left_pointwise_proof_Lde_cong} with
$F = K$ and $L = \id_\cD$, there exists an isomorphism
\begin{alignat}{1}
 \hat{\cC}(\cD(K\Endash,d),\cD(K\Endash,d')) \cong \cD(d,d')
 \label{eq:kan_dense_cong}
\end{alignat}
that is natural in $d \in \cD$ and $d' \in \cD$.
Also, since, from Proposition~\ref{pro:KanDense}, $\braket{\id_\cD,\id_K}$ is
a pointwise left Kan extension of $K$ along $K$,
the equation obtained by substituting $L = \id_\cD$ and $\eta = \id_K$ into
Eq.~\eqref{eq:Kan_left_pointwise_univ_yoneda} yields that
for any $\tau \in \hat{\cC}(\cD(K\Endash,d),\cD(K\Endash,d'))$,
there exists a unique morphism $\ol{\tau} \in \cD(d,d')$ satisfying
\begin{alignat}{1}
 \InsertPDF{Kan_dense_univ.pdf} \raisebox{1em}{.}
 \label{eq:Kan_dense_univ}
\end{alignat}
The one-to-one correspondence between $\tau$ and $\ol{\tau}$ obtained from
Eq.~\eqref{eq:Kan_dense_univ} gives the isomorphism in Eq.~\eqref{eq:kan_dense_cong}.

\subsubsection{Relationship with adjoints} \label{subsubsec:Kan_property_adj}

We have already mentioned in Example~\ref{ex:KanLimitPointwise} that
limits and colimits are special cases of pointwise Kan extensions.
Here, we show that left and right adjoints are also special cases of pointwise Kan extensions.
In particular, they are absolute Kan extensions.
We schematically represent these relationships in the following diagram:
\begin{alignat}{1}
 \text{a pointwise left Kan extension}\quad \InsertMidPDF{Kan_all_concepts_Kan.pdf}
 \qquad\xrightarrow{\text{$\cD = \cOne$}}\qquad
 &\text{a colimit}\quad \InsertMidPDF{Kan_all_concepts_colim.pdf} \nonumber \\
 \qquad\xrightarrow{\text{$F = \id_\cC, \dots$}}\qquad
 &\text{a right adjoint}\quad \InsertMidPDF{report_Kan_all_concepts_adj.pdf}.
 \label{eq:Kan_all_concepts}
\end{alignat}

\begin{proposition}{}{KanAdj}
 For any adjunction $F \dashv G \colon \cC \to \cD$ and its unit $\eta$,
 $\braket{G,\eta}$ is an absolute left Kan extension of $\id_\cC$ along $F$.
\end{proposition}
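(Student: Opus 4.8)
The plan is to verify that $\braket{G,\eta}$ satisfies the universal property of a left Kan extension of $\id_\cC$ along $F$, and moreover that this property is preserved by an arbitrary functor $H \colon \cD \to \cF$, making it absolute. First I would recall from Theorem~\ref{thm:AdjNasSnake} that the adjunction $F \dashv G$ comes equipped with a counit $\varepsilon \colon F \b G \nto \id_\cD$ satisfying the snake equations~\eqref{eq:snake} together with $\eta$. The natural transformation $\eta \colon \id_\cC \nto G \b F$ is exactly the unit, so it has the correct type to be the unit of a left Kan extension of $\id_\cC$ along $F$ (here $\cC,\cD,\cE$ in Definition~\ref{def:Kan} are instantiated as $\cC$, $\cD$, $\cC$, with $K = F$ and the functor being extended equal to $\id_\cC$).

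Next I would establish the universal property: given any $H \colon \cD \to \cC$ and any $\sigma \colon \id_\cC \nto H \b F$, I must exhibit a unique $\ol{\sigma} \colon G \nto H$ with $\sigma = (\ol{\sigma} \b F) \c \eta$. The natural candidate is $\ol{\sigma} \coloneqq (H \b \varepsilon) \c (\sigma \b G)$, i.e.\ whisker $\sigma$ by $G$ on the right and then postcompose with $H \b \varepsilon$. To check $(\ol{\sigma} \b F) \c \eta = \sigma$, I would use the sliding rule~\eqref{eq:sliding} to move things past each other and then invoke the snake equation $(\varepsilon \b F) \c (F \b \eta) = \id_F$, combined with naturality of $\sigma$ (via the sliding rule applied to $\sigma$) to collapse the composite back down to $\sigma$. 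For uniqueness, suppose $\ol{\sigma}'$ also satisfies $(\ol{\sigma}' \b F) \c \eta = \sigma$; whiskering this equation by $G$ on the right, composing with $H \b \varepsilon$, and using the other snake equation $(G \b \varepsilon) \c (\eta \b G) = \id_G$ together with naturality recovers the formula $\ol{\sigma}' = (H \b \varepsilon) \c (\sigma \b G) = \ol{\sigma}$. All of this is most transparent in string diagrams: the snake equations are the two zigzag straightenings, and the whole computation is a bending-the-wire argument.

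Finally, for absoluteness I would observe that the formulas above are stable under applying an arbitrary $H' \colon \cC \to \cF$ (renaming to avoid clash with the test functor, say $S \colon \cC \to \cF$): the image $\braket{S \b G, S \b \eta}$ must be shown to be a left Kan extension of $S \b \id_\cC = S$ along $F$. But the candidate transpose of a given $\tau \colon S \nto H \b F$ is $(H \b \varepsilon) \c (\tau \b G)$, and the verification uses only the snake equations for $\eta, \varepsilon$ — which survive application of $S$ since $S \b (\varepsilon \b F) \c (F \b \eta) = \id$ still holds after whiskering by $S$ — plus naturality. So the same diagrammatic argument goes through verbatim with $S$ inserted on the left, proving that every functor out of $\cC$ preserves this Kan extension. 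The main obstacle, such as it is, is purely bookkeeping: getting the whiskering placements right (which side $G$ or $F$ is whiskered on, and in which order the snake equation is applied) so that the two snake identities are invoked in the correct slots; the string-diagram calculus makes this essentially mechanical, so there is no deep difficulty, only the need to be careful with the order of horizontal composites.
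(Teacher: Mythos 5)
Your proposal is correct and follows essentially the same route as the paper: the transpose $\ol{\sigma} \coloneqq (H \b \varepsilon) \c (\sigma \b G)$, verified via the two snake equations and the sliding rule, with absoluteness obtained by observing that the identical computation goes through with an arbitrary functor whiskered on. The only cosmetic difference is order of exposition — the paper proves the universal property for a general $L \colon \cC \to \cE$ in one stroke (so that the Kan extension property and its preservation by every $L$ fall out simultaneously), whereas you first treat $L = \id_\cC$ and then note the argument generalizes; the underlying calculation is the same.
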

\begin{proof}
 For any $H \colon \cD \to \cE$, $L \colon \cC \to \cE$ (where $\cE$ is arbitrary),
 and $\tau \colon L \nto H \b F$, there exists a unique $\ol{\tau} \colon L \b G \nto H$
 satisfying
 \begin{alignat}{1}
  \InsertPDF{Kan_adj_univ_preserve.pdf} \raisebox{1em}{;}
  \label{eq:Kan_adj_univ_preserve}
 \end{alignat}
 indeed, $\ol{\tau}$ is uniquely determined by
 \begin{alignat}{1}
  \InsertPDF{Kan_adj_univ_preserve2.pdf} \raisebox{1em}{.}
  \label{eq:Kan_adj_univ_preserve2}
 \end{alignat}
 Therefore, by the definition of a left Kan extension, $\braket{L \b G,L \b \eta}$
 is a left Kan extension of $L$ along $F$.
 Note that the unit $L \b \eta$ of this Kan extension is enclosed by the auxiliary line
 in Eq.~\eqref{eq:Kan_adj_univ_preserve}.
 In particular, by substituting $L = \id_\cC$, $\braket{G,\eta}$ is a left Kan extension
 of $\id_\cC$ along $F$.
 This Kan extension is absolute since any $L$ preserves it.
\end{proof}

From this proposition, the right adjoint $G$ to $F$, its unit $\eta$,
and the left Kan extension $\braket{\Lan_F \id_{\cC},\eta'}$ are related as follows:
\begin{alignat}{1}
 \InsertPDF{Kan_adj.pdf} \raisebox{1em}{,}
 \label{eq:Kan_adj}
\end{alignat}
where the blue circle represents $\eta'$, and the diamond block represents
$G \cong \Lan_F \id_{\cC}$.
Thus, $\braket{G,\eta}$ and $\braket{\Lan_F \id_{\cC},\eta'}$ can be identified.

The following proposition shows that the converse of Proposition~\ref{pro:KanAdj} also holds
(note that we have relaxed the conditions).
\begin{proposition}{}{KanAdjInv}
 If $\braket{G,\eta}$ is a left Kan extension of $\id_\cC$ along
 a functor $F \colon \cC \to \cD$, and if $F$ preserves this left Kan extension,
 then $G$ is a right adjoint to $F$, and $\eta$ is the unit of this adjunction.
\end{proposition}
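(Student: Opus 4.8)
The plan is to exhibit the counit $\varepsilon \colon F \b G \nto \id_\cD$ of the putative adjunction directly from the two hypotheses and then to verify the snake equations Eq.~\eqref{eq:snake}, after which Theorem~\ref{thm:AdjNasSnake} finishes the job. Since $F$ preserves the left Kan extension $\braket{G,\eta}$ of $\id_\cC$ along $F$, the pair $\braket{F \b G, F \b \eta}$ is a left Kan extension of $F \b \id_\cC = F$ along $F$. Applying its universal property (Eq.~\eqref{eq:Kan_left_universal} with $H = \id_\cD$) to the natural transformation $\id_F \colon F \nto \id_\cD \b F$ produces a \emph{unique} natural transformation $\varepsilon \colon F \b G \nto \id_\cD$ satisfying $(\varepsilon \b F) \c (F \b \eta) = \id_F$. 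This is already one of the two identities in Eq.~\eqref{eq:snake}, and $\varepsilon$ is automatically natural, being the mediating transformation of a Kan extension.

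Next I would establish the second identity $(G \b \varepsilon) \c (\eta \b G) = \id_G$ using only the \emph{uniqueness} clause of the universal property of $\braket{G,\eta}$: two natural transformations $G \nto G$ coincide as soon as they become equal after whiskering by $F$ on the right and composing with $\eta$ (take $H = G$ in Eq.~\eqref{eq:Kan_left_universal}, noting that $\eta$ itself corresponds to $\id_G$ there). Writing $\zeta \coloneqq (G \b \varepsilon) \c (\eta \b G)$, one expands $\zeta \b F = (G \b \varepsilon \b F) \c (\eta \b G \b F)$ and then uses the sliding rule Eq.~\eqref{eq:sliding} in the form $(\eta \b G \b F) \c \eta = (G \b F \b \eta) \c \eta$ to rewrite $(\zeta \b F) \c \eta = (G \b \varepsilon \b F) \c (G \b F \b \eta) \c \eta = \bigl(G \b ((\varepsilon \b F) \c (F \b \eta))\bigr) \c \eta$. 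By the first snake identity the inner transformation equals $\id_F$, so $(\zeta \b F) \c \eta = \eta = (\id_G \b F) \c \eta$, whence $\zeta = \id_G$. In string-diagram terms this is just sliding the $\eta$-block along the wire and then straightening the zigzag that $\varepsilon$ creates, exactly as in the proof of Theorem~\ref{thm:AdjNasSnake}.

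With both equations of Eq.~\eqref{eq:snake} verified, Theorem~\ref{thm:AdjNasSnake} yields an adjunction $F \dashv G$; since the bijection $\varphi$ is reconstructed from $\eta$ via Eq.~\eqref{eq:adj_conj_inv}, $\eta$ is its unit, as asserted. The content here is purely formal, so I do not anticipate a genuine obstacle; the one point that needs care is keeping straight the direction in which each hypothesis is used — preservation of $\braket{G,\eta}$ by $F$ is what makes the first snake identity free, while the universal property of $\braket{G,\eta}$ enters only through its uniqueness clause for the second, so there is no circularity and no hypothesis beyond "$F$ preserves $\braket{G,\eta}$" (in particular, neither pointwiseness nor full faithfulness of $F$) is needed.
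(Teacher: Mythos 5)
Your proposal is correct and follows essentially the same route as the paper: obtain $\varepsilon$ from the universal property of the preserved Kan extension $\braket{F \b G, F \b \eta}$ (giving the first snake identity), then derive the second snake identity from the uniqueness clause of the universal property of $\braket{G,\eta}$ via the sliding rule, and conclude with Theorem~\ref{thm:AdjNasSnake}. The computation showing $(\zeta \b F) \c \eta = \eta$ is exactly the content of the paper's diagrammatic Eq.~\eqref{eq:Kan_adj_inv2}.
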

\begin{proof}
 From Theorem~\ref{thm:AdjNasSnake}, it suffices to show that there exists a natural transformation
 $\varepsilon \colon F \b G \nto \id_\cD$ such that $\eta$ and $\varepsilon$ satisfy
 Eq.~\eqref{eq:zigzag}.
 Since $F$ preserves the left Kan extension $\braket{G,\eta}$,
 $\braket{F \b G,F \b \eta}$ is a left Kan extension of $F$ along $F$.
 By the universal property of this Kan extension, there exists $\varepsilon$ satisfying
 \begin{alignat}{1}
  \InsertPDF{Kan_adj_inv1.pdf} \raisebox{1em}{,}
  \label{eq:Kan_adj_inv1}
 \end{alignat}
 where the unit $F \b \eta$ of this Kan extension is enclosed by the auxiliary line.
 Also, we obtain
 \begin{alignat}{1}
  \footnoteinset{1.92}{0.3}{\eqref{eq:Kan_adj_inv1}}{%
  \InsertPDF{Kan_adj_inv2.pdf}} \raisebox{1em}{.}
  \label{eq:Kan_adj_inv2}
 \end{alignat}
 By the universal property of the left Kan extension $\braket{G,\eta}$,
 the two natural transformations enclosed by the auxiliary lines must be equal.
 From this fact and Eq.~\eqref{eq:Kan_adj_inv1}, Eq.~\eqref{eq:zigzag} holds.
\end{proof}

\subsubsection{Monads derived from Kan extensions} \label{subsubsec:Kan_property_monad}

Assume that for a functor $K \colon \cC \to \cD$, there exists a right Kan extension
$\braket{T,\varepsilon}$ of $K$ along $K$.
Let us define $\mu \colon T \b T \nto T$ and $\eta \colon \id_\cD \nto T$ to satisfy
\begin{alignat}{1}
 \InsertPDF{Kan_monad_def.pdf} \raisebox{1em}{,}
 \label{eq:Kan_monad_def}
\end{alignat}
where the blue circles represent $\varepsilon$, and the hollow circles
on the left- and right-hand sides of the comma represent $\mu$ and $\eta$, respectively.
Note that for simplicity, we omit the label of the wire representing the functor
$T \colon \cD \to \cD$ in diagrams.
By the universal property of $\braket{T,\varepsilon}$ (see Eq.~\eqref{eq:Kan_right_universal}),
$\mu$ and $\eta$ are uniquely determined.
$\braket{T,\mu,\eta}$ is called a \termdef{codensity monad} of $K$.

\begin{proposition}{}{}
 Any codensity monad is a monad%
 \footnote{$\braket{T,\mu,\eta}$ is called a \termdef{monad} if it satisfies the associativity law,
 i.e., $\mu(\id_T \b \mu) = \mu(\mu \b \id_T)$, and the unit law,
 i.e., $\mu(\id_T \b \eta) = \id_T = \mu(\eta \b \id_T)$.}.
\end{proposition}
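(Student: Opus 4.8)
The plan is to derive the three monad axioms from the uniqueness half of the universal property of the right Kan extension $\braket{T,\varepsilon}$ of $K$ along $K$. Written out, Eq.~\eqref{eq:Kan_monad_def} says that $\mu$ is the unique natural transformation with $\varepsilon \c (\mu \b K) = \varepsilon \c (T \b \varepsilon)$, and $\eta$ is the unique one with $\varepsilon \c (\eta \b K) = \id_K$. More generally, by Eq.~\eqref{eq:Kan_right_universal}, for any $H \colon \cD \to \cD$, two natural transformations $H \nto T$ that become equal after whiskering by $K$ on the right and composing with $\varepsilon$ on the left must already be equal. Hence, to prove an axiom asserting that two such transformations coincide, it suffices to whisker both sides by $K$, compose with $\varepsilon$, and verify the resulting identity of natural transformations into $K$.

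For the unit law $\mu \c (T \b \eta) = \id_T$, the left-hand side after this reduction becomes $\varepsilon \c (\mu \b K) \c (T \b \eta \b K)$, and using first the defining equation of $\mu$, then functoriality of whiskering by $T$, then the defining equation of $\eta$, we get $\varepsilon \c (T \b \varepsilon) \c (T \b (\eta \b K)) = \varepsilon \c (T \b (\varepsilon \c (\eta \b K))) = \varepsilon \c (T \b \id_K) = \varepsilon$, which is exactly what $\id_T$ reduces to; uniqueness finishes it. The other unit law $\mu \c (\eta \b T) = \id_T$ goes the same way, with one extra step: the sliding rule Eq.~\eqref{eq:sliding} applied to $\eta$ and $\varepsilon$ gives $(T \b \varepsilon) \c (\eta \b T \b K) = (\eta \b K) \c \varepsilon$, after which composing with $\varepsilon$ and using $\varepsilon \c (\eta \b K) = \id_K$ again collapses everything to $\varepsilon$.

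For associativity $\mu \c (T \b \mu) = \mu \c (\mu \b T)$, the left-hand side reduces by two applications of $\varepsilon \c (\mu \b K) = \varepsilon \c (T \b \varepsilon)$, interleaved with functoriality of $T \b \Endash$, to $\varepsilon \c (T \b \varepsilon) \c (T \b T \b \varepsilon)$. The right-hand side reduces by one application of the same identity, then the sliding rule Eq.~\eqref{eq:sliding} for $\mu$ and $\varepsilon$ in the form $(T \b \varepsilon) \c (\mu \b T \b K) = (\mu \b K) \c (T \b T \b \varepsilon)$, then one further application of the identity, again to $\varepsilon \c (T \b \varepsilon) \c (T \b T \b \varepsilon)$. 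The two reductions agree, so uniqueness yields associativity and $\braket{T,\mu,\eta}$ is a monad.

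I do not expect a real obstacle here: each step is either a substitution of one of the two defining equations of $\mu$ and $\eta$ or an instance of the sliding rule, so in string-diagram form the argument is a short sequence of "slide a block along a wire" moves, and the diagram keeps track automatically of which functor each transformation has been whiskered by, which is the one bookkeeping point a purely symbolic write-up must watch. The conceptual content is just that $\mu$ and $\eta$ were set up precisely so that the tautological identities among iterated copies of $\varepsilon$ transport, via the bijection furnished by the Kan extension, back to the monad laws for $T$.
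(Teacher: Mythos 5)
Your proposal is correct and follows essentially the same route as the paper: both reduce each monad law, after whiskering by $K$ and composing with $\varepsilon$, to a common composite of copies of $\varepsilon$ (using the defining equations of $\mu$ and $\eta$ together with the sliding rule), and then invoke the uniqueness clause of the universal property of the right Kan extension $\braket{T,\varepsilon}$. The paper merely carries out these same manipulations in string-diagram form (Eqs.~\eqref{eq:Kan_monad_composite} and \eqref{eq:Kan_monad_ident}) rather than symbolically.
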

\begin{proof}
 Let $\braket{T,\mu,\eta}$ be a codensity monad of $K$.
 From
 \begin{alignat}{1}
  \InsertPDF{Kan_monad_composite.pdf} \raisebox{1em}{,}
  \label{eq:Kan_monad_composite}
 \end{alignat}
 the universal property of $\braket{T,\varepsilon}$ gives the following associativity law:
 \begin{alignat}{1}
  \mu(\id_T \b \mu) = \mu(\mu \b \id_T)
  &\qquad\diagram\qquad
  \InsertMidPDF{Kan_monad_composite2.pdf}.
  \label{eq:Kan_monad_composite2}
 \end{alignat}
 Also,
 \begin{alignat}{1}
  \InsertPDF{Kan_monad_ident.pdf}
  \label{eq:Kan_monad_ident}
 \end{alignat}
 gives the following unit law:
 \begin{alignat}{1}
  \mu(\id_T \b \eta) = \id_T = \mu(\eta \b \id_T)
  &\qquad\diagram\qquad
  \InsertMidPDF{Kan_monad_ident2.pdf}.
  \label{eq:Kan_monad_ident2}
 \end{alignat}
 Therefore, $\braket{T,\mu,\eta}$ is a monad.
\end{proof}


\section{(Co)ends and weighted (co)limits} \label{sec:end}

In this section, we mainly discuss (co)ends and weighted (co)limits,
which can be viewed as generalizations of (co)limits in a certain sense.
These concepts are also closely related to pointwise Kan extensions.
We use string diagrams to illustrate some of the relationships between (co)limits,
pointwise Kan extensions, (co)ends, and weighted (co)limits.
String diagrams for (co)ends are also described in Refs.~\cite{Ril-2018,Rom-2020,Bra-Rom-2023}
and references cited therein.
We propose a new representation method that allows us to perform (co)end calculations intuitively.

\subsection{Diagrams for direct products} \label{subsec:end_diagram}

In this section, we mainly use the ``expression for direct products''
(recall Subsubsection~\ref{subsubsec:category_prod_prod}).
In this expression, for a morphism $f \in \cC(a,b)$ and a functor
$F \colon \cC \to \cD$, $Ff$ is represented by the right-hand side
of the following diagram:
\begin{alignat}{1}
 \InsertPDF{end_picture_func.pdf} \raisebox{1em}{.}
 \label{eq:end_picture_func}
\end{alignat}
For a morphism $g \in \cC^\op(b,a) = \cC(a,b)$ and a contravariant functor
$G \colon \cC^\op \to \cD$, in the expression for direct products,
we represent $Gg$ by the right-hand side of the following diagram:
\begin{alignat}{1}
 \InsertPDF{end_picture_func_op.pdf} \raisebox{1em}{.}
 \label{eq:end_picture_func_op}
\end{alignat}
In the expression for direct products, we represent objects and morphisms in the opposite category
$\cC^\op$ as ``upside-down'' versions of the corresponding objects and morphisms in $\cC$.
For convenience, we label the category $\cC^\op$ with ``$\cC$'' in such expressions,
but instead represent objects with downward-pointing arrows;
we can distinguish whether it represents $\cC$ or $\cC^\op$ from the direction of the arrows.
For two blue arrows representing a functor $F \colon \cC \to \cD$, its dual
$F \colon \cC^\op \to \cD^\op$ can be represented by vertically flipping the direction of the arrows.

Also, products are represented by placing elements side by side.
For example, the product $\braket{f,g} \in \mor (\cC^\op \times \cC)$
of $f \in \cC^\op(a,b)$ and $g \in \cC(c,d)$ is represented by
\begin{alignat}{1}
 \InsertPDF{end_picture_func2.pdf} \raisebox{1em}{.}
 \label{eq:end_picture_func2}
\end{alignat}
The gray dashed line representing the product ``$\times$'' can be omitted.

In summary, compared to the main expression, the expression for direct products has
the following main differences:
\begin{enumerate}
 \item It represents products (and tensor products, as will be shown in Subsection~\ref{subsec:end_tensor})
       by placing elements side by side.
 \item It represents applying a functor by drawing two parallel arrows to the left and right.
 \item It represents objects and functors with upward or downward arrows instead of wires.
       A category $\cC$ and its opposite category $\cC^\op$ are distinguished
       by the direction of arrows.
\end{enumerate}

We represent $\cC(a,b)$ as
\begin{alignat}{1}
 \cC(a,b) &\qquad\diagram\qquad
 \InsertMidPDF{end_picture_hom.pdf},
 \label{eq:end_picture_hom}
\end{alignat}
where the dotted arrow is a singleton set $\{ * \} \in \Set$.
Note that in this diagram, $\cC(a,b)$ is represented as a morphism from $\{ * \}$ to $\cC(a,b)$.
As shown on the right-hand side, dotted arrows representing $\{ * \}$ are often omitted.
The dark gray rectangle represents the hom-functor
$\cC(\Endash,\Enndash) \colon \cC^\op \times \cC \to \Set$,
with $a^\op \in \cC^\op$ represented as an upside-down version of $a \in \cC$ with a downward arrow.
For a contravariant functor $F \colon \cC^\op \to \cD$, $\cD(Fc,d)$ can be represented by
\begin{alignat}{1}
 \cD(Fc,d) &\qquad\diagram\qquad
 \InsertMidPDF{end_opposite_ex.pdf}.
 \label{eq:end_opposite_ex}
\end{alignat}
This diagram may be easier to understand if viewed as the mapping of $(Fc)^\op \in \cD^\op$
through $\cD(\Endash,d) \colon \cD^\op \to \Set$; the $Fc$ part is vertically flipped.
\begin{supplemental}
 This representation enables us to easily depict properties, such as
 ``returning to its original state after undergoing two upside-down flips''.
 Therefore, this representation will frequently prove valuable in subsequent discussions.
\end{supplemental}

We represent $\cC(a,\Endash)$ as the following two expressions:
\begin{alignat}{1}
 \cC(a,\Endash) &\qquad\diagram\qquad
 \InsertMidPDF{end_picture_hom_func.pdf}.
 \label{eq:end_picture_hom_func}
\end{alignat}
It could be more comprehensible if you think of it this way:
$\cC(a,\Endash)$ can be viewed as an element of $\Func{\cC}{\Set}$, as shown
on the left-hand side of the diagram,
and when viewed as a functor from $\cC$ to $\Set$, it is represented as shown on the right-hand side.

\subsection{Definition of (co)ends} \label{subsec:end_def}

\subsubsection{Wedges and ends}

\begin{define}{wedges}{Wedge}
 For an object $c \in \cC$ and a bifunctor $D \colon \cJ^\op \times \cJ \to \cC$,
 a collection of morphisms $\alpha \coloneqq \{ \alpha_j \colon c \to D(j,j) \}_{j \in \cJ}$
 in $\cC$ is called a \termdef{wedge} from $c$ to $D$, or simply a wedge to $D$,
 if it satisfies
 \begin{alignat}{1}
  \lefteqn{ D(i,h) \c \alpha_i = D(h,j) \c \alpha_j } \nonumber \\
  &\diagram\qquad \InsertMidPDF{end_wedge.pdf}
  \label{eq:end_wedge}
 \end{alignat}
 for any morphism $h \in \cJ(i,j)$.
 In diagrams, we represent the component $\alpha_j$ of the wedge $\alpha$ as
 \begin{alignat}{1}
  \alpha_j &\qquad\diagram\qquad \InsertMidPDF{end_wedge_picture.pdf},
  \label{eq:end_wedge_picture}
 \end{alignat}
 where a dotted semicircle is drawn inside the block.
\end{define}

We write $\Wedge(c,D)$ for the collection of all wedges from $c$ to $D$.
For a bifunctor $D \colon \cJ^\op \times \cJ \to \cC$, let us consider the following category:
\begin{itemize}
 \item Its each object is a pair, $\braket{c,\alpha}$, of $c \in \cC$ and $\alpha \in \Wedge(c,D)$.
 \item Its each morphism from an object $\braket{c,\alpha}$ to an object $\braket{c',\alpha'}$
       is a morphism $f \in \cC(c,c')$ in $\cC$ that satisfies
       $\alpha_j = \alpha'_j f$ for all $j \in \cJ$.
 \item The composite of its morphisms is the composite of morphisms in $\cC$,
       and its identity morphism is the identity morphism in $\cC$.
\end{itemize}
This category is called the \termdef{category of wedges} to $D$, and we write $\Wedge_D$.
Roughly speaking, $\Wedge_D$ is a category whose objects are wedges to $D$.

\begin{define}{ends}{End}
 Consider a bifunctor $D \colon \cJ^\op \times \cJ \to \cC$.
 A terminal object $\braket{d,\kappa}$ in $\Wedge_D$ is called an \termdef{end} of $D$.
 In other words, a pair, $\braket{d,\kappa}$, of an object $d \in \cC$ and a wedge
 $\kappa \in \Wedge(d,D)$ is called an end of $D$ if
 for any wedge $\alpha \in \Wedge(c,D)$ (where $c \in \cC$ is also arbitrary),
 there exists a unique morphism $\ol{\alpha} \in \cC(c,d)$ satisfying
 \begin{alignat}{1}
  \lefteqn{ \alpha_j = \kappa_j \ol{\alpha} } \nonumber \\
  &\diagram\qquad \InsertMidPDF{end_universal.pdf} \qquad (\forall j \in \cJ).
  \label{eq:end_universal}
 \end{alignat}
 Sometimes $d$ is simply called an end.
\end{define}

The end $d$ is often written as $\int_{j \in \cJ} D(j,j)$.
From the above universal property, $d \in \cD$ is an end of $D$ if and only if
\begin{alignat}{1}
 \cC(\Endash,d) &\cong \Wedge(\Endash,D)
\end{alignat}
holds, where $\Wedge(\Endash,D) \colon \cC^\op \to \Set$ is a presheaf
(in the case where $\Wedge(c,D)$ is a set for each $c \in \cC$) defined as follows:
\begin{itemize}
 \item It maps each object $c$ in $\cC^\op$ to $\Wedge(c,D)$.
 \item It maps each morphism $f \in \cC^\op(b,a) = \cC(a,b)$ to the map
       (i.e., the morphism in $\Set$)
       $\Wedge(b,D) \ni \{ \alpha_j \}_{j \in \cJ} \mapsto
       \{ \alpha_j f \}_{j \in \cJ} \in \Wedge(a,D)$.
\end{itemize}

\begin{ex}{limits are ends}{end_limit}
 For a functor $D' \colon \cJ \to \cC$, let $D \colon \cJ^\op \times \cJ \to \cC$
 be the bifunctor defined by $D(f,g) \coloneqq D'(g)$ $~(f \in \mor \cJ^\op, ~g \in \mor \cJ)$.
 Then, for any $c \in \cC$, a cone from $c$ to $D'$ is equivalent to a wedge from $c$ to $D$,
 i.e., $\Cone(c,D') = \Wedge(c,D)$ holds.
 We can also readily obtain $\Cone_{D'} = \Wedge_D$.
 Furthermore, a limit of $D'$ is equivalent to an end of $D$
 since the former is a terminal object in $\Cone_{D'}$ while the latter
 is a terminal object in $\Wedge_D$.
\end{ex}

\subsubsection{Cowedges and coends}

Cowedges and coends can be defined as the duals of wedges and ends.

\begin{define}{cowedges}{coWedge}
 For an object $c \in \cC$ and a bifunctor $D \colon \cJ^\op \times \cJ \to \cC$,
 a collection of morphisms $\alpha \coloneqq \{ \alpha_j \colon D(j,j) \to c \}_{j \in \cJ}$
 in $\cC$ is called a \termdef{cowedge} from $D$ to $c$, or simply a cowedge from $D$,
 if it satisfies
 \begin{alignat}{1}
  \lefteqn{ \alpha_i \c D(i,h) = \alpha_j \c D(h,j) } \nonumber \\
  &\diagram\qquad \InsertMidPDF{end_cowedge.pdf}
  \label{eq:end_cowedge}
 \end{alignat}
 for any morphism $h \in \cJ(j,i)$.
 In diagrams, we represent the component $\alpha_j$ of the cowedge $\alpha$ as
 \begin{alignat}{1}
  \alpha_j &\qquad\diagram\qquad \InsertMidPDF{end_cowedge_picture.pdf}.
  \label{eq:end_cowedge_picture}
 \end{alignat}
\end{define}

We write $\Cowedge(D,c)$ for the collection of all cowedges from $D$ to $c$.
Also, the category defined as follows is called the \termdef{category of cowedges} from $D$,
and we write $\Cowedge_D$:
\begin{itemize}
 \item Its each object is a pair, $\braket{c,\alpha}$, of $c \in \cC$ and $\alpha \in \Cowedge(D,c)$.
 \item Its each morphism from an object $\braket{c,\alpha}$ to an object $\braket{c',\alpha'}$
       is a morphism $f \in \cC(c,c')$ in $\cC$ that satisfies
       $\alpha'_j = f \alpha_j$ for all $j \in \cJ$.
 \item The composite of its morphisms is the composite of morphisms in $\cC$,
       and its identity morphism is the identity morphism in $\cC$.
\end{itemize}

\begin{define}{coends}{coEnd}
 Consider a bifunctor $D \colon \cJ^\op \times \cJ \to \cC$.
 An initial object $\braket{d,\kappa}$ in $\Cowedge_D$ is called a \termdef{coend} of $D$.
 In other words, a pair, $\braket{d,\kappa}$, of an object $d \in \cC$ and a cowedge
 $\kappa \in \Cowedge(D,d)$ is called a coend of $D$ if
 for any cowedge $\alpha \in \Cowedge(D,c)$ (where $c \in \cC$ is also arbitrary),
 there exists a unique morphism $\ol{\alpha} \in \cC(d,c)$ satisfying
 \begin{alignat}{1}
  \lefteqn{ \alpha_j = \ol{\alpha} \kappa_j } \nonumber \\
  &\diagram\qquad \InsertMidPDF{end_universal_op.pdf} \qquad (\forall j \in \cJ).
  \label{eq:end_universal_op}
 \end{alignat}
 Sometimes $d$ is simply called a coend.
\end{define}

The coend $d$ is often written as $\int^{j \in \cJ} D(j,j)$.
From the above universal property, $d \in \cD$ is a coend of $D$ if and only if
\begin{alignat}{1}
 \cC(d,\Endash) &\cong \Cowedge(D,\Endash)
 \label{eq:coend_cong}
\end{alignat}
holds, where $\Cowedge(D,\Endash) \colon \cC \to \Set$ is a functor
(in the case where $\Cowedge(D,c)$ is a set for each $c \in \cC$)
defined similarly to $\Wedge(\Endash,D)$.

\begin{ex}{colimits are coends}{}
 Considering the dual of Example~\ref{ex:end_limit},
 we can see that any cocone is a cowedge and any colimit is a coend.
\end{ex}

\subsubsection{Diagrams for ends}

For a bifunctor $D \colon \cJ^\op \times \cJ \to \cC$, we represent its end and coend as
\begin{alignat}{1}
 \int_{j \in \cJ} D(j,j) &\qquad\diagram\qquad
 \InsertMidPDF{end_f_end.pdf}, \nonumber \\
 \int^{j \in \cJ} D(j,j) &\qquad\diagram\qquad
 \InsertMidPDF{end_f_coend.pdf}.
 \label{eq:end_f_end}
\end{alignat}
Note that they are objects in $\cC$ that are independent of $j$.
The black horizontal lines in the diagrams represent ``$\int_{j \in \cJ}$''
and ``$\int^{j \in \cJ}$''.
As shown on their right-hand sides, the arrows below the horizontal line representing
``$\int_{j \in \cJ}$'' and above the horizontal line representing ``$\int^{j \in \cJ}$''
are often omitted.
Also, the label ``$j$'' in these diagrams is often omitted.

Consider a functor $D \colon \cJ^\op \times \cJ \times \cD \to \cC$.
For any $d \in \cD$, we can define a functor
$D(\Endash,\Enndash,d) \colon \cJ^\op \times \cJ \to \cC$
(recall Example~\ref{ex:FunctorBifunc}).
We represent an end of this functor as
\begin{alignat}{1}
 \int_{j \in \cJ} D(j,j,d) &\qquad\diagram\qquad
 \InsertMidPDF{end_f_end2.pdf}.
 \label{eq:end_f_end2}
\end{alignat}
Other (co)ends, such as $\int^{j \in \cJ} D(j,j,d)$ or $\int_{j \in \cJ} E(j,d,j,d')$,
can also be considered similarly.

\subsection{Basic properties of (co)ends} \label{subsec:end_property}

\subsubsection{Properties of (co)ends as (co)limits}

It is known that ends are special cases of limits, and coends are special cases of colimits
(e.g., \cite{Mac-2013}).
Therefore, properties that hold for (co)limits hold for (co)ends.
Also, concepts related to (co)limits, such as their preservation, can be applied directly to (co)ends.
Here, we provide several examples.

For each $c \in \cC$, $\yoneda{c} = \cC(c,\Endash)$ preserves any limit
(recall Theorem~\ref{thm:LimitPresheaf}), so it preserves any end.
That is, for any $c \in \cC$ and bifunctor $D \colon \cJ^\op \times \cJ \to \cC$,
\begin{alignat}{1}
 \lefteqn{ \cC\left( c, \int_{j \in \cJ} D(j,j) \right) \cong \int_{j \in \cJ} \cC(c, D(j,j)) }
 \nonumber \\
 &\diagram\qquad \InsertMidPDF{end_commute_end.pdf}
 \label{eq:end_commute_end}
\end{alignat}
holds.
Note that the area enclosed by the auxiliary line represents
the bifunctor $\cC(c,D(\Endash,\Enndash))$.
Since an end is a limit, this isomorphism is natural in $c$ and $D$.
In Eq.~\eqref{eq:end_commute_end}, there are short horizontal lines attached to arrows
representing $c$ and $D$, which indicate that the isomorphism is natural in $c$ and $D$.
In what follows, we use such expressions to represent naturality.
Intuitively, this diagram can be interpreted as allowing the horizontal solid line
representing ``$\int_{j \in \cJ}$'' to freely pass through the dark gray rectangle representing
the hom-functor $\cC(\Endash, \Enndash)$.

As the dual of Eq.~\eqref{eq:end_commute_end},
$\yonedaop{c} = \cC(\Endash,c)$ preserves any colimit
(recall Corollary~\ref{cor:LimitPresheaf2}), so it preserves any coend.
That is,
\begin{alignat}{1}
 \lefteqn{ \cC\left( \int^{j \in \cJ} D(j,j), c \right) \cong \int_{j \in \cJ} \cC(D(j,j), c) }
 \nonumber \\
 &\diagram\qquad \InsertMidPDF{end_commute_coend.pdf}
 \label{eq:end_commute_coend}
\end{alignat}
holds%
\footnote{Although the formula does not demonstrate that this isomorphism is natural in $D$ and $c$,
this information is illustrated in the diagram.}.
Note that by representing the coend $\int^{j \in \cJ} D(j,j)$ as an ``upside-down'' version,
we intuitively show how a coend is transformed into an end.

Also, if a category $\cC$ is complete, then for any small categories $\cI$ and $\cJ$ and
any functor $D \colon \cI^\op \times \cI \times \cJ^\op \times \cJ \to \cC$,
\begin{alignat}{1}
 \lefteqn{ \int_{i \in \cI} \int_{j \in \cJ} D(i,i,j,j)
 \cong \int_{j \in \cJ} \int_{i \in \cI} D(i,i,j,j) } \nonumber \\
 &\diagram\qquad \InsertMidPDF{end_commute.pdf}
 \label{eq:end_commute}
\end{alignat}
holds, where $D_\cI \coloneqq \int_{j \in \cJ} D(\Endash,\Enndash,j,j)$
and $D_\cJ \coloneqq \int_{i \in \cI} D(i,i,\Endash,\Enndash)$.
Note that the gray dashed lines in the horizontal direction are natural isomorphisms.
This isomorphism justifies that both the left- and right-hand sides of Eq.~\eqref{eq:end_commute} are
represented by
\begin{alignat}{1}
 \InsertPDF{end_commute2.pdf} \raisebox{1em}{.}
 \label{eq:end_commute2}
\end{alignat}

\subsubsection{Collections of natural transformations are ends}

The following lemma is known to hold (the proof is omitted; see, e.g., \cite{Mac-2013}).
\begin{lemma}{}{EndNat}
 Assume that $\Func{\cC}{\cD}$ is locally small.
 We have for any two functors $F,G \colon \cC \to \cD$,
 \begin{alignat}{1}
  \Func{\cC}{\cD}(F,G) \cong \int_{c \in \cC} \cD(Fc,Gc)
  &\qquad\diagram\qquad
  \InsertMidPDF{end_nat.pdf}. \nonumber \\
  \label{eq:end_nat}
 \end{alignat}
\end{lemma}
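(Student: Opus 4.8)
The plan is to verify the representability criterion for ends recorded just after Definition~\ref{def:End}: an object $d \in \Set$ is an end of a bifunctor $D \colon \cC^\op \times \cC \to \Set$ precisely when $\Set(\Endash,d) \cong \Wedge(\Endash,D)$. Here the relevant bifunctor is $D \coloneqq \cD(F\Endash,G\Enndash) \colon \cC^\op \times \cC \to \Set$, i.e.\ the hom-functor of $\cD$ precomposed with $F$ on the first variable and $G$ on the second (functoriality being automatic, cf.\ Eq.~\eqref{eq:basic_functor_Hom}); its action sends $h \colon i \to j$ in $\cC$, viewed on the first slot, to post-composition with $Gh$, and on the second slot to pre-composition with $Fh$. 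So it suffices to produce, for every set $X$, a bijection $\Wedge(X,D) \cong \Set(X,\Func{\cC}{\cD}(F,G))$ natural in $X$; taking $d \coloneqq \Func{\cC}{\cD}(F,G)$ then exhibits it, up to the essential uniqueness of ends, as $\int_{c \in \cC} \cD(Fc,Gc)$, naturally in $F$ and $G$ as drawn in Eq.~\eqref{eq:end_nat}.

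First I would unwind a wedge $\alpha \in \Wedge(X,D)$. Its component $\alpha_j$ is a map $X \to \cD(Fj,Gj)$, so (identifying a map with its family of values) $\alpha$ is the datum, for each $x \in X$, of a family $\sigma_x \coloneqq \{ \alpha_j(x) \in \cD(Fj,Gj) \}_{j \in \cC}$. Plugging the description of the action of $D$ above into the wedge condition~\eqref{eq:end_wedge}, it becomes exactly $Gh \c \alpha_i(x) = \alpha_j(x) \c Fh$ for every $x \in X$ and every $h \colon i \to j$ — that is, naturality~\eqref{eq:nat} of each $\sigma_x \colon F \nto G$. Diagrammatically, for this particular $D$ the wedge block of Eq.~\eqref{eq:end_wedge_picture} is literally the sliding rule applied to the natural transformation sitting inside it, so the equivalence ``wedge condition $\Leftrightarrow$ naturality'' is visually immediate. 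This yields the bijection $\alpha \longleftrightarrow (x \mapsto \sigma_x)$ between $\Wedge(X,D)$ and $\Set(X,\Func{\cC}{\cD}(F,G))$.

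Next I would check that this bijection is natural in $X$: the presheaf $\Wedge(\Endash,D)$ acts on $g \colon X' \to X$ by $\{\alpha_j\}_j \mapsto \{\alpha_j \c g\}_j$, which under the correspondence is just $(x \mapsto \sigma_x) \mapsto (x' \mapsto \sigma_{g(x')})$, i.e.\ precomposition with $g$ — the action of $\Set(\Endash,\Func{\cC}{\cD}(F,G))$. Hence $\Func{\cC}{\cD}(F,G)$ represents $\Wedge(\Endash,D)$, so by the criterion after Definition~\ref{def:End} it is an end of $D$, giving Eq.~\eqref{eq:end_nat}. The only real content is the bookkeeping in these two unwindings, and both are routine once the functorial action of $\cD(F\Endash,G\Enndash)$ is written out; I expect that identification of the wedge equation with naturality to be the ``hard'' step only in the sense of being the one place where one must keep the variances straight. (An essentially equivalent route is to take $X = \{*\}$ at once, obtaining $\Wedge(\{*\},D) = \Func{\cC}{\cD}(F,G)$ as a set and then checking directly that the wedge of evaluation maps is terminal in $\Wedge_D$ by the same computation; I would prefer the representability formulation since it slots straight into the paper's framework, matching Eq.~\eqref{eq:coend_cong} and its dual.)
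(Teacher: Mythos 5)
The paper does not actually prove this lemma --- it states it with ``the proof is omitted; see, e.g., \cite{Mac-2013}'' --- so there is no in-paper argument to compare against. Your proof is correct and is the standard one. The key identifications all check out: for $D \coloneqq \cD(F\Endash,G\Enndash)$ the maps $D(i,h)$ and $D(h,j)$ are post-composition with $Gh$ and pre-composition with $Fh$ respectively, so the wedge condition~\eqref{eq:end_wedge} for $\alpha \in \Wedge(X,D)$ is exactly naturality of each slice $\sigma_x = \{\alpha_j(x)\}_{j}$, giving the bijection $\Wedge(X,D) \cong \Set(X,\Func{\cC}{\cD}(F,G))$; your naturality check in $X$ matches the paper's definition of the presheaf $\Wedge(\Endash,D)$ (action by precomposition), and the representability criterion stated after Definition~\ref{def:End} then yields the end. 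One small point: the diagram in Eq.~\eqref{eq:end_nat} also encodes naturality of the isomorphism in $F$ and $G$, which you assert at the outset but do not verify; it follows by the same unwinding (post- and pre-composition with a natural transformation commutes with the correspondence $\alpha \leftrightarrow (x \mapsto \sigma_x)$), but if you intend to claim it you should record that one-line check.
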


Using this lemma, for objects $c,d$ in $\cC$ and a functor $F \colon \cC \to \Set$,
the Yoneda lemma can be represented as
\begin{alignat}{1}
 \int_{c \in \cC} \Set(\cC(d,c),Fc) \cong Fd
 &\qquad\diagram\qquad
 \InsertMidPDF{end_yoneda.pdf}.
 \nonumber \\
 \label{eq:end_yoneda}
\end{alignat}
Equation~\eqref{eq:end_yoneda} may be more visually intuitive when interpreted as
the following picture (which is not a strict diagram):
\begin{alignat}{1}
 \InsertPDF{end_yoneda_intuitive.pdf} \raisebox{1em}{.}
 \label{eq:end_yoneda_intuitive}
\end{alignat}
Also, considering the dual, we have for a presheaf $G \colon \cC^\op \to \Set$,
\begin{alignat}{1}
 \int_{c \in \cC} \Set(\cC(c,d),Gc) \cong Gd
 &\qquad\diagram\qquad \InsertMidPDF{end_yoneda2.pdf},
 \nonumber \\
 \label{eq:end_yoneda2}
\end{alignat}
which can also be expressed as
\begin{alignat}{1}
 \hat{\cC}(\yonedaop{\cC}\Endash,G) \cong G
 &\qquad\diagram\qquad
 \InsertMidPDF{end_yoneda2_another.pdf}.
 \label{eq:end_yoneda2_another}
\end{alignat}

\subsection{(Co)tensor products} \label{subsec:end_tensor}

\subsubsection{Definition of (co)tensor products}

As a preliminary, we introduce (co)tensor products.
For a set $X$ and an object $c \in \cC$, if there exists
an object, denoted by $X \ot c$, in $\cC$ such that
\begin{alignat}{1}
 \cC(X \ot c, c') &\cong \Set(X, \cC(c,c'))
 \label{eq:tensor_product}
\end{alignat}
is natural in $c'$, then $X \ot c$ is called a \termdef{tensor product} (or copower)
of $X$ and $c$.
Also, if there exists a functor $\ot \colon \Set \times \cC \to \cC$ that maps
$\braket{X,c} \in \Set \times \cC$ to $X \ot c \in \cC$ such that
Eq.~\eqref{eq:tensor_product} is natural in $X,c,c'$, then $\ot$ is called the tensor product.
We have for any $X \in \Set$ and $c \in \cC$,
$X \ot c \cong \coprod_{x \in X} c$ (where $\coprod$ is the coproduct).
Therefore, if $\cC$ is cocomplete, then $\cC$ has any tensor product.
In particular, for $\cC = \Set$, $X \ot c \cong X \times c$ holds.
If $\cC$ has any tensor product, then for each $c \in \cC$, $\cC(c,\Endash)$ has
a left adjoint $\Endash \ot c$.

The dual of a tensor product can also be considered.
Specifically, for a set $X$ and an object $c' \in \cC$,
if there exists an object, denoted by $X \cot c'$, in $\cC$ such that
\begin{alignat}{1}
 \cC(c, X \cot c') &\cong \Set(X, \cC(c,c'))
 \label{eq:tensor_coproduct}
\end{alignat}
is natural in $c$, then $X \cot c'$ is called a \termdef{cotensor product} (or power)
of $X$ and $c'$.
Also, if there exists a functor $\cot \colon \Set^\op \times \cC \to \cC$ that maps
$\braket{X,c'} \in \Set^\op \times \cC$ to $X \cot c' \in \cC$ such that
Eq.~\eqref{eq:tensor_coproduct} is natural in $X,c,c'$, then $\cot$ is called the cotensor product.
We have for any $X \in \Set$ and $a \in \cC$, $X \cot a \cong \prod_{x \in X} a$
(where $\prod$ is the direct product).
Therefore, if $\cC$ is complete, then $\cC$ has any cotensor product.
In particular, for $\cC = \Set$, $X \cot a \cong a^X \coloneqq \Set(X,a)$ holds.

Consider the tensor product $\ot \colon \Set \times \cC \to \cC$ and
the cotensor product $\cot \colon \Set^\op \times \cC \to \cC$.
Like direct products (see Eq.~\eqref{eq:end_picture_func2}),
we represent (co)tensor products by placing elements side by side.
However, unlike direct products, we do not insert dashed lines.
Specifically, for $X \in \Set$ and $c \in \cC$, we represent $X \ot c$ and $X \cot c$ by
\begin{alignat}{1}
 X \ot c &\qquad\diagram\qquad
 \InsertMidPDF{end_tensor_def.pdf},
 \qquad
 X \cot c \qquad\diagram\qquad
 \InsertMidPDF{end_tensor_def_op.pdf}.
 \label{eq:end_tensor_def}
\end{alignat}
Note that $X$ is an object in $\Set$, not in $\cC$.
If $\cC$ has any tensor product and cotensor product, then
from Eqs.~\eqref{eq:tensor_product} and \eqref{eq:tensor_coproduct},
there exists an isomorphism
\begin{alignat}{1}
 \lefteqn{ \cC(X \ot c, c') \cong \Set(X, \cC(c,c')) \cong \cC(c, X \cot c') } \nonumber \\
 &\diagram\qquad
 \InsertMidPDF{end_tensor.pdf}
 \label{eq:end_tensor}
\end{alignat}
that is natural in $X \in \Set$ and $c,c' \in \cC$.
In particular, when $\cC = \Set$, the first isomorphism of Eq.~\eqref{eq:end_tensor} is equivalent to
\begin{alignat}{1}
 \Set(X \times Y, Z) \cong \Set(X, Z^Y)
 &\qquad\diagram\qquad
 \InsertMidPDF{end_picture_hom2.pdf}. \nonumber \\
 \label{eq:end_picture_hom2}
\end{alignat}

\subsubsection{Topics related to (co)tensor products}

\myparagraph{Expressions of pointwise Kan extensions by (co)tensor products}

\begin{proposition}{}{KanTensor}
 Consider two functors $K \colon \cC \to \cD$ and $F \colon \cC \to \cE$, where $\cC$ is small.
 If $\cE$ is cocomplete, then a pointwise left Kan extension $\Lan_K F$ exists and satisfies
 \begin{alignat}{1}
  \Lan_K F \cong \int^{c \in \cC} \cD(Kc,\Endash) \ot Fc
  &\qquad\diagram\qquad
  \InsertMidPDF{end_Kan.pdf}.
  \label{eq:end_Lan}
 \end{alignat}
 Dually, if $\cE$ is complete, then a pointwise right Kan extension $\Ran_K F$ exists and satisfies
 \begin{alignat}{1}
  \Ran_K F \cong \int_{c \in \cC} \cD(\Endash,Kc) \cot Fc
  &\qquad\diagram\qquad
  \InsertMidPDF{end_Kan_right.pdf}.
  \label{eq:end_Ran}
 \end{alignat}
\end{proposition}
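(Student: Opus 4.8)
The plan is to exhibit the right-hand side of \eqref{eq:end_Lan} as a functor $\cD\to\cE$ and then identify it with $\Lan_K F$ by the representability characterization of left Kan extensions in \eqref{eq:Kan_left_cong}. First I would note that the hypotheses make everything well defined: since $\cC$ is small, so is $\cC^\op\times\cC$, hence for each $d\in\cD$ the coend $Ld\coloneqq\int^{c\in\cC}\cD(Kc,d)\ot Fc$, being a colimit, exists in the cocomplete category $\cE$, and the tensor product functor $\ot\colon\Set\times\cE\to\cE$ exists. Moreover, by Corollary~\ref{cor:KanPointwiseCocomplete} a pointwise left Kan extension of $F$ along $K$ already exists, so once $L\cong\Lan_K F$ is established, $L$ is automatically pointwise (the existence of one pointwise left Kan extension forces every left Kan extension of $F$ along $K$ to be pointwise, as remarked after Definition~\ref{def:KanPointwise}). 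Functoriality of $d\mapsto Ld$ follows from the universal property of coends applied uniformly in the parameter $d$, i.e.\ by forming the coend of $\cD(K\Endash,\Enndash)\ot F\Endash\colon\cC^\op\times\cC\times\cD\to\cE$ inside the functor category $\Func{\cD}{\cE}$; this is the routine part.

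The computational core is the chain of isomorphisms, natural in an arbitrary $H\colon\cD\to\cE$:
\begin{alignat}{1}
 \Func{\cD}{\cE}(L,H)
 &\cong \int_{d\in\cD}\cE(Ld,Hd)
 \cong \int_{d\in\cD}\int_{c\in\cC}\cE\bigl(\cD(Kc,d)\ot Fc,\,Hd\bigr) \nonumber \\
 &\cong \int_{d\in\cD}\int_{c\in\cC}\Set\bigl(\cD(Kc,d),\,\cE(Fc,Hd)\bigr)
 \cong \int_{c\in\cC}\int_{d\in\cD}\Set\bigl(\cD(Kc,d),\,\cE(Fc,Hd)\bigr) \nonumber \\
 &\cong \int_{c\in\cC}\cE(Fc,HKc)
 \cong \Func{\cC}{\cE}(F,\,H\b K). \nonumber
\end{alignat}
Here the first and last isomorphisms are Lemma~\ref{lemma:EndNat}; the second uses that $\cE(\Endash,Hd)$ sends colimits, hence coends, to limits, hence ends (Eq.~\eqref{eq:end_commute_coend}); the third is the defining adjunction \eqref{eq:tensor_product} of the tensor product; the fourth is the interchange of ends (Eq.~\eqref{eq:end_commute}); and the fifth combines Lemma~\ref{lemma:EndNat} (reading $\int_{d}\Set(\cD(Kc,d),\cE(Fc,Hd))$ as $\Func{\cD}{\Set}(\cD(Kc,\Endash),\cE(Fc,H\Endash))=\Func{\cD}{\Set}(\yoneda{Kc},\cE(Fc,H\Endash))$) with the Yoneda lemma, Theorem~\ref{thm:Yoneda}, to evaluate it as $\cE(Fc,HKc)$. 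Since every step is natural in $H$, \eqref{eq:Kan_left_cong} yields $L\cong\Lan_K F$, which is \eqref{eq:end_Lan}.

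The right Kan extension claim \eqref{eq:end_Ran} is the formal dual: with $\cE$ complete, set $Rd\coloneqq\int_{c\in\cC}\cD(d,Kc)\cot Fc$, which exists and is functorial in $d$ by the dual argument, and invoke the dual of Corollary~\ref{cor:KanPointwiseCocomplete} for pointwiseness. The analogous chain, natural in $H\colon\cD\to\cE$,
\begin{alignat}{1}
 \Func{\cD}{\cE}(H,R)
 &\cong \int_{d\in\cD}\cE(Hd,Rd)
 \cong \int_{d\in\cD}\int_{c\in\cC}\cE\bigl(Hd,\,\cD(d,Kc)\cot Fc\bigr) \nonumber \\
 &\cong \int_{d\in\cD}\int_{c\in\cC}\Set\bigl(\cD(d,Kc),\,\cE(Hd,Fc)\bigr)
 \cong \int_{c\in\cC}\cE(HKc,Fc)
 \cong \Func{\cC}{\cE}(H\b K,\,F), \nonumber
\end{alignat}
uses that $\cE(Hd,\Endash)$ preserves limits, hence ends (Eq.~\eqref{eq:end_commute_end}), the cotensor adjunction \eqref{eq:tensor_coproduct}, the interchange of ends, and the (contravariant) Yoneda lemma, Corollary~\ref{cor:Yonedaop}; then \eqref{eq:Kan_right_cong} gives $R\cong\Ran_K F$.

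The main obstacle I anticipate is not any individual isomorphism — each is either a cited result (Lemma~\ref{lemma:EndNat}, Theorem~\ref{thm:Yoneda}, the (co)tensor adjunctions) or a preservation/Fubini property already recorded for (co)ends — but the careful handling of variance and of the free parameter: one must be sure that $\cD(K\Endash,d)$ is used in its contravariant slot (so the relevant instance of Lemma~\ref{lemma:EndNat} lives over $\cC^\op$), that the interchange step is a legitimate Fubini for ends in $\Set$, and, above all, that the coend defining $L$ (resp.\ $R$) genuinely assembles into a functor of the free variable, so that $\Func{\cD}{\cE}(L,H)$ is meaningful and the displayed isomorphisms are natural in $H$. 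That functoriality, together with the tracking of naturality, is where the real (if routine) work lies.
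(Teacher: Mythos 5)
Your proposal is correct and follows essentially the same route as the paper: invoke Corollary~\ref{cor:KanPointwiseCocomplete} for existence and pointwiseness, then establish the chain $\Func{\cD}{\cE}\bigl(\int^{c}\cD(Kc,\Endash)\ot Fc,\,H\bigr)\cong\int_{d}\int_{c}\Set(\cD(Kc,d),\cE(Fc,Hd))\cong\int_{c}\cE(Fc,HKc)\cong\Func{\cC}{\cE}(F,H\b K)$ via Lemma~\ref{lemma:EndNat}, Eq.~\eqref{eq:end_commute_coend}, the tensor adjunction~\eqref{eq:end_tensor}, Fubini~\eqref{eq:end_commute}, and the Yoneda end~\eqref{eq:end_yoneda}, and conclude by~\eqref{eq:Kan_left_cong}. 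Your extra attention to the functoriality of $d\mapsto Ld$ and to naturality in $H$ is a sound elaboration of points the paper leaves implicit.
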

\begin{proof}
 From Corollary~\ref{cor:KanPointwiseCocomplete}, a pointwise left Kan extension $\Lan_K F$ exists.
 Also, we have
 \begin{alignat}{1}
  \footnoteinsets{-0.27}{1.05}{\eqref{eq:end_tensor}}{\eqref{eq:end_commute}}{%
  \footnoteinset{-0.27}{-0.7}{\eqref{eq:end_yoneda}}{%
  \InsertPDF{end_Kan_proof.pdf}}} \raisebox{1em}{,}
  \label{eq:end_Kan_proof}
 \end{alignat}
 whose mathematical notation is
 \begin{alignat}{1}
  \int_{d \in \cD} \cE \left( \int^{c \in \cC} \cD(Kc,d) \ot Fc,Hd \right)
  &\stackrel{\eqref{eq:end_commute_coend}}{~~\cong~~}
  \int_{d \in \cD} \int_{c \in \cC} \cE( \cD(Kc,d) \ot Fc,Hd) \nonumber \\
  &\stackrel{\eqref{eq:end_tensor}}{~~\cong~~}
  \int_{d \in \cD} \int_{c \in \cC} \Set(\cD(Kc,d),\cE(Fc,Hd)) \nonumber \\
  &\stackrel{\eqref{eq:end_commute}}{~~\cong~~}
  \int_{c \in \cC} \int_{d \in \cD} \Set(\cD(Kc,d),\cE(Fc,Hd)) \nonumber \\
  &\stackrel{\eqref{eq:end_yoneda}}{~~\cong~~}
  \int_{c \in \cC} \cE(Fc, H(Kc)).
 \end{alignat}
 Therefore, from Eq.~\eqref{eq:end_nat}, there exists an isomorphism
 \begin{alignat}{1}
  \Func{\cD}{\cE}\left( \int^{c \in \cC} \cD(Kc,\Endash) \ot Fc, H \right)
  &\cong \Func{\cC}{\cE}(F, H \b K)
 \end{alignat}
 that is natural in $F$ and $H$.
 Thus, from Eq.~\eqref{eq:Kan_left_cong}, Eq.~\eqref{eq:end_Lan} holds.
\end{proof}

Note that Proposition~\ref{pro:KanPointwiseNas} asserts that
if a functor $L \colon \cD \to \cE$ is a pointwise left Kan extension of $F$ along $K$,
then the isomorphism
\begin{alignat}{1}
 \lefteqn{ \cE(Ld,e) \cong \hat{\cC}(\cD(K\Endash,d),\cE(F\Endash,e))} \nonumber \\
 &\diagram\qquad
 \InsertMidPDF{end_Kan_dual.pdf}
 \label{eq:end_Kan_dual}
\end{alignat}
is natural in $d$ and $e$.
It can be seen that Eq.~\eqref{eq:end_Kan_dual} is obtained by applying
$\cE(\Endash(d),e)$ to Eq.~\eqref{eq:end_Lan}.

\myparagraph{The co-Yoneda lemma}

\begin{cor}{the co-Yoneda lemma}{Coyoneda}
 We have for objects $c,d$ in a small category $\cC$ and a functor $F \colon \cC \to \Set$,
 \begin{alignat}{1}
  \lefteqn{\int^{c \in \cC} \cC(c,d) \times Fc \cong Fd
  \cong \int^{c \in \cC} Fc \times \cC(c,d)} \nonumber \\
  &\diagram\qquad
  \InsertMidPDF{end_yoyoneda.pdf}.
  \label{eq:end_yoyoneda}
 \end{alignat}
\end{cor}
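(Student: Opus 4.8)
The plan is to derive this as the special case $K = \id_\cC$ of Proposition~\ref{pro:KanTensor}. First I would set $\cD \coloneqq \cC$, $\cE \coloneqq \Set$, and $K \coloneqq \id_\cC \colon \cC \to \cC$. Since $\cC$ is small and $\Set$ is cocomplete, Corollary~\ref{cor:KanPointwiseCocomplete} guarantees that a pointwise left Kan extension $\Lan_{\id_\cC} F$ exists, and Eq.~\eqref{eq:end_Lan} gives the natural isomorphism $\Lan_{\id_\cC} F \cong \int^{c \in \cC} \cC(c,\Endash) \ot Fc$.

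Next I would identify both sides of this isomorphism. On the left, a left Kan extension of $F$ along $\id_\cC$ is, by Definition~\ref{def:Kan}, a universal morphism from $F$ to $\Endash \b \id_\cC = \id_{\Func{\cC}{\Set}}$; the pair $\braket{F,\id_F}$ is manifestly initial in the comma category $F \comma \id_{\Func{\cC}{\Set}}$ (for any $\braket{H,\sigma}$ the only candidate for $\ol{\sigma}$ is $\sigma$ itself), so $\Lan_{\id_\cC} F \cong F$. On the right, the tensor product is taken in $\cE = \Set$, where $X \ot c \cong X \times c$; hence $\int^{c \in \cC} \cC(c,\Endash) \ot Fc \cong \int^{c \in \cC} \cC(c,\Endash) \times Fc$. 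Plugging the object $d$ into the $\Endash$-slot — which is exactly what the diagrammatic notation of Eq.~\eqref{eq:end_Lan} records — yields $Fd \cong \int^{c \in \cC} \cC(c,d) \times Fc$, the first isomorphism. For the second, I would observe that the two orderings of the factors define naturally isomorphic bifunctors $\cC^\op \times \cC \to \Set$ via the swap $X \times Y \cong Y \times X$, and isomorphic bifunctors have isomorphic coends, so $\int^{c \in \cC} \cC(c,d) \times Fc \cong \int^{c \in \cC} Fc \times \cC(c,d)$, completing the chain displayed in the statement.

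The only point demanding a little care is the passage from the functor-level isomorphism to its value at $d$: one must know that the coend on the right of Eq.~\eqref{eq:end_Lan} really is the functor $d \mapsto \int^{c \in \cC} \cC(c,d) \times Fc$, i.e.\ that the coend commutes with evaluation $\ev_d$. This is immediate since colimits in $\Func{\cD}{\cE}$ are computed pointwise, so $\ev_d$ preserves them, and it is precisely what the string-diagram convention intends. An alternative, more self-contained route would avoid Proposition~\ref{pro:KanTensor} and instead check directly, via Eq.~\eqref{eq:coend_cong}, that $Fd$ represents $\Cowedge(\cC(\Endash,d) \times F\Endash, \Endash)$: currying each cowedge component through Eq.~\eqref{eq:end_picture_hom2} turns a cowedge into a wedge whose apex is $\Set(Fd,e)$ by the Yoneda lemma. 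I expect this bookkeeping to be the main obstacle in the direct approach, which is why I would instead present the Kan-extension argument above.
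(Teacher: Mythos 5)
Your proposal is correct and follows essentially the same route as the paper: the paper likewise obtains the result by specializing Proposition~\ref{pro:KanTensor} to $\cD = \cC$, $\cE = \Set$, $K = \id_\cC$, using cocompleteness of $\Set$ and the identification $\Lan_{\id_\cC} \cong \id_{\Func{\cC}{\Set}}$, and dismissing the second isomorphism as a rearrangement of the product. Your write-up merely spells out a few steps (the initiality of $\braket{F,\id_F}$, the pointwise evaluation at $d$) that the paper leaves implicit.
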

\begin{proof}
 It is sufficient to show the first isomorphism since the right-hand side is simply
 a rearrangement of the direct product on the left-hand side.
 This can be easily seen by considering the case of $\cD = \cC$, $\cE = \Set$, and $K = \id_\cC$
 in Proposition~\ref{pro:KanTensor}.
 Indeed, since $\Set$ is cocomplete, $\Lan_{\id_\cC} F$ exists and satisfies Eq.~\eqref{eq:end_Lan}.
 Thus, from $\Lan_{\id_\cC} \cong \id_{\Func{\cC}{\Set}}$, we obtain Eq.~\eqref{eq:end_yoyoneda}.
\end{proof}
Equation~\eqref{eq:end_yoyoneda} can be rewritten as
\begin{alignat}{1}
 \int^{c \in \cC} \yoneda{c} \times Fc &\cong F \cong \int^{c \in \cC} Fc \times \yoneda{c}.
\end{alignat}

Dually, the following corollary holds.
\begin{cor}{the co-Yoneda lemma}{Coyonedaop}
 We have that for objects $c,d$ in a small category $\cC$ and a presheaf $G \colon \cC^\op \to \Set$,
 \begin{alignat}{1}
  \lefteqn{\int^{c \in \cC} \cC(d,c) \times Gc \cong Gd
  \cong \int^{c \in \cC} Gc \times \cC(d,c)} \nonumber \\
  &\diagram\qquad
  \InsertMidPDF{end_yoyonedaop.pdf}.
  \label{eq:end_yoyonedaop}
 \end{alignat}
\end{cor}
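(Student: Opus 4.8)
The plan is to deduce Corollary~\ref{cor:Coyonedaop} from Corollary~\ref{cor:Coyoneda} by applying the latter to the opposite category, exactly mirroring how Corollary~\ref{cor:Coyoneda} was obtained from Proposition~\ref{pro:KanTensor}. As in that proof, it suffices to establish the first isomorphism, since the right-hand side of Eq.~\eqref{eq:end_yoyonedaop} merely rearranges the binary product $\times$ in $\Set$, which is commutative.

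First I would observe that a presheaf $G \colon \cC^\op \to \Set$ is nothing but a functor out of the small category $\cC^\op$, so Corollary~\ref{cor:Coyoneda} applies verbatim with $\cC$ replaced by $\cC^\op$ and $F$ replaced by $G$ (its objects $c,d$ are the objects of $\cC$). This yields an isomorphism $\int^{c \in \cC^\op} \cC^\op(c,d) \times Gc \cong Gd$. Rewriting $\cC^\op(c,d) = \cC(d,c)$ and identifying the coend over $\cC^\op$ with the coend over $\cC$ — the two are given by the same data, the defining bifunctor into $\Set$ differing only by the canonical symmetry $\cC \times \cC^\op \cong \cC^\op \times \cC$ — gives $\int^{c \in \cC} \cC(d,c) \times Gc \cong Gd$, which is the desired formula.

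Alternatively, and more self-containedly, I would run the argument of Corollary~\ref{cor:Coyoneda} directly: apply Proposition~\ref{pro:KanTensor} with $\cD = \cC^\op$, $\cE = \Set$, and $K = \id_{\cC^\op}$. Since $\cC^\op$ is small (as $\cC$ is) and $\Set$ is cocomplete, a pointwise left Kan extension $\Lan_{\id_{\cC^\op}} G$ exists and, by Eq.~\eqref{eq:end_Lan} together with $X \ot Y \cong X \times Y$ in $\Set$, equals $\int^{c \in \cC^\op} \cC^\op(c,\Endash) \times Gc$; on the other hand $\Lan_{\id_{\cC^\op}} G \cong G$, so evaluating at $d$ and using $\cC^\op(c,d) = \cC(d,c)$ produces $\int^{c \in \cC} \cC(d,c) \times Gc \cong Gd$.

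There is essentially no hard step here: the content is already contained in Corollary~\ref{cor:Coyoneda} (equivalently in Proposition~\ref{pro:KanTensor} and Eq.~\eqref{eq:end_Lan}), and the only point requiring a word of care is the variance bookkeeping when passing to $\cC^\op$ — in particular, justifying that ``$\int^{c \in \cC^\op}$'' may be identified with ``$\int^{c \in \cC}$'', which is just the standard invariance of a coend under the symmetry of its index category. Once this is noted, the second isomorphism in Eq.~\eqref{eq:end_yoyonedaop} follows from commutativity of $\times$ in $\Set$, precisely as in the proof of Corollary~\ref{cor:Coyoneda}.
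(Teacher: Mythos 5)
Your proposal is correct and matches the paper's approach: the paper offers no separate argument for Corollary~\ref{cor:Coyonedaop}, simply introducing it with ``Dually, the following corollary holds,'' which is precisely the substitution of $\cC^\op$ for $\cC$ in Corollary~\ref{cor:Coyoneda} that you carry out. Your remarks on identifying $\int^{c \in \cC^\op}$ with $\int^{c \in \cC}$ and on commutativity of $\times$ just make explicit the bookkeeping the paper leaves implicit.
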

Equation~\eqref{eq:end_yoyonedaop} can be rewritten as
\begin{alignat}{1}
 \int^{c \in \cC} \yonedaop{c} \times Gc &\cong G \cong \int^{c \in \cC} Gc \times \yonedaop{c}.
 \label{eq:end_yoyonedaop2}
\end{alignat}

\begin{proposition}{}{LanKYoneda}
 Consider any functor $K \colon \cC \to \cD$, where $\cC$ is small;
 then, we have
 \begin{alignat}{1}
  (\Lan_K \yonedaop{\cC})d \cong \cD(K \Endash,d)
  &\qquad\diagram\qquad
  \InsertMidPDF{end_Kan_yoneda.pdf}.
  \label{eq:end_KanYoneda}
 \end{alignat}
\end{proposition}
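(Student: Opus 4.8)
The plan is to deduce the formula from the coend expression for pointwise left Kan extensions together with the co-Yoneda lemma. First I would note that the target category $\hat{\cC} = \Func{\cC^\op}{\Set}$ is cocomplete, since $\Set$ is cocomplete and colimits in functor categories are computed componentwise (the dual of Corollary~\ref{cor:LimitBifuncFunc}). As $\cC$ is small, Proposition~\ref{pro:KanTensor} then applies with $\cE = \hat{\cC}$ and $F = \yonedaop{\cC}$, giving a pointwise left Kan extension together with the isomorphism of Eq.~\eqref{eq:end_Lan}; evaluating at an object $d \in \cD$ yields
\begin{alignat}{1}
 (\Lan_K \yonedaop{\cC})d \;\cong\; \int^{c \in \cC} \cD(Kc,d) \ot \yonedaop{c}.
 \label{eq:plan_LanKYoneda}
\end{alignat}

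Next I would apply the co-Yoneda lemma in the form of Corollary~\ref{cor:Coyonedaop} (Eq.~\eqref{eq:end_yoyonedaop2}) to the presheaf $G \coloneqq \cD(K\Endash,d) \colon \cC^\op \to \Set$ --- the smallness of $\cC$ is exactly the hypothesis needed there. Since $\cD(Kc,d) \ot \yonedaop{c} \cong \yonedaop{c} \ot \cD(Kc,d)$, this identifies the right-hand side of \eqref{eq:plan_LanKYoneda} with $G = \cD(K\Endash,d)$, which is the desired isomorphism. Tracking the isomorphisms shows they are natural in $d$, so the statement holds as an isomorphism of functors $\cD \to \hat{\cC}$; pictorially this is the chain already indicated in Eq.~\eqref{eq:end_KanYoneda}, namely ``plug $F = \yonedaop{\cC}$ into the coend formula for $\Lan_K F$, then integrate out $c$ by co-Yoneda''.

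The only genuine prerequisite to check is the cocompleteness of $\hat{\cC}$, which is standard, so I do not expect a real obstacle; the remaining care is bookkeeping around the copower notation, i.e.\ that for $X \in \Set$ and $G \in \hat{\cC}$ the object $X \ot G$ is what Eq.~\eqref{eq:end_yoyonedaop2} writes componentwise as ``$G \times X$'', and keeping track of the naturality decorations on the diagrams. As a remark I would also note an alternative proof avoiding coends: a pointwise $\Lan_K \yonedaop{\cC}$ exists by Corollary~\ref{cor:KanPointwiseCocomplete}, and Proposition~\ref{pro:KanPointwiseNas}(3) combined with $\hat{\cC}(\yonedaop{\Endash},G) \cong G$ (Eq.~\eqref{eq:end_yoneda2_another}) gives $\hat{\cC}((\Lan_K \yonedaop{\cC})d,\,G) \cong \hat{\cC}(\cD(K\Endash,d),\,G)$ naturally in $G$, whence the claim follows from the Yoneda lemma (Lemma~\ref{lemma:YonedaopCongFunc}).
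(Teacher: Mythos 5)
Your proposal is correct and follows essentially the same route as the paper's proof: instantiate the coend formula of Proposition~\ref{pro:KanTensor} with $\cE = \hat{\cC}$ (cocomplete) and $F = \yonedaop{\cC}$, identify the copower $\cD(Kc,d) \ot \yonedaop{c}$ with the product $\cD(Kc,d) \times \yonedaop{c}$, and conclude by the co-Yoneda lemma in the form of Eq.~\eqref{eq:end_yoyonedaop2}. The alternative argument you sketch via Proposition~\ref{pro:KanPointwiseNas}(3) is a nice extra, but the main line of reasoning matches the paper exactly.
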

Since $\hat{\cC}$ is cocomplete%
\footnote{\label{ft:HatCCocomplete}This can be easily seen by substituting
$\cC^\op$ and $\Set$ for $\cI$ and $\cC$, respectively, in Corollary~\ref{cor:LimitBifuncFunc}
and using the fact that $\Set$ is cocomplete.},
a pointwise left Kan extension $\Lan_K \yonedaop{\cC}$ always exists
(recall Corollary~\ref{cor:KanPointwiseCocomplete}).
\begin{proof}
 The proof is immediate from
 \begin{alignat}{1}
  \footnoteinset{-4.57}{0.3}{\eqref{eq:end_Lan}}{%
  \footnoteinset{3.92}{0.3}{\eqref{eq:end_yoyonedaop}}{%
  \InsertPDF{report_end_Kan_yoneda_proof.pdf}}} \raisebox{1em}{,}
  \label{eq:end_Kan_yoneda_proof}
 \end{alignat}
 whose mathematical notation is
 \begin{alignat}{1}
  (\Lan_K \yonedaop{\cC})d
  &\stackrel{\eqref{eq:end_Lan}}{~~\cong~~}
  \int^{c \in \cC} \cD(Kc,d) \ot \yonedaop{\cC}(c)
  \stackrel{}{~~=~~}
  \int^{c \in \cC} \cD(Kc,d) \times \yonedaop{c}
  \stackrel{\eqref{eq:end_yoyonedaop2}}{~~\cong~~}
  \cD(K\Endash,d).
 \end{alignat}
\end{proof}

\begin{cor}{}{LanKYoneda}
 For a small category $\cC$, the Yoneda embedding $\yonedaop{\cC}$ is dense.
\end{cor}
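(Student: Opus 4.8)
The plan is to derive this as an immediate specialization of Proposition~\ref{pro:LanKYoneda}, taking the functor $K$ there to be the Yoneda embedding $\yonedaop{\cC} \colon \cC \to \hat{\cC}$ itself, so that $\cD = \hat{\cC}$. Recall that, by the definition of density (Subsubsection~\ref{subsubsec:Kan_property_dense}), it suffices to show that a pointwise left Kan extension $\Lan_{\yonedaop{\cC}} \yonedaop{\cC}$ exists and is naturally isomorphic to $\id_{\hat{\cC}}$.

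First I would dispose of existence: since $\cC$ is small, $\hat{\cC} = \Func{\cC^\op}{\Set}$ is cocomplete (this is the content of the footnote attached to Proposition~\ref{pro:LanKYoneda}, obtained from Corollary~\ref{cor:LimitBifuncFunc} and cocompleteness of $\Set$), so Corollary~\ref{cor:KanPointwiseCocomplete} guarantees that the pointwise left Kan extension $\Lan_{\yonedaop{\cC}} \yonedaop{\cC}$ exists. Then I would apply Proposition~\ref{pro:LanKYoneda} with $K = \yonedaop{\cC}$, which yields a natural isomorphism $(\Lan_{\yonedaop{\cC}} \yonedaop{\cC})G \cong \hat{\cC}(\yonedaop{\cC}\Endash, G)$ for $G \in \hat{\cC}$. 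Composing with the form of the Yoneda lemma recorded in Eq.~\eqref{eq:end_yoneda2_another}, namely $\hat{\cC}(\yonedaop{\cC}\Endash, G) \cong G$, gives $(\Lan_{\yonedaop{\cC}} \yonedaop{\cC})G \cong G$ naturally in $G$, i.e.\ $\Lan_{\yonedaop{\cC}} \yonedaop{\cC} \cong \id_{\hat{\cC}}$. Hence $\yonedaop{\cC}$ is dense.

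The only point needing a little care is that the composite bijection should be checked to be natural in $G$, so that it is an isomorphism of functors rather than merely a pointwise one; this is automatic since both constituent isomorphisms are natural, and if desired one can instead invoke Lemma~\ref{lemma:YonedaopCongFunc} to upgrade the pointwise isomorphism to $\Lan_{\yonedaop{\cC}} \yonedaop{\cC} \cong \id_{\hat{\cC}}$. I do not anticipate any genuine obstacle here: the corollary is essentially just Proposition~\ref{pro:LanKYoneda} read at $K = \yonedaop{\cC}$ together with the Yoneda lemma, and the diagrammatic content is already visible in Eq.~\eqref{eq:end_KanYoneda}.
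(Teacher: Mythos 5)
Your proposal is correct and follows essentially the same route as the paper: the paper's proof likewise specializes Proposition~\ref{pro:LanKYoneda} to $K = \yonedaop{\cC}$ via Eq.~\eqref{eq:end_KanYoneda} and then applies the Yoneda lemma in the form of Eq.~\eqref{eq:end_yoneda2_another} to conclude $\Lan_{\yonedaop{\cC}} \yonedaop{\cC} \cong \id_{\hat{\cC}}$. Your additional remarks on existence (via cocompleteness of $\hat{\cC}$) and naturality are consistent with, and slightly more explicit than, what the paper records.
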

\begin{proof}
 We have $\Lan_{\yonedaop{\cC}} \yonedaop{\cC} \cong \id_\cC$
 since we have for any $X \in \hat{\cC}$,
 \begin{alignat}{1}
  \footnoteinset{-1.12}{0.3}{\eqref{eq:end_KanYoneda}}{%
  \footnoteinset{2.06}{0.3}{\eqref{eq:end_yoneda2_another}}{%
  \InsertPDF{end_Kan_yoneda_dense.pdf}}} \raisebox{1em}{,}
  \label{eq:end_Kan_yoneda_dense}
 \end{alignat}
 whose mathematical notation is
 \begin{alignat}{1}
  (\Lan_{\yonedaop{\cC}} \yonedaop{\cC})X
  &\stackrel{\eqref{eq:end_KanYoneda}}{~~\cong~~}
  \hat{\cC}(\yonedaop{\cC}\Endash,X)
  \stackrel{\eqref{eq:end_yoneda2_another}}{~~\cong~~} X.
 \end{alignat}
\end{proof}

\myparagraph{The category of presheaves is cartesian closed}

\begin{proposition}{}{PresheafCCC}
 For a small category $\cC$, the category of presheaves $\hat{\cC}$ is cartesian closed%
 \footnote{A category $\cD$ is called \termdef{cartesian closed} if it has any finite product
 and there exists an exponential object $b^a$ for each $a,b \in \cD$.
 An exponential object $b^a$ is an object satisfying
 $\cD(\Endash,b^a) \cong \cD(\Endash \times a,b)$.}.
\end{proposition}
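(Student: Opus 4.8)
The plan is to verify the two defining clauses of cartesian closedness for $\hat{\cC} = \Func{\cC^\op}{\Set}$: that it has all finite products, and that every pair $a,b \in \hat{\cC}$ admits an exponential object $b^a$. Finite products come for free from the corresponding fact in $\Set$: substituting $\cC^\op$ for $\cI$ and $\Set$ for $\cC$ in (the limit half of) Corollary~\ref{cor:LimitBifuncFunc} shows that every functor from a small category into $\hat{\cC}$ has a limit, computed pointwise; in particular the constant presheaf at $\{*\}$ is terminal and $(X \times a)c = Xc \times ac$ is a binary product in $\hat{\cC}$. Dually $\hat{\cC}$ is cocomplete (cf.\ footnote~\ref{ft:HatCCocomplete}), which I will use below.

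The value of the exponential is forced by the Yoneda lemma: if $b^a$ is to satisfy $\hat{\cC}(\Endash \times a, b) \cong \hat{\cC}(\Endash, b^a)$, then evaluating at $\yonedaop{c}$ and applying Corollary~\ref{cor:Yonedaop} gives $b^a(c) \cong \hat{\cC}(\yonedaop{c}, b^a) \cong \hat{\cC}(\yonedaop{c} \times a, b)$. So I would \emph{define} $b^a$ by $b^a(c) \coloneqq \hat{\cC}(\yonedaop{c} \times a, b)$; this is a set because $\cC$ is small, hence $\hat{\cC}$ is locally small, and it is a presheaf because $c \mapsto \yonedaop{c} \times a$ is a functor $\cC \to \hat{\cC}$ (the Yoneda embedding $\yonedaop{\cC}$ followed by the product with $a$) and $\hat{\cC}(\Endash, b)$ is contravariant.

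It then remains to establish a natural isomorphism $\hat{\cC}(X \times a, b) \cong \hat{\cC}(X, b^a)$ for arbitrary $X \in \hat{\cC}$. The main input is the co-Yoneda lemma, $X \cong \int^{c \in \cC} Xc \ot \yonedaop{c}$ (Corollary~\ref{cor:Coyonedaop}, Eq.~\eqref{eq:end_yoyonedaop2}). Since colimits in $\hat{\cC}$ are pointwise and binary product in $\Set$ preserves colimits in each variable, $\Endash \times a$ preserves colimits, so $X \times a \cong \int^{c \in \cC} Xc \ot (\yonedaop{c} \times a)$. Then I would run the chain
\begin{align*}
 \hat{\cC}(X \times a, b)
 &\cong \hat{\cC}\Bigl( \int^{c \in \cC} Xc \ot (\yonedaop{c} \times a),\, b \Bigr)
 \cong \int_{c \in \cC} \hat{\cC}\bigl( Xc \ot (\yonedaop{c} \times a),\, b \bigr) \\
 &\cong \int_{c \in \cC} \Set\bigl( Xc,\, \hat{\cC}(\yonedaop{c} \times a, b) \bigr)
 = \int_{c \in \cC} \Set\bigl( Xc,\, b^a(c) \bigr)
 \cong \hat{\cC}(X, b^a),
\end{align*}
where the second isomorphism is Eq.~\eqref{eq:end_commute_coend} (a hom out of a coend is an end of homs), the third is the tensor–hom adjunction Eq.~\eqref{eq:tensor_product} in the cocomplete category $\hat{\cC}$ (with $Xc$ as the set, $\yonedaop{c} \times a$ as the object being tensored, and $b$ as the target), and the last is Lemma~\ref{lemma:EndNat} (Eq.~\eqref{eq:end_nat}). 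Each step is an instance of a naturality statement already recorded in Section~\ref{sec:end}, so the composite is natural in $X$ (and in $a,b$); hence $b^a$ is an exponential object and $\hat{\cC}$ is cartesian closed.

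The step I expect to be the main obstacle is justifying that $\Endash \times a$ commutes with the coend \emph{without} circularity — one must not invoke that $\Endash \times a$ has a right adjoint, since that is exactly what is being proved. The honest route is to argue objectwise: the coend and the product are both formed in $\Set$ at each object of $\cC$, and in $\Set$ the functor $\Endash \times S$ preserves all colimits (elementary, or a consequence of cartesian closedness of $\Set$), together with the distributivity $(Xc \ot \yonedaop{c}) \times a \cong Xc \ot (\yonedaop{c} \times a)$ of a set-copower over a product. A secondary point that deserves explicit, if routine, checking is the naturality in $X$ of every isomorphism in the displayed chain.
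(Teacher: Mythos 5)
Your proposal is correct and takes essentially the same route as the paper: both define the exponential by $b^a(c) \coloneqq \hat{\cC}(\yonedaop{c} \times a, b)$ (forced by the Yoneda lemma) and then verify the adjunction by the same coend calculation built from the co-Yoneda lemma, the tensor--hom isomorphism of Eq.~\eqref{eq:end_tensor}, Eq.~\eqref{eq:end_commute_coend}, and Lemma~\ref{lemma:EndNat}. The only cosmetic difference is that the paper runs the chain starting from $\hat{\cC}(H,G^F)$ and applies co-Yoneda objectwise inside the end (which sidesteps the need to separately argue that $\Endash \times a$ preserves the coend), whereas you apply co-Yoneda to $X$ first and then justify commuting $\Endash \times a$ past the coend pointwise in $\Set$ --- a step you correctly identify and resolve.
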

\begin{proof}
 Since $\hat{\cC}$ is complete%
 \footnote{This can be shown in the same way as footnote~\ref{ft:HatCCocomplete}
 using the fact that $\Set$ is complete.},
 it has any finite product.
 Therefore, it suffices to show that for each two presheaves $F,G \in \hat{\cC}$,
 there exists an exponential object $G^F$, i.e., an object $G^F$ satisfying
 \begin{alignat}{1}
  \hat{\cC}(\Endash, G^F) \cong \hat{\cC}(\Endash \times F, G)
  &\qquad\diagram\qquad
  \InsertMidPDF{end_yoneda_CCC_sufficient.pdf}.
  \label{eq:end_yoyoneda_CCC_sufficient}
 \end{alignat}
 We define $G^F$ as a presheaf satisfying%
 \footnote{If an exponential object $G^F$ exists, then there must exist an isomorphism
 $G^F(c) \cong \hat{\cC}(\yonedaop{c},G^F) \cong \hat{\cC}(\yonedaop{c} \times F,G)$
 that is natural in $c$,
 where the first isomorphism is due to the Yoneda lemma (see Corollary~\ref{cor:Yonedaop}).
 Therefore, $G^F$ must be defined to satisfy Eq.~\eqref{eq:presheaf_CCC_GF}.}
 \begin{alignat}{1}
  G^F(\Endash) &= \hat{\cC}(\yonedaop{\Endash} \times F,G).
  \label{eq:presheaf_CCC_GF}
 \end{alignat}
 Then, Eq.~\eqref{eq:end_yoyoneda_CCC_sufficient} is obtained from
 \begin{alignat}{1}
  \footnoteinsets{-3.17}{1.15}{\eqref{eq:presheaf_CCC_GF}}{\eqref{eq:end_tensor}}{%
  \footnoteinsets{1.04}{1.15}{\eqref{eq:end_commute_coend}}{\eqref{eq:end_nat}}{%
  \footnoteinset{1.04}{-0.85}{\eqref{eq:end_yoyonedaop}}{%
  \InsertPDF{end_yoneda_CCC.pdf}}}} \raisebox{1em}{,}
  \label{eq:end_yoyoneda_CCC}
 \end{alignat}
 whose mathematical notation is
 \begin{alignat}{1}
  \int_{c \in \cC} \Set(Hc, G^Fc)
  &\stackrel{\eqref{eq:presheaf_CCC_GF}}{~~\cong~~}
  \int_{c \in \cC} \Set(Hc, \hat{\cC}(\yonedaop{c} \times F, G)) \nonumber \\
  &\stackrel{\eqref{eq:end_tensor}}{~~\cong~~}
  \int_{c \in \cC} \hat{\cC}(Hc \ot (\yonedaop{c} \times F), G) \nonumber \\
  &\stackrel{\eqref{eq:end_commute_coend}}{~~\cong~~}
  \hat{\cC} \left( \int^{c \in \cC} Hc \ot (\yonedaop{c} \times F), G \right) \nonumber \\
  &\stackrel{\eqref{eq:end_nat}}{~~\cong~~}
  \int_{d \in \cC^\op} \Set\left( \int^{c \in \cC} Hc \times \cC(d,c) \times Fd, Gd \right) \nonumber \\
  &\stackrel{\eqref{eq:end_yoyonedaop}}{~~\cong~~}
  \int_{d \in \cC^\op} \Set(Hd \times Fd, Gd).
 \end{alignat}
 Note that in this diagram, the dashed line representing the product $\yonedaop{c} \times F$
 is omitted. 
\end{proof}

\myparagraph{Adjunctions between the category of presheaves}

\noindent
It can be shown that for a small category $\cC$,
any adjunction $F \dashv G \colon \hat{\cC} \to \cD$ can be expressed using a certain Kan extension
(see Proposition~\ref{pro:KanNerveRealization2}),
and the converse also holds (see Proposition~\ref{pro:KanNerveRealization}).
\begin{proposition}{}{KanNerveRealization}
 Consider a functor $K \colon \cC \to \cD$, where $\cC$ is small.
 If a pointwise left Kan extension
 $F \coloneqq \Lan_{\yonedaop{\cC}} K \colon \hat{\cC} \to \cD$ exists,
 then $F \dashv \Lan_K \yonedaop{\cC}$ holds.
\end{proposition}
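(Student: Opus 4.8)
The plan is to show that $F \coloneqq \Lan_{\yonedaop{\cC}} K$ has $\Lan_K \yonedaop{\cC}$ as a right adjoint by exhibiting a natural isomorphism of hom-sets $\cD(FX, d) \cong \hat{\cC}(X, (\Lan_K \yonedaop{\cC})d)$ for $X \in \hat{\cC}$ and $d \in \cD$, and then invoking Definition~\ref{def:Adjunction}. First I would note that $F$ exists because $\hat{\cC}$ is cocomplete (footnote~\ref{ft:HatCCocomplete}) and $\cC$ is small, so Corollary~\ref{cor:KanPointwiseCocomplete} applies; similarly $\Lan_K \yonedaop{\cC}$ exists by Proposition~\ref{pro:LanKYoneda}. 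The key computational tool is Proposition~\ref{pro:KanPointwiseNas}, Condition~(3): since $F$ is a pointwise left Kan extension of $K$ along $\yonedaop{\cC}$, the isomorphism $\cD(FX, d) \cong \hat{\cC}(\hat{\cC}(\yonedaop{\cC}\Endash, X), \cD(K\Endash, d))$ is natural in $X$ and $d$. Here I used $\hat{\cC}(\yonedaop{\cC}\Endash, X) \cong X$, which is the Yoneda lemma in the form of Eq.~\eqref{eq:end_yoneda2_another}, so the left side rewrites cleanly.

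The next step is to simplify the right-hand side. Applying the co-Yoneda identity Eq.~\eqref{eq:end_yoneda2_another} inside, $\hat{\cC}(\yonedaop{\cC}\Endash, X) \cong X$, gives $\cD(FX, d) \cong \hat{\cC}(X, \cD(K\Endash, d))$ — but one must check this substitution is the natural one, i.e., that plugging the Yoneda isomorphism into the first argument of $\hat{\cC}(\Endash, \cD(K\Endash,d))$ is compatible with all the naturality constraints. Then by Proposition~\ref{pro:LanKYoneda}, $(\Lan_K \yonedaop{\cC})d \cong \cD(K\Endash, d)$ naturally in $d$, so $\cD(FX,d) \cong \hat{\cC}(X, (\Lan_K \yonedaop{\cC})d)$, natural in both $X$ and $d$. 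This is exactly the adjunction isomorphism $\cD(F\Endash, \Enndash) \cong \hat{\cC}(\Endash, (\Lan_K \yonedaop{\cC})\Enndash)$, so $F \dashv \Lan_K \yonedaop{\cC}$ by definition. Diagrammatically, I would draw the chain of isomorphisms as wires, starting from $\cD(FX,d)$ with $F$ as a wire built from the Kan-extension block, sliding in the representation of $F$ via Eq.~\eqref{eq:end_Kan_dual} (the pointwise formula $\cE(Ld,e) \cong \hat{\cC}(\cD(K\Endash,d),\cE(F\Endash,e))$ with the roles instantiated), then collapsing the $\yonedaop{\cC}$-Yoneda wire, and finally recognizing the result as $\hat{\cC}(X, (\Lan_K\yonedaop{\cC})d)$ using Eq.~\eqref{eq:end_KanYoneda}.

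Alternatively — and this may be cleaner to write — I would use the end-based formulas directly: by Proposition~\ref{pro:KanTensor}, $FX = (\Lan_{\yonedaop{\cC}} K)X \cong \int^{c \in \cC} \hat{\cC}(\yonedaop{c}, X) \ot Kc \cong \int^{c \in \cC} Xc \ot Kc$ using the Yoneda lemma, and then
\begin{alignat}{1}
 \cD(FX, d)
 &\cong \cD\left( \int^{c \in \cC} Xc \ot Kc, d \right)
 \cong \int_{c \in \cC} \cD(Xc \ot Kc, d) \nonumber \\
 &\cong \int_{c \in \cC} \Set(Xc, \cD(Kc, d))
 \cong \hat{\cC}(X, \cD(K\Endash, d))
 \cong \hat{\cC}(X, (\Lan_K \yonedaop{\cC})d),
 \label{eq:KanNerveRealization_proof}
\end{alignat}
where the isomorphisms come respectively from Eq.~\eqref{eq:end_commute_coend} (hom out of a coend), Eq.~\eqref{eq:end_tensor} (the tensor-hom adjunction), Eq.~\eqref{eq:end_nat} (natural transformations as an end), and Proposition~\ref{pro:LanKYoneda}. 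All isomorphisms are natural in $X$ and $d$ because each building block is, which gives the adjunction.

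The main obstacle I anticipate is not any single isomorphism but the bookkeeping of naturality: one must ensure the composite isomorphism is natural \emph{simultaneously} in $X$ and $d$, and in particular that the instance of the Yoneda lemma used to replace $\hat{\cC}(\yonedaop{\cC}\Endash, X)$ by $X$ interacts correctly with the coend over $c$ (the variable $c$ appears both as the Yoneda index and inside $Kc$). The string-diagram approach helps here precisely because, as emphasized throughout the paper, naturality is automatic once the wires are drawn — the short horizontal tick marks on the $X$- and $d$-wires track it. A secondary point to state carefully is the appeal to Proposition~\ref{pro:LanKYoneda}, whose formula $(\Lan_K \yonedaop{\cC})d \cong \cD(K\Endash,d)$ is exactly what closes the loop; I would remark that this is the "nerve" functor and $F$ is the corresponding "realization," explaining the proposition's name.
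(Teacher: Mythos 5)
Your main argument is essentially the paper's own proof: it chains the pointwise-Kan-extension isomorphism of Proposition~\ref{pro:KanPointwiseNas}(3) (i.e., Eq.~\eqref{eq:end_Kan_dual} instantiated for $F = \Lan_{\yonedaop{\cC}} K$) with the Yoneda identity $\hat{\cC}(\yonedaop{\cC}\Endash,X)\cong X$ and Proposition~\ref{pro:LanKYoneda} to get $\cD(FX,d)\cong\hat{\cC}(X,(\Lan_K\yonedaop{\cC})d)$ naturally in $X$ and $d$, exactly as in Eq.~\eqref{eq:end_nerve_realization}. The alternative coend computation is a correct unwinding of the same isomorphisms; the only slip is labelling Eq.~\eqref{eq:end_yoneda2_another} as ``co-Yoneda'' when it is the Yoneda lemma.
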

\begin{proof}
 Since $\hat{\cC}$ is cocomplete, a pointwise left Kan extension
 $G \coloneqq \Lan_K \yonedaop{\cC}$ exists (recall Corollary~\ref{cor:KanPointwiseCocomplete}).
 $F \dashv G$ holds since we have
 \begin{alignat}{1}
  \footnoteinset{-2.72}{0.3}{\eqref{eq:end_Kan_dual}}{%
  \footnoteinsets{2.58}{0.3}{\eqref{eq:end_yoneda2_another}}{\eqref{eq:end_KanYoneda}}{%
  \InsertPDF{end_nerve_realization.pdf}}} \raisebox{1em}{,}
  \label{eq:end_nerve_realization}
 \end{alignat}
 whose mathematical notation is
 \begin{alignat}{1}
  \cD(FX,d)
  &\stackrel{\eqref{eq:end_Kan_dual}}{~~\cong~~}
  \hat{\cC}(\hat{\cC}(\yonedaop{\cC}\Endash,X), \cD(K\Endash,d))
  \stackrel{\eqref{eq:end_yoneda2_another}}{~~\cong~~}
  \hat{\cC}(X,\cD(K\Endash,d))
  \stackrel{\eqref{eq:end_KanYoneda}}{~~\cong~~}
  \hat{\cC}(X,Gd).
 \end{alignat}
\end{proof}

\begin{proposition}{}{KanNerveRealization2}
 Consider an adjunction $F \dashv G \colon  \hat{\cC} \to \cD$, where $\cC$ is small.
 There exists a functor $K \colon \cC \to \cD$ satisfying
 $F \cong \Lan_{\yonedaop{\cC}} K$ and $G \cong \Lan_K \yonedaop{\cC}$.
\end{proposition}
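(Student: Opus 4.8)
The plan is to exhibit the functor $K$ explicitly: take $K \coloneqq F \b \yonedaop{\cC} \colon \cC \to \cD$, i.e., restrict the left adjoint $F$ along the Yoneda embedding. The two required isomorphisms will then be obtained separately — $F \cong \Lan_{\yonedaop{\cC}} K$ from the density of $\yonedaop{\cC}$ combined with the fact that left adjoints preserve (pointwise) left Kan extensions, and $G \cong \Lan_K \yonedaop{\cC}$ from Proposition~\ref{pro:KanNerveRealization} together with the essential uniqueness of adjoints.

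First I would invoke Corollary~\ref{cor:LanKYoneda}: $\yonedaop{\cC}$ is dense, so $\id_{\hat{\cC}}$ (with its canonical unit) is a pointwise left Kan extension of $\yonedaop{\cC}$ along $\yonedaop{\cC}$; in particular, writing $P_X$ for the forgetful functor from $\yonedaop{\cC} \comma X$ to $\cC$, the functor $\yonedaop{\cC} \b P_X$ has a colimit, naturally isomorphic to $X$, for each $X \in \hat{\cC}$. Since $F$ is a left adjoint, Proposition~\ref{pro:KanAdjPreserve} tells us that $F$ preserves this left Kan extension, so $\braket{F \b \id_{\hat{\cC}}, F \b \eta} = \braket{F, F \b \eta}$ is a left Kan extension of $F \b \yonedaop{\cC} = K$ along $\yonedaop{\cC}$; hence $F \cong \Lan_{\yonedaop{\cC}} K$. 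Moreover this left Kan extension is \emph{pointwise}: because $F$, as a left adjoint, preserves colimits, $K \b P_X = F \b (\yonedaop{\cC} \b P_X)$ has a colimit (namely $FX$) for every $X \in \hat{\cC}$, which is exactly the criterion of Definition~\ref{def:KanPointwise}.

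Next I would apply Proposition~\ref{pro:KanNerveRealization} to $K \colon \cC \to \cD$ with $\cC$ small: the pointwise left Kan extension $\Lan_{\yonedaop{\cC}} K$ exists (it is $F$, by the previous step) and $\Lan_K \yonedaop{\cC}$ exists as a pointwise left Kan extension since $\hat{\cC}$ is cocomplete (Corollary~\ref{cor:KanPointwiseCocomplete}), so the proposition yields $\Lan_{\yonedaop{\cC}} K \dashv \Lan_K \yonedaop{\cC}$. Transporting along the isomorphism $F \cong \Lan_{\yonedaop{\cC}} K$ gives $F \dashv \Lan_K \yonedaop{\cC}$. Since also $F \dashv G$ by hypothesis, the essential uniqueness of right adjoints (Proposition~\ref{pro:AdjUniqueNat}) forces $G \cong \Lan_K \yonedaop{\cC}$, and together with $F \cong \Lan_{\yonedaop{\cC}} K$ this completes the proof.

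The step I expect to require the most care is the "pointwise" bookkeeping: Proposition~\ref{pro:KanNerveRealization} genuinely needs $\Lan_{\yonedaop{\cC}} K$ to be a pointwise left Kan extension, not merely a left Kan extension (its proof relies on the hom-set characterization of pointwise Kan extensions), so the colimit-preservation argument that upgrades $\braket{F, F \b \eta}$ from "a left Kan extension of $K$" to "a pointwise left Kan extension of $K$" must be stated explicitly; the remainder is a purely formal assembly of results already established in the excerpt.
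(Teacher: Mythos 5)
Your proposal is correct and follows essentially the same route as the paper's proof: define $K \coloneqq F \b \yonedaop{\cC}$, deduce $F \cong \Lan_{\yonedaop{\cC}} K$ from $\Lan_{\yonedaop{\cC}} \yonedaop{\cC} \cong \id_{\hat{\cC}}$ and preservation of left Kan extensions by the left adjoint $F$, verify pointwiseness by noting that $F$ preserves the colimits of $\yonedaop{\cC} \b P_X$, and then combine Proposition~\ref{pro:KanNerveRealization} with the essential uniqueness of right adjoints. Your explicit attention to the pointwiseness hypothesis matches the paper's own handling of that step.
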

Intuitively, the functor $K$ can be said to have information about both $F$ and $G$.
\begin{proof}
 Let $K \coloneqq F \b \yonedaop{\cC}$.
 From $\Lan_{\yonedaop{\cC}} \yonedaop{\cC} \cong \id_{\hat{\cC}}$
 (recall Corollary~\ref{cor:LanKYoneda}) and the fact that the left adjoint $F$ preserves any
 left Kan extension (recall Proposition~\ref{pro:KanAdjPreserve}), we obtain
 \begin{alignat}{1}
  F \cong F \b \Lan_{\yonedaop{\cC}} \yonedaop{\cC}
  \cong \Lan_{\yonedaop{\cC}} (F \b \yonedaop{\cC}) \cong \Lan_{\yonedaop{\cC}} K.
 \end{alignat}
 For each $X \in \hat{\cC}$, let $P_X$ be the forgetful functor from
 $\yonedaop{\cC} \comma X$ to $\cC$; then,
 $\yonedaop{\cC} \b P_X \colon \yonedaop{\cC} \comma X \to \hat{\cC}$ has a colimit
 since $\hat{\cC}$ is cocomplete.
 Thus, $F \b \yonedaop{\cC} \b P_X = K \b P_X$ also has a colimit
 since the left adjoint $F$ preserves any colimit
 (recall Theorem~\ref{thm:LimitAdjPreserve}).
 Therefore, the Kan extension $F \cong \Lan_{\yonedaop{\cC}} K$ is pointwise,
 and thus from Proposition~\ref{pro:KanNerveRealization},
 $\Lan_K\yonedaop{\cC}$ is a right adjoint to $F$.
 Since right adjoints are essentially unique, we have $G \cong \Lan_K\yonedaop{\cC}$.
\end{proof}

\subsection{Weighted (co)limits} \label{subsec:end_wlim}

Weighted (co)limits can be considered as generalizations of (co)limits and (co)ends.

\subsubsection{Definition of weighted (co)limits} \label{subsec:end_wlim_def}

\begin{define}{weighted (co)limits}{Wlim}
 Arbitrarily choose two functors $W \colon \cC \to \Set$ and
 $F \colon \cC \to \cE$.
 An object, denoted by $\lim^W F$, in $\cE$ is called a \termdef{limit of $F$ weighted by $W$}
 if the isomorphism
 \begin{alignat}{1}
  \lefteqn{ \cE(e, \lim^W F) \cong \Func{\cC}{\Set}(W, \cE(e, F \Endash)) } \nonumber \\
  &\diagram\qquad
  \InsertMidPDF{end_wlim_def.pdf}
  \label{eq:end_wlim_def}
 \end{alignat}
 is natural in $e \in \cE$.
 Dually, arbitrarily choose a presheaf $W' \colon \cC^\op \to \Set$.
 An object, denoted by $\colim^{W'} F$, in $\cE$ is called
 a \termdef{colimit of $F$ weighted by $W'$} if the isomorphism
 \begin{alignat}{1}
  \lefteqn{ \cE(\colim^{W'} F, e) \cong \hat{\cC}(W', \cE(F \Endash, e)) } \nonumber \\
  &\diagram\qquad
  \InsertMidPDF{end_wcolim_def.pdf}
  \label{eq:end_wcolim_def}
 \end{alignat}
 is natural in $e \in \cE$.
\end{define}

\begin{ex}{}{}
 A limit of $F$ is a limit of $F$ weighted by $\Delta_\cC \{ * \}$,
 where $\{ * \}$ is a singleton set.
 Indeed, we have
 \begin{alignat}{1}
  \cE(e, \lim F) &\cong \Cone(e, F) \cong \Cone(\{ * \}, \cE(e,F\Endash))
  \cong \Func{\cC}{\Set}(\Delta_\cC \{ * \}, \cE(e,F\Endash)),
 \end{alignat}
 where the first and second isomorphisms follow from Eqs.~\eqref{eq:limit_repr_cong}
 and \eqref{eq:limit_Cone_1cD}, respectively.
 Dually, a colimit of $F$ is a colimit of $F$ weighted by $\Delta_{\cC^\op} \{ * \}$.
\end{ex}

In particular, when $\cE = \Set$, we have
\begin{alignat}{1}
 \lim^W F &\cong \Func{\cC}{\Set}(W,F).
 \label{eq:end_wlim_Set}
\end{alignat}
Indeed, we have for any $X \in \Set$
\begin{alignat}{1}
 \footnoteinset{-2.31}{1.2}{\eqref{eq:end_nat}}{%
 \footnoteinsets{2.44}{1.2}{\eqref{eq:end_picture_hom2}}{\eqref{eq:end_commute_end}}{%
 \footnoteinset{-2.31}{-0.65}{\eqref{eq:end_nat}}{%
 \InsertPDF{end_wlim_Set_proof.pdf}}}} \raisebox{1em}{,}
 \label{eq:end_wlim_Set_proof}
\end{alignat}
whose mathematical notation is
\begin{alignat}{2}
 \Func{\cC}{\Set}(W\Endash,\Set(X,F\Endash))
 &\stackrel{\eqref{eq:end_nat}}{~~\cong~~}
 \int_{c \in \cC} \Set(Wc,\Set(X,Fc))
 &&\stackrel{\eqref{eq:end_picture_hom2}}{~~\cong~~}
 \int_{c \in \cC} \Set(X,\Set(Wc,Fc)) \nonumber \\
 &\stackrel{\eqref{eq:end_commute_end}}{~~\cong~~}
 \Set\left( X,\int_{c \in \cC} \Set(Wc,Fc) \right)
 &&\stackrel{\eqref{eq:end_nat}}{~~\cong~~}
 \Set(X,\Func{\cC}{\Set}(W,F)),
\end{alignat}
and thus we obtain Eq.~\eqref{eq:end_wlim_Set} from Eq.~\eqref{eq:end_wlim_def}.

\subsubsection{Basic properties of weighted (co)limits}

\myparagraph{Weighted (co)limits are (co)ends}

\begin{proposition}{}{WlimTensor}
 Consider two functors $W \colon \cC \to \Set$ and $F \colon \cC \to \cE$,
 where $\cC$ is small.
 We have
 \begin{alignat}{1}
  \lim^W F \cong \int_{c \in \cC} Wc \cot Fc
  &\qquad\diagram\qquad
  \InsertMidPDF{end_wlim_picture.pdf}
  \label{eq:wlim_cot}
 \end{alignat}
 whenever the cotensor products $Wc \cot Fc$ exist.
 Dually, consider two functors $W' \colon \cC^\op \to \Set$ and $F \colon \cC \to \cE$,
 where $\cC$ is small.
 We have
 \begin{alignat}{1}
  \colim^{W'} F \cong \int^{c \in \cC} W'c \ot Fc
  &\qquad\diagram\qquad
  \InsertMidPDF{end_wlim_picture_op.pdf} \nonumber \\
  \label{eq:wlim_ot}
 \end{alignat}
 whenever the tensor products $Wc' \ot Fc$ exist.
\end{proposition}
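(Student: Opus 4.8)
The plan is to follow the same template as the proofs of Proposition~\ref{pro:KanTensor} and Corollary~\ref{cor:Coyoneda}: apply the hom-functor $\cE(e,\Endash)$ to the candidate end and recognise the outcome as the defining expression of the weighted limit in Eq.~\eqref{eq:end_wlim_def}. I would treat only the limit statement in detail, since the colimit statement follows by replacing $\cC$ with $\cC^\op$ (so that $W'$ becomes a covariant weight) and dualising every step; this simultaneously exchanges ends with coends, cotensor products with tensor products, and $\yoneda{c}$ with $\yonedaop{c}$.

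Concretely, I would fix $e \in \cE$ and run the chain of isomorphisms
\[
 \cE\!\left(e, \int_{c \in \cC} Wc \cot Fc\right)
 \;\cong\; \int_{c \in \cC} \cE(e, Wc \cot Fc)
 \;\cong\; \int_{c \in \cC} \Set(Wc, \cE(e,Fc))
 \;\cong\; \Func{\cC}{\Set}(W, \cE(e,F\Endash)).
\]
The first isomorphism is Eq.~\eqref{eq:end_commute_end}, valid because $\cE(e,\Endash) = \yoneda{e}$ preserves any limit (Theorem~\ref{thm:LimitPresheaf}) and an end is a limit, so it preserves ends. The second is the defining property of the cotensor product, Eq.~\eqref{eq:tensor_coproduct} (equivalently the second isomorphism of Eq.~\eqref{eq:end_tensor}), applied inside the end. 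The third is Lemma~\ref{lemma:EndNat} (Eq.~\eqref{eq:end_nat}), which rewrites the end of a hom-bifunctor as a set of natural transformations. Comparing the composite with Eq.~\eqref{eq:end_wlim_def}, the object $\int_{c\in\cC} Wc\cot Fc$ represents the functor $e \mapsto \Func{\cC}{\Set}(W,\cE(e,F\Endash))$; hence it is a limit of $F$ weighted by $W$, and since a representing object is essentially unique we obtain $\lim^W F \cong \int_{c \in \cC} Wc \cot Fc$ (in particular, the existence of either side follows from that of the other).

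The step I expect to cost the most care is checking that every isomorphism in the chain is natural in $e$, since Definition~\ref{def:Wlim} requires the isomorphism exhibiting a weighted limit to be natural in $e$. In diagrammatic language this is precisely the assertion, visible in the diagram beside Eq.~\eqref{eq:wlim_cot}, that the horizontal line for $\int_{c\in\cC}$ can be slid past the hom-functor box, the cotensor block, and the functor wires without obstruction; concretely one verifies naturality for Eq.~\eqref{eq:end_commute_end} from the coherence of the limit cone preserved by $\yoneda{e}$, for Eq.~\eqref{eq:tensor_coproduct} from the stated naturality of the cotensor adjunction, and for Eq.~\eqref{eq:end_nat} by inspection. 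Granting this, the dual run — using that $\yonedaop{c} = \cC(\Endash,c)$ preserves colimits, hence coends (Corollary~\ref{cor:LimitPresheaf2}, Eq.~\eqref{eq:end_commute_coend}), the defining property of the tensor product, and Lemma~\ref{lemma:EndNat} — yields $\cE\!\left(\int^{c\in\cC} W'c \ot Fc,\, e\right) \cong \hat{\cC}(W', \cE(F\Endash,e))$ naturally in $e$, which by Eq.~\eqref{eq:end_wcolim_def} identifies $\colim^{W'} F \cong \int^{c \in \cC} W'c \ot Fc$.
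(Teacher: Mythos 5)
Your proposal is correct and follows essentially the same route as the paper: the paper's proof is exactly the chain $\Func{\cC}{\Set}(W\Endash,\cE(e,F\Endash)) \cong \int_{c} \Set(Wc,\cE(e,Fc)) \cong \int_{c} \cE(e,Wc \cot Fc) \cong \cE\left(e, \int_{c} Wc \cot Fc\right)$ via Eqs.~\eqref{eq:end_nat}, \eqref{eq:end_tensor}, and \eqref{eq:end_commute_end}, which is your chain read in the opposite direction, concluded by the definition of weighted limits. Your added attention to naturality in $e$ and the explicit dualisation are consistent with what the paper leaves implicit.
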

\begin{proof}
 We have
 \begin{alignat}{1}
  \footnoteinset{-1.98}{0.3}{\eqref{eq:end_nat}}{%
  \footnoteinset{2.78}{0.3}{\eqref{eq:end_tensor}}{%
  \InsertPDF{end_wlim_tensor.pdf}}} \raisebox{1em}{,}
  \label{eq:end_wlim_tensor}
 \end{alignat}
 whose mathematical notation is
 \begin{alignat}{1}
  \Func{\cC}{\Set}(W\Endash,\cE(e,F\Endash))
  &\stackrel{\eqref{eq:end_nat}}{~~\cong~~}
  \int_{c \in \cC} \Set(Wc,\cE(e,Fc))
  \stackrel{\eqref{eq:end_tensor}}{~~\cong~~}
  \int_{c \in \cC} \cE(e,Wc \cot Fc) \nonumber \\
  &\stackrel{\eqref{eq:end_commute_end}}{~~\cong~~}
  \cE\left( e, \int_{c \in \cC} Wc \cot Fc \right).
 \end{alignat}
 Thus, from the definition of weighted limits, we have Eq.~\eqref{eq:wlim_cot}.
\end{proof}

\myparagraph{Ends are weighted limits}

\begin{proposition}{}{EndWlim}
 We have that for any bifunctor $F \colon \cC^\op \times \cC \to \cD$,
 \begin{alignat}{1}
  \int_{c \in \cC} F(c,c) &\cong \lim^{\cC(\Endash,\Enndash)} F
  \label{eq:end_wlim_end}
 \end{alignat}
 whenever the weighted limit $\lim^{\cC(\Endash,\Enndash)} F$ exists.
 Dually, we have
 \begin{alignat}{1}
  \int^{c \in \cC} F(c,c) &\cong \colim^{\cC(\Enndash,\Endash)} F
  \label{eq:end_wcolim_end}
 \end{alignat}
 (where $\cC(\Enndash,\Endash) \colon \cC \times \cC^\op \to \Set$)
 whenever the weighted colimit $\colim^{\cC(\Enndash,\Endash)} F$ exists.
\end{proposition}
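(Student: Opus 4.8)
The plan is to compute the representable functor $\cD\!\left(\Endash, \lim^{\cC(\Endash,\Enndash)} F\right)$ by a chain of known isomorphisms, recognize the result as $\cD\!\left(\Endash, \int_{c \in \cC} F(c,c)\right)$, and then invoke the Yoneda lemma to conclude. Concretely, applying Definition~\ref{def:Wlim} with the indexing category $\cC^\op \times \cC$ (both the weight $\cC(\Endash,\Enndash)$ and the diagram $F$ have this domain), the weighted limit $\lim^{\cC(\Endash,\Enndash)} F$ is characterized by an isomorphism
\begin{align*}
 \cD\!\left(d,\, \lim^{\cC(\Endash,\Enndash)} F\right)
 &\cong \Func{\cC^\op \times \cC}{\Set}\!\left(\cC(\Endash,\Enndash),\, \cD(d, F(\Endash,\Enndash))\right)
\end{align*}
that is natural in $d$. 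So it suffices to produce a natural-in-$d$ isomorphism between this set of natural transformations and $\cD\!\left(d, \int_{c \in \cC} F(c,c)\right)$.

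First I would rewrite the set of natural transformations as an end, using Lemma~\ref{lemma:EndNat} (Eq.~\eqref{eq:end_nat}) over the category $\cC^\op \times \cC$, obtaining $\int_{\braket{a,b} \in \cC^\op \times \cC} \Set\!\left(\cC(a,b),\, \cD(d, F(a,b))\right)$. Next I would apply the Fubini rule of Eq.~\eqref{eq:end_commute} to write this as $\int_{b \in \cC}\int_{a \in \cC} \Set\!\left(\cC(a,b),\, \cD(d, F(a,b))\right)$; for fixed $b$ the assignment $a \mapsto \cD(d, F(a,b))$ is a presheaf on $\cC$ (since $F$ is contravariant in its first slot), so the inner end collapses by the end form of the Yoneda lemma, Eq.~\eqref{eq:end_yoneda2}, to $\cD(d, F(b,b))$, leaving $\int_{b \in \cC} \cD(d, F(b,b))$. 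Finally, since $\cD(d,\Endash)$ preserves ends (Eq.~\eqref{eq:end_commute_end}), this is naturally isomorphic to $\cD\!\left(d, \int_{c \in \cC} F(c,c)\right)$. Each step is an instance of an isomorphism already shown to be natural, and in particular natural in $d$ (the object $d$ only enters through $\cD(d,\Endash)$), so composing them yields a natural-in-$d$ isomorphism $\cD\!\left(\Endash, \lim^{\cC(\Endash,\Enndash)} F\right) \cong \cD\!\left(\Endash, \int_{c \in \cC} F(c,c)\right)$ of presheaves on $\cD$. Since the Yoneda embedding is fully faithful (Corollary~\ref{cor:YonedaEmbedding}; equivalently by Lemma~\ref{lemma:YonedaopCongFunc}), this forces $\lim^{\cC(\Endash,\Enndash)} F \cong \int_{c \in \cC} F(c,c)$. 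The coend statement then follows by flipping every diagram upside down: running the same chain with $\colim^{\cC(\Enndash,\Endash)}$, the cowedge/coend versions of the identities, and $\cC(\Endash,e)$ preserving coends (Eq.~\eqref{eq:end_commute_coend}) gives $\colim^{\cC(\Enndash,\Endash)} F \cong \int^{c \in \cC} F(c,c)$, as in Eq.~\eqref{eq:end_wcolim_end}.

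The main obstacle I anticipate is the variance bookkeeping needed to make Eq.~\eqref{eq:end_commute} and Eq.~\eqref{eq:end_yoneda2} genuinely applicable: the integrand $\Set\!\left(\cC(a,b),\, \cD(d, F(a,b))\right)$ must first be displayed as a functor on $\cC^\op \times \cC \times \cC^\op \times \cC$ in which the two occurrences of $a$ (one inside $\cC(a,b)$, one inside $F(a,b)$) and likewise the two occurrences of $b$ are placed in the correct co-/contravariant slots before the ends are taken diagonally. In the string-diagram language this amounts to routing the wires for the two copies of $\cC$ and of $\cC^\op$ correctly, which is visually clear but must be done with care. An alternative, perhaps slicker, route for this crux bypasses Fubini altogether: by Lemma~\ref{lemma:BifuncNat} a natural transformation $\cC(\Endash,\Enndash) \nto \cD(d, F(\Endash,\Enndash))$ is the same as a family natural separately in each variable, and applying (partial) Yoneda in the second variable identifies it with a family $\{\, t_a \in \cD(d, F(a,a)) \,\}_{a \in \cC}$; the residual naturality in the first variable is exactly the wedge condition, so this set is $\Wedge(\{*\}, \cD(d, F(\Endash,\Enndash)))$, which by the universal property of the end equals $\cD\!\left(d, \int_{c \in \cC} F(c,c)\right)$, again naturally in $d$.
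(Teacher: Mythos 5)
Your proposal is correct and follows essentially the same route as the paper: expand the natural-transformation set as an end over $\cC^\op\times\cC$ via Lemma~\ref{lemma:EndNat}, collapse one variable with the end form of the Yoneda lemma, pull $\cD(d,\Endash)$ out of the remaining end via Eq.~\eqref{eq:end_commute_end}, and conclude from Definition~\ref{def:Wlim}. The only (immaterial) differences are that you integrate out the contravariant variable using Eq.~\eqref{eq:end_yoneda2} where the paper integrates out the covariant one using Eq.~\eqref{eq:end_yoneda}, and that you make the Fubini step and the final uniqueness-of-representation step explicit where the paper leaves them implicit.
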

\begin{proof}
 We have
 \begin{alignat}{1}
  \footnoteinset{0.00}{1.1}{\eqref{eq:end_nat}}{%
  \footnoteinset{0.00}{-0.88}{\eqref{eq:end_yoneda}}{%
  \InsertPDF{end_wlim_end_proof.pdf}}} \raisebox{1em}{,}
  \label{eq:end_wlim_end_proof}
 \end{alignat}
 whose mathematical notation is
 \begin{alignat}{1}
  \Func{\cC^\op \times \cC}{\Set}(\cC(\Endash,\Enndash),\cD(d,F(\Endash,\Enndash)))
  &\stackrel{\eqref{eq:end_nat}}{~~\cong~~}
  \int_{c \in \cC} \int_{c' \in \cC} \Set(\cC(c,c'),\cD(d,F(c,c'))) \nonumber \\
  &\stackrel{\eqref{eq:end_yoneda}}{~~\cong~~}
  \int_{c \in \cC} \cD(d, F(c,c))
  \stackrel{\eqref{eq:end_commute_end}}{~~\cong~~}
  \cD\left(d, \int_{c \in \cC} F(c,c) \right).
 \end{alignat}
 Thus, we obtain Eq.~\eqref{eq:end_wlim_end} from Eq.~\eqref{eq:end_wlim_def}.
\end{proof}

\myparagraph{Weighted (co)limits can be expressed using pointwise Kan extensions}

\begin{proposition}{}{WcolimKan}
 For any small category $\cC$, cocomplete category $\cE$, presheaf $W' \colon \cC^\op \to \Set$,
 and functor $F \colon \cC \to \cE$, we have
 \begin{alignat}{1}
  \colim^{W'} F &\cong (\Lan_{\yonedaop{\cC}} F)W'.
  \label{eq:end_ColimKan_proof}
 \end{alignat}
\end{proposition}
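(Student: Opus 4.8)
The plan is to obtain the result by chaining together two coend formulas that have already been established — the expression of a pointwise left Kan extension as a coend of tensor products (Proposition~\ref{pro:KanTensor}) and the analogous expression of a weighted colimit (Proposition~\ref{pro:WlimTensor}) — with the Yoneda lemma serving as the bridge between the two integrands. Concretely, first I would instantiate Proposition~\ref{pro:KanTensor} with $K \coloneqq \yonedaop{\cC} \colon \cC \to \hat{\cC}$ and target category $\cE$. Its hypotheses hold here: $\cC$ is small and $\cE$ is cocomplete, so the pointwise left Kan extension $\Lan_{\yonedaop{\cC}} F$ exists (also by Corollary~\ref{cor:KanPointwiseCocomplete}) and, evaluating Eq.~\eqref{eq:end_Lan} at $W' \in \hat{\cC}$,
\[
 (\Lan_{\yonedaop{\cC}} F)(W') \;\cong\; \int^{c \in \cC} \hat{\cC}(\yonedaop{c},W') \ot Fc ,
\]
where the tensor products on the right exist because $\cE$ is cocomplete.

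Next I would rewrite the integrand using the Yoneda lemma for presheaves (Corollary~\ref{cor:Yonedaop}, cf.\ Eq.~\eqref{eq:end_yoneda2}): the isomorphism $\hat{\cC}(\yonedaop{c},W') \cong W'c$ is natural in $c$ (indeed in both $c$ and $W'$), and since $\ot$ is functorial this lifts to a natural isomorphism of the two coends, giving $\int^{c} \hat{\cC}(\yonedaop{c},W') \ot Fc \cong \int^{c} W'c \ot Fc$. Finally Proposition~\ref{pro:WlimTensor} (Eq.~\eqref{eq:wlim_ot}) identifies this last coend with $\colim^{W'} F$ — and since $\cE$ is cocomplete the relevant tensor products and the coend itself exist, so $\colim^{W'}F$ is well defined. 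Composing the three isomorphisms yields $\colim^{W'} F \cong (\Lan_{\yonedaop{\cC}} F)W'$. Diagrammatically this is just the same sequence of moves used in the proof of Proposition~\ref{pro:LanKYoneda} (Eq.~\eqref{eq:end_Kan_yoneda_proof}), now with a general cocomplete target $\cE$ and $F$ in place of $\yonedaop{\cC}$.

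I do not anticipate any real obstacle: the entire argument lives inside results already proved, and the only points needing care are routine — checking that the smallness/cocompleteness hypotheses feed correctly into Propositions~\ref{pro:KanTensor} and \ref{pro:WlimTensor} (in particular that the tensor products and the coend $\int^{c} W'c \ot Fc$ exist, the latter being the weighted colimit), and that the Yoneda isomorphism is natural in $c$ so that it may be applied underneath the coend. If one prefers to avoid the coend formulas, an equally short alternative is to argue by representability: combining Eq.~\eqref{eq:end_Kan_dual} (the naturality statement of Proposition~\ref{pro:KanPointwiseNas}, with $L = \Lan_{\yonedaop{\cC}}F$ and $d = W'$) with the Yoneda lemma in the form Eq.~\eqref{eq:end_yoneda2_another} and the definition of weighted colimits (Eq.~\eqref{eq:end_wcolim_def}) gives
\[
 \cE\big((\Lan_{\yonedaop{\cC}} F)W', e\big) \;\cong\; \hat{\cC}\big(W', \cE(F\Endash,e)\big) \;\cong\; \cE\big(\colim^{W'} F, e\big)
\]
naturally in $e \in \cE$, whence $\colim^{W'} F \cong (\Lan_{\yonedaop{\cC}} F)W'$ by Lemma~\ref{lemma:YonedaopCongFunc} (with the one-object category in the role of $\cC$); this mirrors the computation in Eq.~\eqref{eq:end_nerve_realization}.
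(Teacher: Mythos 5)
Your main argument is exactly the paper's proof: it chains Eq.~\eqref{eq:end_Lan} (Proposition~\ref{pro:KanTensor} with $K = \yonedaop{\cC}$), the Yoneda isomorphism $\hat{\cC}(\yonedaop{c},W') \cong W'c$, and Eq.~\eqref{eq:wlim_ot} (Proposition~\ref{pro:WlimTensor}), which is precisely the composite of isomorphisms in Eq.~\eqref{eq:end_wlim_Kan}. The hypothesis checks and the alternative representability route you sketch are correct but not needed beyond what the paper already does.
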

\begin{proof}
 The proof is immediate from
 \begin{alignat}{1}
  \footnoteinset{-3.27}{0.3}{\eqref{eq:end_Lan}}{%
  \footnoteinset{0.70}{0.3}{\eqref{eq:end_yoneda2_another}}{%
  \footnoteinset{3.87}{0.3}{\eqref{eq:wlim_ot}}{%
  \InsertPDF{end_wlim_Kan.pdf}}}} \raisebox{1em}{,}
  \label{eq:end_wlim_Kan}
 \end{alignat}
 whose mathematical notation is
 \begin{alignat}{1}
  (\Lan_{\yonedaop{\cC}}{F})W'
  &\stackrel{\eqref{eq:end_Lan}}{~~\cong~~}
  \int^{c \in \cC} \hat{\cC}(\yonedaop{\cC}(c),W') \ot Fc
  \stackrel{\eqref{eq:end_yoneda2_another}}{~~\cong~~}
  \int^{c \in \cC} W'c \ot Fc
  \stackrel{\eqref{eq:wlim_ot}}{~~\cong~~}
  \colim^{W'} F.
 \end{alignat}
\end{proof}

\section*{Acknowledgements}

I would like to thank Osamu~Hirota, Masaki~Sohma, and Kentaro~Kato from Tamagawa University,
as well as Tsuyoshi~Sasaki~Usuda from Aichi Prefectural University
and Tomohiro~Sogabe from Nagoya University,
for their invaluable support and insightful discussions.
Furthermore, I had the opportunity to discuss the contents of this paper with
Keima~Akasaka, Tamotsu~Basseda, Koreto~Endo, Yusuke~Furuta, Yo~Ikeda,
Yuto~Ikeda, Tomohiro~Karube, Ryosuke~Koganezawa, Takayuki~Kuriyama, Hayato~Nasu,
Tomoya~Nishikata, Ray~D.~Sameshima, Kosei~Sawada, Yoshitsugu~Sekine, Kazuhiro~Shigekuni,
Kayo~Tei, Keisuke~Tomita, Haruki~Toyota, and Aiki~Ueno.

\end{document}